\newtheorem{theorem}{Theorem}[section]
\newtheorem{lemma}[theorem]{Lemma}
\newtheorem{definition}[theorem]{Definition}
\newenvironment{remark}%
  {\par\medbreak\refstepcounter{theorem}%
    \noindent\textbf{Remark~\thetheorem. }}%
  {\par\medskip}
\newcommand{\mr}[1]{\ensuremath{\mathrm{#1}}}
\newcommand{\vz}[1]{\ensuremath{\mathbb{#1}}}
\newcommand{\id}{\ensuremath{\bm{\mr{\mathbb{I}}}}}
\newcommand{\R}{{\vz R}}
\newcommand{\N}{{\vz N}}
\newcommand{\Z}{{\vz Z}}
\newcommand{\C}{{\vz C}}
\newcommand{\T}{{\vz T}}
\newcommand{\dvg}{\text{div}\,}
\newcommand{\TV}{\text{TV}}
\newcommand{\TVa}{\text{TV}_{\text{a}}}
\newcommand{\sgn}{\text{sgn}}
\DeclareMathOperator{\supp}{supp}
\long\def\drop#1{}
\let\e\varepsilon
\let\epsilon\varepsilon
\def\XXint#1#2#3{{\setbox0=\hbox{$#1{#2#3}{\int}$}
     \vcenter{\hbox{$#2#3$}}\kern-.5\wd0}}
\begin{document}
 
\title{ $\Gamma$-convergence of graph Ginzburg-Landau functionals}
\date{\today}
\maketitle     
 
\vspace{ -1\baselineskip}

{\small
\begin{center}
 {\sc Yves van Gennip, Andrea L. Bertozzi}\\
 {\sc Department of Mathematics}\\
 {\sc University of California Los Angeles}\\
 {\sc Los Angeles, CA 90095, USA}\\
 {\sc \texttt{yvgennip@math.ucla.edu} \qquad \texttt{bertozzi@math.ucla.edu}}\\[10pt]
\end{center}
}

\numberwithin{equation}{section}
\allowdisplaybreaks

 \smallskip

 \begin{quote}
\footnotesize
{\bf Abstract.}
We study $\Gamma$-convergence of graph based 
Ginzburg-Landau functionals, both the limit for
 zero diffusive interface parameter $\e \to 0$ 
 and the limit for infinite nodes in the graph 
 $m \to \infty$. For general graphs we prove that
  in the limit $\e \to 0$ the graph cut objective
   function is recovered. We show that the continuum 
 limit of this objective function on 4-regular 
 graphs is related to the total variation seminorm and
  compare it with the limit of the discretized 
  Ginzburg-Landau functional. For both functionals we 
 also study the simultaneous limit $\e \to 0$ 
 and $m \to \infty$, by expressing $\e$ as a power of 
 $m$ and taking $m \to \infty$. Finally we 
 investigate the continuum limit for a nonlocal means type 
 functional on a completely connected graph.
\end{quote}
\hspace{0.94cm}\textbf{AMS Subject Classifications:} 35R02, 35Q56

\section{Introduction}

\subsection{The continuum Ginzburg-Landau
functional}\label{sec:continuumGL}

In this paper we study an adaptation of the classical real
Ginzburg-Landau (also called Allen-Cahn) functional to graphs.  The
Ginzburg-Landau functional is the object to be minimized\footnote{Note
that to avoid trivial minimizers an additional constraint needs to be
added.  In materials science it is common to add a mass constraint of
the form $\int_\Omega u = M$ for a fixed $M>0$.  In image analysis
applications one often adds a fidelity term of the form $\lambda
\|u-f\|_{L^2(\Omega)}^2$ to the functional $F_\e^{GL}$, where
$\lambda>0$ is a parameter and $f\in L^2(\Omega)$ is given data, often
a noisy image which needs to be cleaned up,
\cite{RudinOsherFatemi92}.} in a well known phase field model for
phase separation in materials science, \textit{e.g.}
\cite{Modica87a,Modica87b} and is given by
\begin{equation}\label{eq:GinzburgLandau}
F_\e^{GL}(u) := \e \int_\Omega |\nabla u(x)|^2 \,dx + \frac1\e
\int_\Omega W(u(x))\, dx, \qquad \e>0,
\end{equation}
where $u\in W^{1,2}(\Omega)$ is the phase field describing the
different phases the material can be in and $W$ is a double well
potential with two minima, \textit{e.g.} $W(s) = s^2 (s-1)^2$.
$\Omega$ is a bounded domain in $\R^N$.

Recently \cite{BertozziFlenner12} this functional has been adapted to
weighted graphs in an application to machine learning and data
clustering: An image is interpreted as a weighted graph, with the
vertices corresponding to the pixels and the weights based on the
similarities between the pixels' neighborhoods.  The phase separating
nature of the Ginzburg-Landau functional then drives separation of the
different features in the image.

The continuum functional $F_\e^{GL}$ has been extensively used and
studied, but a theoretical understanding of its equivalent on graphs
is lacking.  In this paper we use $\Gamma$-convergence
\cite{DalMaso93,Braides02} to study the asymptotic behavior of
minimizers of the graph based Ginzburg-Landau functional when either
$\e\to 0$ or the number of nodes in the graph $m \to \infty$.  In
Section~\ref{sec:explainGamma} we discuss $\Gamma$-convergence in more
detail.  Its most important feature is that if a sequence of functions
$\{f_n\}_{n=1}^\infty$ $\Gamma$-converges to a limit function
$f_\infty$ and in addition satisfies a specific compactness condition,
then minimizers of $f_n$ converge to minimizers of $f_\infty$.

It has been proven
\cite{ModicaMortola77,Modica87a,Modica87b}\footnote{As poster child
for $\Gamma$-convergence the proof has been reproduced, clarified, and
extended upon in various ways, see \textit{e.g.}
\cite{Baldo90,Sternberg88,FonsecaTartar89,KohnSternberg89,BarrosoFonseca94,Alberti01a,Alberti01b,ContiFonsecaLeoni02,AlbertiBaldoOrlandi05}},
that $F_\e^{GL}$ $\Gamma$-converges as $\e \to 0$ to the total
variation functional
\begin{equation}\label{eq:surfacetension}
F_0^{GL}(u) := \sigma(W) \int_\Omega |\nabla u|
\end{equation}
where now $u$ is restricted to functions of bounded variation taking
on two values (corresponding to the minima of the potential $W$)
almost everywhere and the surface tension coefficient $\sigma(W)$ is
determined by the potential $W$ (see Section~\ref{sec:proofk} for more
details).  Because the total variation of a binary function is
proportional to the length of the boundary between the regions where
the function takes on different values, from this limit functional the
phase separating behavior can be seen clearly: $u$ takes on one of two
values, corresponding to the two different phases of the material and
by minimizing the BV seminorm of $u$ the interface between the two
phases gets minimized.

One of the results in this paper is a similar $\Gamma$-convergence
statement for the graph Ginzburg-Landau functional:
\begin{equation}\label{eq:graphGLfirstglance}
f_\e(u) := \chi \sum_{i,j=1}^m \omega_{ij} (u_i-u_j)^2 + \frac1\e
\sum_{i=1}^m W(u_i),
\end{equation}
where $u_i$ is the value of $u$ on node $i$, $\omega_{ij}$ the weight
of the edge connecting nodes $i$ and $j$, $m$ is the number of nodes
in the graph, $\e>0$ and $\chi\in (0,\infty)$ is a constant
independent of $\e$ and $m$, usually chosen to be $\chi=\frac12$ so
the first summation is the analogue of $\int |\nabla u|^2$ (see
Section~\ref{sec:LaplDirTV}).  The different terms in this functional
and its scaling will be explained below.

The Euler-Lagrange equations for this functional are a nonlinear
extension of the graph heat equation using the graph Laplacian
\cite{Chung97}.  Nonlinear elliptic equations on graphs were investigated
in \cite{Neuberger06} and recently in \cite{ManfrediObermanSviridov12} 
their well-posedness was studied.

We study not only the limit $\e\to 0$ in analogy with the classical
continuum result, but also investigate the limit $m\to\infty$.  For a
graph embedded in $\R^n$ this can be interpreted as the limit for
finer discretization or sampling scale.  In order to make sense of
this limiting process we need to assume some additional structure on
the graph that tells us how nodes are added along a sequence of
increasing $m$.  In this paper we consider 4-regular graphs
(\textit{i.e.,} each node is connected to exactly 4 edges) with
uniformly weighted edges in Sections~\ref{sec:differentscalingshNe}
and~\ref{sec:differentscalingskNe}, and a completely connected graph
for the nonlocal means functional as studied in Section~\ref{sec:NLM},
but it is an interesting question if and how this can be extended to
different types of graphs.  Adaptation of our results to a 2-regular
graph is fairly direct, but it is not clear at this moment how to
extend our method to other graphs, even regular ones.

\subsection{Different scalings on a 4-regular graph}

The formulation of $f_\e$ in (\ref{eq:graphGLfirstglance}) does not
require the graph to be embedded in a surrounding space, although an
embedding may exist as in the case of the 4-regular graph considered
as an $N\times N$ square grid on the flat torus $\T^2$.

We study two natural scalings for the functional on this 4-regular
graph.  The first is a direct reformulation of the graph functional
$f_\e$ from (\ref{eq:graphGLfirstglance}) with $\chi=\frac12$ and
weights equal to $N^{-1}$ on all existing edges and zero between two
vertices that are not connected by an edge:
\begin{equation}\label{eq:hNe}
h_{N,\e}(u) := N^{-1} \sum_{i, j=1}^{N}
(u_{i+1,j}-u_{i,j})^2+(u_{i,j+1}-u_{i,j})^2 + \e^{-1} \sum_{i,j=1}^N
W(u_{i,j}).
\end{equation}
The second we get from discretizing the functional $F_\e^{GL}$ on the
square grid using a forward finite difference scheme for the gradient
and the trapezoidal rule for the integrals:
\begin{equation}\label{eq:kNe}
k_{N,\e}(u) := \e \sum_{i, j=1}^N
(u_{i+1,j}-u_{i,j})^2+(u_{i,j+1}-u_{i,j})^2 + \e^{-1} N^{-2}
\sum_{i,j=1}^N W(u_{i,j}).
\end{equation}
The subscripts in $u_{i,j}$ denote the horizontal and vertical
coordinates along the square grid.

We will consider $\Gamma$-limits of these functionals when $\e\to 0$
and $N\to \infty$ sequentially.  We also prove results in the case
where we set $\e=N^{-\alpha}$ for $\alpha>0$ in a specified range and
take $N\to\infty$.  Based on the $\Gamma$-convergence result in the
continuum case we expect $h_{N,\e}$ and $k_{N,\e}$ to converge to
total variation functionals.  This intuition turns out to be correct,
but with a twist: $k_{N,\e}$ converges to the total variation
functional $\int_{\T^2} |\nabla u|$, but $h_{N,\e}$ converges to the
\emph{anisotropic} total variation $\int_{\T^2} |u_x|+|u_y|$.  It
picks up the directionality of the grid.  Precise results are stated
and proved in Sections~\ref{sec:differentscalingshNe}
and~\ref{sec:differentscalingskNe}.  These results fit in very well
with the research on $\Gamma$-convergence of discrete functionals to
continuum functionals, as in \textit{e.g.}
\cite{Braides02,BraidesGelli02,AlicandroCicalese04,
BraidesGelli06,AlicandroCicaleseGloria07,
AlicandroBraidesCicalese08,AlicandroCicaleseGloria10, 
ChambolleGiacominiLussardi10}.
In fact, many of the techniques used in
Section~\ref{sec:differentscalingskNe} are inspired by
\cite{AlicandroCicalese04} specifically.

We like to point out that there is also a substantial literature on the 
convergence of graph Laplacians and their eigenvalues and eigenvectors 
to continuum limits. See \textit{e.g.} \cite{HeinAudibertvonLuxburg05,
GineKoltchinskii06,BelkinNiyogi07,HeinAudibertvonLuxburg07,BelkinNiyogi08,
vonLuxburgBelkinBousquet08,MaiervonLuxburgHein11} and references therein. 
The techniques used and the kind of results obtained in those papers are quite different 
from ours, but in a certain sense our results can be seen as nonlinear extensions of 
the graph Laplacian case.

In all cases we have to impose extra constraints on minimizers of the
Ginzburg-Landau functional to avoid trivial minimizers.  We show prove
results showing that in most cases the addition of a mass constraint
or the addition of an $L^p$ fidelity term to the functional is
compatible with the $\Gamma$-convergence results.

\subsection{Asymptotic behavior of nonlocal means}

The functionals of nonlocal means type ---or   (anisotropic)
nonlocal total variation type--- we consider are built on the square
grid in which the graphs are fully connected,
\cite{BuadesCollMorel05,GilboaOsher07,GilboaOsher08,Bresson09}.  Fix
$\Phi\in C^{\infty}(\T^2)$.  We study
\begin{equation}\label{eq:gN}
g_N(u) := N^{-4} \sum_{i,j,k,l=1}^N (\omega_{L,N})_{i,j,k,l}
|u_{i,j}-u_{k,l}|,
\end{equation}
where $\omega_{L,N} := e^{-d_{L,N}^2/\sigma^2}$ with $\sigma, L >0$
constants (possibly depending on $N$) and
\begin{equation}\label{eq:dLN}
(d_{L,N}^2)_{i,j,k,l} := \sum_{r,s=-L}^L  \Big ( \Phi \Big (\frac{i-r}N,
\frac{j-s}N\Big) - \Phi\Big(\frac{k-r}N, \frac{l-s}N\Big) \Big )^2.
\end{equation}
If $\Phi$ is thought of as an image on $\T^2$, as in \textit{e.g.}
\cite{BertozziFlenner12}, then $L$ gives the size of the pixel
neighborhoods whose pairwise comparisons form the graph weights.  As
we will see in Section~\ref{sec:Gammagraph}, $g_N$ arises as the
$\Gamma$-limit of $f_\e$ in (\ref{eq:graphGLfirstglance}) as $\e\to 0$
on this particular fully connected graph.  This is a natural class of
problems for which to study $\Gamma$-convergence as $N\to\infty$.

\subsection{Structure of the paper}

This paper is structured as follows.  Section~\ref{sec:setup} sets up
notation and gives more background information about how to set up a
PDE-to-graph `dictionary' used to find the graph analogue of the
Ginzburg-Landau functional.  It also gives more details about
$\Gamma$-convergence.  In Section~\ref{sec:Gammagraph} the
$\Gamma$-convergence result for $f_\e$ is proved.  This result holds
for general finite undirected weighted graphs.  Next we turn our
attention to the square grid on the torus.  In
Sections~\ref{sec:differentscalingshNe}
and~\ref{sec:differentscalingskNe} the $\Gamma$-convergence results
for $h_{N,\e}$ and $k_{N,\e}$ respectively are stated and proved.
$\Gamma$-convergence for the nonlocal means type functional $g_N$ is
discussed in Section~\ref{sec:NLM}.  We close with a discussion of our
results and open questions for future research in
Section~\ref{sec:discopen}.

\section{Setup}\label{sec:setup}

We will start with introducing some general graph theoretical
notation.

\subsection{Graph notation}\label{sec:graphnotation}

Let $G=(V, E)$ be an undirected graph with vertex (or node) set $V$, $|V|=m\in
\N$, and edge set $E\subset V^2$.  
Consider the space $\mathcal{V}$ of all functions $V\to \R$.  A
function $u\in \mathcal{V}$ can be seen as a labeling of the vertices
of $G$.  It is useful to number the vertices in $V$ from $1$ to $m$
(in arbitrary but fixed order).  We will write $I_m$ for the set of
integers $i$ satisfying $1\leq i\leq m$.  If $u\in \mathcal{V}$ and
$n_i\in V$ is the $i^{\text{th}}$ vertex we will use the shorthand
notation $u_i:= u(n_i)$.  Let $\mathcal{E}$ be the space of all
functions $E\to \R$, which are skew-symmetric with respect to edge
direction, \textit{i.e.,} if $\varphi\in \mathcal{E}$ and
$e_{ij}:=(n_i, n_j) \in E$ is the edge between the $i^{\text{th}}$ and
$j^{\text{th}}$ vertex in $V$ we write $\varphi_{ij} :=
\varphi(e_{ij})$ and demand $\varphi_{ij} = -\varphi_{ji}$\footnote{We
impose skew-symmetry so that $\mathcal{E}$ can be viewed as the space
of \emph{flows} as defined in \textit{e.g.} \cite[Section
2.2]{CandoganMenacheOzdaglarParrilo11}.  An interesting topological
structure arises in this setting \cite[Section
3]{CandoganMenacheOzdaglarParrilo11}, but for our current purposes the
demand of skew-symmetry neither hinders nor helps
us.\label{note:skewsymm}}.  Since the graph is undirected we have
$e_{ij} \in E \Leftrightarrow e_{ji}\in E$.  When no confusion arises
we will abuse notation slightly and consider
$e_{ij}=e_{ji}$\footnote{See note~\ref{note:skewsymm}.}.  In this
paper we consider weighted graphs, which means we assume there is
given a function $\omega: E\to (0,\infty)$, called the weight
function, which assigns a positive weight to each edge.  Because the
graph is undirected the weight function is symmetric: $\omega_{ij}:=
\omega(e_{ij}) = \omega_{ji}$.  It is often useful to extend $\omega$
to a function on $V^2$ instead of on $E\subset V^2$ by identifying the
edge $e_{ij}$ with the pair $(n_i, n_j) \in V^2$ of its end vertices
and setting $\omega_{ij} = 0$ if and only if $e_{ij}\notin E$.  In
particular, if the graph has no self-loops, $\omega_{ii}=0$ for all
$i\in I_m$. In the same way we can extend $\varphi\in \mathcal{E}$ to a function 
$\varphi: V^2 \to \R$ by setting it to zero on node pairs that are not connected 
by and edge. We can incorporate unweighted graphs in this framework by
viewing them as weighted graphs with the range of $\omega$ restricted
to be $\{0, 1\}$.  We define the degree of vertex $n_i$ as $d_i :=
\sum_{j\in I_m} \omega_{ij}$.  If $G$ has no isolated vertices, then 
for every $i\in I_m\,\,$ $d_i >0$.

\subsection{Graph Laplacians, Dirichlet energy, and total
variation}\label{sec:LaplDirTV}

Our first goal is to define operators that serve as the graph gradient
and graph divergence operators.  Using these operators we can then
define a graph Laplacian, a Dirichlet energy, and isotropic and
anisotropic total variations on the graph.  There are many possible
choices to do this.  Ours follow \cite[Section
2]{HeinAudibertvonLuxburg07} and \cite{GilboaOsher08} and are
presented here.  In Appendix~\ref{sec:structureexplained} we give
details and background on the justification of these choices.
$\mathcal{V} \cong \R^m$ and \footnote{The factor $\frac12$ in $m(m-1)/2$ 
comes in because the graph is undirected. Strictly 
speaking $\mathcal{E}$ is not isomorphic to $\R^{m(m-1)/2}$ if we impose 
skew-symmetry, but this distinction is not relevant for our purposes. 
The Hilbert space structure can be defined in any case.} 
$\mathcal{E}\cong \R^{m(m-1)/2}$ are Hilbert spaces defined via the 
following inner products:
\[
\langle u, v \rangle_{\mathcal{V}} := \sum_{i\in I_m} u_i v_i d_i^r,
\quad \langle \varphi, \phi \rangle_{\mathcal{E}} := \frac12
\sum_{i,j\in I_m} \varphi_{ij} \phi_{ij} \omega_{ij}^{2q-1},
\]
for some $r\in [0,1]$ and $q\in [1/2,1]$.  Different choices of $r$
and $q$ are useful in different contexts, as will become clear later
in this section.  We also define the dot product as operator from
$\mathcal{E}\times\mathcal{E}$ to $\mathcal{V}$ for $\varphi, \phi \in
\mathcal{E}$ as
\[
(\varphi\cdot\phi)_i := \frac12 \sum_{j\in I_m} \varphi_{ij} \phi_{ij}
\omega_{ij}^{2q-1}.
\]
With the Hilbert space structure in place, if we define a difference
operator, then all the other operators and functionals will follow
naturally.  We define the difference operator or gradient $\nabla:
\mathcal{V} \to \mathcal{E}$ as
\[
(\nabla u)_{ij} := \omega_{ij}^{1-q} (u_j-u_i).
\]
Notice that the choice $q=1$ makes the gradient operator nonlocal on
the graph, because its dependence on $\omega_{ij}$ disappears.  The
locality reappears in the $\mathcal{E}$-`inner product' (or strictly
speaking sesquilinear form), which is thus turned semi-definite.  The
opposite is the case for $q=\frac12$.

The other graph objects of interest for this paper now follow:
 
$\bullet$
Norms:
    \begin{itemize}
    \item[-] 
 $\|u\|_{\mathcal{V}} := \sqrt{\langle
    u,u\rangle_{\mathcal{V}}} = \sqrt{\sum_{i\in I_m} u_i^2 d_i^r}$,
    
    \item[-] 
    $\|\varphi\|_{\mathcal{E}} := \sqrt{\langle \varphi,
    \varphi\rangle_{\mathcal{E}}} = \sqrt{\frac12 \sum_{i,j\in I_m}
    \varphi_{ij}^2 \omega_{ij}^{2q-1}}$,
    
      \item[-] 
     $\|\varphi\|_i := \! 
    \sqrt{( \varphi \cdot \varphi)_i}  \! =  \! \sqrt{\frac12 \sum_{j\in I_m}
    \varphi_{ij}^2 \omega_{ij}^{2q-1}}$.  
    Note that
    $\|\cdot\|_{\mathcal{E},\text{dot}} \in \mathcal{V}$, 
    
      \item[-] 
    $\|u\|_{\mathcal{V},\infty} \!  :=  \! \max\{|u_i|  \!  : \!  i \! \in \!  I_m\}$ and
    $\|\varphi\|_{\mathcal{E},\infty} := \!  \max\{|\varphi_{ij}|: i,j  \! \in \! 
    I_m\}$.
    \end{itemize}
 
$\bullet$
The Dirichlet energy does not depend on $r$ or $q$:
    \[
    \frac12 \|\nabla u\|_{\mathcal{E}}^2 = \frac14 \sum_{i,j\in I_m}
    \omega_{ij} (u_i-u_j)^2.  \]

$\bullet$
 The divergence $\dvg: \mathcal{E}\to \mathcal{V}$ defined as the
adjoint of the gradient\footnote{If the graph has an isolated node $i$ for which $\omega_{ij}=$ for all $j$ and $d_i=0$ we interpret this definition as $(\dvg \varphi)_i=0$.}
    \[
    (\dvg \varphi)_i := \frac1{2d_i^r} \sum_{j\in I_m} \omega_{ij}^q
    (\varphi_{ji}-\varphi_{ij}).  \]

$\bullet$
 A family of graph Laplacians $\Delta_r := \dvg\circ\nabla:
\mathcal{V} \to \mathcal{V}$ (not to be confused with the
$p$-Laplacians from the literature).  Writing out this definition
gives
    \[
    (\Delta_r u)_i := d_i^{1-r} u_i - \sum_{j\in I_m}
    \frac{\omega_{ij}}{d_i^r} u_j = \sum_{j\in I_m}
    \frac{\omega_{ij}}{d_i^r} (u_i-u_j).  \]
    If we view $u$ as a vector in $\R^m$ we can also write
    \[
    \Delta_r u = (D^{1-r} - D^{-r} W) u, \]
    where $D^r$ is the diagonal matrix with diagonal elements $D_{ii}
    = d_i^r$ and $W$ is the weight matrix with elements $W_{ij} =
    \omega_{ij}$.  We can recover two of the most frequently used
    graph Laplacians from the literature (\textit{cf.}
    \cite{Chung97,vonLuxburg07,HeinAudibertvonLuxburg07}) by choosing
    either $r=0$ or $r=1$.  For $r=0$ we get the unnormalized graph
    Laplacian, for $r=1$ we have the random walk Laplacian, which also
    goes by the name of (asymmetric) normalized Laplacian.  For the
    latter case, the connection with random walks comes from the fact
    that $D^{-1}W$ is a stochastic matrix, \textit{i.e.,} the sum of
    the elements in each of its rows equals $1$.  Note that $\Delta_r$
    is only symmetric if $r=0$.  A third graph Laplacian which is
    often encountered in the literature is the symmetric normalized
    Laplacian $\id - D^{-\frac12} W D^{-\frac12}$, where $\id$ is the
    $m$ by $m$ identity matrix.  However, this one does not fit well
    into the current framework and we will not consider it here.
 
$\bullet$
Total variations\footnote{An interesting question which
    falls outside the scope of this paper is in which respects, if
    any, the curvatures derived as `derivatives' from these total
    variations resemble the continuum case curvature.}:

    \begin{itemize}
    \item[-]
 The isotropic total variation $TV: \mathcal{V}\to \R$
    defined by
    \begin{align*}
    \TV(u) :=& \max\{ \langle \dvg \varphi, u\rangle_{\mathcal{V}} :
    \varphi\in \mathcal{E},\,\, \underset{i\in
    I_m}\max\,\,\|\varphi\|_i\leq 1\}\\
    =& \frac{\sqrt2}{2} \sum_{i\in I_m} \sqrt{\sum_{j\in I_m}
    \omega_{ij} (u_i-u_j)^2}.
    \end{align*}
    \item[-]
 A family of anisotropic total variations ${\TVa}_q:
    \mathcal{V}\to \R$ defined by
    \begin{align*}
    {\TVa}_q(u) :=  \max\{ \langle \dvg \varphi,
    u\rangle_{\mathcal{V}} : \varphi\in \mathcal{E},\,\,
    \|\varphi\|_{\mathcal{E},\infty}\leq 1\} 
    =  \frac12 \sum_{i,j\in I_m} \omega_{ij}^q |u_i-u_j|.
    \end{align*}
    $\TV$ and ${\TVa}_\frac12$ appear in \cite{GilboaOsher08} as
    isotropic and anisotropic total variation respectively.  In this
    paper we show that ${\TVa}_1$ is the $\Gamma$-limit of a sequence
    of Ginzburg-Landau type functionals (Theorem~\ref{thm:Gammaf}).
    \end{itemize}

Because we can identify $u\in \mathcal{V}$ with a vector $\hat u\in \R^m$
and all norms on $\R^m$ are equivalent, we can express convergence in any
of these norms. For definiteness we choose a simple norm, not dependent
on the degree function $d$: For a sequence 
$\{u_n\}_{n=1}^\infty \subset \mathcal{V}$
and $u_\infty \in \mathcal{V}$ and 
corresponding vectors $\hat u_n, \hat u_\infty \in \R^m$
we define
\[
u_n \to u_\infty \text{ as } n\to\infty \quad
 \text{iff} \quad |\hat u_n - \hat u_\infty|_2 \to 0 \text{ as } n\to\infty,
\]
where $|\cdot|_p$, with $p\in \N$, is defined for $\hat u \in \R^m$ as
\[
|\hat u|_p := \Big (\sum_{i\in I_m} \hat u_i^p \Big )^{\frac1p},
\]
subscript $i$ labeling the elements of the vector.

Where this does not lead to confusion, we will use the same notation
$u$ for both the function $u\in \mathcal{V}$ and the corresponding
vector $\hat u \in \R^m$.

\subsection{The functionals}\label{sec:functionals}

A standard choice of double well potential is $W(s) = s^2 (s-1)^2$.
This is a representative example in the class of potentials for which
our results hold.  We always assume that $W\in C^2(\R)$, $W\geq 0$,
and $W(s)=0$ iff $s\in\{0, 1\}$.  Different lemmas and theorems in
this paper require different additional assumptions:
\begin{enumerate}
\item[$(W_1)$] There exists two disjunct open intervals $\hat I_0$ and
$\hat I_1$ containing $0$ and $1$ respectively, constants $c_0$,
$c_1>0$, and a $\beta>0$, such that
\begin{align}\label{eq:Wawayzero}
&0 \leq \max\{W(s): s\in \hat I\} < \min\{W(s): s\in \hat I^c\} \quad
\text{where } \hat I:=\hat I_0\cup \hat I_1 \text{ and}\notag\\
&\forall s\in \hat I_0\,\,\, W(s) \geq c_0 |s|^\beta \quad \text{and}
\quad \forall s\in \hat I_1\,\,\, W(s) \geq c_1 |s-1|^\beta.
\end{align}
\item[$(W_2)$] There exists a $c>0$ such that for large $|s|\,\,$
$W(s) \geq c (s^2-1)$.  \item[$(W_3)$] There exist $c_1, c_2 >0$ and
$p>0$ such that for large $|s|\,\,$ $c_1 |s|^p \leq W(s) \leq c_2
|s|^p$.  \item[$(W_4)$] There exist $c_3, c_4 >0$ and $q>0$ such that
for large $|s|\,\,$ $c_3 |s|^q \leq W'(s) \leq c_4 |s|^q$.
\end{enumerate}
 $(W_1)$ describes the behavior near the wells.  It says that $W$ is
 strictly bounded away from zero outside of neighborhoods of its wells
 and inside these neighborhoods $W$ has a polynomial lower bound.  We
 need it when we study the simultaneous scaling $\Gamma$-limit for
 $h_N^\alpha$ and gives us explicit estimates of how quickly sequences
 of functions with bounded Ginzburg-Landau `energy' approach the wells
 of the potential.  Assumption $(W_2)$ is a coercivity condition that
 will help establish compactness in some situations (it could be
 replaced by any assumption that allows the conclusion that
 $\int_{\T^2} W(u)$ is bounded from below by a function which is
 coercive in $\|u\|_{L^2(\T^2)}$).  $(W_3)$ with $p\geq 2$ is a
 condition needed to prove compactness in the classical Modica-Mortola
 $\Gamma$-convergence result for $F_\e^{GL}$ (see \textit{e.g.}
 \cite[Proposition 3]{Sternberg88}).  In addition we will use its
 lower bound to prove equi-coerciveness
 (Definition~\ref{def:equicoercive}) of the functional $k_N^\alpha$, which is defined below in (\ref{eq:kN1}).  Finally we will
 need $(W_4)$ to control the behavior of $W$ in between grid points,
 when studying the simultaneous scaling $\Gamma$-limit for
 $k_N^\alpha$.  As is easily checked the standard example $W(s) = s^2
 (s-1)^2$ satisfies all the above assumptions for correctly chosen
 constants.

We frequently encounter binary functions in $\mathcal{V}$ and write
\[
\mathcal{V}^b := \left\{ u\in \mathcal{V}: \forall i\in I_m\,\, u_i\in
\{0, 1\}\right\}.
\]
The graph Ginzburg-Landau functional $f_\e: \mathcal{V} \to \R$ from
(\ref{eq:graphGLfirstglance}) can be defined in terms of the Dirichlet
energy:
\[
f_\e(u) = 2 \chi \|\nabla u\|_{\mathcal{E}}^2 + \frac1\e \sum_{i=1}^m
W(u_i), \qquad \text{with } \chi\in (0, \infty).
\]
Let $\T^2$ be the two-dimensional flat unit torus.  We construct a
square grid with $m=N^2$ nodes $G_N := N^{-1} \Z^2 \cap \T^2$.
Interpreting $G_N$ as a graph we can use the notation from
Section~\ref{sec:graphnotation} with subscript $N$, \textit{e.g.}
$V_N$ are the vertices of $G_N$, $\mathcal{V}_N$ are the real-valued
functions on $V_N$, $(\mathcal{V}^b)_N$ the binary ($\{0,1\}$-valued)
functions on $V_N$, etc.  We understand the vertices $V_N$ to be
embedded in $\T^2$.  To distinguish the horizontal and vertical
directions in our graph when working on $G_N$, instead of $u_i$ we
will write $u_{i,j}:= u(n_{i,j})$ where $n_{i,j}:= (i/N, j/N) \in
V_N\subset\T^2$.  We refer to a single square in the grid by
\begin{equation}\label{eq:SNij}
S_N^{i,j}:= [i/N, (i+\nobreak 1)/N)\times [j/N, (j+1)/N).
\end{equation}
We remind the reader that we introduced three different functionals on
the square grid: the graph theoretical Ginzburg-Landau functional
$h_{N,\e}: \mathcal{V}_N \to \R$ in (\ref{eq:hNe}), the discretized
Ginzburg-Landau functional $k_{N,\e}: \mathcal{V}_N \to \R$ in
(\ref{eq:kNe}), and the `sharp interface' (\textit{i.e.,} $\e\to 0$)
nonlocal means functional $g_N: \mathcal{V}_N^b \to \R$
in (\ref{eq:gN}).

We call $h_{N,\e}$ the graph theoretical Ginzburg-Landau functional
because it is equal to $f_\e$ from (\ref{eq:graphGLfirstglance}) if we
choose the weight $\omega$ in $f_\e$ as
\[
\omega(n_{i,j}, n_{k,l}) := \left\{\begin{array}{ll} N^{-1} & \text{if
} (|i-k|=1 \wedge j=l) \vee (i=k \wedge |j-l|=1),\\
0 & \text{otherwise}, \end{array}\right.
\]
and $\chi=\frac12$.  $k_{N,\e}$ we get by using the trapezoidal rule
and a standard finite difference scheme to discretize the
Ginzburg-Landau functional $F_\e^{GL}$.  We have used the periodicity
to relate the terms of the form $(u_{i,j}-u_{i-1,j})^2$ and
$(u_{i,j}-u_{i,j-1})^2$ to $(u_{i+1,j}-u_{i,j})^2$ and
$(u_{i,j+1}-u_{i,j})^2$ in the sum, respectively.

To study $\Gamma$-convergence for the `simultaneous' limits $\e\to 0$
and $N\to\infty$ of $h_{N,\e}$ and $k_{N,\e}$ we set $\e=N^{-\alpha}$,
for $\alpha>0$, and let $N\to\infty$ in the functionals
\begin{align}
h_N^\alpha(u) &:= N^{-1} \sum_{i, j=1}^{N}
(u_{i+1,j}-u_{i,j})^2+(u_{i,j+1}-u_{i,j})^2 + N^\alpha \sum_{i,j=1}^N
W(u_{i,j}),\label{eq:hN}\\
k_N^\alpha(u) &:= N^{-\alpha} \sum_{i, j=1}^N
(u_{i+1,j}-u_{i,j})^2+(u_{i,j+1}-u_{i,j})^2 + N^{\alpha-2}
\sum_{i,j=1}^N W(u_{i,j})\label{eq:kN1}.
\end{align}
We prove $\Gamma$-convergence results for $h_N^\alpha$ in
Section~\ref{sec:simulscalehNalpha} and for $k_N^\alpha$ in
Section~\ref{sec:simulscalekNalpha}.

Note that $h_N^\alpha = N^{\gamma-1} k_N^\gamma$ if $\gamma =
\frac{\alpha+3}2$.  This shows that we do not expect $h_N^\alpha$ and
$k_N^\gamma$ to have the same limit, unless possibly if
$\alpha=\gamma=1$.  This value falls outside the regimes for $\alpha$
we consider and hence we do find different limits.

\subsection{$\Gamma$-convergence}\label{sec:explainGamma}

$\Gamma$-convergence was introduced by De Giorgi and 
Franzoni in \cite{DeGiorgiFranzoni75}. It is a type 
of convergence for function(al)s that is tailored to 
the needs of minimization problems as we will see below. 
A good introduction to the subject is \cite{Braides02},
 the standard reference work is \cite{DalMaso93}.

\begin{definition}\label{def:LBUB}
Let $X$ be a metric space and let $\{F_n\}_{n=1}^\infty$
 be a sequence of functionals $F_n: X \to \R \cup\{\pm \infty\}$.
  We say that $F_n$ $\Gamma$-converges to the functional 
  $F: X\to \R\cup\{\pm \infty\}$, denoted by
   $F_j \overset{\Gamma}{\to} F$ if, for all $u\in X$ we have that
\begin{enumerate}
\item[(LB)] for every sequence $\{u_n\}_{n=1}^\infty$ 
such that $u_n\to u$ it holds that $F(u) \leq
 \underset{n\to\infty}\liminf\, F_n(u_n)$ and
\item[(UB)] there exists a sequence $\{u_n\}_{n=1}^\infty$
 such that $F(u) \geq \underset{n\to\infty}\limsup\, F_n(u_n)$.
\end{enumerate}
\end{definition}

The lower bound condition (LB) tells us that the values along 
the sequence $F_n(u_n)$ are bounded from below by $F(u)$, the
upper bound (UB) shows that the value $F(u)$ is actually achieved.
Combined with a compactness or equi-coerciveness condition
(Definition~\ref{def:equicoercive} below), this allows for
conclusions on the minimizers of $F_n$ and $F$.

It is useful to note that to prove (LB) for a given sequence
$\{u_n\}_{n=1}^\infty$ we only need to prove it for a 
subsequence $\{u_{n'}\}_{n'=1}^\infty \subset \{u_n\}_{n=1}^\infty$ 
such that
$\underset{n'\to\infty}\lim\, F_{n'}(u_{n'}) =
\underset{n\to\infty}\liminf\, F_n(u_n)$. If (LB) is satisfied
for such a sequence (which always exists), then
$F(u) \leq \underset{n'\to\infty}\liminf\, F_{n'}(u_{n'}) =
\underset{n'\to\infty}\lim\, F_{n'}(u_n') =
\underset{n\to\infty}\liminf\, F(u_n)$.  Hence, when proving (LB) we
will assume without loss of generality that $\{u_n\}_{n=1}^\infty$ is
such a sequence.  The uniqueness of the limit then implies that it
suffices to prove (LB) for any subsequence.  Clearly we can also
assume that $\liminf\, F_n(u_n) < \infty$ and hence it suffices to
prove (LB) for a specific subsequence $\{u_{n''}\}_{n''=1}^\infty$ for
which there is a $C>0$ such that $F_{n''}(u_{n''}) \leq C$.  In
practice this means that to prove (LB) we can assume a uniform bound
on $F_n(u_n)$ and freely pass to subsequences when needed.

If we are working with functionals that depend on a continuous 
parameter, \textit{e.g.} $\e\to 0$ or $N\to\infty$, we have to 
prove (LB) and (UB) for an arbitrary sequence $\{\e_n\}_{n=1}^\infty$
 with $\e_n\to 0$ as $n\to\infty$ (or $\{N_n\}_{n=1}^\infty$ 
 with $N_n\to \infty$ as $n\to\infty$).

\begin{definition}\label{def:equicoercive}
Let $X$ be a metric space and let $\{F_n\}_{n=1}^\infty$ be a sequence
 of functionals $F_n: X \to \R \cup\{\pm \infty\}$. We say the
  sequence is \emph{equi-coercive} if for every $t\in \R$ there exists 
a compact set $K_t\subset X$ such that for every $n\in \N\,\,$ 
$\{u\in X: F_n(u)\leq t\} \subset K_t$.
\end{definition}
In practice equi-coerciveness is proved by showing that any sequence
 $\{u_n\}_{n=1}^\infty$ for which $F_n(u_n)$ is uniformly bounded
 has a convergent subsequence.

$\Gamma$-convergence combined with equi-coerciveness allows us to
 conclude the following result.

\begin{theorem}[Chapter 7 in \cite{DalMaso93} and Theorem 1.21 in \cite{Braides02}]
Let $X$ be a metric space, $\{F_n\}_{n=1}^\infty$ be a sequence 
of equi-coercive functionals $F_n: X \to \R \cup\{\pm \infty\}$,
 and let $F$ be the $\Gamma$-limit of $F_n$ for $n\to \infty$. 
 Then there exists a minimizer of $F$ in $X$ and
$
\min\{F(u): u\in X\} = \underset{n\to\infty}\lim\, \inf\{F_n(u): u\in X\}.
$
Furthermore, if $\{u_n\}_{n=1}^\infty\subset X$ is a precompact sequence such that
\[
\underset{n\to\infty}\lim\, F_n(u_n) = \underset{n\to\infty}\lim\, \inf\{F_n(u): u\in X\},
\]
then every cluster point of this sequence is a minimizer of $F$.
\end{theorem}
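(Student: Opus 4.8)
The plan is to run the classical proof of the fundamental theorem of $\Gamma$-convergence: use the upper bound (UB) to bound the infima $\inf_X F_n$ from above by $\inf_X F$, then feed an almost-minimizing sequence into the equi-coerciveness hypothesis to extract a cluster point, and use the lower bound (LB) to identify that cluster point as a minimizer of $F$ attaining the limiting value.

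Write $m:=\inf\{F(u):u\in X\}$ and $m_n:=\inf\{F_n(u):u\in X\}$, with values in $\R\cup\{\pm\infty\}$. First I would fix $u\in X$, take a recovery sequence $u_n\to u$ with $\limsup_n F_n(u_n)\le F(u)$ from (UB), and combine it with $F_n(u_n)\ge m_n$ to get $\limsup_n m_n\le F(u)$; taking the infimum over $u\in X$ gives $\limsup_n m_n\le m$ (the case $F\equiv+\infty$ being degenerate and handled directly, I assume $m<+\infty$). Next, to produce a minimizer, I would choose a subsequence $(n_k)$ with $m_{n_k}\to\liminf_n m_n=:\ell\le m$ and points $u_{n_k}\in X$ with $F_{n_k}(u_{n_k})\le m_{n_k}+\frac1k$ (read as $\le -k$ when $m_{n_k}=-\infty$), so that $F_{n_k}(u_{n_k})\to\ell<+\infty$. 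Being eventually bounded above by some finite $C$, the $u_{n_k}$ lie in the compact set $K_C$ for $k$ large by equi-coerciveness (Definition~\ref{def:equicoercive}), so after a further extraction $u_{n_{k_j}}\to u_\ast$. Applying (LB) along $j\mapsto u_{n_{k_j}}$ (legitimate because a subsequence of a $\Gamma$-convergent sequence still obeys the lower bound, as one sees by completing a convergent sequence to full index using a recovery sequence at its limit point) gives $F(u_\ast)\le\liminf_j F_{n_{k_j}}(u_{n_{k_j}})=\ell$. Hence $m\le F(u_\ast)\le\ell=\liminf_n m_n\le\limsup_n m_n\le m$, so every inequality is an equality: $u_\ast$ minimizes $F$ and $\lim_n m_n=m=\min\{F(u):u\in X\}$.

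For the last assertion, given a precompact sequence $\{u_n\}$ with $\lim_n F_n(u_n)=\lim_n m_n=m$ and any cluster point $u_\ast=\lim_j u_{n_j}$, (LB) along this convergent subsequence yields $F(u_\ast)\le\liminf_j F_{n_j}(u_{n_j})=m=\min\{F(u):u\in X\}$, so $u_\ast$ is a minimizer of $F$. I do not expect a real obstacle here; the only points needing care are the stability of the lower bound under passing to subsequences of the functional index and the bookkeeping for the extended-real values $m=\pm\infty$, consistent with the result being quoted rather than proved in \cite{DalMaso93,Braides02}.
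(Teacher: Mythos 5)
The paper only quotes this result (Dal Maso, Chapter 7; Braides, Theorem 1.21) and gives no proof of its own, so there is nothing internal to compare against; your argument is the standard proof of the fundamental theorem of $\Gamma$-convergence and is correct, including the existence claim (the constructed cluster point $u_\ast$ satisfies $F(u_\ast)\le\ell\le m$, so the infimum of $F$ is attained) and the extended-real bookkeeping. The one step that genuinely needs care---invoking (LB), which is stated for full sequences, along a subsequence of the functional index---you handle correctly: extending $\{u_{n_{k_j}}\}$ to a full sequence converging to $u_\ast$ (by interleaving a recovery sequence, or simply the constant $u_\ast$) and using that the $\liminf$ over all indices is bounded above by the $\liminf$ over the subsequence yields $F(u_\ast)\le\liminf_j F_{n_{k_j}}(u_{n_{k_j}})$ as required.
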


Following \cite{AlicandroCicalese04} for our purposes it turns out it is 
often more useful to reformulate $\Gamma$-convergence in terms of the $\Gamma$-lower limit
\[
F'(u) := \inf\{\underset{n\to\infty}\liminf\, F_n(u_n): u_n\to u\}
\]
and the $\Gamma$-upper limit
\[
F''(u) := \inf\{\underset{n\to\infty}\limsup\, F_n(u_n): u_n\to u\},
\]
\cite[Definition 4.1]{DalMaso93}. It can be shown,
 \cite[Remark 4.2, Proposition 8.1]{DalMaso93}, \cite{Braides06},
  that our definition of $\Gamma$-convergence above is equivalent 
  to the following two conditions. For each $u\in X$ we have that
\begin{enumerate}
\item[(LB')] $F(u) \leq F'(u)$ and
\item[(UB')] $F(u) \geq F''(u)$.
\end{enumerate}
The benefit of this reformulation is that the functions $F'$ and $F''$ 
are lower semicontinuous \cite[Proposition 6.8]{DalMaso93}, which 
comes in handy in Section~\ref{sec:differentscalingskNe}. In fact, 
since conditions (LB) and (LB') are equivalent, we will sometimes 
use the combination (LB)+(UB') to prove $\Gamma$-convergence 
in this paper. Note that (UB) implies (UB').

\subsection{Constraints}\label{sec:explainconstraints}

It is common in semi-supervised learning applications to have a mass
constraint or an additional term in the functional corresponding to a
fit to the known data.
Moreover such constraints are typically necessary to obtain nontrivial
minimizers.
We need to check that these constraints are compatible with the
convergence.

First consider the case of an adding a fidelity term of the form
$\lambda |u-f|_p^p$ to the functional, where $f\in \mathcal{V}$ is a
given function (usually representing some known data to which the
minimizer should be similar) defined on some or all of the vertices in
$V$ and $\lambda>0$ is a parameter.
If $p$ agrees with the topology of the $\Gamma$-convergence we can use
the property that $\Gamma$-limits are stable under addition of a
continuous term or a sequence of continuously convergent terms
\cite[Definition 4.7, Propositions 6.20--21]{DalMaso93} to conclude
that the Ginzburg-Landau functionals plus fidelity term again
$\Gamma$-converge.
We will summarize the results that are relevant for us in the
following definition and lemma, based on the cited definition and
propositions in \cite{DalMaso93}.

\begin{definition}
Let $X$ be a metric space and let $\{F_n\}_{n=1}^\infty$ be a sequence
of functionals $F_n: X \to \R \cup\{\pm \infty\}$.  We say the
sequence is \emph{continuously convergent} to a function $F: X\to \R
\cup\{\pm \infty\}$ if for every $u\in X$ and for every $\eta>0$ there
is an $\bar N \in \N$ and a $\delta>0$ such that for all $n\geq \bar
N$ and $v\in X$ with $\|u-v\|<\delta$ we have $|F_n(v)-F(u)| < \eta$.
\end{definition}

\begin{lemma}\label{lem:addperturbations}
Let $X$ be a metric space and let $\{F_n\}_{n=1}^\infty$ be a sequence
of functionals $F_n: X \to \R \cup\{\pm \infty\}$ which
$\Gamma$-converges to $F: X \to \R \cup\{\pm \infty\}$ and
$\{H_n\}_{n=1}^\infty$ a sequence of functionals $H_n: X \to \R
\cup\{\pm \infty\}$ which is continously convergent to $H: X \to \R
\cup\{\pm \infty\}$, then $F_n+H_n$ $\Gamma$-converges to $F+H$. 
If $\hat G: X \to \R \cup \{\pm \infty\}$ is continuous functional,
then $F_n + \hat G$ $\Gamma$-converges to $F+\hat G$.
\end{lemma}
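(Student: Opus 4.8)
The plan is to peel the perturbation off and verify the two halves (LB) and (UB) of Definition~\ref{def:LBUB} directly, leaning on the single elementary fact that adding a \emph{convergent} real sequence commutes with both $\liminf$ and $\limsup$: if $b_n \to b \in \R$ then $\liminf_n (a_n + b_n) = b + \liminf_n a_n$ and $\limsup_n (a_n + b_n) = b + \limsup_n a_n$ for every sequence $\{a_n\} \subset \R \cup \{\pm\infty\}$.

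The first thing to extract is the only consequence of continuous convergence that is actually needed: if $\{H_n\}$ converges continuously to $H$ and $v_n \to u$ in $X$, then $H_n(v_n) \to H(u)$. This is immediate from the definition of continuous convergence: given $\eta > 0$ pick $\bar N$ and $\delta > 0$ as in that definition; since $v_n \to u$ there is an $N' \geq \bar N$ with $\|u - v_n\| < \delta$ for all $n \geq N'$, and then $|H_n(v_n) - H(u)| < \eta$ for all $n \geq N'$. Note this also forces $H(u)$ to be finite, so the indeterminate form $\infty - \infty$ never appears below.

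With that in hand, (LB) for $F_n + H_n$ is quick: fix $u$ and an arbitrary sequence $u_n \to u$; then $H_n(u_n) \to H(u)$, so $\liminf_n (F_n(u_n) + H_n(u_n)) = H(u) + \liminf_n F_n(u_n) \geq H(u) + F(u)$ by (LB) for $\{F_n\}$. For (UB), take a recovery sequence $u_n \to u$ for $\{F_n\}$ at $u$, i.e.\ $\limsup_n F_n(u_n) \leq F(u)$; the \emph{same} sequence works for $F_n + H_n$ since $\limsup_n (F_n(u_n) + H_n(u_n)) = H(u) + \limsup_n F_n(u_n) \leq H(u) + F(u)$. Hence $F_n + H_n \overset{\Gamma}{\to} F + H$. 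The statement about a continuous $\hat G$ is then the special case $H_n \equiv \hat G$: continuity of $\hat G$ at every point is exactly continuous convergence of the constant sequence $\hat G$ to $\hat G$ (take $\bar N = 1$), so the first part applies with $H = \hat G$.

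There is essentially no serious obstacle here — this lemma is bookkeeping on top of the cited statements in \cite{DalMaso93} — but the one point to be careful about is the value bookkeeping in $\R \cup \{\pm\infty\}$: the additivity of $\liminf$/$\limsup$ under a convergent perturbation that I use is legitimate precisely because continuous convergence pins $H_n(u_n)$ and its limit $H(u)$ to finite values along every sequence $u_n \to u$, which is what rules out $\infty - \infty$. Everything else is a direct transcription of the definitions.
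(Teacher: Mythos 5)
Your proof is correct: the only fact you need from continuous convergence is that $H_n(v_n)\to H(u)$ whenever $v_n\to u$ (which also forces $H(u)$ finite and rules out $\infty-\infty$), and the rest is the additivity of $\liminf$/$\limsup$ under a convergent real perturbation applied to an arbitrary sequence for (LB) and to a recovery sequence for (UB). The paper does not prove this lemma itself but defers to \cite[Propositions 6.20--21]{DalMaso93}; your direct verification is precisely the standard argument behind those propositions, including the observation that a continuous real-valued $\hat G$ is the constant continuously convergent sequence.
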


If instead a mass constraint is imposed on the minimizers we have to
check that each convergent sequence preserves the constraint (to make
it compatible with the lower bound and compactness conditions) and
that the recovery sequence from the upper bound condition does satisfy
the constraint (or can be adapted to satisfy it, without violating the
upper bound condition).  Since we are dealing with $L^p$ convergence,
the former is usually trivially satisfied, but the latter does demand
some more attention.  Details for each of the functionals are provided
in the relevant sections.

For functions $u\in \mathcal{V}^b$ we carefully need to determine the
form of our mass constraint.  The constraint $\sum_{i=1}^m u_i = M$
leads to shrinking support when $m\to\infty$ which is unwanted.  An
alternative condition is an average mass constraint of the form
$\frac1m \sum_{i=1}^m u_i = M$.  Note that $\sum_{i=1}^m u_i$ can take
on only integer values between 0 and $m$ and hence $m M$ should be of
that form as well, in order for the average mass constraint not to
lead to an empty set of admissible minimizers.  If we impose this for
all $m$ this is only possible if $M=0$ or $M=1$, however specific
subsequences of $m$ can be able to satisfy this condition for
different values of $M$ (\textit{e.g.} if $M=\frac12$ and we consider
even $m$).  Hence the choice of $M$ can constrain the subsequences of
$m$ which are admissible.  In order to avoid the possible difficulties
with the average mass \emph{equality}, one can also impose an average
mass \emph{inequality}.  Since the arguments for the mass equality
easily generalize to an inequality, we will not discuss this situation
further.

\section{$\Gamma$-convergence of $f_\e$}\label{sec:Gammagraph}

\subsection{$\Gamma$-convergence and compactness}

In this section we prove $\Gamma$-conver\-gence and compactness for the
functional $f_\e: \mathcal{V} \to \R$ from
(\ref{eq:graphGLfirstglance}).

\begin{theorem}[$\Gamma$-convergence]\label{thm:Gammaf}
$\displaystyle f_\e \overset{\Gamma}{\to} f_0$ as $\e\to 0$, where
\begin{align*}
f_0 (u) :=& \left\{ \begin{array}{ll} \chi \sum_{i,j\in I_m}
\omega_{ij} |u_i-u_j| & \text{if } u\in\mathcal{V}^b,\\ +\infty &
\text{otherwise}\end{array}\right.    = \left\{ \begin{array}{ll}
2 \chi {\TVa}_1(u) & \text{if } u\in\mathcal{V}^b,\\ +\infty &
\text{otherwise}.\end{array}\right.
\end{align*}
\end{theorem}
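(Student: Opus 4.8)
The plan is to fix an arbitrary sequence $\e_n\to 0$ and verify conditions (LB) and (UB) of Definition~\ref{def:LBUB} directly. Since $\mathcal{V}\cong\R^m$ is finite-dimensional, every finite sum appearing in $f_\e$ is continuous in the $\ell^2$-topology used for convergence in $\mathcal{V}$, which makes both halves elementary. Two observations drive the argument: (i) the potential term $\frac1{\e_n}\sum_{i\in I_m}W(u_i)$ is nonnegative and forces limit points into $\mathcal{V}^b$; and (ii) on $\mathcal{V}^b$ one has $(u_i-u_j)^2=|u_i-u_j|$ for all $i,j\in I_m$, since $u_i-u_j\in\{-1,0,1\}$.

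For (LB), fix $u\in\mathcal{V}$ and a sequence $u_n\to u$. As noted in the remarks following Definition~\ref{def:LBUB}, we may assume there is a constant $C>0$ with $f_{\e_n}(u_n)\leq C$ for all $n$ (otherwise $\liminf_n f_{\e_n}(u_n)=+\infty$ and there is nothing to prove; this case is exactly $u\notin\mathcal{V}^b$, where $f_0(u)=+\infty$). Then $0\leq\sum_{i\in I_m}W((u_n)_i)\leq C\e_n\to 0$. Since $W$ is continuous and $u_n\to u$ componentwise, $\sum_{i\in I_m}W(u_i)=\lim_n\sum_{i\in I_m}W((u_n)_i)=0$; as $W\geq 0$ with $W(s)=0$ iff $s\in\{0,1\}$, this gives $u\in\mathcal{V}^b$. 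The Dirichlet term is continuous in its argument, so $\chi\sum_{i,j\in I_m}\omega_{ij}((u_n)_i-(u_n)_j)^2\to\chi\sum_{i,j\in I_m}\omega_{ij}(u_i-u_j)^2=\chi\sum_{i,j\in I_m}\omega_{ij}|u_i-u_j|$, the last step by observation (ii). Since the potential term is nonnegative, $\liminf_n f_{\e_n}(u_n)\geq\chi\sum_{i,j\in I_m}\omega_{ij}|u_i-u_j|=f_0(u)$.

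For (UB), if $u\notin\mathcal{V}^b$ then $f_0(u)=+\infty$ and the constant sequence $u_n\equiv u$ works trivially. If $u\in\mathcal{V}^b$, take again $u_n\equiv u$; then $W(u_i)=0$ for every $i\in I_m$, so $f_{\e_n}(u)=\chi\sum_{i,j\in I_m}\omega_{ij}(u_i-u_j)^2=\chi\sum_{i,j\in I_m}\omega_{ij}|u_i-u_j|=f_0(u)$ for every $n$, hence $\limsup_n f_{\e_n}(u_n)=f_0(u)$. This establishes $f_\e\overset{\Gamma}{\to}f_0$; the identity $f_0=2\chi\,{\TVa}_1$ on $\mathcal{V}^b$ is immediate from the formula ${\TVa}_q(u)=\frac12\sum_{i,j\in I_m}\omega_{ij}^q|u_i-u_j|$ with $q=1$.

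The main obstacle: there is essentially none of substance. The finite dimension of $\mathcal{V}$ trivializes the compactness considerations and the construction of a recovery sequence (the constant sequence suffices), and the only points worth a word of care are the reduction to an energy-bounded subsequence in (LB) and the elementary identity $(u_i-u_j)^2=|u_i-u_j|$, valid precisely on binary configurations. The genuinely delicate arguments in the paper are reserved for the limits $N\to\infty$, where the grid geometry and the behavior of $W$ between grid points come into play.
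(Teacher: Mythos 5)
Your proof is correct and follows essentially the same route as the paper, which packages the same ingredients (continuity of the quadratic difference term, the potential term forcing limit points into $\mathcal{V}^b$, the constant recovery sequence, and the identity $(u_i-u_j)^2=|u_i-u_j|$ on binary functions) as Lemma~\ref{lem:Gammaw} together with the stability of $\Gamma$-limits under continuous perturbations, whereas you simply inline that stability argument by checking (LB) and (UB) directly. One harmless imprecision: in (LB) the case $\liminf_n f_{\e_n}(u_n)=+\infty$ is not ``exactly'' the case $u\notin\mathcal{V}^b$ (the liminf can be infinite even for binary $u$), but since (LB) is trivially true whenever the liminf is infinite, your case split still covers everything.
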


\begin{theorem}[Compactness]\label{thm:compactnessongraph}
Let $W$ satisfy the coercivity condition $(W_2)$, let
$\{\e_n\}_{n=1}^\infty \subset \R_+$ be a sequence such that $\e_n\to
0$ as $n\to \infty$, and let $\{u_n\}_{n=1}^\infty \subset
\mathcal{V}$ be a sequence such that there exists a $C>0$ such that
for all $n\in\N\,\,$ $f_{\e_n}(u_n)<C$.  Then there exists a
subsequence $\{u_{n'}\}_{n'=1}^\infty \subset \{u_n\}_{n=1}^\infty$
and a $u_\infty \in \mathcal{V}^b$ such that $u_{n'} \to u_\infty$ as
$n\to\infty$.
\end{theorem}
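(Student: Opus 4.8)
The plan is to exploit that, the graph being finite, $\mathcal{V}\cong\R^m$ with $m$ fixed, so compactness is just Bolzano--Weierstrass once the sequence is known to be bounded; boundedness in turn will come entirely from the potential term of $f_{\e_n}$ together with the coercivity assumption $(W_2)$, and the Dirichlet energy term will play no role.

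First I would note that every term in $f_\e$ is nonnegative ($\chi>0$, $\omega_{ij}\geq 0$, $W\geq 0$), so $f_{\e_n}(u_n)<C$ gives in particular $\sum_{i\in I_m} W((u_n)_i) < C\e_n$, and the right-hand side tends to $0$ as $n\to\infty$. Next I would globalize $(W_2)$: since $W$ is continuous and nonnegative it is bounded below on any compact interval, so combining that with $W(s)\geq c(s^2-1)$ for large $|s|$ produces constants $c'>0$ and $M\geq 0$ with $W(s)\geq c'|s|^2-M$ for \emph{all} $s\in\R$. Summing over $i\in I_m$ then yields $c'\,|u_n|_2^2 \leq \sum_{i\in I_m} W((u_n)_i) + mM \leq C\e_n + mM$, so $\{u_n\}_{n=1}^\infty$ is a bounded sequence in $\R^m$.

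By Bolzano--Weierstrass there is a subsequence $\{u_{n'}\}_{n'=1}^\infty$ and a $u_\infty\in\mathcal{V}$ with $u_{n'}\to u_\infty$ in the $|\cdot|_2$ norm, i.e.\ in the chosen topology on $\mathcal{V}$. To check $u_\infty\in\mathcal{V}^b$ I would use continuity of $W$ again: for each fixed $i\in I_m$ we have $W((u_{n'})_i)\to W((u_\infty)_i)$, while $0\leq W((u_{n'})_i)\leq \sum_{k\in I_m} W((u_{n'})_k) < C\e_{n'}\to 0$, so $W((u_\infty)_i)=0$ for every $i$. By the standing assumption that $W(s)=0$ iff $s\in\{0,1\}$, this forces $(u_\infty)_i\in\{0,1\}$ for all $i\in I_m$, that is, $u_\infty\in\mathcal{V}^b$, which is the claim.

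I do not anticipate any real obstacle here; the only step needing a line of care is the passage from $(W_2)$ (a lower bound valid only for large $|s|$) to the global quadratic lower bound on $W$, which is exactly where continuity and nonnegativity of $W$ are used.
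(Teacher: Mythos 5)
Your proposal is correct and follows essentially the same route as the paper's proof: bound $\sum_i W((u_n)_i)$ by $C\e_n$, use $(W_2)$ to get boundedness in $\R^m$, apply Bolzano--Weierstrass, and use continuity of $W$ to conclude the limit lies in $\mathcal{V}^b$. You merely spell out the globalization of the coercivity bound and the final limit argument, which the paper leaves implicit.
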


Although in $F_\e^{GL}$ the first term is scaled by $\e$, the first
term of $f_\e$ contains no $\e$.  The reason for this is that the
Dirichlet energy in $F_\e^{GL}$ is unbounded for the binary functions
$u$ that form the domain of the limit functional $F_0^{GL}$.  However,
the difference terms in $f_\e$ are finite even for the binary
functions and thus need no rescaling.  The proof of
$\Gamma$-convergence uses this fact, to view the difference terms as a
continuous perturbation of the functional
\[
w_\e(u):= \frac1\e \sum_{i\in I_m} W(u_i).
\]

\begin{lemma}\label{lem:Gammaw}
The sequence of functionals $w_\e$ $\Gamma$-converges:
$
w_\e \underset{\e\to 0}{\overset{\Gamma}{\to}} w_0
$
where $w_0: \mathcal{V} \to \{0,\infty\}$ is defined via
\[
w_0(u):= \left\{\begin{array}{ll} 0 &\text{if } u\in \mathcal{V}^b,\\
+\infty & \text{otherwise}.  \end{array}\right.
\]
\end{lemma}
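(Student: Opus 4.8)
The plan is to prove $\Gamma$-convergence of $w_\e$ to $w_0$ on the finite-dimensional space $\mathcal{V} \cong \R^m$ by verifying conditions (LB) and (UB) of Definition~\ref{def:LBUB} directly. Since $\mathcal{V}$ is finite-dimensional and all the norms are equivalent, convergence $u_n \to u$ is just coordinatewise convergence of finitely many real sequences, which keeps everything elementary; the essential input is just that $W \in C^2(\R)$, $W \geq 0$, and $W(s) = 0$ iff $s \in \{0,1\}$.

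For the lower bound (LB), fix $u \in \mathcal{V}$ and an arbitrary sequence $u_n \to u$; as noted in the discussion after Definition~\ref{def:LBUB}, I may assume $\liminf_n w_{\e_n}(u_n) < \infty$ and pass to a subsequence realizing the $\liminf$ as a limit with a uniform bound $w_{\e_n}(u_n) \leq C$. There are two cases. If $u \in \mathcal{V}^b$, then $w_0(u) = 0 \leq \liminf_n w_{\e_n}(u_n)$ trivially since $w_{\e_n} \geq 0$. If $u \notin \mathcal{V}^b$, then some coordinate $u_i \notin \{0,1\}$, so $W(u_i) =: 2\delta > 0$; by continuity of $W$ and $u_{n,i} \to u_i$, for $n$ large we have $W(u_{n,i}) \geq \delta$, hence $w_{\e_n}(u_n) \geq \e_n^{-1} W(u_{n,i}) \geq \delta/\e_n \to \infty$, so $\liminf_n w_{\e_n}(u_n) = \infty = w_0(u)$. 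This establishes (LB) (equivalently (LB')).

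For the upper bound (UB), fix $u \in \mathcal{V}$. If $u \notin \mathcal{V}^b$ then $w_0(u) = +\infty$ and the constant recovery sequence $u_n := u$ works, since $+\infty \geq \limsup_n w_{\e_n}(u_n)$ holds vacuously. If $u \in \mathcal{V}^b$, again take $u_n := u$; then $\sum_{i} W(u_i) = 0$, so $w_{\e_n}(u_n) = 0$ for every $n$, and thus $w_0(u) = 0 = \limsup_n w_{\e_n}(u_n)$. This gives a valid recovery sequence in all cases.

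Combining (LB) and (UB) yields $w_\e \overset{\Gamma}{\to} w_0$. I do not anticipate a serious obstacle here: the proof is essentially bookkeeping, and the only mild subtlety is remembering that in the $\Gamma$-convergence setup for continuously parametrized families one must argue for an arbitrary sequence $\e_n \to 0$ rather than for the parameter directly — but the argument above is already phrased that way. (One small wrinkle worth flagging: the statement writes "$g_\e$" where it clearly means "$w_\e$", so the reader should read the displayed $\Gamma$-convergence $w_\e \overset{\Gamma}{\to} w_0$ as the intended claim.) The reason this lemma is useful downstream is that, by Lemma~\ref{lem:addperturbations}, $f_\e = w_\e + (\text{continuous term})$ will $\Gamma$-converge to $w_0$ plus that continuous term, which on $\mathcal{V}^b$ is exactly $\chi \sum_{i,j} \omega_{ij}|u_i - u_j|$ — giving Theorem~\ref{thm:Gammaf}.
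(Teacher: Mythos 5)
Your proof is correct and follows essentially the same route as the paper: a two-case lower bound using nonnegativity of $W$ when $u\in\mathcal{V}^b$ and the blow-up of $\e_n^{-1}W(u_{n,i})$ when some coordinate misses $\{0,1\}$, plus the constant recovery sequence for the upper bound. If anything, your lower-bound argument in the non-binary case is slightly more carefully written than the paper's (you make explicit the uniform lower bound $W(u_{n,i})\geq\delta$ coming from continuity, which the paper only gestures at).
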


\begin{proof}[\bf Proof.]
To prove the required lower bound (LB) let $u\in \mathcal{V}$ and
consider sequences $\{\e_n\}_{n=1}^\infty$ and $\{u_n\}_{n=1}^\infty$
such that $\e_n \to 0$ and $u_n \to u$ as $n\to \infty$.  If $u \in
\mathcal{V}^b$ from $w_\e \geq 0$ it follows that
$
w_0(u) = 0 \leq \underset{n\to\infty}{\liminf}\, w_{\e_n}(u_n).
$
If on the other hand $u\in \mathcal{V}\setminus\mathcal{V}^b$, then $\exists \bar v\in V$ such that for
large enough $n \,\,$ $u_n(\bar
v)\notin \{0,1\}$ and hence
\[
\underset{n\to\infty}{\liminf}\, w_{\e_n}(u_n) \geq
\underset{n\to\infty}{\liminf}\, \frac1{\e_n} W(\bar v) = \infty =
w_0(u).
\]
For the upper bound (UB) we can assume without loss of generality that
$u \in \mathcal{V}^b$.  Define for every $n\in\N\, $ $u_n := u$, then
trivially $u_n \to u$ if $n\to \infty$ and furthermore
$
\underset{n\to\infty}{\limsup}\, w_{\e_n}(u_n) = 0 = w_0(u).
$
\end{proof}

\noindent
{\bf Proof of Theorem~\ref{thm:Gammaf}.}
Define
\[
\hat w(u):= \chi \sum_{i, j\in I_m} \omega_{ij} (u_i-u_j)^2.
\]
$\hat w$ is a polynomial on $\R^m$ and hence continuous.
$\Gamma$-convergence is stable under continuous perturbations
(\textit{e.g.} \cite[Proposition 6.21]{DalMaso93}).  Since $f_\e$ is a
continuous perturbation of $w_\e$ we have by Lemma~\ref{lem:Gammaw}
$
f_\e \overset{\Gamma}\to \hat w + w_0 $ as $ \e \to 0.
$
We complete the proof by noting that if $u\in\mathcal{V}^b$, then
$$
\hat w(u) = \chi \sum_{i, j\in I_m} \omega_{ij} |u_i-u_j|.
\eqno{\qed}
$$

\begin{proof}[\bf Proof of Theorem~\ref{thm:compactnessongraph}]
By the uniform bound on $f_{\e_n}(u_n)$ we have
\[
\sum_{i\in I_m} W((u_n)_i) \leq C \e_n.
\]
Combined with the coercivity condition $(W_2)$ on $W$ we conclude that
for $n$ large enough the sequence $\{u_n\}_{n=1}^\infty$ is bounded
and hence by the Bolzano-Weierstrass theorem there exists a converging
subsequence with limit $u_\infty$.  Since $W \in C^2(\R)$ and $\e_n\to
0$ as $n\to\infty$ we conclude that $W(u_\infty(v_i))=0$ for all $i\in
I_m$ and hence $u_\infty \in \mathcal{V}^b$.
\end{proof}

\begin{remark}
Since $f_0$ is defined on binary functions $u$, if we write
\[
S_k := \{i\in I_m: u_i=k\} \quad \text{for } k\in \{0, 1\},
\]
we can rewrite $f_0$ as
\[
f_0(u) = \left\{ \begin{array}{ll} \chi \sum_{i\in S_0, j\in S_1}
\omega_{i,j} & \text{if } u\in\mathcal{V}^b,\\ +\infty &
\text{otherwise}\end{array}\right.
\]
Minimizing $f_0$ thus corresponds to finding a minimal graph cut of
$G$, \textit{i.e.,} dividing the graph into clusters with minimal edge
weight between them.  Such a minimization requires an extra constraint
to avoid trivial minimizers.  A common choice is to prescribe the
number of clusters (in this case two) one wants, or to introduce a
normalization into the sum of the weights based on the cluster sizes
(\textit{e.g.} normalized cut, normalized association
\cite{ShiMalik00}, and Cheeger cut \cite{SzlamBresson10})).  One could
also minimize $f_0$ under a fixed mass constraint (see
Section~\ref{sec:constraintsfe} below).  One often relaxes the problem
of normalized graph cut minimization by losing the binarity
constraint, \textit{e.g.} in spectral clustering
\cite{NgJordanWeiss02,vonLuxburg07}.

If the weight function $\omega$ is such that there is a nontrivial
partition $A \cup B = I_m$ such that $\omega_{ij} = 0$ for $i\in A$
and $j \in B$ then a nontrivial minimizer of $f_0$ (without the mass
constraint) is clearly given via $S_0 = A$ and $S_1 = B$.  In this
case we can write
\[
u_i = \frac{\sum_{j\in I_m} \omega_{ij} u_j}{d_i}
\]
which is of a form related to the image denoising method known as
nonlocal means, \textit{cf.} \cite{BuadesCollMorel05}.

\end{remark}

\begin{remark}
In this paper we assume that $W$ has wells at $0$ and $1$.  The proofs
in this section make no use of this fact and can easily be extended to
potentials $W$ with wells at values $s_1$ and $s_2$.  In this case the
set $\mathcal{V}^b$ needs to be redefined as the set of functions
taking values in $\{s_1, s_2\}$ and the limit functional $f_0$ from
Theorem~\ref{thm:Gammaf} is multiplied by a factor $|s_1-s_2|$, since
$(u_i-u_j)^2 = |s_1-s_2| |u_i-u_j|$ for $u\in \mathcal{V}^b$.
\end{remark}

\subsection{Constraints}\label{sec:constraintsfe}

Next we show that the addition of a fidelity term $\lambda |u-f|_p^p$
or a mass constraint is compatible with the $\Gamma$-convergence.

\begin{theorem}[Constraints]\label{thm:constraintsforfe}
$\displaystyle f_\e + \lambda |\cdot-f|_p^p \overset{\Gamma}{\to} f_0
+ \lambda |\cdot-f|_p^p$ as $\e\to 0$, where $p\in \R_+$, $\lambda>0$,
and a given function $f\in \mathcal{V}$ (or possibly a given function
$f: U\to \R$ where $U$ is a strict subset of the vertex set $V$ and
the sum in $|\cdot-f|_p^p$ is restricted to vertices in $U$).
Compactness for $f_\e + \lambda |\cdot -f|_p^p$ as in
Theorem~$\ref{thm:compactnessongraph}$ holds. 
If instead, for fixed $M>0$, the domain of definition of $f_\e$ is
restricted to
\[
\mathcal{V}^M:=\{u\in\mathcal{V}: \sum_{i\in I_m} u_i = mM\},
\]
where $M$ is such that $mM$ is an integer between 0 and $m$, then the
results of Theorems~$\ref{thm:Gammaf}$ and~$\ref{thm:compactnessongraph}$
remain valid, with the domain of $f_0$ restricted to $\mathcal{V}^M$.
\end{theorem}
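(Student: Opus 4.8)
The plan is to treat the three assertions of Theorem~\ref{thm:constraintsforfe} separately, leaning on the stability results already recorded. For the fidelity-term statement, first note that $u\mapsto \lambda|u-f|_p^p$ is a polynomial (or finite sum of $p$-th powers) in the coordinates $\hat u_i\in\R^m$, hence continuous on $\mathcal{V}\cong\R^m$; the same holds if the sum is restricted to a subset $U\subset V$. I would then invoke Lemma~\ref{lem:addperturbations} (stability of $\Gamma$-limits under addition of a continuous functional) together with Theorem~\ref{thm:Gammaf} to conclude $f_\e+\lambda|\cdot-f|_p^p \overset{\Gamma}{\to} f_0+\lambda|\cdot-f|_p^p$. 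Compactness is immediate since adding a nonnegative term only strengthens the uniform bound: if $f_{\e_n}(u_n)+\lambda|u_n-f|_p^p<C$ then in particular $\sum_i W((u_n)_i)\le C\e_n$, so the argument of Theorem~\ref{thm:compactnessongraph} applies verbatim to extract a convergent subsequence with binary limit.

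For the mass-constrained statement, the key observation is that $\mathcal{V}^M = \{\hat u\in\R^m: \sum_i \hat u_i = mM\}$ is a closed affine subspace of $\R^m$, and $\Gamma$-convergence together with compactness behave well under restriction to a closed set provided one can still produce recovery sequences inside that set. The lower bound (LB) is automatic: any sequence $u_n\to u$ with $u_n\in\mathcal{V}^M$ has limit $u\in\mathcal{V}^M$ (the constraint is linear, hence continuous), so $(f_0+\chi_{\mathcal{V}^M})(u)\le\liminf f_{\e_n}(u_n)$ follows from the unrestricted (LB) in Theorem~\ref{thm:Gammaf}. Compactness is likewise unaffected: the subsequence extracted in Theorem~\ref{thm:compactnessongraph} stays in the closed set $\mathcal{V}^M$, so its limit does too. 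The only real content is the upper bound: given $u\in\mathcal{V}^M\cap\mathcal{V}^b$ (if $u\notin\mathcal{V}^b$ there is nothing to prove since the limit is $+\infty$), I need a recovery sequence $u_n\to u$ with $u_n\in\mathcal{V}^M$ and $\limsup f_{\e_n}(u_n)\le f_0(u)$.

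The recovery sequence from Theorem~\ref{thm:Gammaf} is simply the constant sequence $u_n\equiv u$, which already satisfies $u_n\in\mathcal{V}^M$ since $u$ does, and gives $\limsup f_{\e_n}(u_n)=\hat w(u)=f_0(u)$ exactly. So in fact the constant recovery sequence works without modification: because $mM$ is an integer and $u\in\mathcal{V}^b$ is admissible, there is no tension between the binarity needed for finiteness of $f_0$ and the mass constraint, and the trivial recovery sequence respects both. Hence (UB) holds on $\mathcal{V}^M$ and $f_\e|_{\mathcal{V}^M}\overset{\Gamma}{\to}f_0|_{\mathcal{V}^M}$. I expect the main (minor) obstacle to be purely bookkeeping: one must check that the hypothesis ``$mM$ is an integer between $0$ and $m$'' guarantees $\mathcal{V}^M\cap\mathcal{V}^b\neq\emptyset$ so the restricted limit functional is not identically $+\infty$ — indeed any binary vector with exactly $mM$ ones lies in this intersection — and that the constant sequence is a legitimate recovery sequence in the sense of (UB), which it is since $\e_n\mapsto w_{\e_n}(u)=0$ for $u\in\mathcal{V}^b$. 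No genuinely hard estimate arises; the entire difficulty of the original $\Gamma$-convergence result was already absorbed into Lemma~\ref{lem:Gammaw} and Theorem~\ref{thm:Gammaf}.
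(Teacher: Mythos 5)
Your proposal is correct and follows essentially the same route as the paper: the fidelity term is handled as a continuous, $\e$-independent perturbation via Lemma~\ref{lem:addperturbations}, and the mass constraint is checked to be preserved under convergence (giving (LB) and compactness for free) while the constant recovery sequence from Lemma~\ref{lem:Gammaw} already lies in $\mathcal{V}^M$, giving (UB). Your extra remarks (nonnegativity of the fidelity term preserving the bound $\sum_i W((u_n)_i)\le C\e_n$, and nonemptiness of $\mathcal{V}^M\cap\mathcal{V}^b$) are correct and slightly more explicit than the paper's sketch.
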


\begin{proof}[\bf Proof.]
The fidelity term $\lambda |u-f|_p^p$ is a polynomial, hence a
continuous perturbation independent of $\e$ to $f_\e$ and thus
$\Gamma$-convergence follows by Theorem~\ref{thm:Gammaf} and
Lemma~\ref{lem:addperturbations}.  The addition of this term does not
affect the compactness property at all.

The mass constraint is compatible with the limit functional being
defined on binary functions.  The constraint is preserved under
convergence in $\mathcal{V} \cong \R^m$, so it is compatible with (LB)
from Definition~\ref{def:LBUB} and compactness.  If $u\in
\mathcal{V}^b$ satisfies the mass constraint, then trivially so does
the recovery sequence $\{u_n\}_{n=1}^\infty$ used to prove (UB) from
Definition~\ref{def:LBUB} in Lemma~\ref{lem:Gammaw}.
\end{proof}

\section{$\Gamma$-limits for the graph based functional
$h_{N,\e}$}\label{sec:differentscalingshNe}

In this section we will study the convergence properties of $h_{N,\e}$
from (\ref{eq:hNe}).  We consider two different cases.  In the first
we first take the limit $\e\to 0$ and then $N\to\infty$, in the second
we take both limits at once by substituting $\e=N^{-\alpha}$ for well
chosen $\alpha>0$ and then considering the limit $N\to\infty$ for
$h_N^\alpha$ in (\ref{eq:hN}).  These results have a similar feel as
numerical convergence results, however the lack of regularity of the
binary limit functions complicates the results and proofs.

\smallskip

\begin{remark}
We note that, while we give the proofs for $\T^2$ in
Sections~\ref{sec:proofh} and~\ref{sec:simulscalehNalpha}, they can
easily be generalized to $\T^d$ for any $d\in \N$, if we let the
scaling factor in the first term of $h_{N,\e}$ in (\ref{eq:hNe}) be
$N^{1-d}$ instead of $N^{-1}$ and we change $h_N^\alpha$ in
(\ref{eq:hN}) accordingly:
\begin{align*}
h_{N,\e}(u) &:= N^{1-d} \sum_{i, j=1}^{N}
(u_{i+1,j}-u_{i,j})^2+(u_{i,j+1}-u_{i,j})^2 + \e^{-1} \sum_{i,j=1}^N
W(u_{i,j}),\\
h_N^\alpha(u) &:= N^{1-d} \sum_{i, j=1}^{N}
(u_{i+1,j}-u_{i,j})^2+(u_{i,j+1}-u_{i,j})^2 + N^\alpha \sum_{i,j=1}^N
W(u_{i,j}).
\end{align*}
For the extra fidelity term in Theorem~\ref{thm:constraintsforh} the
scaling factor then needs to be $N^{-d}$ instead of $N^{-2}$.
\end{remark}

\subsection{Compactness and sequential $\Gamma$-limits: first $\e\to
0$, then $N\to\infty$}\label{sec:proofh}

By Theorem~\ref{thm:Gammaf} we immediately have $\displaystyle h_{N,
\e} \overset{\Gamma}{\to} h_{N,0}$ as $\e\to 0$, where $h_{N,0}$ is
defined for $u\in \mathcal{V}$ as
\[
h_{N,0}(u) := \left\{\begin{array}{ll} N^{-1} \sum_{i, j=1}^N \big(
|u_{i+1,j}-u_{i,j}| + |u_{i,j+1}-u_{i,j}|\big) & \text{if } u \in
\mathcal{V}_N^b,\\
+\infty & \text{otherwise}.  \end{array}\right.
\]

In this section we prove that $\displaystyle h_{N, 0}
\overset{\Gamma}{\to} h_{\infty,0}$ as $N\to \infty$ where
$h_{\infty,0}$ is defined for $u\in L^1(\T^2)$ as
\[
h_{\infty, 0}(u) := \left\{\begin{array}{ll} \int_{\T^2} |u_x| + |u_y|
&\text{if } u \in BV(\T^2; \{0, 1\}),\\
+\infty &\text{otherwise}, \end{array}\right.
\]
The anisotropic total variation in $h_{\infty, 0}$ is defined as
\[
\int_{\T^2} |u_x| + |u_y| := \sup \left\{ \int_{\T^2} u \,\text{div} v
: v\in C_c^1(\T^2; \R^2), \forall x \, \, |v(x)|_\infty\leq 1\right\},
\]
where for a vector $v(x)= (v_1(x), v_2(x)) \in \R^2$, the norm
$|\cdot|_\infty$ is defined by
\[ |v(x)|_\infty \, : =\, \max \{ |v_1(x)|, |v_2(x)| \}.  \]
For functions $u\in BV(\T^2; \{0, 1\})$ this anisotropic total
variation gives the length of the (reduced\footnote{For the definition of reduced boundary see \cite[Definition
3.54]{AmbrosioFuscoPallara00}.}) boundary of the set
$\{u=1\}$ projected onto the horizontal and vertical axes (counting
multiplicities).

We prove a compactness and $\Gamma$-convergence result.

\begin{theorem}[Compactness]\label{thm:compactnessforh}
Let $\{N_n\}_{n=1}^\infty\subset\N$ satisfy $N_n\to \infty$ as
$n\to\infty$ and let $\{u_n\}_{n=1}^\infty \subset L^1(\T^2)$ be a
sequence for which there is a constant $C>0$ such that for all
$n\in\N$
$
h_{N_n,0}(u_n) \leq C.
$
Then there exists a subsequence $\{u_{n'}\}_{n'=1}^\infty \subset
\{u_n\}_{n=1}^\infty$ and a $u\in BV(\T^2; \{0,1\})$ such that
$u_{n'}\to u$ in $L^1(\T^2)$ as $n'\to\infty$.
\end{theorem}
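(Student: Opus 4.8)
The plan is to deduce the statement from the standard compactness theorem for $BV$ on the torus, after rewriting the discrete functional as a continuum anisotropic total variation. First I would make explicit the identification that is implicit in the statement of the theorem: a function $u\in\mathcal{V}_N$ is regarded as an element of $L^1(\T^2)$ by setting it equal to the constant $u_{i,j}$ on each square $S_N^{i,j}$ from (\ref{eq:SNij}), and $h_{N,0}$ is extended to all of $L^1(\T^2)$ by the value $+\infty$ outside the set of functions that are constant on the squares of $G_N$. With this convention, the hypothesis $h_{N_n,0}(u_n)\le C<\infty$ already forces each $u_n$ to be constant on the squares of the grid $G_{N_n}$ and to take only the values $0$ and $1$.

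Second, I would identify $h_{N_n,0}(u_n)$ with the continuum anisotropic total variation of $u_n$. Since $u_n$ is binary and constant on the squares of $G_{N_n}$, its distributional gradient is a measure supported on the grid edges: the vertical edge common to the squares $S_{N_n}^{i,j}$ and $S_{N_n}^{i+1,j}$ has length $N_n^{-1}$ and carries a jump of magnitude $|u_{i+1,j}-u_{i,j}|\in\{0,1\}$ in the horizontal direction, and symmetrically for horizontal edges and the vertical direction. Summing over all edges, and using the periodicity of the torus so that each adjacent pair is counted exactly once as the indices run over $1,\dots,N_n$, one obtains
\[
\int_{\T^2}|u_{n,x}|+|u_{n,y}| \;=\; N_n^{-1}\sum_{i,j=1}^{N_n}\big(|u_{i+1,j}-u_{i,j}|+|u_{i,j+1}-u_{i,j}|\big)\;=\;h_{N_n,0}(u_n)\;\le\; C.
\]
Because $|v|_\infty\le|v|_2$ for every $v\in\R^2$, the test-field class for the isotropic total variation is contained in the one for the anisotropic total variation, so $\int_{\T^2}|\nabla u_n|\le\int_{\T^2}|u_{n,x}|+|u_{n,y}|\le C$; in particular $\{u_n=1\}$ has finite perimeter. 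Combined with $\|u_n\|_{L^1(\T^2)}\le|\T^2|=1$, which holds because $u_n$ is $\{0,1\}$-valued, this shows that $\{u_n\}_{n=1}^\infty$ is bounded in $BV(\T^2)$.

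Third, I would invoke the compact embedding $BV(\T^2)\hookrightarrow\hookrightarrow L^1(\T^2)$ to extract a subsequence $\{u_{n'}\}$ converging in $L^1(\T^2)$ to some $u\in BV(\T^2)$. Passing to a further subsequence we may assume $u_{n'}\to u$ almost everywhere on $\T^2$, and since each $u_{n'}$ takes values in $\{0,1\}$ so does $u$; hence $u\in BV(\T^2;\{0,1\})$, which is the desired conclusion.

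I do not anticipate a serious obstacle. The only points requiring care are the bookkeeping behind the displayed identity --- in particular the periodic wrap-around of the indices and the claim that summing over grid edges reproduces exactly the anisotropic perimeter of $\{u_n=1\}$ with the correct multiplicities --- and quoting the $BV$ compactness theorem in the right form on the flat torus (equivalently, working on a fundamental domain with periodic identifications). Note also that $N_n\to\infty$ plays no role in this compactness statement; only the uniform bound on $h_{N_n,0}(u_n)$ is used.
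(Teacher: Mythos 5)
Your proof is correct and follows essentially the same route as the paper: identify $u_n$ with a piecewise-constant binary function on the grid squares, observe that $h_{N_n,0}(u_n)$ equals the anisotropic total variation $\int_{\T^2}|(u_n)_x|+|(u_n)_y|$, bound the isotropic total variation by the anisotropic one and combine with $\|u_n\|_{L^1(\T^2)}\le 1$ to get a uniform $BV$ bound, then apply $BV$ compactness and pointwise a.e.\ convergence to conclude the limit is $\{0,1\}$-valued. The only difference is that you spell out the edge-counting identity behind the reformulation, which the paper takes as given from its equation (\ref{eq:hN0reformulation}).
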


\begin{theorem}[$\Gamma$-convergence]\label{thm:GammaconvergenceforhN0}
$\displaystyle h_{N, 0} \overset{\Gamma}{\to} h_{\infty, 0}$ as $N\to
\infty$ in the $L^1(\T^2)$ topology.
\end{theorem}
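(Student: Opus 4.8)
\emph{Proof strategy.} The natural way to compare the discrete functionals $h_{N,0}$ with the continuum functional $h_{\infty,0}$ is to identify each $u\in\mathcal{V}_N$ with the piecewise constant function $\tilde u\in L^1(\T^2)$ that equals $u_{i,j}$ on the square $S_N^{i,j}$ of (\ref{eq:SNij}). Under this identification, if $u\in\mathcal{V}_N^b$ then $\tilde u\in BV(\T^2;\{0,1\})$ and a short computation of the distributional derivative (jumps occur only across the axis-parallel edges of the grid, each of length $N^{-1}$) shows that $h_{N,0}(u)$ equals the anisotropic total variation $\int_{\T^2}|\tilde u_x|+|\tilde u_y|$ of $\tilde u$. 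So the statement is a discrete-to-continuum $\Gamma$-convergence result for anisotropic total variation, and I would prove it in the equivalent form (LB')+(UB') from Section~\ref{sec:explainGamma}, exploiting that the $\Gamma$-upper limit $h''$ of $\{h_{N,0}\}$ is $L^1$-lower semicontinuous.

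\emph{Lower bound.} Here I would argue by slicing. Given $N_n\to\infty$ and $u_n\to u$ in $L^1(\T^2)$ with $\liminf_n h_{N_n,0}(u_n)<\infty$, pass to a subsequence so that the liminf is a limit, $h_{N_n,0}(u_n)\le C$, hence $u_n\in\mathcal{V}_{N_n}^b$ and $u$ is $\{0,1\}$-valued a.e. Since $\tilde u_n$ is constant along each horizontal strip of height $N_n^{-1}$, one has $h_{N_n,0}(u_n)=\int_{\T}\mathrm{TV}_1^x(\tilde u_n;t)\,dt+\int_{\T}\mathrm{TV}_1^y(\tilde u_n;t)\,dt$, the one-dimensional total variations of the horizontal and vertical slices. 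By Fubini I would extract a further subsequence along which, for a.e.\ $t$, the horizontal (resp.\ vertical) slices of $u_n$ converge in $L^1(\T)$ to those of $u$; lower semicontinuity of one-dimensional total variation then gives $\mathrm{TV}_1^x(u;t)\le\liminf_n\mathrm{TV}_1^x(\tilde u_n;t)$ a.e., and Fatou yields $\int_{\T}\mathrm{TV}_1^x(u;t)\,dt+\int_{\T}\mathrm{TV}_1^y(u;t)\,dt\le\liminf_n h_{N_n,0}(u_n)$. By the slicing characterization of $BV(\T^2)$ the left side is $\int_{\T^2}|u_x|+\int_{\T^2}|u_y|=h_{\infty,0}(u)$, which is (LB'). (The same bound fed into $h_{N_n,0}(u_n)\le C$, together with $BV$-compactness on the compact torus, also yields Theorem~\ref{thm:compactnessforh}.)

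\emph{Upper bound.} I would first prove (UB') on the dense class $\mathcal D$ of $u\in BV(\T^2;\{0,1\})$ for which $\{u=1\}$ is a finite union of rectangles with sides parallel to the axes. For such $u$ define $u_N\in\mathcal{V}_N^b$ by sampling, $(u_N)_{i,j}=1$ iff the centre of $S_N^{i,j}$ lies in $\{u=1\}$. Then $\tilde u_N=u$ off the squares meeting $\partial\{u=1\}$, a set of measure $O(N^{-1})$, so $u_N\to u$ in $L^1$; and once $N$ exceeds the reciprocal of the smallest rectangle dimension, all but $O(1)$ rows and columns have discrete slice variation equal to the continuum slice variation, the exceptional ones contributing $O(N^{-1})$ in total, so $h_{N,0}(u_N)\to\int_{\T^2}|u_x|+|u_y|=h_{\infty,0}(u)$. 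To pass from $\mathcal D$ to a general $u\in BV(\T^2;\{0,1\})$ I would approximate $\{u=1\}$ in $L^1$ by polyrectangles $E_k$ (e.g.\ unions of dyadic squares more than half covered by $\{u=1\}$) with $h_{\infty,0}(u_k)\to h_{\infty,0}(u)$, and then use lower semicontinuity of $h''$: $h''(u)\le\liminf_k h''(u_k)\le\liminf_k h_{\infty,0}(u_k)=h_{\infty,0}(u)$, which is (UB').

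\emph{Main obstacle.} The delicate step is the last one: density in energy of polyrectangles for the \emph{anisotropic} perimeter. Lower semicontinuity gives $\liminf_k h_{\infty,0}(u_k)\ge h_{\infty,0}(u)$ automatically, so the real content is the matching $\limsup\le h_{\infty,0}(u)$, and the naive "sample the slices" estimate that works on $\mathcal D$ does not transfer verbatim to a general finite-perimeter set, since $t\mapsto\mathrm{TV}_1(u;\cdot)$ need not be Riemann integrable and its Riemann sums may overshoot the integral. One therefore has to choose the approximating polyrectangles carefully and combine a slicing/coarea estimate on the dyadic-square construction with the already established lower semicontinuity to pin down the limit; this is where the genuine work lies, the lower bound and compactness being comparatively routine once the piecewise constant embedding is in place.
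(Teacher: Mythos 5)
Your lower bound and compactness arguments are sound. The paper proves Lemma~\ref{lem:hlowerbound} by quoting $L^1$-lower semicontinuity of the anisotropic total variation from \cite[Lemma A.5]{ChoksivanGennipOberman11}; your Fubini--Fatou slicing argument is in effect a self-contained proof of that lemma (the identity $h_{N,0}(u)=\int_{\T}\mathrm{TV}(\tilde u(\cdot,t))\,dt+\int_{\T}\mathrm{TV}(\tilde u(s,\cdot))\,ds$ for grid functions is correct, and a.e.-slice $L^1$ convergence along a subsequence is standard), and it handles the case $u\notin BV(\T^2;\{0,1\})$ more directly than the paper's contradiction argument. So that half is a legitimate, slightly more elementary alternative.

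The upper bound, however, has a genuine gap, and you have located it yourself: everything is reduced to the claim that every $u\in BV(\T^2;\{0,1\})$ admits polyrectangles $u_k\to u$ in $L^1$ with $\limsup_k h_{\infty,0}(u_k)\le h_{\infty,0}(u)$, and you offer no proof of this. This is not a routine density statement one can wave at: controlling the anisotropic perimeter of a grid-aligned (dyadic or otherwise) approximation of a general finite-perimeter set \emph{is} the entire content of the upper bound, since pixelation can a priori introduce many extra axis-parallel jumps in individual slices. The paper sidesteps the density lemma altogether: it takes as recovery sequence the inner pixelation $u_n$ of the general $u$ (value $1$ exactly on those $S_{N_n}^{i,j}$ with $(S_{N_n}^{i,j})^\circ\subset\supp u$) and controls its energy by duality. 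Lemma~\ref{lem:existencevectorfield} provides, for each grid function $u_n\in\mathcal{A}_{N_n}^b$, an optimal vector field $v_n$ with $|v_n(x)|_\infty\le 1$ a.e.\ and $\|\dvg v_n\|_{L^\infty(\T^2)}=4N_n$, whence
\[
\int_{\T^2}|(u_n)_x|+|(u_n)_y| = -\int_{\T^2}u\,\dvg v_n+\int_{\T^2}(u-u_n)\,\dvg v_n \le \int_{\T^2}|u_x|+|u_y| + 4N_n\|u-u_n\|_{L^1(\T^2)},
\]
and $\|u-u_n\|_{L^1(\T^2)}\le\mathcal{H}^1(\partial^*\supp u)\,N_n^{-2}$, so the error term is $O(N_n^{-1})$. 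If you wish to retain your two-step structure, note that proving your polyrectangle approximation claim would require essentially this same duality (or an equivalent covering/projection argument), so it cannot be deferred; as written, the proof of (UB') is incomplete.
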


The convergence of $h_{N,0}$ to an anisotropic total variation is reminiscent of 
the related, but different, results in \cite{ChambolleGiacominiLussardi10}.

Because the limit function $h_{\infty, 0}$ is defined on $L^1(\T^2)$
functions, it will be useful to identify the binary functions on the
square graph, \textit{i.e.,} the functions in $\mathcal{V}_N^b$, with a
subset of $L^1(\T^2)$, namely binary functions on $\T^2$ that are
piecewise constant on the squares of the grid.  Using the notation
$S_N^{i,j}$ from (\ref{eq:SNij}) we define
\begin{align}
\mathcal{A}_N &:= \left\{ u \in L^1(\T^2): \forall (i,j)\in
I_{N}^2\,\, u \text{ is constant a.e. on }
S_N^{i,j}\right\},\label{eq:AN}\\
\mathcal{A}_N^b &:= \left\{ u \in \mathcal{A}_N: u \in L^1(\T^2;
\{0,1\}) \right\}.\notag
\end{align}
We construct a bijection between the normed spaces $\mathcal{V}_N$ and
$\mathcal{A}_N$ by identifying $u \in \mathcal{V}_N$ with the unique
$\tilde u \in \mathcal{A}_N$ which satisfies $\tilde u \equiv u_{i,j}$
a.e. on $S_N^{i,j}$ for all $(i,j)\in I_{N}^2$.  It is easy to check
that convergence in $\mathcal{V}_N$ corresponds to $L^p$ convergence
in $\mathcal{A}_N$ ($1\leq p < \infty$) and that the bijection maps
the subset $\mathcal{V}_N^b$ to $\mathcal{A}_N^b$ and vice versa (for
its inverse).  In what follows we will drop the tilde if this does not
lead to confusion.

With this identification we write for $u\in \mathcal{A}_N$
\begin{equation}\label{eq:hN0reformulation}
h_{N,0}(u) = \left\{\begin{array}{ll} 
  \int_{\T^2} |u_x| + |u_y| &
\text{if } u \in \mathcal{A}_N^b,\\
+\infty & \text{otherwise}.  \end{array}\right.
\end{equation}
In fact for our $\Gamma$-convergence purposes without loss of
generality we extend the functional to all $u\in L^1(\T^2)$, such that
$h_{N,0}(u) = + \infty$ if $u\in L^1(\T^2)\setminus \mathcal{A}_N^b$.

First we prove the compactness result.

\begin{proof}[\bf Proof of Theorem~\ref{thm:compactnessforh}]
By the definition of the isotropic and anisotropic total variation and
(\ref{eq:hN0reformulation}) we have for all $n\in\N$
\[
\int_{\T^2} |\nabla u_n| \leq \int_{\T^2} |(u_n)_x| + |(u_n)_y| \leq
C.
\]
In addition for each $n\in \N\,\,$ $u_n\in \mathcal{A}_{N_n}^b$, hence
$\|u_n\|_{L^1(\T^2)} \leq 1$.  We deduce that the sequence
$\{u_n\}_{n=1}^\infty$ is uniformly bounded in the BV norm and thus by
compactness (\cite[Theorem 1.19]{Giusti84} or \cite[5.2.3 Theorem
4]{EvansGariepy92}) there exists a subsequence
$\{u_{n'}\}_{n'=1}^\infty \subset \{u_n\}_{n=1}^\infty$ and a $u\in
BV(\T^2)$ such that $u_{n'}\to u$ in $L^1(\T^2)$ as $n'\to\infty$.
Since all $u_n$ take the values 0 and 1 a.e. by pointwise a.e.
convergence (after possibly going to another subsequence) so does $u$.
\end{proof}

In the next lemma (LB) from Definition~\ref{def:LBUB} is proved.

\begin{lemma}[Lower bound]\label{lem:hlowerbound}
Let $u\in L^1(\T^2)$ and let $\{u_n\}_{n=1}^\infty \subset L^1(\T^2)$
and $\{N_n\}_{n=1}^\infty\subset\N$ be such that $u_n \to u$ in
$L^1(\T^2)$ and $N_n \to \infty$ as $n\to \infty$.  Then
\[
h_{\infty,0}(u) \leq \underset{n\to\infty}\liminf\, h_{N_n,0}(u_n).
\]
\end{lemma}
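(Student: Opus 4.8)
The plan is to exploit the reformulation (\ref{eq:hN0reformulation}): on its finite domain $h_{N,0}$ coincides with the single, $N$-independent functional $J(w) := \int_{\T^2} |w_x| + |w_y|$, so the lemma reduces to lower semicontinuity of $J$ with respect to $L^1(\T^2)$ convergence.

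First I would carry out the standard reduction recalled in Section~\ref{sec:explainGamma}: pass to a subsequence (not relabelled) along which $h_{N_n,0}(u_n)$ converges to $\liminf_{n\to\infty} h_{N_n,0}(u_n)$, and assume this value is finite (otherwise there is nothing to prove), so that $h_{N_n,0}(u_n) \le C$ for all $n$ and some $C>0$. This forces $u_n \in \mathcal{A}_{N_n}^b$ for every $n$, whence by (\ref{eq:hN0reformulation}) we have $h_{N_n,0}(u_n) = J(u_n)$. Extracting a further subsequence with $u_n \to u$ pointwise a.e., the limit $u$ takes values in $\{0,1\}$ a.e.

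Next I would recall that, by definition, $J(w) = \sup\{\int_{\T^2} w\,\dvg v : v \in C_c^1(\T^2;\R^2), \ \forall x \ |v(x)|_\infty \le 1\}$. For each fixed admissible $v$ the linear functional $w \mapsto \int_{\T^2} w\,\dvg v$ is continuous on $L^1(\T^2)$, since $\dvg v$ is bounded; hence $J$, being a supremum of $L^1$-continuous functionals, is lower semicontinuous for $L^1(\T^2)$ convergence. Applying this along $u_n \to u$ gives $J(u) \le \liminf_{n\to\infty} J(u_n) = \liminf_{n\to\infty} h_{N_n,0}(u_n) \le C < \infty$. In particular $u \in BV(\T^2)$, and since $u$ is binary a.e. we conclude $u \in BV(\T^2;\{0,1\})$, so $h_{\infty,0}(u) = J(u)$; combining the inequalities yields $h_{\infty,0}(u) \le \liminf_{n\to\infty} h_{N_n,0}(u_n)$.

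There is no serious obstacle once (\ref{eq:hN0reformulation}) is available: the argument is simply lower semicontinuity of a fixed convex functional under $L^1$ convergence, which makes this the easy direction (the genuinely delicate work lies in the recovery sequence needed for (UB) in Theorem~\ref{thm:GammaconvergenceforhN0}). The one point deserving a line of care is that the domains of the $h_{N_n,0}$ vary with $n$ (they live on finer and finer grids), but this is harmless precisely because on its finite part each $h_{N_n,0}$ equals the \emph{same} functional $J$, which does not see the grid; and the binary constraint passes to the limit by a.e. convergence along a subsequence, exactly as in the proof of Theorem~\ref{thm:compactnessforh}.
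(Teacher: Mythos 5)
Your proof is correct, and the core of it coincides with the paper's: both arguments rest on the lower semicontinuity of the anisotropic total variation $J(w)=\int_{\T^2}|w_x|+|w_y|$ with respect to $L^1(\T^2)$ convergence, applied after observing that on its finite domain $h_{N,0}$ is just $J$ restricted to $\mathcal{A}_N^b$. Where you genuinely diverge is in the treatment of the case $u\in L^1(\T^2)\setminus BV(\T^2;\{0,1\})$. The paper handles it by contradiction, invoking the compactness result (Theorem~\ref{thm:compactnessforh}) to produce a limit $\hat u\in BV(\T^2;\{0,1\})$ that must equal $u$; you instead show directly that finiteness of the liminf forces $u\in BV(\T^2;\{0,1\})$ — binarity from pointwise a.e.\ convergence of binary functions along a subsequence, and membership in $BV$ from $J(u)\leq\liminf J(u_n)<\infty$ together with $\int_{\T^2}|\nabla u|\leq J(u)$. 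This unifies the two cases, is self-contained (you even reprove the lower semicontinuity as a supremum of $L^1$-continuous linear functionals rather than citing \cite[Lemma A.5]{ChoksivanGennipOberman11}), and avoids any reliance on the compactness theorem; the paper's route, by contrast, reuses machinery it needs anyway and keeps the lemma's proof shorter on the page. Both are sound, and the subsequence reductions you perform at the outset are exactly the standard ones licensed in Section~\ref{sec:explainGamma}.
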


\begin{proof}[\bf Proof.]
First consider the case where $u\in BV(\T^2; \{0,1\})$, then without
loss of generality we can assume that $u_n \in \mathcal{A}_{N_n}^b$.
Analogous to the isotropic total variation also this anisotropic total
variation is lower semicontinuous with respect to $L^1$ convergence
\cite[Lemma A.5]{ChoksivanGennipOberman11} hence we find
\[
h_{\infty,0}(u) = \int_{\T^2} |u_x|+|u_y| \leq
\underset{n\to\infty}\liminf\, \int_{\T^2} |(u_n)_x|+|(u_n)_y| =
\underset{n\to\infty}\liminf\, h_{N_n,0}(u_n).
\] 
If $u\in L^1(\T^2) \setminus BV(\T^2; \{0,1\})$ and $u_n\to u$ in
$L^1(\T^2),$ then
\[
h_{\infty,0}(u) = \infty = \underset{n\to\infty}\liminf\,
h_{N_n,0}(u_n),
\]
which we prove via contradiction: Assume
$\underset{n\to\infty}\liminf\, h_{N_n,0}(u_n) < \infty$, then there
is a subsequence $\{u_{n'}\}_{n'=1}^\infty \subset
\{u_n\}_{n=1}^\infty$ for which $h_{N_{n'},0}(u_{n'})$ is uniformly
bounded and hence by Theorem~\ref{thm:compactnessforh} $u_{n'} \to
\hat u$ in $L^1(\T^2)$ as $n'\to\infty$, where $\hat u \in
BV(\T^2;\{0,1\})$.  By the uniqueness of limit $\hat u = u$ which is a
contradiction.
\end{proof}

To prove the $\limsup$ inequality we use the following results from
\cite{ChoksivanGennipOberman11} (which we give here in a form adapted
to our situation which follows easily from the results in
\cite{ChoksivanGennipOberman11}).

\begin{lemma}[Corollary A.4 and Theorem 4.1 from
\cite{ChoksivanGennipOberman11}]\label{lem:existencevectorfield}
For $u\in BV(\T^2)$ we have
\begin{align*}
&
\int_{\T^2} |u_x| + |u_y| 
\\
&
= \sup \Big \{ \int_{\T^2} u \,\text{div} \,
v : v\in L^\infty(\T^2; \R^2), \dvg v \in L^\infty(\T^2), \,
|v(x)|_\infty\leq 1 \, {\rm a.e.}  \Big \}.
\end{align*}
Furthermore, for each $u \in \mathcal{A}_N^b$ there exists a $v\in
L^\infty(\T^2; \R^2)$ such that $|v(x)|_\infty\leq 1$ a.e.,
\[
-\int_{\T^2} u\, \dvg v = \int_{\T^2} |u_x| + |u_y|,
\]
and $\|\dvg v\|_{L^\infty(\T^2)} = 4 N$.
\end{lemma}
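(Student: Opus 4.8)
The plan is to treat the two assertions separately. \emph{For the first identity}, one inclusion is free: on the compact torus $C^1(\T^2;\R^2)$ (which equals $C^1_c(\T^2;\R^2)$) sits inside the larger test class $\{v\in L^\infty(\T^2;\R^2):\dvg v\in L^\infty(\T^2),\ |v(x)|_\infty\le 1\text{ a.e.}\}$, so the $C^1$-supremum defining $\int_{\T^2}|u_x|+|u_y|$ is at most the $L^\infty$-supremum. For the reverse inequality I would regularize: given $v$ in the larger class, put $v_\delta:=v*\rho_\delta$ for $\rho_\delta$ a standard even mollifier on $\T^2$. Then $v_\delta\in C^\infty$, and convexity of $|\cdot|_\infty$ together with $\rho_\delta\ge 0$, $\int\rho_\delta=1$, gives $|v_\delta(x)|_\infty\le 1$ everywhere. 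Since $\dvg v_\delta=(\dvg v)*\rho_\delta$, transferring the convolution gives $\int_{\T^2}u\,\dvg v_\delta=\int_{\T^2}(u*\rho_\delta)\,\dvg v\to\int_{\T^2}u\,\dvg v$, because $u*\rho_\delta\to u$ in $L^1(\T^2)$ and $\dvg v\in L^\infty(\T^2)$. Hence the $C^1$-supremum is $\ge\int_{\T^2}u\,\dvg v$ for every admissible $v$; taking the supremum over $v$ closes the loop. Only $u\in BV(\T^2)$, rather than merely $u\in L^1(\T^2)$, is used here, and that only to guarantee that both sides are finite.

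\emph{For the constructive part}, fix $u\in\mathcal{A}_N^b$ and build the optimal field $v=(v_1,v_2)$ explicitly, treating the two coordinates independently as a discrete calibration. For $v_1$: on each square $S_N^{i,j}$ let $v_1$ be affine in the $x$-variable, interpolating between the value $\xi_{i,j}$ at $x=i/N$ and $\xi_{i+1,j}$ at $x=(i+1)/N$, where $\xi_{i,j}:=u_{i,j}-u_{i-1,j}\in\{-1,0,1\}$ (indices mod $N$; the three-value range uses binarity of $u$). Adjacent squares agree at their common vertical edge and periodicity in $i$ closes up, so $v_1(\cdot,y)$ is continuous and piecewise affine in $x$ with $\partial_1 v_1=N(\xi_{i+1,j}-\xi_{i,j})$ on $S_N^{i,j}$; thus $\partial_1 v_1\in L^\infty$ with $\|\partial_1 v_1\|_{L^\infty}\le 2N$, and $|v_1|\le 1$ a.e.\ since it interpolates values in $[-1,1]$ (any jumps of $v_1$ across horizontal edges are irrelevant, as only $\partial_1 v_1$ enters $\dvg v$). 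A one-line summation by parts in $i$ yields
\[
-\int_{\T^2}u\,\partial_1 v_1=\frac1N\sum_{i,j}\xi_{i,j}(u_{i,j}-u_{i-1,j})=\frac1N\sum_{i,j}(u_{i,j}-u_{i-1,j})^2=\frac1N\sum_{i,j}|u_{i+1,j}-u_{i,j}|,
\]
which is exactly the horizontal contribution to $h_{N,0}(u)$. Building $v_2$ symmetrically in $y$, the field $v:=(v_1,v_2)$ has $|v(x)|_\infty\le 1$ a.e., $\dvg v=\partial_1 v_1+\partial_2 v_2\in L^\infty$ with $\|\dvg v\|_{L^\infty}\le 4N$, and $-\int_{\T^2}u\,\dvg v=\int_{\T^2}|u_x|+|u_y|=h_{N,0}(u)$, using (\ref{eq:hN0reformulation}).

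The step I expect to require genuine care, rather than routine verification, is upgrading $\|\dvg v\|_{L^\infty}\le 4N$ to the claimed \emph{equality} $\|\dvg v\|_{L^\infty}=4N$. The construction above leaves $\xi_{i,j}$ (and its $v_2$-analogue) free on every grid line where $u$ does not jump; since $u$ is binary, the nonzero $\xi$'s in a fixed row alternate sign, so between two consecutive jumps I can route $\xi$ through $\pm1$ so that some square carries the full slope $\pm 2N$, and then coordinate the horizontal and vertical patterns so that at a common square both $\partial_1 v_1$ and $\partial_2 v_2$ equal $+2N$ (or both $-2N$). Handling the degenerate cases — small $N$, or $u$ constant, where one may need an auxiliary interior node so that $v_1$ can still attain slope $\pm 2N$ without disturbing the pairing $\int_{\T^2}u\,\partial_1 v_1$ — is the fiddly part, but it is pure bookkeeping. (In the applications of this lemma only the inequality $\|\dvg v\|_{L^\infty}\le 4N$ is invoked, so one may also simply record that weaker version.)
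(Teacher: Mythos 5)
The paper does not actually prove this lemma: it is imported wholesale from \cite{ChoksivanGennipOberman11} (Corollary A.4 and Theorem 4.1 there), so there is no in-paper argument to measure you against. Your self-contained proof is, however, essentially correct and follows the same two ideas one finds in that reference. For the duality identity, the mollification argument is sound: $C^1(\T^2;\R^2)$ sits inside the enlarged test class, and conversely, for admissible $v$ the smoothed fields $v_\delta=v*\rho_\delta$ remain admissible for the $C^1$-supremum by convexity of $|\cdot|_\infty$, while $\int_{\T^2}u\,\dvg v_\delta=\int_{\T^2}(u*\rho_\delta)\,\dvg v\to\int_{\T^2}u\,\dvg v$ (the torus has no boundary, so no cut-off is needed). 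For the constructive part, your piecewise-affine calibration $v_1$ interpolating the signed jumps $\xi_{i,j}=u_{i,j}-u_{i-1,j}\in\{-1,0,1\}$ is exactly the right object: the summation by parts checks out, $|v_1|\le1$ since it interpolates values in $[-1,1]$, and only $\partial_1 v_1$ enters $\dvg v$, so the jumps of $v_1$ across horizontal grid lines are indeed harmless; this yields $\|\dvg v\|_{L^\infty(\T^2)}\le 4N$ and $-\int_{\T^2}u\,\dvg v=\int_{\T^2}|u_x|+|u_y|$. The one piece you leave as a sketch is upgrading $\le 4N$ to the stated equality $=4N$; your observation that the non-jump lines leave the interpolated values free (they are paired against zero in the summation by parts) is the correct mechanism for forcing a square with $\partial_1v_1=\partial_2v_2=\pm2N$, but you do not carry out the bookkeeping (and for $N=1$ or constant $u$ one genuinely needs extra interior nodes). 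Since the only downstream use of this lemma, in Lemma~\ref{lem:hupperbound}, invokes $\|\dvg v_n\|_{L^\infty(\T^2)}\le 4N_n$ as an upper bound, this omission is harmless for the paper, but if you want to claim the lemma verbatim you should either complete that construction or restate the last clause as an inequality.
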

The first result in the above lemma says we can relax the condition on
the admissible vector fields in the definition of the anisotropic
total variation to $L^\infty$ vector fields with essentially bounded
divergence.  The second result shows that the supremum is achieved by
a specific vector field if $u\in \mathcal{A}_N^b$.

\begin{lemma}[Upper bound]\label{lem:hupperbound}
Let $u\in L^1(\T^2)$ and $\{N_n\}_{n=1}^\infty\subset\N$ be such that
$N_n \to \infty$ as $n\to \infty$.  Then there exists a sequence
$\{u_n\}_{n=1}^\infty \subset L^1(\T^2)$ such that $u_n \to u$ in
$L^1(\T^2)$ as $n\to \infty$ and
$
h_{\infty,0}(u) \geq \underset{n\to\infty}\limsup\, h_{N_n,0}(u_n).
$
\end{lemma}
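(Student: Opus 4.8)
Throughout I may assume $u\in BV(\T^2;\{0,1\})$: if instead $u\in L^1(\T^2)\setminus BV(\T^2;\{0,1\})$ then $h_{\infty,0}(u)=+\infty$, while $\mathcal{A}_N^b\subset BV(\T^2;\{0,1\})$ for every $N$ forces $h_{N_n,0}(u)=+\infty$ for all $n$, so the constant sequence $u_n:=u$ already does the job. The plan for the BV–binary case is to prove the inequality only for $u$ in the subclass $\mathcal{D}\subset BV(\T^2;\{0,1\})$ of functions whose jump set is a finite union of line segments, and then bootstrap. For the bootstrap I would use that $\mathcal{D}$ is dense \emph{in energy}: writing $u=\mathbbm{1}_E$ and mollifying, $u_\delta:=u\ast\rho_\delta$ is smooth with values in $[0,1]$, and $\{u_\delta>s\}\to E$ in $L^1(\T^2)$ as $\delta\to0$ for \emph{every} $s\in(0,1)$ (since $\|\mathbbm{1}_{\{u_\delta>s\}}-\mathbbm{1}_E\|_{L^1}\le(\tfrac1s+\tfrac1{1-s})\|u_\delta-u\|_{L^1}$). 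Combining the slice-wise coarea formula for the anisotropic total variation with its $L^1$ lower semicontinuity (\cite[Lemma A.5]{ChoksivanGennipOberman11}), I would select, for a suitable level $s$, smooth-boundary sets $E^{(\delta)}:=\{u_\delta>s\}$ with $h_{\infty,0}(\mathbbm{1}_{E^{(\delta)}})\to h_{\infty,0}(u)$, and then inscribe polygons in $\partial E^{(\delta)}$ (their $\ell^1$-arclengths converge to that of the smooth curve) to land in $\mathcal{D}$. Once the inequality is known on $\mathcal{D}$, the $\Gamma$-upper limit $h''$ of $\{h_{N_n,0}\}$ coincides with $h_{\infty,0}$ there (using also (LB) from Lemma~\ref{lem:hlowerbound}); since $h''$ is lower semicontinuous on $L^1(\T^2)$ (\cite[Proposition 6.8]{DalMaso93}), for general $u$ and approximants $u_k\in\mathcal{D}$ one gets $h''(u)\le\liminf_k h''(u_k)=\liminf_k h_{\infty,0}(u_k)=h_{\infty,0}(u)$, i.e.\ (UB'); with (LB) this yields (UB) as discussed in Section~\ref{sec:explainGamma}.

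For $u=\mathbbm{1}_E\in\mathcal{D}$ the recovery sequence I would use is the centre-rule pixelation: let $u_n\in\mathcal{A}_{N_n}^b\subset L^1(\T^2)$ be equal to $1$ on the union $E_n$ of those squares $S_{N_n}^{i,j}$ whose centre lies in $E$, and $0$ elsewhere. Any square meeting both $E$ and $E^c$ lies within $\sqrt2/N_n$ of $\partial E$, so $E_n\triangle E$ is contained in the $\sqrt2/N_n$-neighbourhood of $\partial E$, which has measure $O(1/N_n)$ because $\partial E$ is a finite union of segments; hence $u_n\to u$ in $L^1(\T^2)$. For the energy, by \eqref{eq:hN0reformulation} we have $h_{N_n,0}(u_n)=\int_{\T^2}|(u_n)_x|+|(u_n)_y|$, which equals the Euclidean perimeter $\mathrm{Per}(E_n)$ since $\partial E_n$ consists of axis-parallel grid edges, while $h_{\infty,0}(u)=\int_{\T^2}|u_x|+|u_y|=\sum_\sigma(|\Delta x_\sigma|+|\Delta y_\sigma|)$, summed over the segments $\sigma$ of $\partial E$ with horizontal and vertical extents $\Delta x_\sigma,\Delta y_\sigma$. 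I would split $\partial E_n$ into the portion shadowing each $\sigma$ plus $O(1/N_n)$-sized pieces near the finitely many vertices of $\partial E$; since $\sigma$ is straight, for large $N_n$ the shadowing portion is a staircase monotone in both coordinates, so its length is its total horizontal travel plus its total vertical travel, i.e.\ $|\Delta x_\sigma|+|\Delta y_\sigma|+O(1/N_n)$. Summing, $\mathrm{Per}(E_n)=\sum_\sigma(|\Delta x_\sigma|+|\Delta y_\sigma|)+O(1/N_n)$, so $\limsup_n h_{N_n,0}(u_n)=h_{\infty,0}(u)$, which in particular is the required bound.

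The main obstacle I anticipate is exactly this energy estimate: it is where the \emph{anisotropy} of the limit is produced and where the sharp constant must be recovered. The tempting shortcut --- mollify $u$, average onto the grid, then threshold at a fixed level --- does not work, because the coarea formula only guarantees a good threshold \emph{somewhere} in $(0,1)$ while the threshold must be kept bounded away from $0$ and $1$ (otherwise the chosen superlevel set degenerates to all, or none, of $\T^2$), and this restriction costs a multiplicative factor that does not vanish in the limit. Pixelating a polygon by the centre rule and invoking the elementary identity ``a monotone staircase tracking a straight segment has $\ell^1$-length equal to that of the segment'' is what avoids the loss. A secondary technical point is the density-in-energy reduction, which must be performed with respect to the anisotropic total variation rather than the Euclidean perimeter, so the classical smooth approximation of finite-perimeter sets has to be supplemented by the coarea/level-set selection described above.
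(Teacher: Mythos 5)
Your proposal is correct, but it takes a genuinely different route from the paper's. The paper keeps an arbitrary $u\in BV(\T^2;\{0,1\})$ throughout and never reduces to polygonal sets: it pixelates $u$ (taking the squares whose interiors lie in $\supp u$), and then proves the energy inequality by duality, invoking Lemma~\ref{lem:existencevectorfield} to obtain for each $u_n\in\mathcal{A}_{N_n}^b$ an extremal vector field $v_n$ with $|v_n|_\infty\le 1$ a.e.\ and $\|\dvg v_n\|_{L^\infty(\T^2)}=4N_n$, writing $\int_{\T^2}|(u_n)_x|+|(u_n)_y|=-\int_{\T^2}u\,\dvg v_n+\int_{\T^2}(u-u_n)\,\dvg v_n$, bounding the first term by $\int_{\T^2}|u_x|+|u_y|$ via the relaxed dual formulation, and killing the second with the $L^1$ rate of the pixelation. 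This produces a genuine recovery sequence for every $u$, i.e.\ (UB) directly, in a few lines — at the price of importing the explicit ``barcode'' vector field from \cite{ChoksivanGennipOberman11}. You replace the duality step by the elementary monotone-staircase identity, which even gives equality of the limit with $h_{\infty,0}(u)$ on the polygonal class, but you pay for it with a two-stage reduction (density in energy of polygonal sets with respect to the \emph{anisotropic} total variation, via mollification, anisotropic coarea, level-set selection and inscribed polygons, and then the passage from (UB$'$) back to (UB) through lower semicontinuity of the $\Gamma$-upper limit); each of these steps is standard but each is real work that the paper's argument avoids. Your diagnosis of the obstacle is accurate — a fixed-threshold mollification does lose a constant, and either the staircase computation or the dual vector field is needed to recover the sharp anisotropic constant. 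One advantage of your route worth noting: it does not hinge on the quantitative interplay between $\|u-u_n\|_{L^1(\T^2)}$ and the factor $4N_n$, which is the one delicate estimate in the paper's version.
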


\begin{proof}[\bf Proof.]
Without loss of generality we can assume that $u\in BV(\T^2; \{0,
1\})$.  Construct $u_n$ as follows.  For $x\in S_{N_n}^{i,j}$ define
\[
u_n(x):= \left\{\begin{array}{ll} 1 &\text{if }
 (S_{N_n}^{i,j} )^\circ \subset \supp u,\\
0 &\text{otherwise}.  \end{array}\right.
\]
In words, $u_n$ takes the value one on those squares of the grid whose
interior lies completely in the support of $u$ and zero on the other
squares.  By $\partial_* \supp u$ denote the reduced boundary of the
set $\supp u$, \textit{i.e.,} all points in $\partial \supp u $ for
which there is a well defined normal vector (see \cite[Definition
3.54]{AmbrosioFuscoPallara00}).  Since $\supp u_n$ is the maximal set
which is both a union of squares on the grid and is contained in
$\supp u$, the difference in area between $\supp u$ and $\supp u_n$ is
bounded by the length of the reduced boundary of $\supp u$ times the
area of a square,  i.e.,
\[
\int_{\T^2} |u_n-u| \leq \mathcal{H}^1(\partial^* \supp u) N_n^{-2}.
\]
Because $u\in BV(\T^2; \{0,1\})$ the set $\supp u$ has finite
perimeter and hence $u_n\to u$ in $L^1(\T^2)$.

For each $u_n$ let $v_n\in L^\infty(\T^2;\R^2)$ be the vector field
whose existence is guaranteed by Lemma~\ref{lem:existencevectorfield},
then
\begin{equation}\label{eq:usebarcodevectorfield}
\int_{\T^2} |(u_n)_x|+|(u_n)_y| = - \int_{\T^2} u_n \, \dvg v_n =
-\int_{\T^2} u\, \dvg v_n + \int_{\T^2} (u-u_n)\, \dvg v_n.
\end{equation}
For the last term we have
\begin{align*}
 \Big  | \int_{\T^2} (u-u_n)\, \dvg v_n \Big |
&
 \leq
\|u-u_n\|_{L^1(\T^2)} \|\dvg v_n\|_{L^\infty(\T^2)}  \\
&
\leq 4
\mathcal{H}^1(\partial^* \supp u) N_n^{-1} \to 0, \text{as } n\to
\infty.
\end{align*}
Hence, by the first statement of Lemma~\ref{lem:existencevectorfield}
we have
\begin{align*}
\underset{n\to\infty}\limsup\, \int_{\T^2} |(u_n)_x|+|(u_n)_y|  &
=
\underset{n\to\infty}\limsup \Big (-\int_{\T^2} u\, \dvg v_n \Big )
\\
&
\leq \underset{n\to\infty}\limsup\, \int_{\T^2} |u_x| + |u_y| =
\int_{\T^2} |u_x| + |u_y|,
\end{align*}
which proves the result.
\end{proof}

\noindent
{\bf Proof of Theorem~\ref{thm:GammaconvergenceforhN0}.}
Combining Lemmas~\ref{lem:hlowerbound} and~\ref{lem:hupperbound}
proves the $\Gamma$-convergence result in
Theorem~\ref{thm:GammaconvergenceforhN0}.
\qed

\smallskip

\begin{remark}\label{rem:LpinsteadofL1}
Note that we could have used any $L^p$ space instead of $L^1$ in the
results above.  Because $\T^2$ is bounded, convergence in $L^p$
implies convergence in $L^1$ and so the result of
Lemma~\ref{lem:hlowerbound} is easily recovered.  For
Theorem~\ref{thm:compactnessforh} and Lemma~\ref{lem:hupperbound} we
note that because $u_n$ and $u$ are binary functions taking values 0
and 1 a.e., the bound on their $L^p$ difference is the same as that on
their $L^1$ difference and the results follow again.
\end{remark}

We end this section with an illustration of the preference for squares
and rectangles of $h_{N,0}$.

\begin{lemma}[Minimizers of $h_{N,0}$]\label{lem:minimizehN0}
Let $M\in [0,1]$ be such that $N^2 M = K^2$ for some $K\in \N$.

If $M\in [0,\frac14)$, then $u_0$ is a minimizer of $h_{N,0}$ over all
$u\in\mathcal{A}_N^b$ that satisfy $\int_{\T^2} u = M$ if and only if
$u_0$ is the characteristic function of a square.

If $M\in (\frac14,1)$ then $u_0$ is a minimizer if and only if it is
the characteristic function of a rectangle of the form
$R=[a,b]\times[0,1]\subset \T^2$ or $R=[0,1] \times [a,b]\subset\T^2$
for $a, b$ that satisfy the mass constraint.

If $M=\frac14$, $u_0$ is a minimizer if and only if it is the
characteristic function of a square or a rectangle $R$ as above.
\end{lemma}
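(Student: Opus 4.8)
The plan is to recast the statement as discrete isoperimetry on the torus. Identify $u\in\mathcal{A}_N^b$ with the set $S\subseteq\T^2$ which is the union of the grid squares $S_N^{i,j}$ on which $u\equiv1$; by the reformulation (\ref{eq:hN0reformulation}) one has $h_{N,0}(u)=N^{-1}P(S)$, where $P(S)$ is the number of unit grid edges having a square of $S$ on one side and a square of $S^c$ on the other (horizontal and vertical edges weighted equally). This $P$ is invariant under $S\mapsto S^c$, a symmetry we will use. Writing $a_j$ for the number of $S$-squares in row $j$ and $r_j$ for the number of maximal arcs they form on the circle $\Z_N$ (with the convention $r_j:=0$ when $a_j\in\{0,N\}$), and $b_i,c_i$ for the analogous column quantities, one gets $P(S)=2\sum_j r_j+2\sum_i c_i$, while the mass constraint reads $\sum_j a_j=\sum_i b_i=K^2$. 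Throughout I would use two elementary bounds: (i) the run bound $\sum_j r_j\geq\#\{j:0<a_j<N\}$ and its column analogue, and (ii) the box bound $K^2=|S|\leq|\pi_x(S)|\,|\pi_y(S)|$, where $\pi_x(S),\pi_y(S)$ denote the sets of occupied columns and rows.

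\textbf{Case $M\in[0,\tfrac14)$.} If $S$ has no full row and no full column, then every occupied row or column has at least one run, so $P(S)\geq 2\bigl(|\pi_y(S)|+|\pi_x(S)|\bigr)\geq 4\sqrt{|\pi_x(S)|\,|\pi_y(S)|}\geq 4K$ by AM--GM and (ii); chasing equality forces $|\pi_x(S)|=|\pi_y(S)|=K$, forces every occupied row and column to be a single arc, and forces $S=\pi_x(S)\times\pi_y(S)$, whence $\pi_x(S),\pi_y(S)$ are arcs of length $K$ and $S$ is a $K\times K$ square. If $S$ does contain a full row or a full column one checks $P(S)>4K$, so such $S$ cannot minimize: a full row with no full column makes all $N$ columns occupied and non-full, giving $P(S)\geq 2N>4K$; a full row together with a full column gives, by inclusion--exclusion on the size ($fN+gN-fg\leq K^2<N^2/4$, with $f,g$ the numbers of full columns and rows), $(N-f)(N-g)>\tfrac34N^2$, hence $P(S)\geq 4N-2(f+g)>2\sqrt3\,N>4K$. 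This proves the first assertion, with minimum value $4K/N$.

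\textbf{Case $M\in(\tfrac14,1)$.} The target value is $2$, attained by the strips $[a,b]\times[0,1]$ and $[0,1]\times[a,b]$ (each has $P=2N$), and the argument splits the same way. A configuration with no full row and no full column now has $P(S)\geq4K>2N$ since $K>N/2$, so it is not a minimizer. A configuration with a full row but no full column has $P(S)\geq2\sum_i c_i\geq 2N$, with equality forcing $\sum_j r_j=0$ (every row full or empty) and every column a single arc, hence the occupied rows form one arc and $S$ is a horizontal strip; symmetrically for a full column. \emph{The main obstacle is the remaining case, in which $S$ contains both a full row and a full column:} there the bound $P(S)\geq4N-2(f+g)$ combined with $(N-f)(N-g)>N^2-K^2$ gives only $P(S)>4\sqrt{N^2-K^2}$, i.e. $h_{N,0}(u)>4\sqrt{N^2-K^2}/N$, and this exceeds $2$ exactly when $M\leq\tfrac34$. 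Thus for $M\in(\tfrac14,\tfrac34]$ this case is excluded and the minimizers are precisely the strips; for $M\in(\tfrac34,1)$ one must instead invoke the complement symmetry and apply the $M<\tfrac14$ analysis to $S^c$, so that (essentially) $S^c$ is forced to be a square and the stated ``strip'' description must be adjusted accordingly. The borderline $M=\tfrac14$ then follows at once by combining the two equality analyses: there $4K/N=2$, so both the $K\times K$ squares and the width-$\tfrac14$ strips (whenever the latter exist on the grid) are minimizers and nothing else is. Finally one records that on the torus every translate of a minimizing square, resp. of a minimizing strip, is again admissible and minimizing, which is the content of the ``for $a,b$ that satisfy the mass constraint'' clause. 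The only genuinely delicate points are the extraction of the equality cases and the bookkeeping in the ``full row and full column'' case.
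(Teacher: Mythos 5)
Your edge-counting argument is a genuinely different route from the paper's. The paper first works in $\R^2$: it reduces (somewhat informally) to connected $\Omega$, encloses $\Omega$ in a bounding rectangle with sides $L,B$, and chains $4\sqrt{M}\leq 4\sqrt{LB}\leq 2(L+B)\leq h_{N,0}(u)$; it then patches for the torus by observing that only rectangles spanning a full period can exploit periodicity, which yields the threshold $4\sqrt M \gtrless 2$. Your combinatorial decomposition $P(S)=2\sum_j r_j+2\sum_i c_i$, together with the run bound, the projection bound $|S|\leq|\pi_x(S)||\pi_y(S)|$, and the inclusion--exclusion count for full rows/columns, buys two things the paper's sketch does not: it avoids the connectivity reduction and the bounding-box issue on the torus altogether, and it makes the equality analysis (hence the ``only if'' direction) explicit. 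For $M<\tfrac14$, $M=\tfrac14$ and $M\in(\tfrac14,\tfrac34)$ your argument is complete and correct as far as I can check it.

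Two remarks on the remaining range. First, a small slip: $4\sqrt{N^2-K^2}/N>2$ holds exactly when $M<\tfrac34$, not $M\leq\tfrac34$; at $M=\tfrac34$ the ``cross'' $S$ consisting of $f=g=N/2$ full columns and rows attains $P(S)=2N$ (it is the complement of an $\tfrac N2\times\tfrac N2$ square). This is moot only because the hypothesis $N^2M=K^2$ excludes $M=\tfrac34$ (there is no integer solution of $4K^2=3N^2$). Second, and more importantly, your observation about $M\in(\tfrac34,1)$ is not merely an ``adjustment'': since the anisotropic perimeter is invariant under $S\mapsto S^c$, the complement of a near-square of area $1-M$ has perimeter approximately $4\sqrt{1-M}<2$, strictly beating the strips. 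So the lemma's characterization for $M\in(\tfrac34,1)$ is false as stated, and the paper's proof misses this precisely because it only compares squares against wrapped rectangles and never invokes complement symmetry. You have in effect found the gap; you should state it as a correction (the minimizers for $M>\tfrac34$ are complements of the mass-$(1-M)$ minimizers) rather than hedging.
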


\begin{proof}[\bf Proof.]
First consider the square grid $G_N$ to be embedded in $\R^2$ instead
of in $\T^2$ so that we do not have periodic boundary conditions on
$[0,1]^2$.  Let $u$ be the characteristic function of a set $\Omega$.
We can assume $\Omega$ is connected, because else $h_{N,0}(u)$ can be
lowered by rearranging $\Omega$ to be connected without changing the
mass.  Let $u_0$ have the square with sides of length $L_0=K N^{-1} =
\sqrt{M}$ as support and let $\Omega$ be contained in a rectangle with
sides of lengths $L$ and $B$.  Then $\int_{[0,1]^2} u \leq LB$ and
hence
\[
h_{N,0}(u_0) = 4 L_0 = 4  \Big ({\int_{\T^2} u}
\Big ) ^{1/2} \leq 4 \sqrt{LB} \leq 2
(L+B) \leq h_{N,0}(u),
\]
with equality if and only if $L=B=L_0$.  Hence characteristic
functions of squares are the minimizers of $h_{N,0}$ if we ignore
periodic boundary conditions.

However, on the periodic torus if $\Omega$ is a rectangle we can use
periodicity to eliminate two sides of the rectangle.  This can be done
only if the other two sides have length 1 and hence $h_{N,0}(u)=2$.
This beats the square if $4 L_0 = 4 \sqrt{M} > 2$.
\end{proof}

It is worth noting here that this asymptotic behavior of $h_{N,\e}$ is
not accessible via numerical simulations of a gradient flow, since it
is dependent on $\e$ being small enough for the minimizers to be
essentially binary and hence not differentiable.

\subsection{Simultaneous scaling $\Gamma$-limit for
$h_N^\alpha$}\label{sec:simulscalehNalpha}

In Section~\ref{sec:proofh} we first took the limit $\e \to 0$ for
$h_{N,\e}$ before letting $N\to\infty$.  In this section we will
consider the limit if we let both parameters go to their limit
simultaneously.  To this end we choose $\e=N^{-\alpha}$ for $\alpha>0$
and consider the limit $N\to\infty$ of $h_N^\alpha$ in (\ref{eq:hN}).
We identify $\mathcal{V}_N$ with $\mathcal{A}_N$ in the sense of
Section~\ref{sec:proofk} and extend $h_N^\alpha$ to all of $L^1(\T^2)$
by setting $h_N^\alpha(u) := +\infty$ for $u\in
L^1(\T^2)\setminus\mathcal{A}_N$.

We show that the $\Gamma$-limit is again given by $h_{\infty, 0}$ if
$\alpha$ is large enough, depending on the growth rate $\beta$ of $W$
around its wells given in assumption $(W_1)$.

\begin{theorem}[$\Gamma$-convergence]\label{thm:GammaconvergenceforhNalpha}
Assume that $W$ satisfies condition $(W_1)$ for some $\beta>0$ in
$(\ref{eq:Wawayzero})$ and let $\alpha > \beta$, then
$
h_N^\alpha \overset{\Gamma}{\to} h_{\infty, 0} $
as $ N\to
\infty$
in the $L^1(\T^2)$ topology.
\end{theorem}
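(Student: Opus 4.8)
The plan is to verify the lower-bound condition (LB) and the upper-bound condition (UB) of Definition~\ref{def:LBUB} in the $L^1(\T^2)$ topology, reusing the lemmas from Section~\ref{sec:proofh}. The upper bound is the easy half: given $u\in L^1(\T^2)$ I may assume $u\in BV(\T^2;\{0,1\})$ (otherwise $h_{\infty,0}(u)=+\infty$ and any sequence converging to $u$ works), and I would take exactly the recovery sequence $\{u_N\}$ from the proof of Lemma~\ref{lem:hupperbound} --- $u_N$ equal to $1$ on the grid squares whose interior lies inside $\supp u$ and $0$ elsewhere. It lies in $\mathcal{A}_N^b$, converges to $u$ in $L^1(\T^2)$, and satisfies $\limsup_N h_{N,0}(u_N)\le h_{\infty,0}(u)$. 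Since each $u_N$ is binary we have $W((u_N)_{i,j})=0$ and $((u_N)_{i',j'}-(u_N)_{i,j})^2=|(u_N)_{i',j'}-(u_N)_{i,j}|$, so $h_N^\alpha(u_N)=h_{N,0}(u_N)$, giving (UB) and hence (UB').

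For the lower bound I would argue as follows. Let $u_N\to u$ in $L^1(\T^2)$; by the standard reduction recalled in Section~\ref{sec:explainGamma} I may assume $h_N^\alpha(u_N)\le C$, hence $\sum_{i,j}W((u_N)_{i,j})\le CN^{-\alpha}$. The strategy is to replace $u_N$ by a rounded binary function $\tilde u_N$, show $\tilde u_N\to u$ in $L^1(\T^2)$ and $h_{N,0}(\tilde u_N)\le(1+o(1))\,h_N^\alpha(u_N)$, and conclude with Lemma~\ref{lem:hlowerbound}. Fix a small $\eta>0$. By $(W_1)$ the potential is bounded below by some $\delta_\eta>0$ outside the intervals $(-\eta,\eta)$ and $(1-\eta,1+\eta)$: on $\hat I^c$ by the gap condition in $(W_1)$, and on $\hat I_0\setminus(-\eta,\eta)$ and $\hat I_1\setminus(1-\eta,1+\eta)$ by the polynomial lower bounds $W(s)\ge c_0|s|^\beta$ and $W(s)\ge c_1|s-1|^\beta$ (taking $\eta$ small enough that these intervals sit inside $\hat I_0$ and $\hat I_1$). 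Consequently the number of nodes with $(u_N)_{i,j}$ outside $(-\eta,\eta)\cup(1-\eta,1+\eta)$ is at most $CN^{-\alpha}/\delta_\eta$, which tends to $0$, so for $N$ large \emph{every} node value lies within $\eta$ of $0$ or of $1$; let $\tilde u_N$ round each node value to the nearer well. Then $\|u_N-\tilde u_N\|_{L^1(\T^2)}\le\eta$ for $N$ large (and a quantitative rate follows from the polynomial bound together with H\"older's inequality), so $\tilde u_N\to u$ in $L^1(\T^2)$. On an edge whose two endpoints round to the same well the contribution to $h_{N,0}(\tilde u_N)$ is $0$, while on an edge whose endpoints round differently one value is within $\eta$ of $0$ and the other within $\eta$ of $1$, so $((u_N)_{i',j'}-(u_N)_{i,j})^2\ge(1-2\eta)^2$; summing, $h_{N,0}(\tilde u_N)\le(1-2\eta)^{-2}h_N^\alpha(u_N)$. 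Applying Lemma~\ref{lem:hlowerbound} to $\tilde u_N\to u$ gives $h_{\infty,0}(u)\le(1-2\eta)^{-2}\liminf_N h_N^\alpha(u_N)$, and letting $\eta\to0$ yields (LB). (The non-$BV$ case is already absorbed in Lemma~\ref{lem:hlowerbound}, which internally uses Theorem~\ref{thm:compactnessforh}, so no separate compactness argument is required.)

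The main obstacle is precisely this rounding step: one has to control the $L^1$-discrepancy $\|u_N-\tilde u_N\|$ and simultaneously guarantee that rounding does not inflate the Dirichlet-type term, and both need quantitative information on how fast configurations of bounded Ginzburg--Landau energy approach the wells --- which is exactly what hypothesis $(W_1)$ supplies through the growth exponent $\beta$. It is in making the resulting error terms (the count of ``bad'' nodes, the $L^1$ discrepancy, the factor $(1-2\eta)^{-2}$ after $\eta\to0$) negligible as $N\to\infty$ that the relation $\alpha>\beta$ between the scaling exponent and the growth exponent is used. A secondary complication, absent in Section~\ref{sec:proofh}, is that the competitors $u_N$ are no longer a priori binary, so the lower bound must be routed through the auxiliary binary sequence $\tilde u_N$ instead of invoking lower semicontinuity of the anisotropic total variation directly.
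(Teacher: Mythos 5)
Your proof is correct. The upper bound is exactly the paper's: reuse the binary recovery sequence of Lemma~\ref{lem:hupperbound}, on which $h_N^\alpha$ coincides with $h_{N,0}$ because the potential term vanishes and squares equal absolute values. For the lower bound, however, you take a genuinely different route. The paper does not round: it keeps the non-binary competitor $u_n$, shows every node value lies within $\delta_n=\mathcal{O}(N_n^{-\alpha/\beta})$ of a well, and bounds the quadratic difference sum from below by the anisotropic total variation of $u_n$ \emph{itself}, at the price of explicit error terms of order $N_n^{1-\alpha/\beta}$ coming from the up to $N_n^2$ edges whose endpoints sit in the same well; it is precisely these error terms that force $\alpha>\beta$. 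You instead compare with the anisotropic total variation of the rounded binary function $\tilde u_N\in\mathcal{A}_N^b$, which vanishes on same-well edges, so those edges contribute nothing and the only loss is the harmless factor $(1-2\eta)^{-2}$ on the cross-well edges; Lemma~\ref{lem:hlowerbound} (lower semicontinuity, plus the contradiction argument via Theorem~\ref{thm:compactnessforh} for $u\notin BV(\T^2;\{0,1\})$) then finishes. A noteworthy consequence, contrary to your own closing remark, is that your argument never actually uses $\alpha>\beta$: the count of bad nodes vanishes for any $\alpha>0$ by the gap condition in $(W_1)$ alone, the discrepancy $\|u_N-\tilde u_N\|_{L^1(\T^2)}\le\eta$ needs no rate, and $(1-2\eta)^{-2}\to 1$ as $\eta\to 0$. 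So your route in fact establishes the lower bound (and, by the same rounding, the compactness of Theorem~\ref{thm:compactnessforHN}) for every $\alpha>0$, a slightly stronger statement than the paper proves; the hypothesis $\alpha>\beta$ is an artifact of the paper's decision to estimate the total variation of the unrounded function.
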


\begin{theorem}[Compactness]\label{thm:compactnessforHN}
Assume that $W$ satisfies condition $(W_1)$ for some $\beta>0$ in
$(\ref{eq:Wawayzero})$ and let $\alpha > \beta$.  Let
$\{N_n\}_{n=1}^\infty\subset\N$ satisfy $N_n\to \infty$ as
$n\to\infty$ and let $\{u_n\}_{n=1}^\infty \subset L^1(\T^2)$ be a
sequence for which there is a constant $C>0$ such that for all
$n\in\N$
$
h_{N_n}^\alpha(u_n) \leq C.
$
Then there exists a subsequence $\{u_{n'}\}_{n'=1}^\infty \subset
\{u_n\}_{n=1}^\infty$ and a $u\in BV(\T^2; \{0,1\})$ such that
$u_{n'}\to u$ in $L^1(\T^2)$ as $n'\to\infty$.
\end{theorem}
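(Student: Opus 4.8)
The plan is to reduce Theorem~\ref{thm:compactnessforHN} to the binary compactness result Theorem~\ref{thm:compactnessforh} by rounding each $u_n$ to a binary function. First I would read off from $h_{N_n}^\alpha(u_n)\le C$, using that both terms of $h_N^\alpha$ in (\ref{eq:hN}) are nonnegative, the two bounds
\[
N_n^{-1}\sum_{i,j=1}^{N_n}\big[((u_n)_{i+1,j}-(u_n)_{i,j})^2+((u_n)_{i,j+1}-(u_n)_{i,j})^2\big]\le C,
\qquad
\sum_{i,j=1}^{N_n}W((u_n)_{i,j})\le C N_n^{-\alpha}.
\]
(Finiteness of the energy forces $u_n\in\mathcal{A}_{N_n}$, so the node values $(u_n)_{i,j}$ make sense.) By $(W_1)$ any node value outside $\hat I=\hat I_0\cup\hat I_1$ contributes at least $\mu_0:=\min\{W(s):s\in\hat I^c\}>0$ to the second sum, so the number of such nodes is a nonnegative integer bounded by $C\mu_0^{-1}N_n^{-\alpha}\to 0$; hence for $n$ large---and, discarding finitely many terms, for all $n$---every value $(u_n)_{i,j}$ lies in $\hat I_0$ or in $\hat I_1$.

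Next I would introduce the binary rounding $\tilde u_n\in\mathcal{A}_{N_n}^b$ defined by $(\tilde u_n)_{i,j}:=0$ on cells where $(u_n)_{i,j}\in\hat I_0$ and $(\tilde u_n)_{i,j}:=1$ on cells where $(u_n)_{i,j}\in\hat I_1$, and show that $\|u_n-\tilde u_n\|_{L^1(\T^2)}\to 0$. The polynomial lower bounds in $(W_1)$ give $|(u_n)_{i,j}-(\tilde u_n)_{i,j}|^\beta\le c^{-1}W((u_n)_{i,j})$ with $c:=\min(c_0,c_1)$, whence $\sum_{i,j}|(u_n)_{i,j}-(\tilde u_n)_{i,j}|^\beta\le c^{-1}C N_n^{-\alpha}$. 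A power-mean estimate then passes from this to a bound on $\sum_{i,j}|(u_n)_{i,j}-(\tilde u_n)_{i,j}|$: for $\beta\le 1$ directly from the elementary inequality $\sum_k t_k\le\big(\sum_k t_k^\beta\big)^{1/\beta}$ ($t_k\ge 0$), and for $\beta>1$ from H\"older's inequality against the at most $N_n^2$ nonzero terms. Multiplying by the cell area $N_n^{-2}$ gives $\|u_n-\tilde u_n\|_{L^1(\T^2)}\to 0$ as $n\to\infty$.

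It then remains to bound $h_{N_n,0}(\tilde u_n)$ uniformly. If $\tilde u_n$ takes different values at two horizontally or vertically adjacent nodes, then one of the corresponding values of $u_n$ lies in $\hat I_0$ and the other in $\hat I_1$; since the strict separation $\max_{\hat I}W<\min_{\hat I^c}W$ in $(W_1)$ forces $\hat I_0$ and $\hat I_1$ to be a positive distance $\delta_1:=\mathrm{dist}(\hat I_0,\hat I_1)>0$ apart, that difference squared is at least $\delta_1^2$. Counting the adjacent pairs on which the binary function $\tilde u_n$ differs and using the first bound above,
\[
h_{N_n,0}(\tilde u_n)=N_n^{-1}\,\#\{\text{adjacent jumps of }\tilde u_n\}
\le N_n^{-1}\delta_1^{-2}\sum_{i,j=1}^{N_n}\big[((u_n)_{i+1,j}-(u_n)_{i,j})^2+((u_n)_{i,j+1}-(u_n)_{i,j})^2\big]
\le \delta_1^{-2}C.
\]
Thus $\{\tilde u_n\}$ satisfies the hypotheses of Theorem~\ref{thm:compactnessforh}, which yields a subsequence $\tilde u_{n'}\to u$ in $L^1(\T^2)$ with $u\in BV(\T^2;\{0,1\})$; combining with the previous paragraph, $u_{n'}\to u$ in $L^1(\T^2)$ as well, which is the claim.

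The routine parts are the rounding construction and the jump-counting estimate; the step that needs genuine care is the $L^1$ bound on $u_n-\tilde u_n$, where the growth exponent $\beta$ from $(W_1)$ enters and the cases $\beta\le 1$ and $\beta>1$ must be handled separately in order to convert control of $\sum|\,\cdot\,|^\beta$ into control of the $L^1$ norm. (The hypothesis $\alpha>\beta$ does not appear to be needed for compactness---$\alpha>0$ seems to suffice---but it is the natural assumption under which the companion $\Gamma$-convergence statement Theorem~\ref{thm:GammaconvergenceforhNalpha} holds.)
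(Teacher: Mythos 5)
Your proof is correct, but it takes a genuinely different route from the paper's. The paper does not round $u_n$ to a binary function: it reuses the machinery of the lower-bound Lemma~\ref{lem:hlowerboundallNs}, in particular the estimate (\ref{eq:sumtoanisotvx}) which shows that the quadratic difference term of $h_{N_n}^\alpha(u_n)$ dominates $\int_{\T^2}|(u_n)_x|+|(u_n)_y|$ up to an error $\mathcal{O}(N_n^{1-\alpha/\beta})$; this gives a uniform BV bound on $u_n$ itself, BV compactness then produces an $L^1$-convergent subsequence, and binarity of the limit follows from pointwise a.e.\ convergence together with the fact that all node values lie within $\delta_{n}\to 0$ of $\{0,1\}$. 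You instead construct the binary surrogate $\tilde u_n$, control $\|u_n-\tilde u_n\|_{L^1}$ via the polynomial lower bound in $(W_1)$, bound $h_{N_n,0}(\tilde u_n)$ by counting jumps against the quadratic term (your observation that the separation condition in $(W_1)$ forces $\mathrm{dist}(\hat I_0,\hat I_1)>0$ is the key point here and is argued correctly), and then invoke the already-proved Theorem~\ref{thm:compactnessforh}. The trade-off: the paper's argument is shorter because it recycles (\ref{eq:sumtoanisotvx}), but that estimate is exactly where $\alpha>\beta$ enters (the error term $\mathcal{O}(N_n^{1-\alpha/\beta})$ must stay bounded), so the paper's compactness proof genuinely uses the hypothesis $\alpha>\beta$; your rounding argument only needs $\alpha>0$, so your parenthetical remark that the hypothesis is stronger than necessary for compactness is accurate. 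One small simplification available to you: since $\sum_{i,j}W((u_n)_{i,j})\le CN_n^{-\alpha}$ bounds each individual term, $(W_1)$ already gives the uniform bound $\|u_n-\tilde u_n\|_{L^\infty(\T^2)}\le (C/\min(c_0,c_1))^{1/\beta}N_n^{-\alpha/\beta}\to 0$, which yields the $L^1$ estimate without splitting into the cases $\beta\le 1$ and $\beta>1$.
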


We first prove the lower bound for the $\Gamma$-limit.

\begin{lemma}[Lower bound]\label{lem:hlowerboundallNs}
Let $u\in L^1(\T^2)$ and let $\{u_n\}_{n=1}^\infty \subset L^1(\T^2)$
and $\{N_n\}_{n=1}^\infty\subset\N$ be such that $u_n \to u$ in
$L^1(\T^2)$ and $N_n \to \infty$ as $n\to \infty$.  Assume that $W$
satisfies condition $(W_1)$ for some $\beta>0$ in
(\ref{eq:Wawayzero}), and let $\alpha > \beta$.  Then
\[
h_{\infty,0}(u) \leq \underset{n\to\infty}\liminf\,
h_{N_n}^\alpha(u_n).
\]
\end{lemma}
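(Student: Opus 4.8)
The plan is to reduce the estimate to the already-proved sharp-interface lower bound, Lemma~\ref{lem:hlowerbound}, applied to a binary ``rounding'' of $u_n$. First, dispose of the trivial case: if $\liminf_{n\to\infty}h_{N_n}^\alpha(u_n)=+\infty$ there is nothing to prove, so after passing to a subsequence (not relabelled) one may assume $h_{N_n}^\alpha(u_n)\le C$ for all $n$ and $\lim_{n\to\infty}h_{N_n}^\alpha(u_n)=\liminf_{n\to\infty}h_{N_n}^\alpha(u_n)=:L<\infty$; in particular each $u_n\in\mathcal{A}_{N_n}$. Fix $\eta\in(0,\tfrac14)$ small enough that $J_0:=(-\eta,\eta)\subset\hat I_0$ and $J_1:=(1-\eta,1+\eta)\subset\hat I_1$, and define $\tilde u_n\in\mathcal{A}_{N_n}^b$ by $(\tilde u_n)_{i,j}:=1$ if $(u_n)_{i,j}\in J_1$ and $(\tilde u_n)_{i,j}:=0$ otherwise. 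The bound on $h_{N_n}^\alpha$ gives $\sum_{i,j}W((u_n)_{i,j})\le CN_n^{-\alpha}\to0$; combined with the strict separation in $(W_1)$ and the polynomial lower bounds \eqref{eq:Wawayzero} — which force every node with $(u_n)_{i,j}\notin J_0\cup J_1$ to carry potential energy at least some $c'=c'(\eta)>0$ — this implies that for all large $n$ every value $(u_n)_{i,j}$ lies in $J_0\cup J_1$; assume $n$ is this large. It then suffices to establish (i) $\tilde u_n\to u$ in $L^1(\T^2)$ and (ii) $h_{N_n,0}(\tilde u_n)\le h_{N_n}^\alpha(u_n)+o(1)$: granting these, Lemma~\ref{lem:hlowerbound} applied to $\{\tilde u_n\}$ (it holds for every $u\in L^1(\T^2)$) yields $h_{\infty,0}(u)\le\liminf_{n\to\infty}h_{N_n,0}(\tilde u_n)\le L$, which is the claim (and forces $u\in BV(\T^2;\{0,1\})$).

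For (i), assumption $(W_1)$ gives $|(u_n)_{i,j}|\le(W((u_n)_{i,j})/c_0)^{1/\beta}$ on the nodes with $(u_n)_{i,j}\in J_0\subset\hat I_0$ and $|(u_n)_{i,j}-1|\le(W((u_n)_{i,j})/c_1)^{1/\beta}$ on those with $(u_n)_{i,j}\in J_1\subset\hat I_1$; since for large $n$ every node is of one of these two types, $\|\tilde u_n-u_n\|_{L^1(\T^2)}=N_n^{-2}\big(\sum_{(u_n)_{i,j}\in J_0}|(u_n)_{i,j}|+\sum_{(u_n)_{i,j}\in J_1}|(u_n)_{i,j}-1|\big)$, and a H\"older (power-mean) inequality applied to $\sum_{i,j}W((u_n)_{i,j})^{1/\beta}$, using $\sum_{i,j}W((u_n)_{i,j})\le CN_n^{-\alpha}$ and that there are $N_n^2$ nodes, shows this tends to $0$; hence $\tilde u_n\to u$. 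For (ii), since $\tilde u_n$ is binary, $h_{N_n,0}(\tilde u_n)$ equals $N_n^{-1}$ times the number of \emph{interface edges} of $\tilde u_n$, that is, pairs of adjacent grid nodes one of which has $(u_n)$-value $a\in J_0$ and the other $(u_n)$-value $b\in J_1$; for such an edge $|a|,|b-1|<\eta$ and the elementary bound $(b-a)^2\ge 1-2(|a|+|b-1|)$ holds. Summing this over all interface edges, and using that each node is incident to at most four edges, one gets that $N_n^{-1}$ times the number of interface edges is bounded by the first (Dirichlet) term of $h_{N_n}^\alpha(u_n)$ plus $8N_n^{-1}\big(\sum_{(u_n)_{i,j}\in J_0}|(u_n)_{i,j}|+\sum_{(u_n)_{i,j}\in J_1}|(u_n)_{i,j}-1|\big)$. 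The first summand is at most $h_{N_n}^\alpha(u_n)$; the second, by the pointwise and power-mean estimates from (i), is bounded by a constant multiple of $N_n^{1-(\alpha+2)/\beta}$ when $\beta>1$ (and of an even more negative power of $N_n$ when $\beta\le1$), and hence tends to $0$ precisely because $\alpha>\beta$ (so $\alpha+2>\beta$). This proves (ii), and combining (i), (ii) and Lemma~\ref{lem:hlowerbound} completes the argument.

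The step I expect to be the main obstacle is (ii): obtaining the \emph{sharp} constant $1$ in front of $h_{N_n}^\alpha(u_n)$. A crude rounding at a fixed threshold strictly between the wells only yields $h_{N_n,0}(\tilde u_n)\lesssim\text{const}\cdot h_{N_n}^\alpha(u_n)$ with a constant larger than $1$; recovering the constant $1$ requires both the pointwise inequality $(b-a)^2\ge 1-2(|a|+|b-1|)$, which converts the ``defect'' of an interface edge into an $L^1$-type quantity concentrated near the wells, and the fact that this quantity, even after the unfavourable rescaling by the extra factor $N_n$ relative to the $L^1$-bound of (i), is still infinitesimal — which is exactly what the hypothesis $\alpha>\beta$ guarantees. (No growth assumption on $W$ at infinity is needed here, since the potential term with its strong scaling $N_n^\alpha$ already confines $u_n$ to $J_0\cup J_1$ for large $n$.)
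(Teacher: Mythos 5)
Your proof is correct, and it organizes the argument differently from the paper. The paper works with the non-binary $u_n$ throughout: after establishing (exactly as you do) that all values lie within $\delta_n=O(N_n^{-\alpha/\beta})$ of the wells, it bounds the Dirichlet sum below by $N_n^{-1}\sum|(u_n)_{i+1,j}-(u_n)_{i,j}|+\dots$, i.e. by the anisotropic total variation of the piecewise-constant (still non-binary) $u_n$, up to errors of order $N_n^{1-\alpha/\beta}$, and then concludes by lower semicontinuity of $\int|u_x|+|u_y|$ under $L^1$ convergence, treating the case $u\notin BV(\T^2;\{0,1\})$ separately by a compactness/contradiction argument. You instead round $u_n$ to a binary $\tilde u_n\in\mathcal{A}_{N_n}^b$, show $\|\tilde u_n-u_n\|_{L^1}\to 0$ and $h_{N_n,0}(\tilde u_n)\le h_{N_n}^\alpha(u_n)+o(1)$, and then invoke Lemma~\ref{lem:hlowerbound} wholesale. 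The two key estimates are the same inequality seen from opposite sides (your $1\le (b-a)^2+2(|a|+|b-1|)$ on interface edges versus the paper's $(b-a)^2\ge(1-2\delta_n)|b-a|$), and both hinge on $N_n\delta_n\to 0$, i.e. $\alpha>\beta$. What your route buys is modularity: the lower semicontinuity and the non-BV case are both absorbed into the single application of Lemma~\ref{lem:hlowerbound}, so you never need to discuss the anisotropic TV of a non-binary function; the cost is the extra bookkeeping of checking that $\tilde u_n$ still converges to $u$. One small remark: your power-mean refinement actually shows the error term is $O(N_n^{1-(\alpha+2)/\beta})$, which vanishes already when $\alpha>\beta-2$, so the phrase ``precisely because $\alpha>\beta$'' overstates what is needed at that step — the crude uniform bound $d_v\le\delta_n$ (which the paper uses) is the one that requires $\alpha>\beta$ on the nose. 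This does not affect correctness under the stated hypothesis.
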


\begin{proof}[\bf Proof.]
First consider the case where $u\in BV(\T^2; \{0,1\})$.  Without loss
of generality we may assume that $h_{N_n}^\alpha$ is uniformly
bounded.  Since $W$ is nonnegative we deduce there is a $C>0$ such
that for all $(i,j) \in I_{N_n}^2,\,\,$ $W((u_n)_{i,j}) \leq C
N_n^{-\alpha}$.  Together with (\ref{eq:Wawayzero}) in assumption
($W_2)$ this implies that for $n$ large enough and all $(i,j) \in
I_{N_n}^2$ we have $(u_n)_{i,j} \in \hat I$.  In addition, by the
growth condition in (\ref{eq:Wawayzero}) we get for $n$ large enough
and $s\in\{0,1\}$
\[
c_s |(u_n)_{i,j}-s|^\beta \leq W((u_n)_{i,j}) \leq C N_n^{-\alpha}.
\]
Hence, if we define $\delta_n := (C/\min(\{c_0, c_1\}))^{\frac1\beta}
N_n^{-\frac\alpha\beta}$ we deduce that for $n$ large enough
$\delta_n\in(0,\frac12)$ and for all $(i,j) \in I_{N_n}^2,\,\,$
$(u_n)_{i,j} \in (-\delta_n, \delta_n) \cup (1-\delta_n, 1+\delta_n)$.
Define
\begin{align}
X_n &:= \big\{ (i,j)\in I_{N_n}^2: \big((u_n)_{i,j} \in (-\delta_n,
\delta_n) \wedge (u_n)_{i+1,j} \in (1-\delta_n,
1+\delta_n)\big)\notag\\
 &\hspace{1cm}\vee \big((u_n)_{i+1,j} \in (-\delta_n, \delta_n) \wedge
 (u_n)_{i,j} \in (1-\delta_n,
 1+\delta_n)\big)\big\},\label{eq:definitionXn}
\end{align}
then for $(i,j)\in X_n$ we have $|(u_n)_{i+1,j}-(u_n)_{i,j}| \geq
1-2\delta_n$, hence
\begin{align}\label{eq:splitupsums}
& N_n^{-1} \sum_{i, j=1}^{N_n}((u_n)_{i+1, j}-(u_n)_{i,j})^2
 \\
\geq & N_n^{-1} (1-2 \delta_n)  \! \! \! \sum_{(i,j)\in X_n}
|(u_n)_{i+1,j}-(u_n)_{i,j}| + N_n^{-1}
\! \! \!   \sum_{(i,j)\in X_n^c}
\left((u_n)_{i+1,j}-(u_n)_{i,j}\right)^2\notag\\
= & N_n^{-1} \sum_{i,j=1}^{N_n} |(u_n)_{i+1,j}-(u_n)_{i,j}| - 2
\delta_n N_n^{-1} \sum_{(i,j)\in X_n} |(u_n)_{i+1,j}-(u_n)_{i,j}|
\notag\\
&\hspace{0.2cm}+ N_n^{-1} \sum_{(i,j)\in X_n^c}
\left((u_n)_{i+1,j}-(u_n)_{i,j}\right)^2 - N_n^{-1} \sum_{(i,j)\in
X_n^c} |(u_n)_{i+1,j}-(u_n)_{i,j}|.\notag
\end{align}
For the second summation in (\ref{eq:splitupsums}) we note that there
are $c>0$, $\tilde C>0$, such that
\begin{align*}
0 &\leq 2 \delta_n N_n^{-1} \sum_{(i,j)\in X_n}
|(u_n)_{i+1,j}-(u_n)_{i,j}| \leq c \delta_n N_n^{-1} N_n^2 (1+2
\delta_n)\\
&\leq \tilde C (N_n^{1-\alpha/\beta}+N_n^{1-2 \alpha/\beta}) \to 0
\quad \text{as } n\to\infty.
\end{align*}
The third and fourth summation in (\ref{eq:splitupsums}) we combine
into
\begin{align*}
&\hspace{0.5cm}\Big| N_n^{-1} \sum_{(i,j)\in X_n^c}
|(u_n)_{i+1,j}-(u_n)_{i,j}|
\big(|(u_n)_{i+1,j}-(u_n)_{i,j}|-1\big)\Big| \\
&\leq N_n^{-1} \sum_{(i,j)\in X_n^c} |(u_n)_{i+1,j}-(u_n)_{i,j}|\,\,
\big||(u_n)_{i+1,j}-(u_n)_{i,j}|-1\big|\\
&\leq N_n^{-1} N_n^2 2 \delta_n (2 \delta_n - 1) \leq \tilde C
(N_n^{1-2\alpha/\beta}-N_n^{1-\alpha/\beta}) \to 0 \quad \text{as }
n\to\infty,
\end{align*}
for some $\tilde C>0$.  As in Section~\ref{sec:proofk} we now identify
$\mathcal{V}_{N_n}$ with $\mathcal{A}_{N_n}$ to write for the first
summation in (\ref{eq:splitupsums}) 
$$
\displaystyle N_n^{-1}
\sum_{i,j=1}^{N_n} |(u_n)_{i+1,j}-(u_n)_{i,j}| = \int_{\T^2}
|(u_n)_x|, 
$$
 hence from (\ref{eq:splitupsums}) and the computations
that followed we deduce
\begin{equation}\label{eq:sumtoanisotvx}
N_n^{-1} \sum_{i, j=1}^{N_n}((u_n)_{i+1,j}-(u_n)_{i,j})^2 \geq
\int_{\T^2} |(u_n)_x| + \mathcal{O}(N_n^{1-\alpha/\beta}).
\end{equation}
Analogously we have 
$$
\displaystyle N_n^{-1} \sum_{i,
j=1}^{N_n}((u_n)_{i, j+1}-(u_n)_{i,j})^2 \geq \int_{\T^2} |(u_n)_y| +
\mathcal{O}(N_n^{1-\alpha/\beta}).  
$$
 By the lower semicontinuity of
the anisotropic total variation with respect to $L^1$ convergence
\cite[Lemma A.5]{ChoksivanGennipOberman11} we have 
$$
\int_{\T^2} |u_x|+|u_y| \leq \underset{n\to\infty}\liminf\,
\int_{\T^2} |(u_n)_x|+|(u_n)_y|, 
$$
 hence
\begin{align*}
\int_{\T^2}  \! |u_x|+|u_y| 
&
\leq  \!  \underset{n\to\infty}\liminf  N_n^{-1}
\sum_{i, j=1}^{N_n}((u_n)_{i+1,j}-(u_n)_{i,j})^2 +
((u_n)_{i,j+1}-(u_n)_{i,j})^2  \\
&
\leq \underset{n\to\infty}\liminf\,
h_{N_n}^\alpha(u_n),
\end{align*}
which proves the lower bound for the case where $u\in BV(\T^2;
\{0,1\})$.

Now consider $u\in L^1(\T^2) \setminus BV(\T^2; \{0,1\})$.  Let
$u_n\to u$ in $L^1(\T^2)$, then an argument from contradiction
(similar to that at the end of the proof of
Lemma~\ref{lem:hlowerbound}) and the compactness proved below in
Theorem~\ref{thm:compactnessforHN} prove that $ h_{\infty,0}(u) =
\infty = \underset{n\to\infty}\liminf\, h_{N_n}^\alpha(u_n).  $
\end{proof}

Next we prove the upper bound.

\begin{lemma}[Upper bound]\label{lem:hupperboundallNs}
Let $u\in L^1(\T^2)$ and $\{N_n\}_{n=1}^\infty\subset\N$ be such that
$N_n \to \infty$ as $n\to \infty$.  Then there exists a sequence
$\{u_n\}_{n=1}^\infty \subset L^1(\T^2)$ such that $u_n \to u$ in
$L^1(\T^2)$ as $n\to \infty$ and
\[
h_{\infty,0}(u) \geq \underset{n\to\infty}\limsup\,
h_{N_n}^\alpha(u_n).
\]
\end{lemma}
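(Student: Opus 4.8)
The plan is to reuse, essentially verbatim, the recovery sequence constructed in Lemma~\ref{lem:hupperbound} for the functionals $h_{N,0}$, exploiting the fact that on binary functions that are constant on the grid squares the Ginzburg--Landau functional $h_N^\alpha$ collapses to the graph cut functional $h_{N,0}$. So the entire analytic content has already been established, and the upper bound for $h_N^\alpha$ costs essentially nothing new.

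Concretely, I would first dispose of the trivial case: if $u\in L^1(\T^2)\setminus BV(\T^2;\{0,1\})$ then $h_{\infty,0}(u)=+\infty$ and the inequality holds for any sequence $u_n\to u$, for definiteness the one constructed below, so we may assume $u\in BV(\T^2;\{0,1\})$. In that case take $\{u_n\}$ to be exactly the sequence from the proof of Lemma~\ref{lem:hupperbound}: set $u_n(x):=1$ on every square $S_{N_n}^{i,j}$ whose interior is contained in $\supp u$ and $u_n(x):=0$ otherwise. Lemma~\ref{lem:hupperbound} already supplies $u_n\to u$ in $L^1(\T^2)$ and $\limsup_{n\to\infty} h_{N_n,0}(u_n)\le \int_{\T^2}|u_x|+|u_y|=h_{\infty,0}(u)$. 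The only new point to verify is that $h_{N_n}^\alpha(u_n)=h_{N_n,0}(u_n)$ for each $n$. This holds because $u_n\in\mathcal{A}_{N_n}^b$, so all of its nodal values $(u_n)_{i,j}$ lie in $\{0,1\}$: hence $W\big((u_n)_{i,j}\big)=0$ for every $(i,j)$ and the potential term $N_n^{\alpha}\sum_{i,j}W\big((u_n)_{i,j}\big)$ vanishes identically, while for $0,1$-valued data $\big((u_n)_{i+1,j}-(u_n)_{i,j}\big)^2=\big|(u_n)_{i+1,j}-(u_n)_{i,j}\big|$ and likewise in the vertical direction, so the Dirichlet term of $h_{N_n}^{\alpha}$ coincides with that of $h_{N_n,0}$. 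Combining, $\limsup_{n\to\infty} h_{N_n}^{\alpha}(u_n)=\limsup_{n\to\infty} h_{N_n,0}(u_n)\le h_{\infty,0}(u)$, which is the claim.

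There is no real obstacle here: the hard work lives entirely in Lemma~\ref{lem:hupperbound} (which in turn rests on the vector field produced by Lemma~\ref{lem:existencevectorfield}), and the contribution of the present lemma is just the harmless observation that the extra factor $N_n^{\alpha}$ multiplies a potential term that is zero along a binary recovery sequence --- which is why, in contrast to the lower bound Lemma~\ref{lem:hlowerboundallNs}, no lower restriction on $\alpha$ is needed for the upper bound. Finally, exactly as in Remark~\ref{rem:LpinsteadofL1}, the same argument goes through with $L^1$ replaced by any $L^p$, $1\le p<\infty$, since all the functions involved are binary.
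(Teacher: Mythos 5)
Your proposal is correct and follows essentially the same route as the paper's proof: both reuse the recovery sequence from Lemma~\ref{lem:hupperbound} and observe that for binary, piecewise-constant $u_n$ the potential term vanishes and the squared differences reduce to absolute differences, so the bound reduces to the one already established for $h_{N_n,0}$.
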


\begin{proof}[\bf Proof.]
Without loss of generality we can assume that $u\in BV(\T^2; \{0,
1\})$.  Construct $u_n\in\mathcal{A}_{N_n}$ as follows.  For $x\in
S_{N_n}^{i,j}$ define, as in the proof of Lemma~\ref{lem:hupperbound},
\[
u_n(x):= \left\{\begin{array}{ll} 1 &\text{if }
 (S_{N_n}^{i,j} )^\circ \subset \supp u,\\
0 &\text{otherwise}.  \end{array}\right.
\]
Then $\big((u_n)_{i+1,j}-(u_n)_{i,j}\big)^2 =
|(u_n)_{i+1,j}-(u_n)_{i,j}|$ and hence identifying $\mathcal{V}_{N_n}$
with $\mathcal{A}_{N_n}$
\[
N_n^{-1} \sum_{i,j=1}^{N_n} \big((u_n)_{i+1,j}-(u_n)_{i,j}\big)^2 =
N_n^{-1} \sum_{i,j=1}^{N_n} |(u_n)_{i+1,j}-(u_n)_{i,j}| = \int_{\T^2}
|(u_n)_x|.
\]
Similarly 
$$
 N_n^{-1} \sum_{i,j=1}^{N_n}
\big((u_n)_{i,j+1}-(u_n)_{i,j}\big)^2 = \int_{\T^2} |(u_n)_y|.  
$$
Since every $u_n$ takes values in $\{0,1\}$ we can repeat the argument
from the proof of Lemma~\ref{lem:hupperbound} in and following
(\ref{eq:usebarcodevectorfield}) to prove that
\[
\underset{n\to\infty}\limsup\, \int_{\T^2} |(u_n)_x| + |(u_n)_y| \leq
\int_{\T^2} |u_x| + |u_y|.
\]
Since for every $n\in\N$ and $(i,j)\in I_{N_n}^2\,,$
$W((u_n)_{i,j})=0$ we get the desired result.
\end{proof}

\noindent
{\bf Proof of Theorem~\ref{thm:GammaconvergenceforhNalpha}.}
Combining Lemmas~\ref{lem:hlowerboundallNs}
and~\ref{lem:hupperboundallNs} we get the $\Gamma$-convergence result
in Theorem~\ref{thm:GammaconvergenceforhNalpha}.
\qed

Next we prove compactness.

\smallskip

\noindent
{\bf Proof of Theorem~\ref{thm:compactnessforHN}.}
By the first part of the proof of Lemma~\ref{lem:hlowerboundallNs} we
have, after possibly going to a subsequence, that for all $n\in\N$ and
all $(i,j)\in I_{N_n}^2\,\,$ $(u_n)_{i,j} \in \hat I$, hence
$\|u_n\|_{L^1(\T^2)}$ is uniformly bounded.  By the same proof, in
particular (\ref{eq:sumtoanisotvx}) and the uniform bound on
$h_{N_n}^\alpha(u_n)$, we have for all $n\in\N$ that $ 
\int_{\T^2} |(u_n)_x| + |(u_n)_y|$ is uniformly bounded.  We deduce as
in the proof of Theorem~\ref{thm:compactnessforh} a uniform bound on
the BV norms of $u_n$ from which it follows by the compactness theorem
in BV (\cite[Theorem 1.19]{Giusti84} or \cite[5.2.3 Theorem
4]{EvansGariepy92}) that there exists a subsequence
$\{u_{n'}\}_{n'=1}^\infty \subset \{u_n\}_{n=1}^\infty$ and a $u\in
BV(\T^2)$ such that $u_{n'}\to u$ in $L^1(\T^2)$ as $n'\to\infty$.  By
the arguments in the proof of Lemma~\ref{lem:hlowerboundallNs} each
$u_{n'}$ takes values in $(-\delta_{n'},
\delta_{n'})\cup(1-\delta_{n'}, 1+\delta_{n'})$ where $\delta_{n'}\to
0$ as $n'\to\infty$.  After possibly going to another subsequence
$u_{n'}$ converges pointwise a.e. to $u$, hence $u$ takes values in
$\{0,1\}$ almost everywhere.
\qed

\subsection{Constraints}

In this section we show that addition of a fidelity term to the
functional or imposing a mass constraint are compatible with the three
$\Gamma$-limits we discussed, \textit{i.e.,} $\e\to 0$ for $h_{N,\e}$,
$N\to \infty$ for $h_{N,0}$, and $N\to\infty$ for $h_N^\alpha$.

\begin{theorem}[Constraints]\label{thm:constraintsforh} 
\label{item:hconstraintsa} 
$(1)$  $\displaystyle h_{N,\e} + \lambda
N^{-2} |\cdot-f|_p^p \overset{\Gamma}{\to} h_{N,0} + \lambda N^{-2}
|\cdot-f|_p^p$ for $\e\to 0$, where $p\in \N$, $\lambda>0$, and a
given function $f\in \mathcal{V}_N$ $($or possibly a given function $f:
U\to \R$ where $U$ is a strict subset of the vertex set $V$ and the
sum in $|\cdot-f|_p^p$ is restricted to vertices in $U ).$    A
compactness result for $h_{N,\e} + \lambda N^{-2} |\cdot-f|_p^p$ as in
Theorem~$\ref{thm:compactnessongraph}$ holds.

    If instead, for fixed $M>0$, the domain of definition of
    $h_{N,\e}$ is restricted to $\mathcal{V}_N^M$ $($\textit{i.e.,}
    $\mathcal{V}^M$ from Theorem~$\ref{thm:constraintsforfe}$ on the
    grid $G_N )$ where $M$ is such that $N^2M$ is an integer between 0
    and $N^2$, then the $\Gamma$-convergence and compactness results
    for $\e\to 0$ remain valid, with the domain of $h_{N,0}$
    restricted to $\mathcal{V}^M$ as well.

\label{item:hconstraintsb}
$(2)$ 
Let $p\in\N$, $\lambda>0$, $f\in
C^1(\T^2)$ and $f_N\in \mathcal{A}_N$ the sampling of $f$ on the grid
$G_N$ $( f$, $f_N$ and their norms can also be defined on subsets of
$\T^2$ and $G_N$ as in part~$\ref{item:hconstraintsa}),$ then
$\displaystyle h_{N,0} + \lambda N^{-2} |\cdot-f_N|_p^p
\overset{\Gamma}{\to} h_{\infty,0} + \lambda
\|\cdot-f\|_{L^p(\T^2)}^p$ as $N\to\infty$ in the $L^p(\T^2)$
topology.  A compactness result as in
Theorems~$\ref{thm:compactnessforh}$ and Remark~$\ref{rem:LpinsteadofL1}$
holds for $h_{N,0} + \lambda N^{-2} |\cdot-f_N|_p^p$.

    If instead the domain of $h_{N,0}$ is restricted to
    $\mathcal{V}_N^M$, for a fixed $M$ such that $N^2 M$ is an integer
    between $0$ and $N^2$, then the compactness and lower bound
    results from Lemma~$\ref{lem:hlowerbound},$
    Theorem~$\ref{thm:compactnessforh},$ and
    Remark~$\ref{rem:LpinsteadofL1}$ remain valid, with the domain of
    $h_{\infty,0}$ restricted to 
    $$
    BV_M(\T^2;\{0,1\}) := \Big \{u\in
    BV(\T^2; \{0,1\}) : \int_{\T^2} u = M \Big \} .
    $$
      With the same
    restriction, the upper bound result from
    Lemma~$\ref{lem:hupperbound}$ is still valid if we restrict it to
    sequences $\{N_n\}_{n=1}^\infty\subset\N$ such that $N_n^2 M$ is
    an integer for each $n\in\N$.

$(3)$ 
If $\lambda$, $f$, and $f_N$ are as in
part~$\ref{item:hconstraintsa}$ and $\alpha>\beta$ as in
Theorem~$\ref{thm:GammaconvergenceforhNalpha},$ then $\displaystyle
h_N^\alpha + \lambda N^{-2} |\cdot -f_N|_1 \overset{\Gamma}\to
h_N^\alpha + \lambda \|\cdot - f\|_{L^1(\T^2)}$ as $N\to\infty$ in the
$L^1(\T^2)$ topology.  A compactness result as in
Theorem~$\ref{thm:compactnessforHN}$ holds for $h_N^\alpha + \lambda
N^{-2} |\cdot -f_N|_1$.

    If instead the domain of $h_N^\alpha$ is restricted to
    $\mathcal{V}_N^M$ for a fixed $M\in [0,1]$, then the
    $\Gamma$-convergence and compactness results from
    Theorems~$\ref{thm:GammaconvergenceforhNalpha}$
    and~$\ref{thm:compactnessforHN}$ remain valid, with the domain of
    $h_{\infty,0}$ restricted to $BV_M(\T^2;\{0,1\})$.

\end{theorem}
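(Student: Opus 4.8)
The plan is to treat all three parts through the same principle: each functional is a Ginzburg--Landau term plus a \emph{well-behaved} perturbation, and everything reduces to the convergence and compactness already established in this section together with Lemma~\ref{lem:addperturbations}.

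\emph{Part (1).} Here $N$ is fixed and $\e\to 0$, so the setting is finite dimensional. I would observe that $h_{N,\e}\overset{\Gamma}{\to}h_{N,0}$ is Theorem~\ref{thm:Gammaf} specialized to the grid weights with $\chi=\frac12$, and that $u\mapsto\lambda N^{-2}|u-f|_p^p$ is a polynomial on $\mathcal{V}_N\cong\R^{N^2}$, hence a continuous functional that does not depend on $\e$; the claimed $\Gamma$-convergence then follows immediately from the ``continuous $\hat G$'' clause of Lemma~\ref{lem:addperturbations}. Compactness comes from Theorem~\ref{thm:compactnessongraph}, since the added term is nonnegative and a uniform bound on the perturbed functional therefore still bounds $h_{N,\e}(u_n)$. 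For the mass constraint I would repeat the argument of Theorem~\ref{thm:constraintsforfe} verbatim: $\{\sum_i u_i=N^2M\}$ is closed under convergence in $\R^{N^2}$, and the constant recovery sequence from Lemma~\ref{lem:Gammaw} respects it. The variant in which $f$ is defined only on a proper subset of the vertices changes nothing.

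\emph{Parts (2) and (3): the perturbation.} Now the perturbation is ``infinite dimensional'', but it is harmless because along converging sequences it \emph{converges}. The key identity is that for $u$ identified with an element of $\mathcal{A}_N$ one has $N^{-2}|u-f_N|_p^p=\|u-f_N\|_{L^p(\T^2)}^p$ (the squares $S_N^{i,j}$ have area $N^{-2}$ and $u,f_N$ are both constant on them), with the $L^1$ version in part (3); and since $f\in C^1(\T^2)$ its grid sampling converges uniformly, hence in $L^p(\T^2)$, to $f$. Thus along any $u_n\to u$ in $L^p(\T^2)$ the perturbation tends to $\|u-f\|_{L^p(\T^2)}^p$. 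I would obtain (LB) by adding this convergent quantity to Lemma~\ref{lem:hlowerbound} (resp.\ Lemma~\ref{lem:hlowerboundallNs}), using $\liminf(a_n+b_n)=\liminf a_n+\lim b_n$ when $b_n$ converges, and (UB) by taking the recovery sequence of Lemma~\ref{lem:hupperbound} (resp.\ Lemma~\ref{lem:hupperboundallNs}), which converges in $L^p$ by Remark~\ref{rem:LpinsteadofL1}, and adding the same term; compactness drops the nonnegative perturbation and invokes Theorem~\ref{thm:compactnessforh} (resp.\ Theorem~\ref{thm:compactnessforHN}). The subregion variant is identical with the integrals restricted accordingly.

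\emph{The mass constraint in parts (2)--(3): the hard part.} For the lower bound and compactness there is nothing new: $\int_{\T^2}u=M$ survives $L^1(\T^2)$ convergence on the bounded torus, so the limit again lies in $BV_M(\T^2;\{0,1\})$. The real obstacle is the upper bound, because the ``round down to whole grid squares'' recovery sequence $u_n$ of Lemma~\ref{lem:hupperbound} (resp.\ Lemma~\ref{lem:hupperboundallNs}) satisfies only $\int_{\T^2}u_n\le M$, with $M-\int_{\T^2}u_n\to 0$ by the area estimate in the proof of Lemma~\ref{lem:hupperbound}. Restricting (as in the statement, or via a subsequence as discussed in Section~\ref{sec:explainconstraints}) to $N_n$ with $N_n^2M\in\N$, the number $k_n:=N_n^2\big(M-\int_{\T^2}u_n\big)$ of squares still to be switched on is a nonnegative integer with $k_n=o(N_n^2)$, and I would restore the constraint by appending to $u_n$ a single $\lceil\sqrt{k_n}\rceil\times\lceil\sqrt{k_n}\rceil$ block of grid squares (for $n$ large there is room for it, since $0<M<1$ forces both $\supp u_n$ and its complement to have measure bounded away from $0$; the cases $M\in\{0,1\}$ are trivial). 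The new values remain in $\{0,1\}$, so the potential term stays zero, and by the discrete isoperimetric inequality the anisotropic total variation increases by $O(\sqrt{k_n}\,N_n^{-1})\to 0$, while $u_n$ moves in $L^p(\T^2)$ by $O\big((k_nN_n^{-2})^{1/p}\big)\to 0$; hence both the upper bound and the convergence $u_n\to u$ are preserved. I expect this rearrangement step --- bounding the energy cost of enforcing the exact discrete mass constraint on the recovery sequence --- to be the only genuine work; the rest is bookkeeping on top of the results of Sections~\ref{sec:Gammagraph}--\ref{sec:simulscalehNalpha}.
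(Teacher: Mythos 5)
Your treatment of part (1), of the fidelity terms in parts (2)--(3), and of the lower-bound/compactness side of the mass constraint coincides with the paper's: the identity $N^{-2}|v-f_N|_p^p=\int_{\T^2}|v-f_N|^p$, continuous convergence of the perturbation, Lemma~\ref{lem:addperturbations}, and preservation of $\int_{\T^2}u$ under $L^p(\T^2)$ convergence. For the mass-constrained upper bound in part (2) you take a genuinely different route: the paper keeps the extra mass in the boundary layer, interpolating between $u_n$ (squares contained in $\supp u$) and $\hat u_n$ (squares meeting $\supp u$) by switching on a ``select chosen number'' of squares in $\supp\hat u_n\setminus\supp u_n$, whereas you park the deficit $k_n=N_n^2\big(M-\int_{\T^2}u_n\big)$ in a near-square block in the bulk of $(\supp u)^c$ and pay an explicit perimeter cost $O(\sqrt{k_n}\,N_n^{-1})\to0$. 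Your version is in fact more quantitative (the paper's sketch does not verify that switching on $O(N_n)$ scattered boundary squares costs only $o(1)$ in anisotropic perimeter, which requires clustering them), but it needs two repairs: a $\lceil\sqrt{k_n}\rceil\times\lceil\sqrt{k_n}\rceil$ block contains more than $k_n$ cells, so you must use a staircase shape with exactly $k_n$ cells (still of perimeter $O(\sqrt{k_n}\,N_n^{-1})$); and ``$|(\supp u)^c|>0$ implies there is room'' is not automatic, since a set of positive measure need not contain a whole block of empty grid cells --- you should place the block around a Lebesgue density point of $(\supp u)^c$ and grow it cell by cell until exactly $k_n$ cells outside $\supp u_n$ have been switched on.

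The genuine gap is in part (3). There the theorem claims the mass-constrained result for every fixed $M\in[0,1]$ and arbitrary $N_n\to\infty$, with no integrality hypothesis --- precisely because the admissible functions for $h_N^\alpha$ live in $\mathcal{A}_N$ rather than $\mathcal{A}_N^b$, so the constraint can be met with non-binary values. Your construction adds only whole binary squares and therefore requires $k_n\in\N$; it proves the restricted statement of part (2) but not the unrestricted claim of part (3). To recover the full claim the recovery sequence must take values in $(0,1)$ somewhere, and then the potential term $N_n^\alpha\sum_{i,j}W\big((\overline u_n)_{i,j}\big)$ is no longer zero and must be shown to vanish; this is the one point where part (3) is not mere bookkeeping on top of part (2). (The paper's own sketch handles the integrality issue by setting $\overline u_n=c_n\hat u_n$ on $(\supp u_n)^c$ with $c_n$ fixed by the mass constraint, though the resulting energy cost also deserves an explicit check.)
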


We give a sketch of the proofs.

(1)
This follows directly from Theorem~\ref{thm:constraintsforfe}.

(2)
For the fidelity term first we note that for $v, f_N\in
\mathcal{A}_N$, 
$$
N^{-2} |v-f_N|_p^p = \int_{\T^2} |v-f_N|^p .
$$  Hence,
for $v \in \mathcal{A}_N$, $u\in L^p(\T^2)$, $f \in C(\T^2)$ and
$f_N\in \mathcal{A}_N$ the discretization of $f$ on $G_N$ we have
\begin{align*}
\Big |\int_{\T^2} |v-f_N|^p  & - \int_{\T^2} |u-f|^p \Big |  \leq
\int_{\T^2} \big| |v-f_N| - |u-f| \big| 
\\
&
\leq
\|v-f_N-u+f\|_{L^p(\T^2)}^p  
 \leq \|v-u\|_{L^p(\T^2)}^p + \|f-f_N\|_{L^p(\T^2)}^p.
\end{align*}
Since (by a Taylor series argument) $f_N \to f$ in $L^p(\T^2)$ as
$N\to \infty$ the sequence of fidelity terms is continuously
convergent and thus by Lemma~\ref{lem:addperturbations} the
$\Gamma$-limit follows.  Clearly the compactness isn't harmed (even
helped) by adding an extra term to the functional.

For the mass constraint, since $u\in\mathcal{A}_N^b$ the conditions on
$M$ are necessary as explained in
Section~\ref{sec:explainconstraints}.  $L^p(\T^2)$ convergence
preserves average mass and hence the constraint is compatible with
(LB) and compactness.  For (UB) the recovery sequence
$\{u_n\}_{n=1}^\infty$ used in the proof of
Lemma~\ref{lem:hupperbound} has support contained in the support of
$u$ and hence if $\int_{\T^2} u = M$, then $\int_{\T^2} u_n \leq M$.
We can construct a similar recovery sequence $\{\hat
u_n\}_{n=1}^\infty$, where $\hat u_n$ has support on all grid squares
which intersect $\supp u$.  This sequence satisfies all the required
properties of a recovery sequence and has $\int_{\T^2} u_n \geq M$.
Hence, under the assumption on $\{N_n\}_{n=1}^\infty$ which assures
that the mass condition can be satisfied for each $N_n$, there exists
a recovery sequence $\{\overline u_n\}_{n=1}^\infty$ where $\overline
u_n$ takes the value $1$ on $\supp u_n$ as well as on a select chosen
number of squares which lie in $\supp \hat u_n \setminus \supp u_n$.
For these combinations of $\{N_n\}_{n=1}^\infty$ and $M$ (UB) is
compatible with the mass constraint as well.

(3)
For the fidelity term we can use the same arguments as above.

For the mass constraint we note that now $u_n\in \mathcal{A}_{N_n}$
and the limit function $u\in BV(\T^2; \{0,1\})$.  This means each
choice $M\in [0,1]$ is allowed in the mass constraint.  As above,
because of the $L^1$ convergence this mass constraint is compatible
with (LB) and compactness.  For (UB) we note that the proof of
Lemma~\ref{lem:hupperboundallNs} followed Lemma~\ref{lem:hupperbound},
so our argument here is very similar to that for $h_{N,0}$ above, with
the added bonus that we do not need to restrict ourselves to specific
combinations of $\{N_n\}_{n=1}^\infty$ and $M$.  Let the recovery
sequences $\{u_n\}_{n=1}^\infty$ and $\{\hat u_n\}_{n=1}^\infty$ be as
above.  Now construct another recovery sequence $\{\overline
u_n\}_{n=1}^\infty$ by setting $\overline u_n = u_n$ on $\supp u_n$
and $\overline u_n = c_n \hat u_n$ on $(\supp u_n)^c$, where $c_n\in
[0,1]$ is chosen such that for each $n$ the average mass constraint is
satisfied.

\section{$\Gamma$-limits for the the discretized  \\
Ginzburg-Landau
functional $k_{N,\e}$}\label{sec:differentscalingskNe}

In this section we will study the convergence properties of $k_{N,\e}$
from (\ref{eq:kNe}).  We first take the limit $N\to\infty$ and then
$\e\to 0$.  The resulting $\Gamma$-limits are given in
Section~\ref{sec:proofk}.  The simultaneous limit, obtained by
substituting $\e=N^{-\alpha}$ for well chosen $\alpha>0$ and then
considering the limit $N\to\infty$ for $k_N^\alpha$ in (\ref{eq:hN})
is studied in Section~\ref{sec:simulscalekNalpha}.

There has been a series of (recent) papers dealing with convergence of
discrete energies to integral energies among which
\cite{BraidesGelli02,AlicandroCicalese04,BraidesGelli06,AlicandroCicaleseGloria07,AlicandroBraidesCicalese08,AlicandroCicaleseGloria10},
all expanding on the ideas in \cite[Chapter 4]{Braides02}.  For many
of the proofs we will use ideas from \cite{AlicandroCicalese04} in
which the authors prove a $\Gamma$-limit of integral form exists for a
general class of grid based functionals.  Here we study a specific
functional and hence can prove more explicit results.

\begin{remark}\label{rem:kgendim}
In Sections~\ref{sec:proofk} and~\ref{sec:simulscalekNalpha} we prove
the results for $\T^2$, but they can easily be generalized to $\T^d$
for any $d\in \N$, if we let the scaling factor in the first term of
$k_{N,\e}$ in (\ref{eq:kNe}) be $\e N^{2-d}$ instead of $\e N^{0}$ and
the factor in the second term $\e^{-1} N^{-d}$ instead of $\e^{-1}
N^{-2}$ and we change $k_N^\alpha$ in (\ref{eq:kN}) accordingly:
\begin{align}
k_{N,\e}(u) &:= \! \e N^{2-d} \! \sum_{i, j=1}^N\! 
(u_{i+1,j}-u_{i,j})^2 \!   
+(u_{i,j+1}-u_{i,j})^2 \! + \e^{-1} N^{-d}
\sum_{i,j=1}^N W(u_{i,j}),  \\
k_N^\alpha(u) &:= \! N^{2-d-\alpha}\!  \sum_{i, j=1}^N\! 
(u_{i+1,j}-u_{i,j})^2   
+(u_{i,j+1}-u_{i,j})^2 + N^{\alpha-d}
\sum_{i,j=1}^N W(u_{i,j})\label{eq:kN}.
\end{align}
The admissible range of $\alpha$ in Theorems~\ref{thm:kNcompactness}
and~\ref{thm:GammaconvergenceforkNalpha} is dependent on the dimension
$d$ in a way which will be made precise in Remark~\ref{rem:alphadim}.
For the extra fidelity term in Theorem~\ref{thm:constraintsfork} the
scaling factor then needs to be $N^{-d}$ instead of $N^{-2}$.
\end{remark}

\subsection{Sequential $\Gamma$-convergence and compactness:
first $N\to\infty$, then $\e\to 0$}\label{sec:proofk}

We will prove $\displaystyle k_{N, \e} \overset{\Gamma}{\to}
k_{\infty,\e}$ as $N\to \infty$, where $k_{\infty, \e}$ is defined for
$u\in L^1(\T^2)$ as
\[
k_{\infty,\e}(u) := \left\{\begin{array}{ll} 
 \e \int_{\T^2} |\nabla
u|^2 + \e^{-1} \int_{\T^2} W(u) &\text{if } u \in W^{1,2}(\T^2),\\
+\infty &\text{otherwise}.  \end{array}\right.
\]
We see that $k_{\infty, \e}$ is the Ginzburg-Landau functional from
(\ref{eq:GinzburgLandau}).  As explained in
Section~\ref{sec:continuumGL} it is known that this functional
$\Gamma$-converges in either the $L^1(\T^2)$ or $L^2(\T^2)$
topology\footnote{Results are usually stated in the $L^1$ topology,
but for example \cite{RenWei00,Braides02,ChoksiRen05} note that the
results can be stated in the $L^2$ topology as well.} as $\e\to 0$ to
the total variation (\ref{eq:surfacetension}).  To be precise its
$\Gamma$-limit is
\[
k_{\infty,0}(u): = \left\{\begin{array}{ll} \sigma(W) \int_{\T^2}
|\nabla u| &\text{if } u \in BV(\T^2; \{0,1\}),\\
+\infty &\text{otherwise}, \end{array}\right.
\]
where $  \sigma(W):= 2 \int_0^1 \sqrt{W(s)}\,ds>0$ is a
constant depending on the specific form of $W$, in particular on the
transition between its wells, \cite{Modica87a}.  The sequence of
functionals is equi-coercive as well.

For fixed $u\in C^3(\T^2)$ we have pointwise convergence (in
$C^3(\T^2)$) $k_{N,\e}(u) \to k_{\infty,\e}(u)$ as $N\to\infty$ (more
details below), but the dependence of the discretization errors on
derivatives of $u$ prevents us from concluding uniform convergence.
$\Gamma$-convergence offers a useful middle ground between pointwise
and uniform convergence and can thus be seen as an extension of
classical numerical analysis results.  The pointwise convergence
follows from combining the boundedness of $\T^2$ with the trapezoidal
rule (for fixed $i$)
\[
\int_\T f(x,y)\,dy = N^{-1}\sum_{j=1}^N f(x, j/N) + \frac1{12} N^{-2}
\frac{\partial^2 f}{\partial y^2}(x, \xi), \quad \text{for } f\in
C^2(\T^2),
\]
for some $f$-dependent $\xi\in (0,1)$, and the finite difference
approximation of the derivative
\begin{align} 
\label{eq:Taylorfinitediff} 
\frac{\partial u}{\partial x}(i/N, j/N) 
&
= N\big[u((i+1)/N, j/N) -
u(i/N, j/N)\big] \\
&
\notag
- \frac12 N^{-1} \frac{\partial^2 u}{\partial
x^2}((i+r_i)/N, j/N)
\end{align}
for some $r_i\in [0,1]$.  We see the dependence of the errors on
derivatives of the functions $f$ and $u$.

We prove the following $\Gamma$-convergence and compactness results.

\begin{theorem}[$\Gamma$-convergence]\label{thm:GammaconvergenceforkNe}
$\displaystyle k_{N, \e} \overset{\Gamma}{\to} k_{\infty, \e}$ as
$N\to \infty$ in the $L^1(\T^2)$ or $L^2(\T^2)$ topology.
\end{theorem}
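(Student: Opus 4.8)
The plan is to rewrite $k_{N,\e}$ \emph{exactly}, in the spirit of \cite{AlicandroCicalese04}, using two representatives of a discrete function $u\in\mathcal{V}_N$: its piecewise-constant extension $u^{\mathrm{pc}}\in\mathcal{A}_N$ (the value $u_{i,j}$ on $S_N^{i,j}$, as in the identification of Section~\ref{sec:proofh}), and its continuous piecewise-affine interpolant $\tilde u$ on the triangulation of $\T^2$ obtained by cutting each square $S_N^{i,j}$ along the diagonal from $(i/N,j/N)$ to $((i+1)/N,(j+1)/N)$. Computing $\int_{\T^2}|\nabla\tilde u|^2$ triangle by triangle and using the periodicity of the torus to reindex the two contributions of each square yields the identity
\[
\int_{\T^2}|\nabla\tilde u|^2 \;=\; \sum_{i,j=1}^N (u_{i+1,j}-u_{i,j})^2 + (u_{i,j+1}-u_{i,j})^2,
\]
while trivially $\int_{\T^2}W(u^{\mathrm{pc}}) = N^{-2}\sum_{i,j=1}^N W(u_{i,j})$. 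Hence for $u\in\mathcal{A}_N$ one has $k_{N,\e}(u) = \e\int_{\T^2}|\nabla\tilde u|^2 + \e^{-1}\int_{\T^2}W(u^{\mathrm{pc}})$, and $k_{N,\e}(u)=+\infty$ otherwise. A triangle-by-triangle estimate also gives $\|\tilde u - u^{\mathrm{pc}}\|_{L^2(\T^2)}^2 \le C N^{-2}\sum_{i,j}\big((u_{i+1,j}-u_{i,j})^2+(u_{i,j+1}-u_{i,j})^2\big) \le C N^{-2}\e^{-1}k_{N,\e}(u)$, so $\tilde u$ and $u^{\mathrm{pc}}$ are $\mathcal{O}(N^{-1})$-close in $L^2(\T^2)$ (hence in $L^1(\T^2)$, since $\T^2$ is bounded) whenever the energy stays bounded. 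Everything then reduces to standard continuum facts, and the directional symmetry of $k_{N,\e}$---absent in $h_{N,\e}$---is exactly what makes the limit \emph{isotropic}.

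For the lower bound (LB), fix $u$ and a sequence $u_n\to u$ with $N_n\to\infty$; passing to the subsequence realizing the $\liminf$ we may assume $k_{N_n,\e}(u_n)\le C$, so $u_n\in\mathcal{A}_{N_n}$ and $\int|\nabla\tilde u_n|^2\le\e^{-1}C$. By the closeness estimate $\tilde u_n\to u$ in $L^2(\T^2)$ and $L^1(\T^2)$; the Poincaré--Wirtinger inequality on $\T^2$ together with the $L^1$-bound on $\{u_n\}$ then gives a uniform bound on $\|\tilde u_n\|_{W^{1,2}(\T^2)}$, so along a further subsequence $\tilde u_n\weakto u$ in $W^{1,2}(\T^2)$; in particular $u\in W^{1,2}(\T^2)$ and, by weak lower semicontinuity of the Dirichlet energy, $\e\int|\nabla u|^2\le\e\liminf_n\int|\nabla\tilde u_n|^2$. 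Since $u_n^{\mathrm{pc}}\to u$ in $L^1$, a further subsequence converges a.e., and Fatou's lemma (using $W\ge 0$ continuous) gives $\e^{-1}\int W(u)\le\e^{-1}\liminf_n\int W(u_n^{\mathrm{pc}})$. Adding these and using $\liminf(a_n+b_n)\ge\liminf a_n+\liminf b_n$ together with the exact formula for $k_{N_n,\e}$ gives $k_{\infty,\e}(u)\le\liminf_n k_{N_n,\e}(u_n)$. If $u\notin W^{1,2}(\T^2)$, the same chain run by contradiction forces $\liminf_n k_{N_n,\e}(u_n)=+\infty=k_{\infty,\e}(u)$. Both topologies are handled verbatim.

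For the upper bound I use the reformulation via the lower semicontinuous $\Gamma$-upper limit $k''$ (Section~\ref{sec:explainGamma}): it suffices to verify $k''(u)\le k_{\infty,\e}(u)$ on a class dense in $L^1(\T^2)$ along which $k_{\infty,\e}$ is continuous, and then let lower semicontinuity of $k''$ do the rest; for $u\notin W^{1,2}(\T^2)$, or more generally when $k_{\infty,\e}(u)=+\infty$, there is nothing to prove. So take $u\in W^{1,2}(\T^2)$ and choose $u^{(k)}\in C^\infty(\T^2)$ with $u^{(k)}\to u$ in $W^{1,2}$; using $W^{1,2}(\T^2)\hookrightarrow L^q(\T^2)$ for every $q<\infty$ and the polynomial growth of $W$ (e.g.\ $(W_3)$) one gets $\int W(u^{(k)})\to\int W(u)$, hence $k_{\infty,\e}(u^{(k)})\to k_{\infty,\e}(u)$. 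For fixed $v\in C^\infty(\T^2)$, sampling $v$ on $G_N$ gives $v_N\in\mathcal{V}_N$, and the finite-difference expansion $(v_{i+1,j}-v_{i,j})^2 = N^{-2}\big(\frac{\partial v}{\partial x}(i/N,j/N)\big)^2+\mathcal{O}(N^{-3})$, its $y$-analogue, and the trapezoidal/Riemann estimate $N^{-2}\sum_{i,j}W(v_{i,j})\to\int_{\T^2}W(v)$ recalled just before the theorem show that $k_{N,\e}(v_N)\to k_{\infty,\e}(v)$ as $N\to\infty$, while $v_N^{\mathrm{pc}}\to v$ uniformly; thus $k''(v)\le k_{\infty,\e}(v)$. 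Lower semicontinuity of $k''$ and continuity of $k_{\infty,\e}$ along $\{u^{(k)}\}$ then give (UB'), and together with (LB) this proves the theorem.

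I expect the only genuinely analytic point to be the upper bound for non-smooth $u\in W^{1,2}(\T^2)$, where the potential term $\int W(u)$ must be controlled along the smoothing---this is where the growth hypothesis on $W$ and the Sobolev embedding of $W^{1,2}(\T^2)$ in two dimensions are used; the identity between the discrete Dirichlet energy and $\int|\nabla\tilde u|^2$ is the structural observation that makes the rest routine. The $L^1$ versus $L^2$ distinction should cause no trouble: the estimate $\|\tilde u_n-u_n^{\mathrm{pc}}\|_{L^2}=\mathcal{O}(N^{-1})$ and the uniform convergence of the samplings $v_N^{\mathrm{pc}}$ make the two topologies interchangeable throughout.
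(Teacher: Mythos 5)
Your proof is correct, and the lower bound takes a genuinely different (and arguably cleaner) route than the paper's. The paper's Lemma~\ref{lem:klowerbound} works with two \emph{directional} piecewise-affine interpolants $v_n^1,v_n^2$ and then runs the slicing method: Fubini, Fatou on one-dimensional slices, weak convergence of $(v_n^1)'_{1,y}$ on a.e.\ slice, and reassembly of the slices via the $W^{1,2}$ characterization in \cite[4.9.2 Theorem 2]{EvansGariepy92}. You instead exploit the exact identity
\[
\int_{\T^2}|\nabla\tilde u|^2=\sum_{i,j=1}^N (u_{i+1,j}-u_{i,j})^2+(u_{i,j+1}-u_{i,j})^2
\]
for the single global affine interpolant on the diagonal triangulation (I checked: each square contributes $\tfrac12$ of four squared differences, and the periodicity reindexing recombines them exactly), together with the $\mathcal{O}(N^{-1})$ closeness of $\tilde u$ to $u^{\mathrm{pc}}$, a Poincar\'e--Wirtinger bound to get uniform $W^{1,2}$ control, weak compactness, and weak lower semicontinuity of the Dirichlet integral. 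This avoids slicing entirely and identifies $u\in W^{1,2}(\T^2)$ in one stroke; the price is that the identity is special to this energy on the periodic square grid, whereas the slicing argument is the one that generalizes to the broader class of discrete energies in \cite{AlicandroCicalese04}. Your upper bound is essentially the paper's: sample smooth functions, Jensen plus Taylor on the difference quotients, then density of $C^\infty(\T^2)$ in $W^{1,2}(\T^2)$ and lower semicontinuity of the $\Gamma$-upper limit.

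One caveat: in the final density step you control $\int_{\T^2}W(u^{(k)})\to\int_{\T^2}W(u)$ via the growth condition $(W_3)$ and the Sobolev embedding $W^{1,2}(\T^2)\hookrightarrow L^q(\T^2)$. That works, but $(W_3)$ is not among the hypotheses of Theorem~\ref{thm:GammaconvergenceforkNe} as stated; the paper instead argues by dominated convergence along an a.e.-convergent subsequence. If you want your argument to match the stated hypotheses you should either add $(W_3)$ explicitly or replace this step by a Vitali/uniform-integrability argument tailored to the mollified sequence. Also, in the lower bound your parenthetical claim that $\tilde u_n\to u$ in $L^2(\T^2)$ only holds when the ambient convergence $u_n\to u$ is taken in $L^2$; in the $L^1$ topology you get (and only need) $L^1$ convergence of $\tilde u_n$.
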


\begin{theorem}[Compactness]\label{thm:compactnessforkNe}
Assume $W$ satisfies $(W_2)$.  Let $\{N_n\}_{n=1}^\infty\subset\N$
satisfy $N_n \to \nobreak \infty$ as $n\to\infty$ and let
$\{u_n\}_{n=1}^\infty \subset L^1(\T^2)$ be a sequence for which there
is a constant $C>0$ such that for all $n\in\N$
$
k_{N_n,\e}(u_n) \leq C.
$
Then there exists a subsequence $\{u_{n'}\}_{n'=1}^\infty \subset
\{u_n\}_{n=1}^\infty$ and a $u\in W^{1,2}(\T^2)$ such that $u_{n'}\to
u$ in $L^2(\T^2)$ as $n'\to\infty$.
\end{theorem}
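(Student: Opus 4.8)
The plan is to extract from a uniform bound $k_{N_n,\e}(u_n) \leq C$ a uniform bound on the $W^{1,2}(\T^2)$ norm of a suitable piecewise-affine (or mollified) interpolation of $u_n$, and then invoke Rellich--Kondrachov compactness of the embedding $W^{1,2}(\T^2) \hookrightarrow\hookrightarrow L^2(\T^2)$. Fix $\e>0$ throughout (it is not a limit parameter here). First I would set up the interpolation: for each $u_n \in \mathcal{V}_{N_n}$ define $\tilde u_n$ on $\T^2$ to be, say, the continuous piecewise-bilinear (Q1) interpolant of the grid values $(u_n)_{i,j}$ on the mesh of side $N_n^{-1}$ — or equivalently work with the piecewise-constant identification $\mathcal{A}_{N_n}$ and a standard mollification, whichever makes the gradient estimate cleanest. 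The key point is that on each grid cell the $L^2$ norm of $\nabla \tilde u_n$ is controlled, up to a dimensional constant, by $N_n^{2} \big((u_{i+1,j}-u_{i,j})^2 + (u_{i,j+1}-u_{i,j})^2\big) \cdot N_n^{-2}$, so that
\[
\int_{\T^2} |\nabla \tilde u_n|^2 \,dx \leq c \sum_{i,j=1}^{N_n} (u_{i+1,j}-u_{i,j})^2 + (u_{i,j+1}-u_{i,j})^2 \leq \frac{c}{\e}\, k_{N_n,\e}(u_n) \leq \frac{cC}{\e}.
\]
Thus $\{\nabla \tilde u_n\}$ is bounded in $L^2(\T^2;\R^2)$.

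Next I would get a bound on $\|\tilde u_n\|_{L^2(\T^2)}$ itself (so that the full $W^{1,2}$ norm, not just the seminorm, is bounded). This is where the coercivity assumption $(W_2)$ enters: from $k_{N_n,\e}(u_n)\leq C$ we have $N_n^{-2}\sum_{i,j} W((u_n)_{i,j}) \leq \e C$, i.e. $\int_{\T^2} W(\tilde u_n^{\mathrm{pc}}) \leq \e C$ where $\tilde u_n^{\mathrm{pc}}$ is the piecewise-constant version. Using $(W_2)$, for large $|s|$ we have $W(s) \geq c(s^2-1)$, so $W(s) \geq c' s^2 - c''$ globally for suitable constants, hence $\int_{\T^2} W(\tilde u_n^{\mathrm{pc}}) \geq c'\|\tilde u_n^{\mathrm{pc}}\|_{L^2(\T^2)}^2 - c''|\T^2|$; this bounds $\|\tilde u_n^{\mathrm{pc}}\|_{L^2(\T^2)}$, and since the bilinear interpolant and the piecewise-constant version differ in $L^2$ by at most $cN_n^{-1}\|\nabla\tilde u_n\|_{L^2}$ (again controlled above), $\|\tilde u_n\|_{L^2(\T^2)}$ is uniformly bounded too. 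Combining, $\{\tilde u_n\}$ is bounded in $W^{1,2}(\T^2)$; by Rellich--Kondrachov there is a subsequence $\tilde u_{n'} \to u$ strongly in $L^2(\T^2)$ with $u \in W^{1,2}(\T^2)$ (weak $W^{1,2}$ limit). Finally, since by construction $\|\tilde u_{n'} - u_{n'}\|_{L^2(\T^2)} \to 0$ (the interpolation error is $\mathcal{O}(N_{n'}^{-1})$ times the bounded gradient norm), the original sequence $u_{n'} \to u$ in $L^2(\T^2)$ as well, which is the claim. (The $L^1(\T^2)$ statement follows since $\T^2$ is bounded.)

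The main obstacle, I expect, is being careful about the interpolation bookkeeping: one must choose an interpolation for which the discrete Dirichlet sum genuinely dominates $\int|\nabla\tilde u_n|^2$ with an $N$-independent constant (the Q1 interpolant works, but the cross-derivative term needs a short computation), and one must keep track of the $L^2$ distance between the various interpolants of the same grid function so that the convergence transfers back to $u_n$ itself. A secondary technical point is the passage from the local-for-large-$|s|$ hypothesis $(W_2)$ to a genuinely global lower bound $W(s)\geq c's^2 - c''$, which uses only that $W$ is continuous (hence bounded below on the compact complement of the "large $|s|$" region) — this is routine but should be stated. Everything else is standard, and indeed the whole argument parallels the classical Modica--Mortola compactness proof, with the discrete Dirichlet sum playing the role of $\e\int|\nabla u|^2$ and $(W_2)$ supplying coercivity exactly as in \cite[Proposition 3]{Sternberg88}.
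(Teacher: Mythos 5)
Your argument is correct, but it takes a different route from the paper's own proof of Theorem~\ref{thm:compactnessforkNe}. The paper works directly with the piecewise-constant identification $u_n\in\mathcal{A}_{N_n}$: it extracts weak $L^2$ limits of $u_n$ and of the difference quotients $D_{N_n}^k u_n$, identifies the latter as $\nabla u$ via a discrete summation-by-parts against test functions $\phi$ (using that $D_{N_n}^k\phi\to\nabla\phi\cdot e_k$ strongly), which gives $u\in W^{1,2}(\T^2)$, and then upgrades weak to strong $L^2$ convergence through a separate discrete Rellich--Kondrachov lemma (Lemma~\ref{lem:discreteRK}) proved by heat-semigroup mollification and Fourier estimates. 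You instead pass to the continuous $Q_1$ interpolant $\tilde u_n$, bound $\|\nabla\tilde u_n\|_{L^2}^2$ by the discrete Dirichlet sum with an $N$-independent constant, use $(W_2)$ for the $L^2$ bound, and invoke the classical Rellich--Kondrachov theorem; the conclusion transfers back to $u_n$ because $\|\tilde u_n-u_n\|_{L^2}^2=\mathcal{O}(N_n^{-2}\e^{-1})$. All the interpolation estimates you flag as the "main obstacle" are genuinely fine --- indeed the paper carries out exactly these computations (the cross-term bound $\int(\partial_x v_n)^2\le\sum((u_n)_{i+1,j}-(u_n)_{i,j})^2$ and the $L^2$ comparison of $v_n$ with $u_n$) in its proof of the compactness Theorem~\ref{thm:kNcompactness} for $k_N^\alpha$, so your route is essentially the $k_N^\alpha$ strategy applied to $k_{N,\e}$. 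What each approach buys: yours is more elementary (no Fourier analysis, no auxiliary mollification lemma) and reuses machinery already needed elsewhere in the paper; the paper's approach avoids any interpolation bookkeeping and isolates a reusable discrete compactness lemma, at the cost of the heat-kernel estimates in the appendix. Your handling of $(W_2)$ (upgrading the large-$|s|$ bound to a global $W(s)\ge c's^2-c''$ using $W\ge0$ and continuity) is the same as what the paper implicitly does.
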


The proof of $\Gamma$-convergence adapts the ideas that are developed
in an abstract general framework in \cite{AlicandroCicalese04} to our
situation.  In Section~\ref{sec:proofh} we constructed a bijection
between $\mathcal{V}_N$ and $\mathcal{A}_N$ from (\ref{eq:AN}).  In
what follows we will identify $u\in \mathcal{V}_N$ with its
counterpart $\tilde u\in \mathcal{A}_N$ and drop the tilde if no
confusion arises.

For $u\in \mathcal{A}_N \subset L^1(\T^2)$ we define pointwise
evaluation by identifying each $u$ with its representative which is
piecewise constant on the squares $S_N^{i,j}$ and for which pointwise
evaluation is well-defined.  For $z\in \T^2$ we define the difference
quotients as
\begin{equation}\label{eq:differencequotient}
D_N^k u(z) := N \big[u(z+e_k/N)-u(z)\big], \quad k\in\{1,2\},
\end{equation}
where $e_k$ denotes the $k^{\text{th}}$ standard basis vector of
$\R^2$.

With the identification between $\mathcal{V}_N$ and $\mathcal{A}_N$ we
extend the functional to all $u\in L^1(\T^2)$ as follows
\begin{equation}\label{eq:kNereformulation}
k_{N,\e}(u) = \left\{\begin{array}{ll} 
 \e \int_{\T^2} \left[(D_N^1u)^2
+ (D_N^2u)^2\right] + \e^{-1} \int_{\T^2} W(u) & \text{if } u \in
\mathcal{A}_N,\\
+\infty & \text{otherwise}.  \end{array}\right.
\end{equation}
In the proof of the $\liminf$ inequality we will use the slicing
method, \cite[Chapter 15]{Braides02}, \cite{Braides06}, which uses the
following notation.  Remembering $\T^2 \cong [0,1)^2$ we define
$
\T^2_1 := \{(0,y)\in \T^2: \exists t\in [0,1): (t,y) \in \T^2\} $
 and $ \T^2_2 := \{(x,0)\in \T^2: \exists t\in [0,1): (x,t)
\in \T^2\}
$
and for $(0,y)\in \T^2_1$, $(x,0)\in \T^2_2$ we define the sets
$
\T^2_{1,y} := \{t\in [0,1): (t,y) \in \T^2\} $ and $
\T^2_{2,x} := \{t\in [0,1): (x,t) \in \T^2\}
$
and the functions $u_{1,y}(t) := u(t,y)$ and $u_{2,x}(t) := u(x,t)$ on
$\T^2_{1,y}$ and $\T^2_{2,x}$ respectively.

In what follows $\e>0$ is fixed.

\begin{lemma}[Lower bound]\label{lem:klowerbound}
Let $u\in L^1(\T^2)$ and let $\{u_n\}_{n=1}^\infty \subset L^1(\T^2)$
and $\{N_n\}_{n=1}^\infty\subset\N$ be such that $u_n \to u$ in
$L^1(\T^2)$ and $N_n \to \infty$ as $n\to \infty$.  Then
\[
k_{\infty,\e}(u) \leq \underset{n\to\infty}\liminf\, k_{N_n,\e}(u_n).
\]
\end{lemma}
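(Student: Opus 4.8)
The plan is to follow the slicing method referred to above (see \cite[Chapter 15]{Braides02}, and \cite{AlicandroCicalese04} for discrete functionals), handling the Dirichlet part and the double-well part of $k_{N_n,\e}$ separately after a preliminary reduction. Throughout, $\e>0$ is fixed, so the potential term carries no small parameter and gives no help with the regularity of the limit.

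First I would dispose of the trivial case: if $\liminf_{n\to\infty} k_{N_n,\e}(u_n) = +\infty$ there is nothing to prove, so I may assume this $\liminf$ is finite and, passing to a (non-relabelled) subsequence, that $k_{N_n,\e}(u_n) \to L < \infty$. For $n$ large this forces $u_n \in \mathcal{A}_{N_n}$, so by (\ref{eq:kNereformulation}) I may write $k_{N_n,\e}(u_n) = a_n + b_n$ with $a_n := \e\int_{\T^2}\big[(D_{N_n}^1 u_n)^2 + (D_{N_n}^2 u_n)^2\big]\ge 0$ and $b_n := \e^{-1}\int_{\T^2} W(u_n)\ge 0$. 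Both sequences are bounded since $a_n+b_n\to L$; after a further subsequence I may assume $a_n\to A$, $b_n\to B$ with $A+B=L$, and (as $L^1$-convergence gives a.e.\ convergence along a subsequence) that $u_n\to u$ a.e.\ on $\T^2$. The double-well part is then immediate: $W\ge 0$ and $W\in C(\R)$, so Fatou's lemma gives $\e^{-1}\int_{\T^2} W(u) \le \liminf_{n} b_n = B$.

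The substance of the proof is the Dirichlet part, where I must show both $u \in W^{1,2}(\T^2)$ and $\e\int_{\T^2}|\nabla u|^2 \le A$. With the notation $\T^2_1, \T^2_{1,y}, u_{1,y}$ introduced above and Fubini's theorem, $\int_{\T^2}(D_{N_n}^1 u_n)^2 = \int_{\T^2_1}\|D_{N_n}(u_n)_{1,y}\|_{L^2(\T^2_{1,y})}^2\,dy$, with $D_{N_n}$ now the one-dimensional difference quotient, while $\|u_n-u\|_{L^1(\T^2)} = \int_{\T^2_1}\|(u_n)_{1,y}-u_{1,y}\|_{L^1(\T^2_{1,y})}\,dy$. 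Hence, after a further subsequence, for a.e.\ $y$ one has $(u_n)_{1,y}\to u_{1,y}$ in $L^1$ and $\liminf_n \|D_{N_n}(u_n)_{1,y}\|_{L^2}^2 < \infty$ (the latter by Fatou applied to the bound on $a_n$). Fixing such a $y$ and a subsequence realising this $\liminf$, $L^2$-boundedness of the one-dimensional difference quotients yields a weak $L^2$-limit $g$; testing against $\varphi\in C^\infty(\T^2_{1,y})$ and using $N_n\big[\varphi(\cdot - 1/N_n)-\varphi\big] \to -\varphi'$ uniformly, with $\big\|N_n[\varphi(\cdot-1/N_n)-\varphi]\big\|_\infty\le\|\varphi'\|_\infty$, together with $(u_n)_{1,y}\to u_{1,y}$ in $L^1$, shows $g$ is the distributional derivative $(u_{1,y})'$. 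Thus $u_{1,y}\in W^{1,2}(\T^2_{1,y})$ and, by weak lower semicontinuity of the $L^2$-norm, $\int (u_{1,y}')^2 \le \liminf_n \|D_{N_n}(u_n)_{1,y}\|_{L^2}^2$. Integrating in $y$ (Fatou again) gives $\int_{\T^2}u_x^2 \le \liminf_n \int_{\T^2}(D_{N_n}^1 u_n)^2$, and the symmetric argument in the other coordinate gives the analogous bound for $u_y$. Since $u_{1,y}\in W^{1,2}$ for a.e.\ $y$ with $\int_{\T^2_1}\int(u_{1,y}')^2 < \infty$ (and likewise in the second direction), $u$ is weakly differentiable with $\nabla u\in L^2(\T^2)$; combined with $u\in L^1(\T^2)$ and the Sobolev--Poincar\'e inequality on $\T^2$ this gives $u\in L^2(\T^2)$, hence $u\in W^{1,2}(\T^2)$, and $\e\int_{\T^2}|\nabla u|^2 = \e\int_{\T^2}(u_x^2+u_y^2)\le \liminf_n a_n = A$.

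Putting the two parts together, $k_{\infty,\e}(u) = \e\int_{\T^2}|\nabla u|^2 + \e^{-1}\int_{\T^2}W(u) \le A+B = L = \liminf_n k_{N_n,\e}(u_n)$; and if $u\notin W^{1,2}(\T^2)$ the argument shows $\liminf_n k_{N_n,\e}(u_n)$ cannot be finite, so the inequality is trivial there as well. I expect the main obstacle to be exactly this Dirichlet part: converting a uniform $L^2$-bound on the \emph{discrete}, piecewise-constant (hence jump-discontinuous) difference quotients into genuine $W^{1,2}$-regularity of the limit together with lower semicontinuity of the energy; the double-well term is of no use here (it enters only the compactness statement, cf.\ Theorem~\ref{thm:compactnessforkNe}), and it is the reduction to one dimension via slicing, in the spirit of \cite{AlicandroCicalese04}, that keeps the weak-$L^2$ compactness argument on each slice clean.
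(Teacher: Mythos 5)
Your proof is correct and follows the same overall strategy as the paper's: reduce to a uniformly bounded (sub)sequence, handle the potential term by Fatou's lemma and a.e.\ convergence, and treat the Dirichlet term by slicing onto one-dimensional sections, extracting weak $L^2$ limits of the difference quotients on a.e.\ slice, invoking weak lower semicontinuity of the $L^2$ norm, and reassembling via Fubini/Fatou and the a.e.-line characterization of Sobolev functions. The one genuine technical difference is how you convert the discrete difference quotients into derivatives of the limit. The paper introduces, for each coordinate direction, a piecewise affine interpolant $v_n^k$ whose classical partial derivative on each cell equals $D_{N_n}^k u_n$, shows $\|v_n^k-u_n\|_{L^2(\T^2)}=\mathcal{O}(N_n^{-1})$ using the energy bound, and then identifies the slice-wise weak limit through Lemma~\ref{lem:lemmaforklowerbound}. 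You skip the interpolants entirely and identify the weak limit $g$ of $D_{N_n}(u_n)_{1,y}$ directly as $(u_{1,y})'$ by discrete summation by parts against a smooth test function, using the uniform convergence $N_n[\varphi(\cdot-1/N_n)-\varphi]\to-\varphi'$ — which is exactly the device the paper uses in the proof of Theorem~\ref{thm:compactnessforkNe}, here applied slice by slice. This buys you a shorter argument with one fewer auxiliary object and no interpolation error estimate; the paper's interpolants, on the other hand, reduce the slice analysis to a statement about genuine $W^{1,2}$ functions and reuse cleanly in the $k_N^\alpha$ analysis. Your handling of the $y$-dependent sub-subsequences (arguing slice by slice rather than extracting one subsequence for all $y$) and of the membership $u\in L^2(\T^2)$ via Poincar\'e are both sound, and your contradiction argument for $u\notin W^{1,2}(\T^2)$ matches the paper's.
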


\begin{proof}[\bf Proof.]
This proof is an application of arguments in \cite[Proposition
3.4]{AlicandroCicalese04}.

First consider the case where $u\in W^{1,2}(\T^2)$, then we can assume
without loss of generality that $\{k_{N_n,\e}(u_n)\}_{n=1}^\infty$ is
uniformly bounded and thus $u_n\in \mathcal{A}_{N_n}$.  For $(x,y)\in
S_{N_n}^{i,j}$ we define
\begin{align*}
v_n^1(x,y)&:= u_n(i/N_n,j/N_n) + D_{N_n}^1 u_n(i/N_n,
j/N_n)\cdot(x-i/N_n),\\
v_n^2(x,y)&:= u_n(i/N_n,j/N_n) + D_{N_n}^2 u_n(i/N_n,
j/N_n)\cdot(y-j/N_n).
\end{align*}
In what follows $k\in\{1,2\}$.  Note $v_n^k \in BV(\T^2)$.  We denote
the densities of the absolutely continuous (with respect to the
Lebesgue measure) part of the measures $\nabla_x v_n^1$ and $\nabla_y
v_n^2$ by $  \frac{\partial v_n^1}{\partial x}$ and
$ \frac{\partial v_n^2}{\partial y}$ respectively.  Then
for $(x,y) \in  (S_N^{i,j} )^\circ$ we have
\[
\frac{\partial v_n^1}{\partial x}(x,y) = D_{N_n}^1 u_n(i/N_n, j/N_n)
\quad\text{and}\quad \frac{\partial v_n^2}{\partial y}(x,y) =
D_{N_n}^2 u_n(i/N_n, j/N_n).
\]
We deduce $v_n^k \to u$ in $L^1(\T^2)$ as $n\to\infty$ from
$$
\|v_n^k-u\|_{L^1(\T^2)} \leq \|v_n^k-u_n\|_{L^1(\T^2)} +
\|u_n-\nobreak u\|_{L^1(\T^2)}.
$$ The latter term converges to zero by
assumption.  The former we bound by $ \|v_n^k-u_n\|_{L^2(\T^2)}$ using
H\"older's inequality.  We then note that from the uniform bound on
$\{k_{N_n,\e}(u_n)\}_{n=1}^\infty$ we have 
$$
\sum_{i,j=1}^{N_n}
\left[D_{N_n}^k u_n (i/N_n, j/N_n)\right]^2 \leq C N_n^2
$$
 for some
$C>0$ and hence (if $k=1$; similarly for $k=2$)
\begin{align*}
&
\int_{\T^2} (v_n^1-u_n)^2  = \sum_{i,j=1}^{N_n} \int_{S_{N_n}^{i,j}}
\left[ D_{N_n}^1 u_n (i/N_n, j/N_n)\,\,(x-i/N_n)\right]^2 \,dy\,dx\\
&
\quad
= N_n^{-1} \sum_{i,j=1}^{N_n} \left[D_{N_n}^1 u_n (i/N_n,
j/N_n)\right]^2 \int_{i/N_n}^{(i+1)/N_n} (x-i/N_n)^2 \,dx \leq
\frac{C}3 N_n^{-2}.
\end{align*}
For $\mathcal{H}^1$-a.e. $y\in \T^2_1$ the slice $(v_n^1)_{1,y} \in
W^{1,2}(\T^2_{1,y})$.  By Fubini's theorem and Fatou's lemma
\begin{align*}
\underset{n\to\infty}\liminf  \int_{\T^2} \Big (\frac{\partial
v_n^1}{\partial x} \Big )^2 
&
= \underset{n\to\infty}\liminf\,
\int_{\T^2_1} \int_{\T^2_{1,y}} |(v_n^1)'_{1,y}|^2 \,dt \,dy 
\\
&\geq
\int_{\T^2_1} \underset{n\to\infty}\liminf\, \int_{\T^2_{1,y}}
|(v_n^1)'_{1,y}|^2 \,dt \,dy,
\end{align*}
Because
\begin{equation}\label{eq:partvnsquared}
\underset{n\to\infty}\liminf\, \int_{\T^2}  \Big (\frac{\partial
v_n^1}{\partial x} \Big )^2\leq
\underset{n\to\infty}\liminf\,k_{N_n,\e}(u_{N_n})<\infty
\end{equation}
we have that, after possibly going to a subsequence, for
$\mathcal{H}^1$-a.e. $y\in \T^2_1$ the sequence
$\{\|(v_n^1)'_{1,y}\|_{L^2(\T^2_{1,y})}\}_{n=1}^\infty$ is bounded and
hence weakly convergent in $L^2(\T^2_{1,y})$.  Since for
$\mathcal{H}^1$-a.e. $y\in \T^2_1$ we have $(v_n^1)_{1,y} \to u_{1,y}$
in $L^1(\T^2_{1,y})$ we identify the limit as $(v_n^1)'_{1,y}
\rightharpoonup u_{1,y}'$ in $L^2(\T^2_{1,y})$, for
$\mathcal{H}^1$-a.e. $y\in \T^2_1$ (see
Lemma~\ref{lem:lemmaforklowerbound} in
Appendix~\ref{sec:deferredproofs} for details).  Hence, by the weak
lower semicontinuity of the $L^2$ norm, Fatou's lemma, and the
completely analogous results for $u_{2,x}$ and $\frac{\partial
v_n^2}{\partial y}$ we get
\[
\underset{n\to\infty}\liminf  \int_{\T^2}  \Big [ \Big (\frac{\partial
v_n^1}{\partial x} \Big )^2 +  \Big (\frac{\partial v_n^2}{\partial
y} \Big )^2 \Big ] \geq \int_{\T^2_1} \int_{\T^2_{1,y}} |u'_{1,y}|^2
\,dt \,dy + \int_{\T^2_2} \int_{\T^2_{2,x}} |u'_{2,x}|^2 \,dt \,dy.
\]
Putting the slices back together using \cite[4.9.2 Theorem
2]{EvansGariepy92}) we deduce that
\begin{equation}\label{eq:nablaubound}
\underset{n\to\infty}\liminf\, \int_{\T^2}  \Big [ \Big (\frac{\partial
v_n^1}{\partial x} \Big )^2 +  \Big (\frac{\partial v_n^2}{\partial
y} \Big )^2 \Big ] \geq \int_{\T^2} |\nabla u|^2.
\end{equation}
For the other term in the functional we use Fatou's lemma, the
continuity of $W$ and the a.e. pointwise convergence of $u_n$ to $u$
to find
\[
\underset{n\to\infty}\liminf\, \int_{\T^2} W(u_n) \geq \int_{\T^2}
\underset{n\to\infty}\liminf\, W(u_n) = \int_{\T^2} W(u).
\]
This proves the result for $u\in W^{1,2}(\T^2)$.

Now consider the case where $u\in L^1(\T^2)\setminus W^{1,2}(\T^2)$.
Assume that 
$$
\underset{n\to\infty}\liminf\, k_{N_n,\e}(u_n) < \infty,
$$
 then after possibly going to a subsequence for each $n\in \N\,\,$
$u_n\in \mathcal{A}_{N_n}$ and the sequences $\left\{\int_{\T^2}
(D_{N_n}^k u_n)^2\right\}_{n=1}^\infty$ are bounded.  We can follow
the same slicing method as applied above up to equation
(\ref{eq:partvnsquared}).  Again we find that for $\mathcal{H}^1$-a.e.
$y\in \T^2_1$ the sequence
$\{\|(v_n^1)'_{1,y}\|_{L^2(\T^2_{1,y})}\}_{n=1}^\infty$ is bounded and
combined with $(v_n^1)_{1,y} \to u_{1,y}$ in $L^1(\T^2_{1,y})$ for
$\mathcal{H}^1$-a.e. $y\in \T^2_1$ we deduce that $u_{1,y}\in
W^{1,2}(\T^2_{1,y})$ for $\mathcal{H}^1$-a.e. $y\in \T^2_1$ (see
Lemma~\ref{lem:lemmaforklowerbound} in
Appendix~\ref{sec:deferredproofs} for details).  We then continue as
above to arrive at (\ref{eq:nablaubound}) and conclude that $u\in
W^{1,2}(\T^2)$, which contradicts the assumption that $u\not\in
W^{1,2}(\T^2)$.  Hence $ \underset{n\to\infty}\liminf\,
k_{N_n,\e}(u_n) = \infty, $ which concludes the proof.
\end{proof}

Next we prove (UB') (see Section~\ref{sec:explainGamma}).

\begin{lemma}[Upper bound]\label{lem:kupperbound}
Let $u\in L^1(\T^2)$ and $\{N_n\}_{n=1}^\infty\subset\N$ be such that
$N_n \to \infty$ as $n\to \infty$ and let $k_\e''$ be the
$\Gamma$-upper limit of $k_{N_n,\e}$ as $n\to\infty$ with respect to
the $L^2(\T^2)$ topology, then
$
k_\e''(u) \leq k_{\infty, \e}(u).
$
\end{lemma}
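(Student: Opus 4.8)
The plan is a standard recovery-sequence argument, carried out first for smooth $u$ and then extended to general $u\in W^{1,2}(\T^2)$ by density. If $u\in L^1(\T^2)\setminus W^{1,2}(\T^2)$ then $k_{\infty,\e}(u)=+\infty$ and there is nothing to prove, so assume $u\in W^{1,2}(\T^2)$. The key structural fact is that, being a $\Gamma$-upper limit, $k_\e''$ is lower semicontinuous with respect to the $L^2(\T^2)$ topology (\cite[Proposition 6.8]{DalMaso93}). Hence if $\{u_j\}_{j=1}^\infty\subset C^\infty(\T^2)$ satisfies $u_j\to u$ in $W^{1,2}(\T^2)$ (so in particular in $L^2$), then $\int_{\T^2}|\nabla u_j|^2\to\int_{\T^2}|\nabla u|^2$, and $\int_{\T^2}W(u_j)\to\int_{\T^2}W(u)$ by the Sobolev embedding $W^{1,2}(\T^2)\hookrightarrow L^p(\T^2)$ for every $p<\infty$ together with the continuity and growth of $W$; thus $k_{\infty,\e}(u_j)\to k_{\infty,\e}(u)$, and lower semicontinuity gives $k_\e''(u)\le\liminf_{j\to\infty}k_\e''(u_j)\le\liminf_{j\to\infty}k_{\infty,\e}(u_j)=k_{\infty,\e}(u)$, provided we have already shown $k_\e''(v)\le k_{\infty,\e}(v)$ for every $v\in C^\infty(\T^2)$. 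So it suffices to treat smooth $u$.

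Now fix $u\in C^\infty(\T^2)$ and an arbitrary sequence $\{N_n\}_{n=1}^\infty\subset\N$ with $N_n\to\infty$. Define $u_n\in\mathcal{A}_{N_n}$ to be the piecewise-constant sampling of $u$, namely $u_n\equiv u(i/N_n,j/N_n)$ a.e. on $S_{N_n}^{i,j}$. Since $u$ is uniformly continuous, $\|u_n-u\|_{L^\infty(\T^2)}\to 0$ (with rate governed by the modulus of continuity of $u$), hence $u_n\to u$ in $L^2(\T^2)$, which is the convergence required for the $\Gamma$-upper limit $k_\e''$.

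Next I would compute $k_{N_n,\e}(u_n)$ using the reformulation $(\ref{eq:kNereformulation})$. On the interior of $S_{N_n}^{i,j}$ the difference quotient $D_{N_n}^1 u_n$ is constant and equals $N_n\big[u((i+1)/N_n,j/N_n)-u(i/N_n,j/N_n)\big]$, which by the finite-difference expansion $(\ref{eq:Taylorfinitediff})$ equals $\frac{\partial u}{\partial x}(i/N_n,j/N_n)+\mathcal{O}(N_n^{-1})$, uniformly in $(i,j)$ since $u\in C^2(\T^2)$. Squaring, multiplying by the cell area $N_n^{-2}$ and summing over the $N_n^2$ squares, the leading term is a Riemann sum converging to $\int_{\T^2}(\partial u/\partial x)^2$, while the cross term is $\mathcal{O}(N_n^{-1})$ and the quadratic remainder is $\mathcal{O}(N_n^{-2})$; thus $\int_{\T^2}(D_{N_n}^1u_n)^2\to\int_{\T^2}(\partial u/\partial x)^2$, and analogously $\int_{\T^2}(D_{N_n}^2u_n)^2\to\int_{\T^2}(\partial u/\partial y)^2$. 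Since $W\circ u$ is continuous, $\int_{\T^2}W(u_n)=N_n^{-2}\sum_{i,j=1}^{N_n}W(u(i/N_n,j/N_n))\to\int_{\T^2}W(u)$, again as a Riemann sum. Hence $k_{N_n,\e}(u_n)\to\e\int_{\T^2}|\nabla u|^2+\e^{-1}\int_{\T^2}W(u)=k_{\infty,\e}(u)$, so $k_\e''(u)\le\limsup_{n\to\infty}k_{N_n,\e}(u_n)=k_{\infty,\e}(u)$, completing the smooth case and hence the proof.

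The smooth computation is routine bookkeeping with Taylor expansions and Riemann sums; the step that needs genuine care is the density reduction, i.e. verifying $k_{\infty,\e}(u_j)\to k_{\infty,\e}(u)$ for mollifications $u_j$ of a general $u\in W^{1,2}(\T^2)$ with $k_{\infty,\e}(u)<\infty$. The Dirichlet part is automatic from $W^{1,2}$-convergence, but convergence of the potential term requires controlling the growth of $W$ against the two-dimensional Sobolev embedding (or, for a general $W$, a preliminary truncation of $u$); for the model potential $W(s)=s^2(s-1)^2$, and under assumption $(W_3)$, this is immediate.
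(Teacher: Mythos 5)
Your proof is correct and follows the same overall skeleton as the paper's: the same piecewise-constant sampling of $u$ on the grid as recovery sequence for smooth $u$, followed by density of $C^\infty(\T^2)$ in $W^{1,2}(\T^2)$ and lower semicontinuity of the $\Gamma$-upper limit. Two local steps are carried out differently. For the Dirichlet term in the smooth case, the paper writes the difference quotient as $\int_0^1 \partial_x u(\cdot+s e_1/N_n)\,ds$, applies Jensen's inequality, and exploits the exact torus identity $\sum_{i,j}\int_{S_{N_n}^{i,j}+s e_1/N_n}(\partial_x u)^2 = \int_{\T^2}(\partial_x u)^2$ to conclude $\limsup_n \int_{\T^2}(D_{N_n}^1 u_n)^2+(D_{N_n}^2 u_n)^2 \le \int_{\T^2}|\nabla u|^2$; you instead expand $D_{N_n}^1 u_n = \partial_x u + \mathcal{O}(N_n^{-1})$ uniformly (legitimate since $u\in C^\infty$ has bounded second derivatives) and invoke Riemann-sum convergence of $N_n^{-2}\sum_{i,j}(\partial_x u(i/N_n,j/N_n))^2$ to $\int_{\T^2}(\partial_x u)^2$, which is equally valid and in fact yields the full limit rather than only an upper bound. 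For the potential term in the density step, the paper argues via pointwise a.e.\ convergence and dominated convergence, while you use the Sobolev embedding $W^{1,2}(\T^2)\hookrightarrow L^p(\T^2)$ together with a polynomial growth bound on $W$ (or a truncation). You are right to flag this as the delicate point: the lemma's standing hypotheses on $W$ include no growth condition, so some additional device (a $(W_3)$-type bound, truncation, or an $L^\infty$ control on the approximating sequence) is genuinely needed to pass to the limit in $\int_{\T^2}W(u_j)$, and your version makes the required hypothesis explicit rather than leaving it implicit.
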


\noindent
{\bf Proof.}
This proof is an adaptation of the ideas in \cite[Proposition
3.5]{AlicandroCicalese04}.

The case where $u\in L^1(\T^2)\setminus W^{1,2}(\T^2)$ is trivial.
For the case where $u\in W^{1,2}(\T^2)$ we first assume that $u\in
C^\infty(\T^2)$.  Define a sequence $\{u_n\}_{n=1}^\infty$ such that
for each $n\in\N\,\,$ $u_n\in \mathcal{A}_{N_n}$ in the following way.
If $(x,y)\in S_{N_n}^{i,j}$ then $ u_n(x) := u(i/N_n, j/N_n).  $ Then
(by a Taylor series argument) $u_n \to u$ in $L^2(\T^2)$ as
$n\to\infty$.  Let $\nu_{N_n}^{i,j} := (i/N_n, j/N_n) \in G_{N_n}$,
then
\[
D_{N_n}^1 u_n(\nu_{N_n}^{i,j}) = \int_0^1 \frac{\partial u}{\partial
x}(\nu_{N_n}^{i,j}+s e_1/N_n) \,ds
\]
Jensen's inequality then gives
\[
(D_{N_n}^1 u_n(\nu_{N_n}^{i,j}))^2 = \! \Big (\int_0^1 \frac{\partial
u}{\partial x}(\nu_{N_n}^{i,j}+s e_1/N_n) \,ds \Big )^2 
\! \leq \! \int_0^1
 \Big (\frac{\partial u}{\partial x}(\nu_{N_n}^{i,j}+s
e_1/N_n) \Big )^2 \! ds.
\]
Because $u$ is smooth the Taylor series with remainder gives for $z\in
\T^2$
\[
\frac{\partial u}{\partial x}(\nu_{N_n}^{i,j}+s e_1/N_n) =
\frac{\partial u}{\partial x}(z+s e_1/N_n) + \nabla \frac{\partial
u}{\partial x}((1-c) x + c \nu_{N_n}^{i,j}) \cdot (\nu_{N_n}^{i,j}-z),
\]
for some $c\in [0,1]$.  Using the fact that $u$ and all its
derivatives are bounded we find for some constants $C_u>0$ and $\tilde
C_u$ depending only on $u$
\begin{align*}
&
N_n^{-2} \int_0^1  \Big (\frac{\partial u}{\partial
x}(\nu_{N_n}^{i,j}+s e_1/N_n) \Big )^2\,ds  = \int_{S_{N_n}^{i,j}}
\int_0^1  \Big (\frac{\partial u}{\partial x}(\nu_{N_n}^{i,j}+s
e_1/N_n) \Big )^2\,ds \,dx\\
& \leq \int_{S_{N_n}^{i,j}} \int_0^1  \Big [
\left(\frac{\partial u}{\partial x}(z+s e_1/N_n)\right)^2 + C_u
|\nu_{N_n}^{i,j}-z| + \tilde C_u |\nu_{N_n}^{i,j}-z|^2 \Big ]\,ds
\,dz\\
& \leq \int_0^1 \int_{S_{N_n}^{i,j}+s e_1/N_n}
 \Big (\frac{\partial u}{\partial x} \Big )^2(z) \,dz \,ds + C_u
\left(N_n\right)^{-3} + \tilde C_u \left(N_n\right)^{-4}.
\end{align*}
Analogous estimates hold for $D_{N_n}^2 u_n(\nu_{N_n}^{i,j})$.

Note that the sets $S_{N_n}^{i,j}+s e_k/N_n$, $k\in \{1,2\}$, are just
the squares $S_{N_n}^{i,j}$ shifted a distance $s/N_n$ over the
coordinate axes.  Because we are working on the torus we get for some
$\overline C_u>0$ depending only on $u$
\begin{align*}
&N_n^{-2} \sum_{i,j\in I_{N_n}}  \Big [\left(D_{N_n}^1 u_n\right)^2
(\nu_{N_n}^{i,j}) + \left(D_{N_n}^2 u_n\right)^2
(\nu_{N_n}^{i,j}) \Big ]\\
& \leq  \int_0^1 \sum_{i,j\in I_{N_n}}  \Big [\int_{S_{N_n}^{i,j}+s
e_1/N_n}  \Big (\frac{\partial u}{\partial x} \Big )^2(z) \,dz +
\int_{S_{N_n}^{i,j}+s e_2/N_n}  \Big (\frac{\partial u}{\partial
y} \Big )^2(z) \,dz  \Big ]\,ds  \\
&
\qquad
+ \overline C_u
\big(\left(N_n\right)^{-1}+\left(N_n\right)^{-2}\big) 
  = \int_{\T^2} |\nabla u|^2 + \overline C_u
\big(\left(N_n\right)^{-1}+\left(N_n\right)^{-2}\big).
\end{align*}
We deduce that
\[
\underset{n\to\infty}\limsup\, \int_{\T^2} \left(D_{N_n}^1
u_n\right)^2 + \left(D_{N_n}^2 u_n\right)^2 \leq \int_{\T^2} |\nabla
u|^2.
\]
Because $u\in C^\infty(\T^2)$ $u$ is bounded on $\T^2$ and hence
$\left\{|u_n|\right\}_{n=1}^\infty$ is uniformly bounded on $\T^2$.
Therefore there exists $\hat C>0$ such that for all $n\in\N\,\,$
$W(u_n)\leq \hat C$.  Because the constant $\hat C$ is integrable on
$\T^2$ we can use the dominated convergence theorem (or the reverse
Fatou's lemma) and the continuity of $W$ to deduce
\[
\underset{n\to\infty}\limsup\, \int_{\T^2} W(u_n) \leq \int_{\T^2}
\underset{n\to\infty}\limsup\, W(u_n) = \int_{\T^2} W(u)
\]
(or use $\int_{\T^2} \big(W(u_n)-W(u)\big)\,dx = \int_{\T^2}
\int_{u_n}^u W'(s)\,ds \,dx \leq C \int_{\T^2} |u_n-u|\,dx$).

Combining the two inequalities above leads to
\begin{align}
\notag
&
\underset{n\to\infty}\limsup\, k_{N_n, \e}(u_n)  \\
&
\leq \e\,
\underset{n\to\infty}\limsup\, \int_{\T^2} \left(D_{N_n}^1
u_n\right)^2 + \left(D_{N_n}^2 u_n\right)^2 + \e^{-1}\,
\underset{n\to\infty}\limsup\, \int_{\T^2} W(u_n) \notag\\
&\leq \e \int_{\T^2} |\nabla u|^2 + \e^{-1} \int_{\T^2} W(u) =
k_{\infty, \e}(u).\label{eq:resultforcontu}
\end{align}
In the terminology of $\Gamma$-upper limit of
Section~\ref{sec:explainGamma} we have proven that $k_{\infty,\e}''(u)
\leq k_{\infty,\e}(u)$ for $u\in C^\infty(\T^2)$.  Since
$C^\infty(\T^2)$ is dense in $W^{1,2}(\T^2)$ (using $W^{1,2}(\T^2)$
convergence) we use the lower semicontinuity of the $\Gamma$-upper
limit to conclude (UB') for all $u \in\nobreak W^{1,2}(\T^2)$ as
follows.  Let $\{u_n\}_{n=1}^\infty\subset C^\infty(\T^2)$ be a
sequence such that $u_n\to u$ in $W^{1,2}(\T^2)$ as $n\to\infty$, then
it also converges in $L^2(\T^2)$, hence
\[
k_\e''(u) \leq \underset{n\to\infty}\liminf\, k_\e''(u_n) \leq
\underset{n\to\infty}\liminf\, k_{\infty,\e}(u_n).
\]
Up to taking a subsequence $u_n\to u$ pointwise a.e., hence $W(u_n)\to
W(u)$ pointwise a.e. Thus by possibly redefining $u$ on a set of
measure zero for $n$ large enough we have that $W(u_n) \leq W(u) +
\tilde C$ for some $\tilde C>0$.  We can assume $k_{\infty,\e}(u) <
\infty$, hence $W(u)$ is integrable on $\T^2$.  Now we can again use
the dominated convergence theorem or the reverse Fatou lemma and the
continuity of $W$ to find
\[
\underset{n\to\infty}\limsup\, \int_{T^2} W(u_n) \leq \int_{\T^2}
\underset{n\to\infty}\limsup\, W(u_n) = \int_{\T^2} W(u).
\]
Since $u_n\to u$ in $W^{1,2}(\T^2)$ we have  
$$
\underset{n\to\infty}\limsup\, \int_{\T^2} |\nabla u_n|^2 =
\int_{\T^2} |\nabla u|^2, 
$$
 hence,
\[
k_\e''(u) \leq \underset{n\to\infty}\liminf\, k_{\infty,\e}(u_n) \leq
\underset{n\to\infty}\limsup\, k_{\infty,\e}(u_n) \leq
k_{\infty,\e}(u).
\eqno{\qed}
\]

\noindent
{\bf Proof of Theorem~\ref{thm:GammaconvergenceforkNe}.}
Combining Lemmas~\ref{lem:klowerbound} and~\ref{lem:kupperbound}
proves the $\Gamma$-convergence result.  Note in particular that we
have proven the lower bound for sequences converging in $L^1(\T^2)$
and the recovery sequence for the upper bound converges in
$L^2(\T^2)$, hence we can conclude $\Gamma$-convergence in both
topologies.
\qed

\smallskip

Using a technique from \cite[5.8.2 Theorem 3]{Evans02} we also get
compactness for $k_{N,\e}$.

\smallskip

\noindent
{\bf Proof of Theorem~\ref{thm:compactnessforkNe}.}
In what follows $k\in\{1,2\}$.  By (\ref{eq:kNereformulation}) we have
for all $n\in\N\,\,$ $u_n\in \mathcal{A}_{N_n}$ and
\[
\e \int_{\T^2} \left[(D_{N_n}^1 u_n)^2 + (D_{N_n}^2 u_n)^2\right] +
\e^{-1} \int_{\T^2} W(u_n) \leq C.
\]
By assumption $(W_2)$ on $W$ we find that
$\{\|u_n\|_{L^2(\T^2)}\}_{n=1}^\infty$ is uniformly boun\-ded, hence
there is a subsequence of $\{u_n\}_{n=1}^\infty$ (again labelled by
$n$) and a $u\in L^2(\T^2)$ such that $u_n \rightharpoonup u$ in
$L^2(\T^2)$ as $n\to\infty$.  Moreover we see that $\{\|D_{N_n}^k
u_n\|_{L^2(\T^2)}\}_{n=1}^\infty$ is uniformly bounded and hence there
is a further subsequence $\{u_{n'}\}_{n'=1}^\infty \subset
\{u_n\}_{n=1}^\infty$ and a $w\in L^2(\T^2;\R^2)$ such that
$D_{N_{n'}}^k u_n \rightharpoonup w_k$ in $L^2(\T^2)$ as
$n'\to\infty$. 
Let $\phi\in C_c^\infty(\T^2)$, then
\begin{align*}
\int_{\T^2} u_{n'} D_{N_{n'}}^k\phi &= N_{n'} \int_{\T^2} u_{n'}(x)
[\phi(x+e_k/N_{n'})-\phi(x)]\,dx\\
 &= N_{n'} \int_{\T^2} [u_{n'}(x-e_k/N_{n'})-u_{n'}(x)] \phi(x) dx =
 -\int_{\T^2} \phi  D_{N_{n'}}^k u_{n'}.
\end{align*}
By (\ref{eq:Taylorfinitediff}) we have $ \int_{\T^2} \big(D_{N_{n'}}^k
\phi - \nabla \phi\cdot e_k\big)^2 = K N_{n'}^{-2}, $ for some $K>0$,
hence $D_{N_{n'}}^k \phi \to \nabla \phi \cdot e_k$ in $L^2(\T^2)$ as
$n'\to\infty$.  Combining this strong convergence for the difference
quotient of $\phi$ with the weak convergence for $u_{n'}$ and its
difference quotient we deduce
\[
\int_{\T^2} u \nabla \phi \cdot e_k = \underset{n'\to\infty}\lim\,
\int_{\T^2} u_{n'} D_{N_{n'}}^k\phi = -\underset{n'\to\infty}\lim\,
\int_{\T^2} \phi\, D_{N_{n'}}^k u_{n'} = -\int_{\T^2} \phi\, w_k.
\]
Hence $|\nabla u| = |w| \in L^2(\T^2)$.  We conclude that $u\in
W^{1,2}(\T^2)$.

Finally, the strong convergence $u_{n'} \to u$ in $L^2(\T^2)$ follows
from the bound on $D_{N_{n'}}^k u_n$ by a discrete version of the
Rellich-Kondrachov compactness theorem (see Lemma~\ref{lem:discreteRK}
in Appendix~\ref{sec:deferredproofs} for details).
\qed

\subsection{Simultaneous scaling $\Gamma$-limit for
$k_N^\alpha$}\label{sec:simulscalekNalpha}

In Section~\ref{sec:proofk} we studied the $\Gamma$-limits of
$k_{N,\e}$ by first taking $N\to\infty$ and then $\e\to 0$.  We will
now show we can take both limits at once if we scale $\e$ correctly in
terms of $N$.  This is particularly relevant for numerical
applications.  We set $\e=N^{-\alpha}$ for some $\alpha\in
(0,\frac2{q+3})$ where $q$ is the degree of polynomial growth of $W'$
in condition $(W_4)$ and take the limit $N\to\infty$ of $k_N^\alpha$
in (\ref{eq:kN}).

Note that in contrast to the case for $h_N^\alpha$ the order of the
limits $\e \to 0$ and $N\to\infty$ are reversed and thus we have an
upper bound on $\alpha$ instead of a lower bound.

 We prove a compactness and $\Gamma$-convergence result.

\begin{theorem}[Compactness]\label{thm:kNcompactness}
Assume $W$ satisfies $(W_3)$ and $(W_4)$ for given $p\geq 2$ and $q>0$
and $\alpha\in (0,\frac2{q+3})$.  Let $\{N_n\}_{n=1}^\infty\subset\N$
satisfy $N_n\to \infty$ as $n\to\infty$ and let $\{u_n\}_{n=1}^\infty
\subset L^1(\T^2)$ be a sequence for which there is a constant $C>0$
such that for all $n\in\N$
$
k_{N_n}^\alpha(u_n) \leq C.
$
Then there exists a subsequence $\{u_{n'}\}_{n'=1}^\infty \subset
\{u_n\}_{n=1}^\infty$ and a $u\in BV(\T^2,\{0,1\})$ such that
$u_{n'}\to u$ in $L^2(\T^2)$ as $n'\to\infty$.
\end{theorem}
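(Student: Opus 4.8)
The plan is to transfer the problem to the continuum by comparing $k_{N_n}^\alpha$ with the classical Ginzburg--Landau functional $k_{\infty,\e}$, for which both $\Gamma$-convergence to $k_{\infty,0}$ and equi-coercivity as $\e\to0$ are recorded in Section~\ref{sec:proofk}. Set $\e_n:=N_n^{-\alpha}\to0$, identify each $u_n$ with its piecewise constant representative in $\mathcal{A}_{N_n}$, and use the reformulation (\ref{eq:kNereformulation}) with $\e=\e_n$. The bound $k_{N_n}^\alpha(u_n)\le C$ then splits into a gradient bound $G_n:=\sum_{i,j=1}^{N_n}\big[(u_{i+1,j}-u_{i,j})^2+(u_{i,j+1}-u_{i,j})^2\big]\le CN_n^{\alpha}$ and a potential bound $\int_{\T^2}W(u_n)=N_n^{-2}\sum_{i,j}W(u_{i,j})\le CN_n^{-\alpha}\to0$. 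From the lower bound $W(s)\ge c_1|s|^p$ for $|s|\ge R_0$ in $(W_3)$ and the potential bound one gets $\int_{\T^2}|u_n|^p\le R_0^p+c_1^{-1}C$, so $\{u_n\}$ is bounded in $L^p(\T^2)$, hence (as $p\ge2$) in $L^2(\T^2)$. Since $W$ is continuous, nonnegative, vanishes only at $\{0,1\}$, and is coercive at infinity by $(W_3)$, there is $\eta_\delta>0$ with $W\ge\eta_\delta$ on $\{\,\mathrm{dist}(\cdot,\{0,1\})\ge\delta\,\}$; together with $\int_{\T^2}W(u_n)\to0$ this forces $\mathrm{dist}(u_n,\{0,1\})\to0$ in measure.

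\textbf{The crux} is to promote the discrete bound to a continuum Ginzburg--Landau bound. Let $\hat u_n\in W^{1,\infty}(\T^2)\subset W^{1,2}(\T^2)$ be the continuous piecewise affine interpolant of $(u_{i,j})$ over a standard triangulation of $G_{N_n}$. Then $\int_{\T^2}|\nabla\hat u_n|^2\le C_0 G_n$, hence $\e_n\int_{\T^2}|\nabla\hat u_n|^2\le C_0\e_n G_n\le C_0C$, and $\|\hat u_n-u_n\|_{L^2(\T^2)}^2\le C_0'N_n^{-2}G_n\le C_0'CN_n^{\alpha-2}\to0$. The delicate term is the discretization error $\mathrm{Err}_n:=\int_{\T^2}W(\hat u_n)-\int_{\T^2}W(u_n)$: on each grid cell a mean value estimate bounds $|W(\hat u_n)-W(u_{i,j})|$ by $\big(\sup|W'|\text{ over an interval of length }\le\rho_{i,j}\big)\,\rho_{i,j}$, where $\rho_{i,j}$ is the cell oscillation and $\sum_{i,j}\rho_{i,j}^2\le C_0''G_n$; $(W_4)$ bounds that supremum by a power of the local grid values, and splitting the sum according to their size --- using the $\rho_{i,j}^2$ bound where the values are moderate and the potential bound where they are large, together with Cauchy--Schwarz --- yields a bound $\mathrm{Err}_n\le \mathrm{const}\cdot N_n^{-\theta}$, and the hypothesis $\alpha<\frac{2}{q+3}$ is exactly what makes $\theta>\alpha$, so that $\e_n^{-1}\mathrm{Err}_n=N_n^{\alpha}\mathrm{Err}_n\to0$ and
\[
k_{\infty,\e_n}(\hat u_n)=\e_n\int_{\T^2}|\nabla\hat u_n|^2+\e_n^{-1}\int_{\T^2}W(\hat u_n)\le C_0C+C+\e_n^{-1}\mathrm{Err}_n\le C'
\]
for all large $n$. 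This error bookkeeping is the main obstacle: because the first term of $k_N^\alpha$ is scaled to the Dirichlet energy of the \emph{interpolant} rather than to the discrete gradient of the piecewise constant function, a direct ``discrete Modica trick'' on $u_n$ fails, and the whole argument rests on making $\e_n^{-1}\mathrm{Err}_n\to0$, which is precisely what the restricted range of $\alpha$ secures.

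Finally, by the equi-coercivity of $\{k_{\infty,\e}\}_\e$ from Section~\ref{sec:proofk} there is a subsequence with $\hat u_{n'}\to u$ in $L^1(\T^2)$, and $u\in BV(\T^2;\{0,1\})$ because the $\Gamma$-limit $k_{\infty,0}$ --- which satisfies $k_{\infty,0}(u)\le\liminf_{n'}k_{\infty,\e_{n'}}(\hat u_{n'})\le C'$ --- is finite only there. Since $\|\hat u_n-u_n\|_{L^1(\T^2)}\to0$, also $u_{n'}\to u$ in $L^1(\T^2)$, hence, after a further subsequence, a.e. To upgrade to $L^2$: $\{u_n\}$ is bounded in $L^p$ with $p\ge2$ and has uniformly small tails --- $\int_{\{|u_n|\ge R\}}|u_n|^2\le R^{2-p}\int_{\T^2}|u_n|^p\to0$ as $R\to\infty$ if $p>2$, while if $p=2$ then $\int_{\{|u_n|\ge R\}}|u_n|^2\le c_1^{-1}\int_{\T^2}W(u_n)\to0$ --- so $\{|u_n|^2\}$ is uniformly integrable and Vitali's theorem gives $u_{n'}\to u$ in $L^2(\T^2)$. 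The convergence in measure to $\{0,1\}$-values confirms $u\in BV(\T^2;\{0,1\})$, which completes the proof.
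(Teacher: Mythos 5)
Your overall architecture matches the paper's proof: replace $u_n$ by a continuous interpolant (the paper uses the bilinear interpolant $v_n$, you use a piecewise affine one over a triangulation --- an immaterial difference), show $k_{N_n}^\alpha(u_n)\ge F_{N_n^{-\alpha}}^{GL}(\hat u_n)+R_n$ with $R_n\to0$, and invoke Modica--Mortola compactness together with $\|\hat u_n-u_n\|_{L^2(\T^2)}\to0$. The Dirichlet-energy comparison, the estimate $\|\hat u_n-u_n\|_{L^2}^2\le C N_n^{\alpha-2}$, and your Vitali/uniform-integrability upgrade to $L^2$ convergence are all fine (the paper gets the $L^2$ statement more directly from the $L^2$ Modica--Mortola compactness of $v_n$ plus the triangle inequality).

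The gap is precisely in the step you identify as the crux and then do not execute: the proof that $\e_n^{-1}\mathrm{Err}_n=N_n^{\alpha}\mathrm{Err}_n\to0$. The paper's route is to first establish the a priori bound $\|u_n\|_{L^\infty(\T^2)}=\mathcal{O}(N_n^{\alpha/2})$ by a propagation argument: if some $|(u_n)_{i,j}|\ge N_n^\gamma$ with $\gamma>\alpha/2$, then the discrete gradient bound $N_n^{-\alpha}\big((u_n)_{i,j}-(u_n)_{i+1,j}\big)^2\le C$ forces the neighbouring value, and by induction all values, to be $\Theta(N_n^\gamma)$, contradicting the potential bound via $(W_3)$. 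Only with this $L^\infty$ bound does $(W_4)$ give a global control $\sup|W'|=\mathcal{O}(N_n^{\alpha q/2})$, whence $\mathrm{Err}_n=\mathcal{O}(N_n^{\alpha q/2+\alpha/2-1})$ and $N_n^\alpha\mathrm{Err}_n\to0$ exactly when $\alpha<\frac{2}{q+3}$. Your proposed substitute --- splitting cells by the size of the local values and playing the oscillation bound $\sum\rho_{i,j}^2\lesssim N_n^{\alpha}$ against the potential bound via Cauchy--Schwarz/H\"older --- is not carried out, and if one does carry it out it requires $q<p$ and produces a threshold of the form $\alpha<\frac{2(p-q)}{3p-2q}$, which coincides with $\frac{2}{q+3}$ only when $q=p-1$ and is otherwise a different (sometimes strictly smaller) range; for $q\ge p$, which the hypotheses do not exclude, the scheme breaks down. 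So the assertion that ``$\alpha<\frac{2}{q+3}$ is exactly what makes $\theta>\alpha$'' is unsupported as written. To close the argument with the stated range of $\alpha$ you need the $L^\infty$ propagation bound (or an equivalent device), which is the one genuinely new idea in the paper's proof and is missing from yours.
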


\begin{theorem}[$\Gamma$-convergence]\label{thm:GammaconvergenceforkNalpha}
Let $W$ satisfy $(W_3)$ and $(W_4)$ for given $p,q>0$ and assume
$\alpha\in (0,\frac2{q+3})$.  Then $\displaystyle k_N^\alpha
\overset{\Gamma}{\to} k_{\infty, 0}$ as $N\to \infty$ in either the
$L^1(\T^2)$ or $L^2(\T^2)$ topology.
\end{theorem}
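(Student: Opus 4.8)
The plan is to obtain the $\Gamma$-limit by diagonalising the two limits already in hand: for fixed $\e$, Theorem~\ref{thm:GammaconvergenceforkNe} gives $k_{N,\e}\overset{\Gamma}{\to}k_{\infty,\e}$ as $N\to\infty$, and the classical Modica--Mortola theorem gives $k_{\infty,\e}\overset{\Gamma}{\to}k_{\infty,0}$ as $\e\to0$. Since $\e=\e_N:=N^{-\alpha}$ tends to $0$ while staying much larger than the mesh size $N^{-1}$ (because $\alpha<2/(q+3)<1$), the discretisation still resolves the diffuse interface, so the \emph{isotropic} total variation should survive in the limit. As in Theorem~\ref{thm:GammaconvergenceforkNe} I would prove the lower bound (LB) for sequences converging in $L^1(\T^2)$ and build a recovery sequence converging in $L^2(\T^2)$, thereby obtaining $\Gamma$-convergence in both topologies at once; throughout I identify $\mathcal V_N$ with $\mathcal A_N$ and extend $k_N^\alpha$ by $+\infty$ off $\mathcal A_N$, as in Section~\ref{sec:proofk}.

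For the lower bound, let $u_n\to u$ in $L^1(\T^2)$ with $\sup_n k_{N_n}^\alpha(u_n)<\infty$; without loss of generality $u_n\in\mathcal A_{N_n}$. From $N_n^\alpha\int_{\T^2}W(u_n)\le C$ we get $\int_{\T^2}W(u_n)\to0$, hence (Fatou, continuity of $W$, pointwise a.e.\ convergence along a subsequence) $\int_{\T^2}W(u)=0$; combined with the uniform $BV$ bound supplied by Theorem~\ref{thm:kNcompactness} this gives $u\in BV(\T^2;\{0,1\})$, and if $u\notin BV(\T^2;\{0,1\})$ the inequality is trivial once the contradiction argument of Lemma~\ref{lem:hlowerbound} is invoked. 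For $u\in BV(\T^2;\{0,1\})$ I would adapt the slicing/interpolation argument of Lemma~\ref{lem:klowerbound}: pass to the affine-in-$x$ and affine-in-$y$ interpolants $v_n^1,v_n^2\in BV(\T^2)$ of $u_n$, so that $\int_{\T^2}(D_{N_n}^1u_n)^2=\int_{\T^2}(\partial_xv_n^1)^2$, $\int_{\T^2}(D_{N_n}^2u_n)^2=\int_{\T^2}(\partial_yv_n^2)^2$ and $v_n^k\to u$ in $L^1$. Since the potential term is only sampled at grid nodes, I would first replace $W(u_n)$ by $W(v_n^k)$ on each square at a controlled cost: on $S_{N_n}^{i,j}$ one has $|v_n^k-u_n|\le N_n^{-1}|D_{N_n}^ku_n|$, whence by $(W_4)$
\[
\big|W(v_n^k)-W(u_n)\big|\ \lesssim\ \big(1+|(u_n)_{i,j}|^q+N_n^{-q}|D_{N_n}^ku_n|^q\big)\,N_n^{-1}|D_{N_n}^ku_n|.
\]
Summing over $(i,j)$ and estimating $\sum|D_{N_n}^ku_n|^\gamma$ and $\sum|(u_n)_{i,j}|^\gamma$ via the energy bound together with the growth bounds of $(W_3)$ (which convert $\int_{\T^2}W(u_n)\le CN_n^{-\alpha}$ into $L^p$-type bounds on $u_n$), one checks that this total error tends to $0$ precisely in the range $\alpha<2/(q+3)$; this bookkeeping is the main technical obstacle. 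Once the potential is expressed through the interpolants I would run the Modica trick keeping the two difference directions \emph{coupled}, e.g.\ via the pointwise inequality $\e a^2+\e b^2+\e^{-1}c\ge 2\sqrt{(a^2+b^2)c}$ (applied with $a=\partial_xv_n^1$, $b=\partial_yv_n^2$, $c=W(v_n^k)$) --- this is exactly what produces the isotropic $\int_{\T^2}|\nabla u|$ rather than the anisotropic $\int_{\T^2}|u_x|+|u_y|$ found for $h_N^\alpha$ --- or equivalently via a localised optimal split of the potential term between the two slicing directions in the spirit of \cite{AlicandroCicalese04}, together with the $1$D Modica--Mortola lower bound on slices and lower semicontinuity, concluding $\liminf_n k_{N_n}^\alpha(u_n)\ge\sigma(W)\int_{\T^2}|\nabla u|=k_{\infty,0}(u)$.

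For the upper bound, given $u\in BV(\T^2;\{0,1\})$ I would take the classical Modica--Mortola recovery sequence $w_\e$ --- built from the optimal one-dimensional profile $\eta'=\sqrt{W(\eta)}$ across a smoothed copy of $\partial^*\{u=1\}$ and chosen smooth on $\T^2$ --- so that $w_\e\to u$ in $L^2$ and $k_{\infty,\e}(w_\e)\to k_{\infty,0}(u)$ as $\e\to0$, and set $u_n$ equal to the grid sampling of $w_{\e_{N_n}}$ on $G_{N_n}$, identified with its element of $\mathcal A_{N_n}$. Then $u_n\to u$ in $L^2$, and --- exactly as in the proof of Lemma~\ref{lem:kupperbound}, using the trapezoidal rule for the potential integral and the finite-difference estimate \eqref{eq:Taylorfinitediff} for the gradient term --- one obtains $k_{N_n}^\alpha(u_n)=k_{\infty,\e_{N_n}}(w_{\e_{N_n}})+\mathcal O\!\big(N_n^{-1}\,\mathrm{poly}(\e_{N_n}^{-1})\big)$, where the polynomial factor comes from the (a priori unbounded) derivatives of $w_\e$ and of $W\circ w_\e$; using $(W_3)$ and $(W_4)$ to bound $W'$ and $W''$ one sees these derivatives blow up only polynomially in $\e^{-1}=N^\alpha$, so the error vanishes for $\alpha$ in the stated range and $\limsup_n k_{N_n}^\alpha(u_n)\le\lim_n k_{\infty,\e_{N_n}}(w_{\e_{N_n}})=k_{\infty,0}(u)$. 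For $u\in L^1(\T^2)\setminus BV(\T^2;\{0,1\})$ the bound is trivial. Combining the lower bound (proved in $L^1$) with the $L^2$-convergent recovery sequence gives $k_N^\alpha\overset{\Gamma}{\to}k_{\infty,0}$ in both topologies. The crux throughout is the quantitative control of how far $W$ and its derivatives deviate between neighbouring grid points, and this is exactly what forces the restriction $\alpha<2/(q+3)$.
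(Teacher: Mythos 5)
Your overall strategy --- reduce $k_N^\alpha$ to the continuum functional $F^{GL}_{N^{-\alpha}}$ evaluated on an interpolant (for the lower bound) and discretize a Modica--Mortola recovery sequence (for the upper bound), with the restriction on $\alpha$ coming from the potential-term error between grid points --- is the same as the paper's. However, two of your key steps do not close as written.

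For the lower bound, you propose to use the two \emph{decoupled} one-directional interpolants $v_n^1,v_n^2$ of Lemma~\ref{lem:klowerbound} and then apply the coupled Modica inequality $\e a^2+\e b^2+\e^{-1}c\ge 2\sqrt{(a^2+b^2)c}$ with $a=\partial_x v_n^1$, $b=\partial_y v_n^2$. This does not yield the isotropic total variation: the pair $(\partial_x v_n^1,\partial_y v_n^2)$ is not the gradient of a single function, so $2\sqrt{W}\sqrt{a^2+b^2}$ cannot be identified with $|\nabla(\phi\circ v_n)|$ for $\phi'=2\sqrt W$, and the lower semicontinuity step that produces $\sigma(W)\int_{\T^2}|\nabla u|$ has nothing to act on. (Your alternative of an optimally split two-directional slicing has the same defect, and in any case two-directional slicing naturally produces the anisotropic functional unless one slices in all directions.) The paper avoids this entirely by using the \emph{bilinear} interpolant $v_n$ of $u_n$, a single $W^{1,2}(\T^2)$ function for which an explicit computation gives $\int_{\T^2}(\partial_x v_n)^2\le\sum_{i,j}\big((u_n)_{i+1,j}-(u_n)_{i,j}\big)^2$ and likewise in $y$; together with the a priori bound $\|u_n\|_{L^\infty}=\mathcal O(N_n^{\alpha/2})$ (obtained by induction over the squares using the energy bound and $(W_3)$) and the estimate $R_n=N_n^\alpha\int_{\T^2}(W(u_n)-W(v_n))=\mathcal O(N_n^{\alpha(q+3)/2-1})$ from $(W_4)$, one gets $k_{N_n}^\alpha(u_n)\ge F^{GL}_{N_n^{-\alpha}}(v_n)+R_n$ and then invokes the full two-dimensional Modica--Mortola liminf as a black box. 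If you want to keep your route you must replace the two interpolants by one.

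For the upper bound, sampling the recovery sequence $w_\e$ at the grid points and invoking the trapezoidal rule and \eqref{eq:Taylorfinitediff} requires second derivatives of $w_\e$, which the standard (Lipschitz) profile does not have; even after smoothing, the resulting error in the Dirichlet term is of order $N^{-1}$ times $\|\partial_x w_\e\|_\infty\|\partial_{xx}w_\e\|_\infty\sim\e^{-3}=N^{3\alpha}$ on an interfacial layer of width $\e$, which you have not checked vanishes on the whole range $\alpha<\frac2{q+3}$ (for small $q$ this range extends up to $\frac23$, where such a crude bound fails). The paper sidesteps this by defining $u_n$ on each square as the value of $w_{\e_{N_n}}$ at the point of $\overline{S_{N_n}^{i,j}}$ minimizing $(D^1_{N_n}w)^2+(D^2_{N_n}w)^2$, so that the discrete Dirichlet energy is dominated \emph{exactly} by $\int_{\T^2}|\nabla w_{\e_{N_n}}|^2$ via Jensen and Fubini on the torus, and only the potential term carries an error $\mathcal O(K_nN_n^{-1})=\mathcal O(N_n^{\alpha-1})$, needing just $\alpha<1$. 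You would need either this device or a genuinely quantified version of your Taylor argument.
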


Using the difference quotient notation from
(\ref{eq:differencequotient}) we can mimic (\ref{eq:kNereformulation})
and write
\[
k_{N_n}^\alpha(u) = \left\{\begin{array}{ll} N^{-\alpha} \int_{\T^2}
\left[(D_N^1u)^2 + (D_N^2u)^2\right] + N^{\alpha} \int_{\T^2} W(u) &
\text{if } u \in \mathcal{A}_N,\\
+\infty & \text{otherwise}.  \end{array}\right.
\]

The proofs of this section make repeated use of the Modica-Mortola
results \cite{ModicaMortola77,Modica87a,Modica87b,Sternberg88} which
show ($L^1$ and $L^2$) compactness and convergence\footnote{As
remarked in an earlier footnote, results are usually stated in the
$L^1$ topology, but for example \cite{RenWei00,Braides02,ChoksiRen05}
note that the $\Gamma$-convergence can be stated in the $L^2$ topology
as well.  Compactness in $L^2$ follows from compactness in $L^1$
combined with the binary nature of the limit function.  Finally note
that the original results on bounded domains are easily adapted for
the torus.  Compactness is not hindered by the periodicity because no
regularity beyond $BV(\T^2; \{0,1\})$ is needed for the limit.  The
lower bound generalizes immediately by restriction to sequences of
periodic functions.  The important properties of the recovery sequence
for the upper bound are local properties near the boundary of $\supp
u$ and so are also satisfied on a periodic domain.} for
$F_{N^{-\alpha}}^{GL} \overset{\Gamma}{\to} F_0^{GL}$ as $N\to
\infty$.  Note that condition $(W_3)$ with $p\geq 2$ on the double
well potential $W$ is needed for the compactness result to hold (see
\textit{e.g.} \cite[Proposition 3]{Sternberg88}).

\smallskip

\noindent
{\bf Proof of Theorem~\ref{thm:kNcompactness}.}
Let $\{u_n\}_{n=1}^\infty \subset L^1(\T^2)$ be a sequence such that
$k_{N_n}^\alpha(u_n) \leq C$.  Below we will prove the claim that
there is a sequence $\{v_n\}_{n=1}^\infty \subset W^{1,2}(\T^2)$ such
that $\|v_n-u_n\|_{L^2(\T^2)}\to 0$ as $n \to \infty$ and
\begin{equation}\label{eq:comparekunkvn}
k_{N_n}^\alpha(u_n) \geq F_{N_n^{-\alpha}}^{GL}(v_n) + R_n, \quad
\underset{n\to\infty}\lim\, R_n = 0.
\end{equation}
Given the veracity of this claim, it follows by the Modica-Mortola
compactness result for $F_{N_n^{-\alpha}}^{GL}$
\cite{ModicaMortola77,Modica87a,Modica87b,Sternberg88} that there is a
subsequence of $\{v_n\}_{n=1}^\infty$ (again labeled by $n$) such that
$v_n \to u$ in $L^2(\T^2)$ for a $u\in BV(\T^2,\{0,1\})$ (here we need
condition $(W_3)$ on $W$ with $p\geq 2$).  Using the triangle
inequality and $\|v_n-u_n\|_{L^2(\T^2)}\to 0$ as $n \to \infty$ we
then conclude that there is a subsequence of $\{u_n\}_{n=1}^\infty$
converging in $L^2(\T^2)$ to $u$.

To prove the claim, first we show that $\|u_n\|_{L^\infty(\T^2)} =
\mathcal{O}(N_n^{\alpha/2})$.  Assume not and let
$\gamma>\frac\alpha2$, then there is a subsequence (labeled again by
$n$) such that for each $n$ there is a square $S_{N_n}^{i,j}$ on which
$|(u_n)_{i,j}|\geq N_n^\gamma$.  For definiteness assume $(u_n)_{i,j}
= N_n^\gamma$.  By the uniform bound on $k_{N_n}^\alpha(u_n)$ we have
\begin{equation}\label{eq:determineungrowth}
N_n^{-\alpha} (N_n^\gamma-(u_n)_{i+1,j})^2 \leq C,
\end{equation}
hence $u_{i+1,j} = \Theta(N_n^\gamma)$.  By induction over all the
squares $S_{N_n}^{i,j}$ we find that $u_n = \Theta(N_n^\gamma)$.
Therefore $\|u_n\|_{L^p(\T^2)}^p = \Theta(N_n^{p\gamma})$, but by the
coercivity condition $(W_3)$ on $W$ (for any $p>0$) the uniform bound
on $k_{N_n}^\alpha(u_n)$ demands
\[
c_1 \|u_n\|_{L^p(\T^2)}^p \leq \int_{\T^2} W(u_n) \leq C
N_n^{-\alpha},
\]
which is a contradiction.

Now for any $u_n$ let $v_n$ be its bilinear interpolation: For $(x,y)
\in S_{N_n}^{i,j}$ define
\begin{align*}
v_n(x, & y)  := N_n^2  \Big [ (u_n)_{i,j} \Big (\frac{i+1}{N_n}-x \Big )
 \Big (\frac{j+1}{N_n} - y \Big )  \\
&
 + (u_n)_{i+1,j}
 \Big (x-\frac{i}{N_n} \Big )  \Big (\frac{j+1}{N_n} - y \Big )\\
&  + (u_n)_{i,j+1}  \Big (\frac{i+1}{N_n}-x \Big )
 \Big (y-\frac{j}{N_n} \Big ) + (u_n)_{i+1,j+1}
 \Big (x-\frac{i}{N_n} \Big )  \Big (y-\frac{j}{N_n} \Big )  \Big ],
\end{align*}
where for notational convenience we have identified $u_n\in
\mathcal{A}_{N_n}$ with its counterpart in $\mathcal{V}_{N_n}$.  Thus
defined $v_n$ is continuous and $\|v_n\|_{L^\infty(\T^2)} =
\|u_n\|_{L^\infty(\T^2)}$.  A straightforward computation shows
\begin{align*}
&  \int_{\T^2} (v_n-u_n)^2 
 = \frac1{18} N_n^{-2} \sum_{i,j=1}^{N_n}  \Big [ \frac72
\big((u_n)_{i,j}-(u_n)_{i+1,j}\big)^2  \\
&
\quad
+ \frac72
\big((u_n)_{i,j}-(u_n)_{i,j+1)}\big)^2 + 4
\big((u_n)_{i,j}-(u_n)_{i+1,j+1}\big)^2 
 \\
&
\quad
- \frac12 \big((u_n)_{i+1,j}-(u_n)_{i,j+1}\big)^2 -
\big((u_n)_{i+1,j}-(u_n)_{i+1,j+1}\big)^2 
\\
&
\qquad
-
\big((u_n)_{i,j+1}-(u_n)_{i+1,j+1}\big)^2 \Big ] .
\end{align*}
First we note that there is a $C>0$ such that
\begin{align*}
&
\big((u_n)_{i,j}-(u_n)_{i+1,j+1}\big)^2 
\\
&\leq C
 \Big [\big((u_n)_{i,j}-(u_n)_{i+1,j}\big)^2+\big((u_n)_{i+1,j}-(u_n)_{i+1,j+1}\big)^2 \Big ],\\
&
\big((u_n)_{i+1,j}-(u_n)_{i,j+1}\big)^2 
\\
 &\leq C
 \Big [\big((u_n)_{i+1,j}-(u_n)_{i,j}\big)^2+\big((u_n)_{i,j}-(u_n)_{i,j+1}\big)^2 \Big ].
\end{align*}
Next we use periodicity to deduce
\[
\sum_{i,j=1}^{N_n} \big((u_n)_{i+1,j}-(u_n)_{i+1,j+1}\big)^2 =
\sum_{i,j=1}^{N_n} \big((u_n)_{i,j}-(u_n)_{i,j+1}\big)^2,
\]
and analogously for similar terms.  Using the uniform bound on
$k_{N_n}^\alpha(u_n)$ we find
\begin{align*}
\int_{\T^2} (v_n-u_n)^2 
&
\leq C N_n^{-2} \sum_{i,j=1}^{N_n}
\Big [\big((u_n)_{i+1,j}-(u_n)_{i,j}\big)^2+\big((u_n)_{i,j+1}-(u_n)_{i,j}\big)^2\Big ]
\\
& \leq C N_n^{-2+\alpha}.
\end{align*}
Thus, $\|v_n-u_n\|_{L^2(\T^2)}\to 0$ as $n \to \infty$.
Another computation gives
\begin{align*}
\int_{\T^2}  \Big (\frac{\partial v_n}{\partial x} \Big )^2 &= \frac13
\sum_{i,j=1}^{N_n}
 \Big [\big((u_n)_{i+1,j}-(u_n)_{i,j}\big)^2+\big((u_n)_{i+1,j+1}-(u_n)_{i,j+1}\big)^2\\
&\hspace{1.8cm}
+\big((u_n)_{i+1,j}-(u_n)_{i,j}\big)\big((u_n)_{i+1,j+1}-(u_n)_{i,j+1}\big) \Big ]
.
\end{align*}
Using periodicity as above in combination with the inequality
\begin{align*}
&
\big((u_n)_{i+1,j}-(u_n)_{i,j}\big)\big((u_n)_{i+1,j+1}-(u_n)_{i,j+1}\big)
\\
&
\leq \frac12
 \Big (\big((u_n)_{i+1,j}-(u_n)_{i,j}\big)^2+\big((u_n)_{i+1,j+1}-(u_n)_{i,j+1}\big)^2 \Big ) 
\end{align*}
we deduce
\[
\int_{\T^2}  \Big (\frac{\partial v_n}{\partial x} \Big )^2 \leq
\sum_{i,j=1}^{N_n} \big((u_n)_{i+1,j}-(u_n)_{i,j}\big)^2
\]
and the analogous result for $\int_{\T^2} \big (\frac{\partial
v_n}{\partial y} \big )^2$.

Finally we note that 
$$
N_n^\alpha \int_{\T^2} W(u_n) = N_n^\alpha
\int_{\T^2} W(v_n) + R_n ,
$$
 where
\begin{align}
\label{eq:resttermRn}
R_n  & = N_n^\alpha \int_{\T^2} \big (W(u_n)-W(v_n)\big ) \leq N_n^\alpha
M_n^{W'} \|u_n-v_n\|_{L^1(\T^2)} \\
&
\notag
\leq N_n^\alpha M_n^{W'}
\|u_n-v_n\|_{L^2(\T^2)} \leq M_n^{W'} N_n^{\frac32\alpha-1}.
\end{align}
Here 
$$
M_n^{W'} := \underset{x\in\T^2}\max\, \underset{s\in
[v_n(x),u_n(x)]}\max\, |W'(s)| . 
$$
 By construction, since
$\|u_n\|_{L^\infty(\T^2)} = \mathcal{O}(N_n^{\alpha/2})$, we have
$$
\|v_n\|_{L^\infty(\T^2)} = \mathcal{O}(N_n^{\alpha/2}) . 
$$
 Hence, the
maximum over $s$ is achieved for some $s=\mathcal{O}(N_n^{\alpha/2})$.
By the regularity of $W'$ and its polynomial growth condition $(W_4)$
we then have $M_n^{W'} = \mathcal{O}(N_n^{\alpha q/2})$, hence $R_n
\to 0$ as $n\to\infty$ by the choice of $\alpha$.

The inequalities above prove the claim and hence finish the proof.
\qed

\begin{remark}\label{rem:kNcompactnessL2}
Clearly, the $L^2(\T^2)$ convergence in Theorem~\ref{thm:kNcompactness}
can be replaced by $L^1(\T^2)$ convergence if desired.
\end{remark}

\begin{lemma}[Lower bound]\label{lem:klowerboundallNs}
Assume $W$ satisfies $(W_3)$ and $(W_4)$ for given $p>0$ and $q>0$ and
$\alpha\in (0,\frac2{q+3})$.  Let $u\in L^1(\T^2)$ and let
$\{u_n\}_{n=1}^\infty \subset L^1(\T^2)$ and
$\{N_n\}_{n=1}^\infty\subset\N$ be such that $u_n \to u$ in
$L^1(\T^2)$ and $N_n \to \infty$ as $n\to \infty$, then
\[
k_{\infty,0}(u) \leq \underset{n\to\infty}\liminf\,
k_{N_n}^\alpha(u_n).
\]
\end{lemma}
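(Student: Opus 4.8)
Here is the plan. The idea is to reduce the statement to the classical Modica--Mortola lower bound by comparing $k_{N_n}^\alpha(u_n)$ with the continuum Ginzburg--Landau functional evaluated on a piecewise-bilinear interpolant of $u_n$, reusing the estimates already established in the proof of Theorem~\ref{thm:kNcompactness}.

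First I would dispose of the trivial case: if $\underset{n\to\infty}\liminf\, k_{N_n}^\alpha(u_n) = \infty$ there is nothing to prove, so assume it is finite. As explained in Section~\ref{sec:explainGamma}, it suffices to prove the inequality along a subsequence realizing this $\liminf$, and then to pass to a further subsequence; relabelling, we may therefore assume that $k_{N_n}^\alpha(u_n) \to \underset{n\to\infty}\liminf\, k_{N_n}^\alpha(u_n)$ and that there is a $C>0$ with $k_{N_n}^\alpha(u_n) \leq C$ for all $n$. In particular $u_n \in \mathcal{A}_{N_n}$ for every $n$.

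Next, let $v_n \in W^{1,2}(\T^2)$ be the bilinear interpolation of $u_n$ constructed in the proof of Theorem~\ref{thm:kNcompactness}. That proof already establishes, using $(W_3)$ for the bound $\|u_n\|_{L^\infty(\T^2)} = \mathcal{O}(N_n^{\alpha/2})$, that (i) $\|v_n-u_n\|_{L^2(\T^2)} \to 0$, so that together with $u_n\to u$ in $L^1(\T^2)$ also $v_n \to u$ in $L^1(\T^2)$; (ii) $\int_{\T^2} |\nabla v_n|^2 \leq \int_{\T^2}\big[(D_{N_n}^1 u_n)^2 + (D_{N_n}^2 u_n)^2\big]$; and (iii) $N_n^\alpha \int_{\T^2} W(u_n) = N_n^\alpha \int_{\T^2} W(v_n) + R_n$ with $R_n \to 0$, where $(W_4)$ and the restriction $\alpha \in (0,\tfrac{2}{q+3})$ are precisely what forces $R_n\to 0$. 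Combining (ii), multiplied by $N_n^{-\alpha}$, with (iii) yields the comparison
\[
k_{N_n}^\alpha(u_n) \;\geq\; F_{N_n^{-\alpha}}^{GL}(v_n) + R_n, \qquad R_n \to 0 .
\]
Since $R_n\to 0$ this gives $\underset{n\to\infty}\liminf\, k_{N_n}^\alpha(u_n) \geq \underset{n\to\infty}\liminf\, F_{N_n^{-\alpha}}^{GL}(v_n)$. Applying the classical Modica--Mortola lower bound for $F_\e^{GL} \overset{\Gamma}{\to} F_0^{GL} = k_{\infty,0}$ as $\e\to 0$ in the $L^1(\T^2)$ topology \cite{ModicaMortola77,Modica87a,Modica87b,Sternberg88} to $\e_n = N_n^{-\alpha}\to 0$ and to the sequence $v_n \to u$ in $L^1(\T^2)$, we get $\underset{n\to\infty}\liminf\, F_{N_n^{-\alpha}}^{GL}(v_n) \geq F_0^{GL}(u) = k_{\infty,0}(u)$ (in particular, in the finite case $u\in BV(\T^2;\{0,1\})$, while if $u\notin BV(\T^2;\{0,1\})$ the lower bound forces this $\liminf$ to be $+\infty$, consistently with $k_{\infty,0}(u)=+\infty$). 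Chaining the two inequalities proves the claim.

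The only substantive step is the comparison inequality, and its delicate point---already settled inside the proof of Theorem~\ref{thm:kNcompactness}---is controlling the error $R_n$ between $N_n^\alpha\int_{\T^2} W(u_n)$ and $N_n^\alpha\int_{\T^2} W(v_n)$: this is where the polynomial growth of $W'$ in $(W_4)$ and the \emph{upper} restriction $\alpha < \tfrac{2}{q+3}$ enter, mirroring the \emph{lower} restriction on $\alpha$ needed for $h_N^\alpha$. Everything else is routine: the reduction to a uniformly bounded subsequence and the invocation of the Modica--Mortola lower bound, which is valid under the standing assumptions on $W$ (only the companion compactness statement requires $(W_3)$ with $p\geq 2$; here the interpolants $v_n$ may grow like $N_n^{\alpha/2}$ in $L^\infty$, but this is harmless for the lower bound, exactly as in the proof of Theorem~\ref{thm:kNcompactness}).
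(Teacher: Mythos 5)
Your proposal is correct and follows essentially the same route as the paper: reduce to a uniformly bounded subsequence, invoke the comparison $k_{N_n}^\alpha(u_n) \geq F_{N_n^{-\alpha}}^{GL}(v_n) + R_n$ with the bilinear interpolants $v_n$ already established in the proof of Theorem~\ref{thm:kNcompactness}, and apply the Modica--Mortola lower bound to $v_n \to u$ in $L^1(\T^2)$. Your added remarks (why $v_n\to u$ in $L^1$, the case $u\notin BV(\T^2;\{0,1\})$, and the observation that $(W_3)$ with $p\geq 2$ is only needed for compactness) are accurate and consistent with the paper.
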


\noindent
{\bf Proof.}
Without loss of generality we can assume that $k_{N_n}^\alpha(u_n)$ is
uniformly bounded.  In the proof of Theorem~\ref{thm:kNcompactness} we
established that then estimate (\ref{eq:comparekunkvn}) follows, where
the $v_n$ are the bilinear interpolations of $u_n$ that converge to
$u$ in $L^1(\T^2)$.  Using the $\Gamma$-convergence result of Modica
and Mortola \cite{ModicaMortola77,Modica87a,Modica87b,Sternberg88},
specifically their lower bound, we find
\[
\underset{n\to\infty}\liminf\, k_{N_n}^\alpha(u_n) \geq
\underset{n\to\infty}\liminf\, \left(F_{N_n^{-\alpha}}^{GL}(v_n) +
R_n\right) \geq k_{\infty,0}(u).
\eqno{\qed}
\]

\begin{remark}\label{rem:alphadim}
As noted earlier in Remark~\ref{rem:kgendim} our results (and proofs)
generalize to $\T^d$ if the terms in $k_N^\alpha$ are rescaled
properly depending on the dimension $d$.  In this case we carefully
need to reexamine the admissible range for $\alpha$ in
Theorem~\ref{thm:kNcompactness} and Lemma~\ref{lem:klowerboundallNs}
(and hence by extension Theorem~\ref{thm:GammaconvergenceforkNalpha}).
In particular (\ref{eq:determineungrowth}) becomes
\[
N_n^{2-d-\alpha} (N_n^\gamma-(u_n)_{i+1,j})^2 \leq C
\]
and hence we need to choose $\gamma > \frac{\alpha+d-2}2$ and deduce
that $\|u_n\|_{L^\infty(\T^2)} =
\mathcal{O}(N_n^{\frac{\alpha+d-2}2})$.  Generalizing
(\ref{eq:resttermRn}) and the discussion that follows then leads to
the conclusion that the admissible range of $\alpha$ is $\alpha\in
 \big (0, \frac{2q - 2(d-2)}{q (q+3)} \big )$.  In particular $q$ in
condition $(W_4)$ should be chosen larger than $d-2$.
\end{remark}

\begin{lemma}[Upper bound]\label{lem:kupperboundallNs}
Let $\alpha \in (0,1)$, $u\in L^1(\T^2)$, and
$\{N_n\}_{n=1}^\infty\subset\N$ be such that $N_n \to \infty$ as $n\to
\infty$ and let $k''$ be the $\Gamma$-upper limit of $k_{N_n}^\alpha$
as $n\to\infty$ with respect to the $L^2(\T^2)$ topology, then
$
k''(u) \leq k_{\infty,0}(u).
$
\end{lemma}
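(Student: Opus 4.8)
The plan is to build the recovery sequence by discretizing the classical Modica--Mortola transition profile, in the spirit of the proof of Lemma~\ref{lem:kupperbound}. Since $k_{\infty,0}(u)=+\infty$ unless $u\in BV(\T^2;\{0,1\})$, only $u=\mathbf{1}_E$ with $E\subset\T^2$ of finite perimeter needs treatment. First I would reduce to $E$ with $C^2$ boundary: choose sets $E_j$ with smooth boundary such that $\mathbf{1}_{E_j}\to\mathbf{1}_E$ in $L^1(\T^2)$ and $\int_{\T^2}|\nabla\mathbf{1}_{E_j}|\to\int_{\T^2}|\nabla\mathbf{1}_E|$ (the standard smooth approximation of sets of finite perimeter with convergence of perimeters, cf.\ \cite{Giusti84,AmbrosioFuscoPallara00}). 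As these are binary functions, $L^1$ and $L^2$ convergence agree, so the lower semicontinuity of the $\Gamma$-upper limit $k''$ in the $L^2(\T^2)$ topology (\cite[Proposition 6.8]{DalMaso93}) gives $k''(\mathbf{1}_E)\le\liminf_j k''(\mathbf{1}_{E_j})$, and it then suffices to prove $k''(\mathbf{1}_{E_j})\le k_{\infty,0}(\mathbf{1}_{E_j})=\sigma(W)\int_{\T^2}|\nabla\mathbf{1}_{E_j}|$ for each $j$.

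For $E$ with $C^2$ boundary $\Sigma$, the construction is as follows. Set $\e_n:=N_n^{-\alpha}$, let $d$ be the signed distance to $\Sigma$ (which is $C^2$ on a fixed tube $\{|d|<\delta_0\}$), and let $\Psi_\e\colon\R\to[0,1]$ be the truncated, mollified optimal profile: nondecreasing, $\Psi_\e\equiv 0$ on $(-\infty,-T_\e]$ and $\Psi_\e\equiv 1$ on $[T_\e,\infty)$ with $T_\e=O(\e|\log\e|)\to 0$, $\|\Psi_\e'\|_\infty=O(\e^{-1})$, $\|\Psi_\e''\|_\infty=O(\e^{-2})$, and $\int_\R\big(\e(\Psi_\e')^2+\e^{-1}W(\Psi_\e)\big)\,dt\to\sigma(W)$. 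For $n$ large ($T_{\e_n}<\delta_0$) I would define $w_n:=\Psi_{\e_n}(d)$ inside the tube and $w_n:=\mathbf{1}_E$ outside; then $w_n\in C^2(\T^2)$, $w_n\to\mathbf{1}_E$ in $L^2(\T^2)$, and the usual coarea computation gives $F_{\e_n}^{GL}(w_n)\to\sigma(W)\,\mathcal{H}^1(\Sigma)=k_{\infty,0}(\mathbf{1}_E)$. Finally let $u_n\in\mathcal{A}_{N_n}$ be the sampling $u_n\equiv w_n(i/N_n,j/N_n)$ on $S_{N_n}^{i,j}$.

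It then remains to compare $k_{N_n}^\alpha(u_n)$ with $F_{\e_n}^{GL}(w_n)$. The key point is that $w_n$ is constant ($0$ or $1$) outside a union $\widetilde T_n$ of grid squares of measure $O(\e_n|\log\e_n|+N_n^{-1})=O(N_n^{-\alpha}\log N_n)$, and there $u_n\equiv w_n$ and $D_{N_n}^k u_n\equiv 0$. On $\widetilde T_n$, writing $D_{N_n}^1 u_n(x,y)=\int_0^1\partial_x w_n\big((i+s)/N_n,j/N_n\big)\,ds$ and Taylor expanding, $D_{N_n}^1 u_n(x,y)=\partial_x w_n(x,y)+O(\|\nabla^2 w_n\|_\infty/N_n)=\partial_x w_n(x,y)+O(N_n^{2\alpha-1})$, while $\partial_x w_n=O(N_n^{\alpha})$ on $\widetilde T_n$; squaring, integrating over $\widetilde T_n$, doing the same for $k=2$, and multiplying by $\e_n=N_n^{-\alpha}$ turns the cross term into $O(N_n^{\alpha-1}\log N_n)$ and the quadratic error into $O(N_n^{2\alpha-2}\log N_n)$, both $o(1)$ since $\alpha<1$, whence $N_n^{-\alpha}\int_{\T^2}[(D_{N_n}^1 u_n)^2+(D_{N_n}^2 u_n)^2]\le\e_n\int_{\T^2}|\nabla w_n|^2+o(1)$. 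Likewise $\|u_n-w_n\|_{L^\infty(\T^2)}\le\|\nabla w_n\|_\infty/N_n=O(N_n^{\alpha-1})\to 0$ (so $u_n\to\mathbf{1}_E$ in $L^2(\T^2)$), and since $u_n-w_n$ is supported on $\widetilde T_n$, $\|u_n-w_n\|_{L^1(\T^2)}=O(N_n^{-1}\log N_n)$; because $W'$ is bounded on a neighbourhood of $[0,1]$, this yields $\big|N_n^\alpha\int_{\T^2}(W(u_n)-W(w_n))\big|=O(N_n^{\alpha-1}\log N_n)\to 0$. Adding the two estimates, $\limsup_n k_{N_n}^\alpha(u_n)\le\lim_n F_{\e_n}^{GL}(w_n)=k_{\infty,0}(\mathbf{1}_E)$, which is (UB') for smooth $E$; the reduction of the first paragraph then finishes the proof.

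The main obstacle is precisely this comparison: the recovery profile $w_n$ has gradient of order $\e_n^{-1}=N_n^{\alpha}$ and Hessian of order $\e_n^{-2}=N_n^{2\alpha}$, so the discretization errors look dangerous, and they are controlled only because the transition layer has thickness $O(\e_n|\log\e_n|)$ and because $\e_n=N_n^{-\alpha}$ with $\alpha<1$ — this is exactly where the hypothesis $\alpha\in(0,1)$ enters, and it is also what forces the $L^\infty$ error $\|u_n-w_n\|_\infty=O(N_n^{\alpha-1})$ to vanish. A secondary, routine point is making the reduction to $C^2$ sets precise via lower semicontinuity of $k''$ together with perimeter-convergent smooth approximation.
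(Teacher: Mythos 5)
Your proof is correct, but the crucial comparison step is done by a genuinely different mechanism than in the paper. You construct the Modica--Mortola recovery profile explicitly, with quantitative bounds $\|\nabla w_n\|_\infty = O(N_n^{\alpha})$ and $\|\nabla^2 w_n\|_\infty = O(N_n^{2\alpha})$, sample it at the grid corners, and control $k_{N_n}^{\alpha}(u_n)-F^{GL}_{\e_n}(w_n)$ by a Taylor expansion of the difference quotients, exploiting that the errors live only on a transition layer of measure $O(N_n^{-\alpha}\log N_n)$; your bookkeeping of the cross and quadratic error terms is correct and does isolate $\alpha<1$ as the needed hypothesis. The paper instead treats the Modica--Mortola recovery sequence $v_n$ as a black box, using only its Lipschitz constant $K_n=O(N_n^{\alpha})$, and --- rather than sampling at grid corners --- samples $v_n$ on each closed square at the point minimizing $(D^1_{N_n}v_n)^2+(D^2_{N_n}v_n)^2$. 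This gives the pointwise inequality $(D^1_{N_n}u_n)^2+(D^2_{N_n}u_n)^2\le (D^1_{N_n}v_n)^2+(D^2_{N_n}v_n)^2$ for free, and then Jensen's inequality plus Fubini on the torus yields the exact one-sided bound $\int_{\T^2}[(D^1_{N_n}u_n)^2+(D^2_{N_n}u_n)^2]\le\int_{\T^2}|\nabla v_n|^2$ with no error term; only the potential term leaves a remainder $O(N_n^{\alpha-1})$, controlled by the Lipschitz bound. The paper's argmin trick thus avoids second derivatives entirely, whereas your route needs the extra (standard, but not free) input that the recovery profile can be taken $C^2$ with Hessian $O(\e^{-2})$ and transition width $O(\e|\log\e|)$ --- the latter implicitly uses nondegeneracy of the wells, or else a truncation-at-height-$\delta$ plus diagonalization argument, which you should flag. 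Both proofs use $\alpha<1$ in the same place (discretization error against the $O(N_n^{\alpha})$ gradient of the profile) and both close with the same reduction to smooth sets via perimeter-convergent approximation and lower semicontinuity of the $\Gamma$-upper limit.
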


\noindent
{\bf Proof.}
The case where $u\in L^1(\T^2)\setminus BV(\T^2;\{0,1\})$ is trivial,
so assume $u\in BV(\T^2;\{0,1\})$.  First assume that $\supp u$ has
smooth boundary $\partial \supp u$.

By the classical Modica-Mortola results used before $k_{\infty,\e}$
$\Gamma$-converges in both the $L^1(\T^2)$ and $L^2(\T^2)$ topology to
$k_{\infty,0}$.  Let $\{v_n\}_{n=1}^\infty$ be the recovery sequence
for this convergence with $\e=N_n^{-\alpha}$, see \textit{e.g.}
\cite[Proposition 2]{Modica87a}, \cite[\S 7.2.1]{Braides06}, then each
$v_n \in W^{1,2}(\T^2)$ is a Lipschitz continuous function, $v_n\to u$
in $L^2(\T^2)$ as $n\to\infty$, and $\underset{n\to\infty}\limsup\,
k_{\infty,N_n^{-\alpha}}(v_n) \leq k_{\infty,0}(u)$.  We denote the
Lipschitz constant of $v_n$ by $K_n$ and note that $K_n =
\mathcal{O}(N_n^\alpha)$ as $n\to \infty$.

We now follow a similar line of reasoning as in the proof of
Lemma~\ref{lem:kupperbound}, but need to be more careful to deal with
the lesser regularity of $v_n$ (only Lipschitz continuous instead of
$C^\infty$).  For each $i,j \in I_N$ and each $n\in \N$, define
\[
\nu_{N_n}^{i,j} := \underset{z\in
\overline{S_{N_n}^{i,j}}}{\text{argmin}}\, \Big[ (D_{N_n}^1 v_n(z))^2
+ (D_{N_n}^2 v_n(z))^2 \Big],
\]
where we used the difference quotient notation from
(\ref{eq:differencequotient}).  Note that since $v_{N_n}$ is
continuous and $\overline{S_{N_n}^{i,j}}$ is compact, the minimum is
attained.  Note that it may be that $\nu_{N_n}^{i,j}\in
\overline{S_{N_n}^{i,j}}\setminus S_{N_n}^{i,j}$.  Now define a
sequence $\{u_n\}_{n=1}^\infty$ such that for each $n\in\N\,\,$
$u_n\in \mathcal{A}_{N_n}$ in the following way.  If $x\in
S_{N_n}^{i,j}$, then $u_n(x) := v_n(\nu_{N_n}^{i,j})$.  Note that by
construction
\[
(D_{N_n}^1 u_n(x))^2 + (D_{N_n}^2 u_n(x))^2 \leq (D_{N_n}^1 v_n(x))^2
+ (D_{N_n}^2 v_n(x))^2
\]
First we check that $u_n\to u$ in $L^2(\T^2)$.  We estimate
$\|u_n-u\|_{L^2(\T^2)} \leq \|u_n-v_n\|_{L^2(\T^2)} +
\|v_n-u\|_{L^2(\T^2)}$.  We know that $v_n \to u$ in $L^2(\T^2)$, for
the first term on the right we use the Lipschitz continuity of $v_n$
to compute
\begin{align*}
\|u_n-v_n\|_{L^2(\T^2)}^2 &= \int_{\T^2} |u_n(x)-v_n(x)|^2 dx = \!\! \!
\sum_{i,j\in I_{N_n}} \int_{S_{N_n}^{i,j}}
|v_n(\nu_{N_n}^{i,j})-v_n(x)|^2\,dx\\
&\leq K_n^2 \sum_{i,j\in I_{N_n}} \int_{S_{N_n}^{i,j}}
|\nu_{N_n}^{i,j} - x|^2\, dx \leq \sqrt2 K_n^2 N_n^2 N_n^{-2}
N_n^{-2}.
\end{align*}
Here we have used that there are $N_n^2$ nodes in the grid, the area
of each square $S_{N_n}^{i,j}$ is $N_n^{-2}$, and for $x\in
S_{N_n}^{i,j}$ we have $|\nu_{N_n}^{i,j}-x| \leq \sqrt2 N_n^{-1}$.
Since $K_n=\mathcal{O}(N_n^\alpha)$ as $n\to\infty$ and $\alpha<1$ we
have $\|u_n-v_n\|_{L^2(\T^2)}^2 \to 0$ as $n\to\infty$ and hence $u_n
\to u$ in $L^2(\T^2)$.

Since $v_{N_n}: \T^2 \to \R$ is Lipschitz continuous, if we fix either
$y\in\T$ or $x\in \T$ so are $v_{N_n}(\cdot, y): \T\to\R$ and
$v_{N_n}(x,\cdot): \T\to\R$.  Therefore by Rademacher's theorem the
partial derivatives of $v_{N_n}$ exist a.e. on horizontal and vertical
lines, hence we have for a.e. $x\in\T^2$,
\[
D_{N_n}^1 v_n(x) = \int_0^1 \frac{\partial v_n}{\partial x}(x+s
e_1/N_n) \,ds,  \quad D_{N_n}^2 v_n(x) = \int_0^1
\frac{\partial v_n}{\partial y}(x+s e_2/N_n) \,ds.
\]
By Jensen's inequality
\[
(D_{N_n}^1 v_n(x))^2 = \Big (\int_0^1 \frac{\partial v_n}{\partial
x}(x+s e_1/N_n) \,ds \Big )^2 \leq \int_0^1 \Big (\frac{\partial
v_n}{\partial x}(x+s e_1/N_n) \Big )^2\,ds,
\]
and similarly for $D_{N_n}^2 v_n(x)$.
For the finite difference terms in $k_{N_n}^\alpha(u_n)$ we now find,
by construction of $u_n$,
\begin{align*}
&\hspace{0.8cm} \int_{\T^2} \left[(D_{N_n}^1u_n(x))^2 +
(D_{N_n}^2u_n(x))^2\right]\,dx\\
&= \sum_{i,j\in I_{N_n}} \int_{S_{N_n}^{i,j}}
\left[(D_{N_n}^1u_n(x))^2 + (D_{N_n}^2u_n(x))^2\right]\,dx \\
&
\leq
\sum_{i,j\in I_{N_n}} \int_{S_{N_n}^{i,j}} \left[(D_{N_n}^1v_n(x))^2 +
(D_{N_n}^2v_n(x))^2\right]\,dx\\
&\leq \sum_{i,j\in I_{N_n}} \int_{S_{N_n}^{i,j}} \int_0^1
 \Big [ \Big (\frac{\partial v_n}{\partial x}(x+s e_1/N_n) \Big )^2 +
 \Big (\frac{\partial v_n}{\partial y}(x+s e_2/N_n) \Big )^2 \Big ]
\,ds \,dx\\
&= \int_0^1 \sum_{i,j\in I_{N_n}}  \Big [\int_{S_{N_n}^{i,j}+s e_1/N_n}
 \Big (\frac{\partial v_n}{\partial x}(x) \Big )^2\,dx +
\int_{S_{N_n}^{i,j}+s e_2/N_n}  \Big (\frac{\partial v_n}{\partial
y}(x) \Big )^2\,dx  \Big ] \,ds  \\
&= \int_{\T^2} |\nabla v_n|^2,
\end{align*}
where we have used Fubini's theorem and the fact that we are working
on a torus, so the union of all sets of the form $S_{N_n}^{i,j}+s
e_k/N_n$ (for either $k=1$ or $k=2$) is the same as the union of all
$S_{N_n}^{i,j}$, \textit{i.e.,} $\T^2$.

To deal with the double well potential term in $k_{N_n}^\alpha$ we
first note that for each $x\in\T^2$ and each $n\in\N\,\,$ $v_n(x)\in
[0,1]$, hence $W'$ is bounded on intervals of the form $[v_n(x),
v_n(\nu_{N_n}^{i,j})]$ by some $C>0$.  Therefore, for $x\in
S_{N_n}^{i,j}$,
\begin{align*}
W(u_n(x))-W(v_n(x)) &= W(v_n(\nu_{N_n}^{i,j})) - W(v_n(x)) =
\int_{v_n(x))}^{v_n(\nu_{N_n}^{i,j})} W'(s)\,ds\\
&\leq C |v_n(\nu_{N_n}^{i,j})-v_n(x)| \leq C K_n |\nu_{N_n}^{i,j}-x|.
\end{align*}
We thus find
\begin{align*}
\int_{\T^2} W(u_n)(x)\,dx &= \int_{\T^2}W(v_n(x))\,dx + C K_n
\sum_{i,j\in I_{N_n}} \int_{S_{N_n}^{i,j}} |\nu_{N_n}^{i,j}-x|\,dx\\
&\leq \int_{\T^2}W(v_n(x))\,dx + C K_n N_n^2 N_n^{-2} N_n^{-1}.
\end{align*}
Hence $\int_{\T^2} W(u_n)(x)\,dx \leq \int_{\T^2}W(v_n(x))\,dx + R_n$,
where $R_n = \mathcal{O}(N_n^{\alpha-1})$.  Since $\alpha<1$, $R_n\to
0$ as $n\to\infty$.

Combining both terms in $k_{N_n}^\alpha$ we find $k_{N_n}^\alpha(u_n)
\leq k_{\infty,N_n^{-\alpha}}(v_n) + R_n$.  We already know that
$\underset{n\to\infty}\limsup\, k_{\infty,N_n^{-\alpha}}(v_n) \leq
k_{\infty,0}(u)$, so we have proved that for $u\in BV(\T^2;\{0,1\})$
with smooth $\partial \supp u$ we have
$
\underset{n\to\infty}\limsup\, k_{N_n}^\alpha(u_n) \leq
k_{\infty,0}(u)
$
or in terms of the $\Gamma$-upper limit: $k''(u) \leq
k_{\infty,0}(u)$.

Now let $u\in BV(\T^2;\{0,1\})$, not necessarily with the smoothness
condition on the boundary.  Then by \cite[Theorem
3.42]{AmbrosioFuscoPallara00} there is a sequence $\{\hat
u_n\}_{n=1}^\infty \subset BV(\T^2;\{0,1\})$ such that each $\supp
u_n$ has smooth boundary and $\underset{n\to\infty}\lim\,
k_{\infty,0}(\hat u_n) = k_{\infty,0}(u)$.  Hence by lower
semicontinuity of the $\Gamma$-upper limit $k''$ we conclude
\[
k''(u) \leq \underset{n\to\infty}\liminf\, k''(\hat u_n) \leq
\underset{n\to\infty}\liminf\, k_{\infty,0}(\hat u_n) =
k_{\infty,0}(u).
\eqno{\qed}
\]

\begin{proof}[\bf Proof of Theorem~\ref{thm:GammaconvergenceforkNalpha}]
Since we have used $L^1(\T^2)$ convergence in (LB) in
Lemma~\ref{lem:klowerboundallNs} and $L^2(\T^2)$ convergence in
Lemma~\ref{lem:kupperboundallNs} for (UB') we can now conclude
$\Gamma$-convergence in either of these two topologies.
\end{proof}

\subsection{Discussion of the range of
$\alpha$}\label{sec:discussionalpha}

The range of admissible $\alpha$ in the results in the previous
section is not only of theoretical interest, but is also important for
computations.  In simulations choosing $\e$ of the right order is a
hard problem.  If $\e$ is too small in gradient flow simulations this
leads to the phenomenon of `pinning', where the initial condition gets
pinned down into the wells of $W$ without changing its geometry.  On
the other hand, an $\e$ which is too large leads to immediate
diffusion of the initial condition and loss of some relevant features.
Our results do not directly address the gradient flow, but are in the
same spirit.

In the proof of compactness and the lower bound above we have assumed
$\alpha\in (0,\frac2{q+3})$ where $q$ is the degree of polynomial
growth of $W'$.  There are some reasons to believe this restriction
could possibly be relaxed to $\alpha\in (0, \frac2{q+2})$, but we have
not found a proof for this statement.

First note that the dependence on $q$ in the range of $\alpha$ comes
from the discrete nature of the problem.  Fundamentally it can be
traced back to the lack of a chain rule for discrete differentiation.
Trying to copy the classical Modica-Mortola result, we can define
$w_n:= \int_0^{u_n} \sqrt{W(s)}\,ds$ and estimate
\begin{align*}
&
\int_{\T^2} |\nabla w_n|  = N_n^{-1} \sum_{i,j=1}^{N_n}
|(w_n)_{i+1,j}-(w_n)_{i,j}| + |(w_n)_{i,j+1}-(w_n)_{i,j}|
\\
&\leq 2 N_n^{-1} \sum_{i,j=1}^{N_n}
\bigg ( {\left[((w_n)_{i+1,j}-(w_n)_{i,j})^2 +
((w_n)_{i,j+1}-(w_n)_{i,j})^2\right] } \bigg)^{\frac 12}
\\
&
= 2 N_n^{-1} \sum_{i,j=1}^{N_n}
 \bigg (  { \Big (\int_{(u_n)_{i,j}}^{(u_n)_{i+1,j}}
\sqrt{W(s)}\,ds \Big )^2+ \Big (\int_{(u_n)_{i,j}}^{(u_n)_{i,j+1}}
\sqrt{W(s)}\,ds \Big )^2} \bigg)^{\frac 12} 
\\
&
= 2 N_n^{-1} \sum_{i,j=1}^{N_n}
 \bigg(
  \Big [((u_n)_{i+1,j}-(u_n)_{i,j})^2 W((u_n^\ast)_{i+1,j}) 
\\
&
\qquad\qquad \qquad\quad
+
((u_n)_{i,j+1}-(u_n)_{i,j})^2 W((u_n^\ast)_{i,j+1}) \Big ]  
 \bigg)^{\frac 12},
\end{align*}
where $(u_n^\ast)_{i+1,j} \in [(u_n)_{i,j}, (u_n)_{i+1,j}]$ and
$(u_n^\ast)_{i,j+1} \in [(u_n)_{i,j}, (u_n)_{i,j+1}]$ come from the
mean value theorem.  If we had control over the behavior of $W$ in
between grid points, we could use Cauchy's inequality to bound the
expression above by $k_{N_n}^\alpha(u_n)$.  Without condition $(W_4)$
this control is lacking.  We could do without this condition if we
would somehow have an \textit{a priori} $L^\infty$ bound for the
sequence $\{u_n\}_{n=1}^\infty$.  This reflects a similar situation in
the continuum case \cite[Remark 1.35]{Sternberg88} where condition
$(W_3)$ with $p\geq 2$ can be dropped from the assumptions needed for
compactness if an \textit{a priori} $L^\infty$ bound is available.  By
construction a uniform bound on $\|u_n\|_{L^\infty(\T^2)}$ gives a
similar bound on $\|v_n\|_{L^\infty(\T^2)}$, hence under such an
\textit{a priori} bound we could drop conditions $(W_3)$ and $(W_4)$
from our assumptions and eleminate $q$ (\textit{i.e.,} $q=0$) from the
restriction on $\alpha$.  The difference with the continuum case is
that in our case (in the absence of an $L^\infty$ bound) we need
control over $W$ and its derivative $W'$.  The scale of the
discretization, $N^{-1}$, should be fine enough to resolve the
variations in $W'$.

Next note that in the proof of the upper bound we only use $\alpha\in
(0,1)$.  This restriction has a natural interpretation: If we
interpret $N^{-1}$ as the discretization spacing, then
$\e=N^{-\alpha}$ for $0<\alpha<1$ tells us that the discretization
should be fine enough to `resolve the diffuse interface' which, in the
continuum case, has width of order $\e$.  The following example shows
that for $\alpha>1$ the lower bound fails.  Let $u\in BV(\T^2,
\{0,1\})$ be equal to zero on half the torus (say on $[0,1/2] \times
[0,1)$) and equal to one on the other half and let
$\{u_n\}_{n=1}^\infty$ be a sequence converging to $u$ in $L^1(\T^2)$
obtained by simply discretizing $u$ on the grid $G_{N_n}$ for a
sequence $\{N_n\}_{n=1}^\infty$, $N_n\to \infty$ as $n\to\infty$.
Then $W(u_n)\equiv 0$ for all $n$.  The finite difference term in
$k_{N_n}^\alpha(u_n)$ only has nonzero contributions along the
boundary between the parts of the torus where $u=0$ and $u=1$.  Thus
there are $2N$ jumps of order 1 and hence $k_{N_n}^\alpha(u_n) = 2
N_n^{1-\alpha}$.  If $\alpha>1$ this converges to zero, but the limit
functional $k_{\infty,0}(u)>0$, which contradicts (LB).

Finally, note that in (\ref{eq:resttermRn}) our estimate is not sharp
since we use H\"older's inequality to go from
$\|u_n-v_n\|_{L^1(\T^2)}$ to $\|u_n-v_n\|_{L^2(\T^2)}$.  If instead a
bound $\|u_n-v_n\|_{L^1(\T^2)} = \mathcal{O}(N_n^{-1})$ could be
proved, possibly using the uniform bound on $k_{N_n}^\alpha(u_n)$,
then in (\ref{eq:resttermRn}) the condition on $\alpha$ relaxes to
$\alpha\in (0, \frac{2}{q+2})$, which in the absence of $q$ would
reduce to $\alpha\in (0,1)$.  We therefore conjecture that $\alpha\in
(0, \frac{2}{q+2})$ is in fact the natural restriction for $\alpha$
(on $\T^2$, see Remark~\ref{rem:alphadim} for a discussion about the
range of $\alpha$ in general dimensions), or, if an \textit{a priori}
$L^\infty$ bound is available, $\alpha\in (0,1)$.  However, it might
be the case that a bilinear interpolation is not the right
interpolation to attain this bound.

\subsection{Constraints}

In this section we show that addition of a fidelity term or imposing a
mass constraint are compatible with the three $\Gamma$-limits we
established, \textit{i.e.,} $N\to\infty$ for $k_{N,\e}$, $\e\to 0$ for
$k_{\infty,\e}$ and $N\to\infty$ for $k_N^\alpha$.

The Modica-Mortola limit $k_{\infty,\e} \overset{\Gamma}{\to}
k_{\infty,0}$ as $N\to \infty$ in the $L^p(\T^2)$ topology,
$p\in\{1,2\}$, is known to be compatible with a mass constraint,
\textit{e.g.} \cite[Proposition 2]{Modica87a}, \cite[Theorem
1]{Sternberg88}, \cite[Proposition 6.6]{Braides02}.  Furthermore,
since an $L^p(\T^2)$ fidelity term, $p\in\{1,2\}$, is clearly
continuous with respect to $L^p(\T^2)$ convergence, it is also
compatible with the $\Gamma$-limit.  The theorem below addresses the
other two $\Gamma$-limits for $k_{N,\e}$ and $k_N^\alpha$.

\begin{theorem}[Constraints] \label{thm:constraintsfork}
 \label{item:constraintska} 
$(1)$
$  k_{N,\e} + \lambda
N^{-2} |\cdot-f_N|_p^p \overset{\Gamma}{\to} k_{\infty,\e} + \lambda
\int_{\T^2} |\cdot-f|_p^p$ for $N\to \infty$ in the $L^p(\T^2)$
topology, where $p\in\{1,2\}$, $\lambda>0$, $f\in C^1(\T^2)$ and
$f_N\in \mathcal{A}_N$ is the sampling of $f$ on the grid $G_N$ ($f$,
$f_N$ and their norms can also be defined on subsets of $\T^2$ and
$G_N$ as in Theorem~$\ref{thm:constraintsforh},$
part~$\ref{item:hconstraintsa}).$  A compactness result for $k_{N,\e} +
\lambda N^{-2} |\cdot-f|_p^p$ as in
Theorem~$\ref{thm:compactnessforkNe}$ holds.

    If instead, for fixed $M\in [0,1]$, the domain of definition of
    $k_{N,\e}$ is restricted to $\mathcal{V}_N^M$ $($\textit{i.e.,}
    $\mathcal{V}^M$ from Theorem~$\ref{thm:constraintsforfe}$ on the
    grid $G_N ),$ then the $\Gamma$-convergence and compactness results
    for $N\to 0$ remain valid, with the domain of $k_{\infty,0}$
    restricted to $\mathcal{V}^M$.

$(2)$
$  k_N^\alpha + \lambda N^{-2} |\cdot-f_N|_p^p
\overset{\Gamma}{\to} k_{\infty,0} + \lambda \int_{\T^2}
|\cdot-f|_p^p$ for $N\to \infty$ in the $L^p(\T^2)$ topology, where
$p\in\{1,2\}$, $\lambda$, $f$, and $f_N$ are as in
part~$\ref{item:constraintska},$ and $\alpha\in (0,\frac2{q+3})$ as in
Theorem~$\ref{thm:GammaconvergenceforkNalpha}.$  A compactness result
for $k_N^\alpha + \lambda N^{-2} |\cdot-f|_p^p$ as in
Theorem~$\ref{thm:kNcompactness}$ and Remark~$\ref{rem:kNcompactnessL2}$
holds.

    If instead, for fixed $M\in [0,1]$, the domain of definition of
    $k_N^\alpha$ is restricted to $\mathcal{V}_N^M$, then the
    $\Gamma$-convergence and compactness results for $N\to 0$ remain
    valid, with the domain of $k_{\infty,0}$ restricted to
    $\mathcal{V}^M$.

\end{theorem}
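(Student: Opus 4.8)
We give a sketch of the proof, treating the fidelity term and the mass constraint separately; in each case the statement reduces to the corresponding unconstrained result of Sections~\ref{sec:proofk} and~\ref{sec:simulscalekNalpha} together with Lemma~\ref{lem:addperturbations}.

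\emph{Fidelity term.} Using the identification $\mathcal V_N\cong\mathcal A_N$ from Section~\ref{sec:proofk} one has, for $v,f_N\in\mathcal A_N$, $N^{-2}|v-f_N|_p^p=\int_{\T^2}|v-f_N|^p$. Since $f\in C^1(\T^2)$, a Taylor argument gives $f_N\to f$ in $L^p(\T^2)$ as $N\to\infty$, and then the same elementary estimate used in the proof sketch of Theorem~\ref{thm:constraintsforh} shows that the sequence $v\mapsto\lambda N^{-2}|v-f_N|_p^p$ is continuously convergent, in the $L^p(\T^2)$ topology, to $u\mapsto\lambda\|u-f\|_{L^p(\T^2)}^p$. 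Hence Theorems~\ref{thm:GammaconvergenceforkNe} and~\ref{thm:GammaconvergenceforkNalpha} combined with Lemma~\ref{lem:addperturbations} yield the $\Gamma$-convergence statements in both parts; since the lower bounds hold for $L^1(\T^2)$-convergence and the recovery sequences converge in $L^2(\T^2)$, the conclusion holds in either topology. Compactness costs nothing: the added term is nonnegative, so a uniform bound on $k_{N_n,\e}+\lambda N_n^{-2}|\cdot-f_{N_n}|_p^p$ (resp. on $k_{N_n}^\alpha+\lambda N_n^{-2}|\cdot-f_{N_n}|_p^p$) forces a uniform bound on $k_{N_n,\e}$ (resp. $k_{N_n}^\alpha$), and Theorem~\ref{thm:compactnessforkNe} (resp. Theorem~\ref{thm:kNcompactness} and Remark~\ref{rem:kNcompactnessL2}) applies verbatim.

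\emph{Mass constraint.} On the unit-measure torus $u\mapsto\int_{\T^2}u$ is continuous on $L^1(\T^2)$, hence on $L^2(\T^2)$; under the identification $\mathcal V_N\cong\mathcal A_N$ the constraint $u\in\mathcal V_N^M$ reads $\int_{\T^2}u=M$, which is therefore preserved along $L^p(\T^2)$-convergent sequences, $p\in\{1,2\}$. Consequently the restriction to $\{\int_{\T^2}u=M\}$ is compatible with the lower bounds (Lemmas~\ref{lem:klowerbound} and~\ref{lem:klowerboundallNs}) and with the compactness proofs, which go through unchanged. For the upper bound one must exhibit, for $u$ in the restricted domain, a recovery sequence of mass exactly $M$. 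Start from the recovery sequences $\{u_n\}$ of Lemma~\ref{lem:kupperbound} (for $k_{N,\e}$ as $N\to\infty$), resp. Lemma~\ref{lem:kupperboundallNs} (for $k_N^\alpha$); since $\int_{\T^2}u_n\to\int_{\T^2}u=M$, the defect $c_n:=M-\int_{\T^2}u_n\to 0$, and we replace $u_n$ by $u_n+c_n\in\mathcal A_{N_n}$. As $|\T^2|=1$ this has mass exactly $M$, still converges to $u$ in $L^2(\T^2)$, and leaves the finite-difference term unchanged because additive constants cancel in the difference quotients $D_{N_n}^k$. It remains to control the double-well term: since the $u_n$ take values in a fixed bounded interval (near $[0,1]$), $W'$ is bounded there, so $\int_{\T^2}\bigl|W(u_n+c_n)-W(u_n)\bigr|=\mathcal{O}(|c_n|)$. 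For $k_{N,\e}$ (with $\e$ fixed) this already yields $\limsup_n\bigl(k_{N_n,\e}+\lambda N_n^{-2}|\cdot-f_{N_n}|_p^p\bigr)(u_n+c_n)\le k_{\infty,\e}(u)+\lambda\|u-f\|_{L^p(\T^2)}^p$. For $k_N^\alpha$ one additionally needs $N_n^\alpha|c_n|\to 0$: choosing (as one may, by the classical compatibility of the Modica--Mortola upper bound with a mass constraint, \cite[Proposition 2]{Modica87a}, \cite[Theorem 1]{Sternberg88}, \cite[Proposition 6.6]{Braides02}) the underlying profile $v_n$ in the proof of Lemma~\ref{lem:kupperboundallNs} to have mass $M$, one has $c_n=\int_{\T^2}(v_n-u_n)$, and since $v_n$ and $u_n$ differ essentially only through values on the $\mathcal{O}(N_n^{2-\alpha})$ grid squares meeting the $\mathcal{O}(N_n^{-\alpha})$-thick transition layer, $|c_n|=\mathcal{O}(N_n^{-1})$, whence $N_n^\alpha|c_n|=\mathcal{O}(N_n^{\alpha-1})\to 0$ because $\alpha<1$. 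Finally, for a general $u\in BV(\T^2;\{0,1\})$ with $\int_{\T^2}u=M$ (and $\partial\supp u$ not necessarily smooth) one approximates $\supp u$ by sets of finite perimeter with smooth boundary and the same volume $M$ — combining \cite[Theorem 3.42]{AmbrosioFuscoPallara00} with a small volume-fixing modification changing the perimeter by $o(1)$ — and invokes lower semicontinuity of the $\Gamma$-upper limit (Section~\ref{sec:explainGamma}), exactly as at the end of the proof of Lemma~\ref{lem:kupperboundallNs}. This establishes (UB') on the restricted domains and finishes the proof.

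\emph{Main obstacle.} The only genuinely delicate point is forcing the recovery sequence to meet the mass constraint for $k_N^\alpha$: the potential term carries the diverging weight $N_n^\alpha$, so the crude estimate $|c_n|=\mathcal{O}\bigl(\|u_n-v_n\|_{L^2(\T^2)}\bigr)=\mathcal{O}(N_n^{\alpha-1})$ is not obviously good enough, and one must exploit that $u_n-v_n$ is concentrated in the thin transition layer to improve it to $|c_n|=\mathcal{O}(N_n^{-1})$; after that the bound $\alpha<1$ — automatic for $\alpha\in(0,\frac2{q+3})$ since $\frac2{q+3}\le\frac23$ — closes the estimate. Everything else is routine bookkeeping built on the unconstrained results of Sections~\ref{sec:proofk} and~\ref{sec:simulscalekNalpha} and on Lemma~\ref{lem:addperturbations}.
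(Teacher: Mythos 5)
Your proposal is correct and follows essentially the same route as the paper: the fidelity term is handled as a continuously convergent perturbation via Lemma~\ref{lem:addperturbations}, the mass constraint is preserved under $L^p(\T^2)$ convergence (hence compatible with (LB) and compactness), and the recovery sequence is repaired by adding a small constant whose effect on the potential term is controlled through the boundedness of $W'$ on the relevant range. The one place you go beyond the paper's sketch — the quantitative bound $|c_n|=\mathcal{O}(N_n^{-1})$ obtained by exploiting that $u_n-v_n$ is supported on the $\mathcal{O}(N_n^{2-\alpha})$ squares meeting the transition layer, so that $N_n^\alpha|c_n|\to 0$ for $\alpha<1$ — is a genuine and correct filling-in of a step the paper only asserts "follows in a similar way".
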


We give a sketch of the proofs.

(1)
The compatibility of the fidelity term with the
$\Gamma$-convergence and compactness follows as in the proof of
Theorem~\ref{thm:constraintsforh}, part~\ref{item:hconstraintsb}.  As
in that theorem the mass constraint is preserved under $L^p(\T^2)$
convergence and so is compatible with both (LB) and
compactness\footnote{Both the fidelity term and the mass constraint
are not only compatible with the compactness result, but even help
with concluding uniform boundedness of either $L^1(\T^2)$ or
$L^2(\T^2)$ norm and hence can replace assumption $(W_2)$ in
Theorem~\ref{thm:compactnessforkNe} when compactness with respect to
the correct topology is considered.\label{foot:helpscompactness}}.  To
show that the mass constraint is compatible with (UB') as well we need
to check two conditions.  First, the recovery sequence which was
constructed (in the proof of Lemma~\ref{lem:kupperbound}) for $u\in
C^\infty(\T^2)$ should either satisfy or be able to be adapted to
satisfy the mass constraint.  Second, for each $u\in W^{1,2}(\T^2)$
there should be an approximating sequence $\{u_n\}_{n=1}^\infty
\subset C^\infty(\T^2)$ which has constant mass.  The latter follows
directly by the use of normalized mollifiers to construct the
approximating sequence.  For the former condition we follow an
argument reminiscent of the proof that a mass constraint is compatible
with the Modica-Mortola $\Gamma$-convergence result for the continuum
Ginzburg-Landau functional, see \textit{e.g.}
\cite{Modica87a,Sternberg88,Braides02}.  Assume that $\int_{\T^2} u =
M$ for some $M>0$.  Because $u$ is smooth it has bounded derivatives
on $\T^2$, hence (using the notation $S_{N_n}^{i,j}$ from
(\ref{eq:SNij}))
    \begin{align*}
    \int_{\T^2} u_n - u &= \sum_{i,j=1}^{N_n} \int_{S_{N_n}^{i,j}}
    \left[u(i/N_n, j/N_n) - u(x)\right]\,dx\\
    & \leq C_u \sum_{i,j=1}^{N_n} \int_{S_{N_n}^{i,j}} |(i/N_n,
    j/N_n)-x|_2\,dx \leq \tilde C_u N_n^{-1},
    \end{align*}
    for some constants $C_u$, $\tilde C_u$, depending only on $u$.
    Hence for each $n\in\N$ there is a $\delta_n =
    \mathcal{O}(N_n^{-1})$ such that $\tilde u_n := u_n + \delta_n$
    satisfies $\int_{\T^2} \tilde u_n = M$.  For this new proposed
    recovery sequence we compute
    \[
    k_{N_n,\e}(\tilde u_n) = k_{N_n,\e}(u_n) + \e^{-1} \int_{\T^2}
    \left[ W(\tilde u_n) - W(u_n)\right].  \]
    By Taylor's theorem, for $x\in\T^2$,
$$
    W(\tilde u_n(x)) - W(u_n(x)) = W'(c_n(x)) \delta_n, 
$$
    where $c_n(x) \in [u_n, u_n+\delta_n]$.  Since $u$ is continuous
    and hence bounded on $\T^2$, the sequence $\{u_n\}_{n=1}^\infty$
    is equibounded and hence, because $W\in C^2(\R)$, $W'(c_n)$ is
    equibounded.  Therefore, the sequence $\{\tilde u_n\}_{n=1}^\infty$
    is indeed a recovery sequence for (UB) with $u\in C^\infty(\T^2)$
    and satisfies the mass constraint.

(2)
As in part~\ref{item:constraintska} the addition of fidelity
terms is compatible with the $\Gamma$-limit and compactness and the
mass constraint is compatible with both (LB) and compactness.  For
compatibility with (UB') again we check two things: First that the
recovery sequence which was constructed (in the proof of
Lemma~\ref{lem:kupperboundallNs}) for $u\in BV(\T^2;\{0,1\})$ with
$\partial \supp u$ smooth either satisfies or can be adapted to
satisfy the mass constraint and second that for each $u\in
BV(\T^2;\{0,1\})$ an approximating sequence $\{u_n\}_{n=1}^\infty
\subset BV(\T^2;\{0,1\})$ can be chosen for which $\partial \supp u_n$
is smooth and which has constant mass.  The latter condition is
satisfied if we use the approximating sequence as in \cite[Theorem
3.42]{AmbrosioFuscoPallara00} and then introduce a small dilation,
diminishing along the sequence, of the support of each $u_n$ so that
the mass remains fixed (see \textit{e.g.} \cite[Proposition
7.1]{PeletierRoeger09}).  The former condition follows in a similar
way to the construction above in the case $N\to\infty$ for $k_{N,\e}$.

\subsection{Gradient flow for $k_{N,\e}$ with constraints}

To minimize $k_{N,\e}$ either under a mass constraint or with a
fidelity term we can use a gradient flow.  First consider the latter
case: $k_{N,\e,\lambda} := k_{N,\e} + \lambda |\cdot-f|_2^2$.  For $u,
v\in \mathcal{V}_N$ we compute $\text{grad}(k_{N,\e,\lambda})(u) \in
\mathcal{V}_N$ via 
$$
  \frac{d}{dt}
k_{N,\e,\lambda}(u+tv) \Big |_{t=0} = \langle
\text{grad}(k_{N,\e\,\lambda})(u), v\rangle_{\mathcal{V}}
$$
 and then
set for all $i,j\in I_N$
$$
  \frac{\partial
u_{i,j}}{\partial t} =
-\big(\text{grad}(k_{N,\e,\lambda})(u)\big)_{i,j}.
$$
 This leads to the
equation
\begin{equation}\label{eq:gradflowforkNel}
\frac{\partial u_{i,j}}{\partial t} = -4^{-r}
 \Big [2 \e 
\!\!\!\!\!
 \sum_{(k,l)\in \mathcal{N}(i,j)} 
\!\!\!\!\!
(u_{i,j}-u_{k,l}) +
\e^{-1} N^{-2} W'(u_{i,j}) + 2 \lambda (u_{i,j}-f_{i,j})  \Big ],
\end{equation}
where the set of indices of neighbors of $(i,j)$ is given by
$\mathcal{N}(i,j) = \{(i-1,j), (i+1,j), (i,j-1), (i,j+1)\}$.  The
overall prefactor $4^{-r}$ comes from the factor $d_{ij}^{-r}$ which is needed to 
cancel the factor $d_{i,j}^r$ in the $\mathcal{V}_N$ inner product. Here we assume 
the weights in this case to be equal to $1$ (on existing edges). Equation 
(\ref{eq:gradflowforkNel}) is the discretized analogue of the continuum 
Allen-Cahn equation with data fidelity
\[
\frac{\partial u}{\partial t} = 2 \e \Delta u - \e^{-1} W'(u) - 2
\lambda (u-f),
\]
which is the $L^2$ gradient flow of $F_\e^{GL}(u) + \lambda
\|u-f\|_{L^2(\T^2)}^2$.

If instead of the fidelity term a mass constraint is imposed the term
$2 \lambda (u_{i,j}-f_{i,j})$ gets replaced by a Lagrange multiplier
$$
\kappa = \e^{-1} N^{-4} \sum_{i,j=1}^N W'(u_{i,j}) .
$$
To illustrate we show simulation results using a fidelity term with
$f$ the characteristic function of a square.  We use a one-step
forward in time finite difference scheme to discretize the time
derivative, \textit{i.e.,}
\[
u_{i,j}^{n+1} \! = u_{i,j}^n - 4^{-r} dt 
 \Big [2 \e
\!\!\!\!\! 
\sum_{(k,l)\in \mathcal{N}(i,j)} 
\!\!\!\!\!
(u_{i,j}^n-u_{k,l}^n) + \e^{-1}
N^{-2} W'(u_{i,j}^n) + 2 \lambda (u_{i,j}^n-f_{i,j})  \Big ].
\]
Here $dt$ is the discrete time step and the superscript $n$ labels the
time step.  We start with a random initial condition $u^0$. We use the inner product
structure on $\mathcal{V}_N$ corresponding to the unnormalized
Laplacian ($r=0$).  Using the structure corresponding to the random
walk Laplacian ($r=1$) instead only gives an overall multiplicative
factor $\frac14$ in the right hand side of the gradient flow in
(\ref{eq:gradflowforkNel}) and hence is effectively just a time
rescaling leading to qualitatively the same behavior.

\begin{figure} 
{\includegraphics[width=0.32\textwidth]{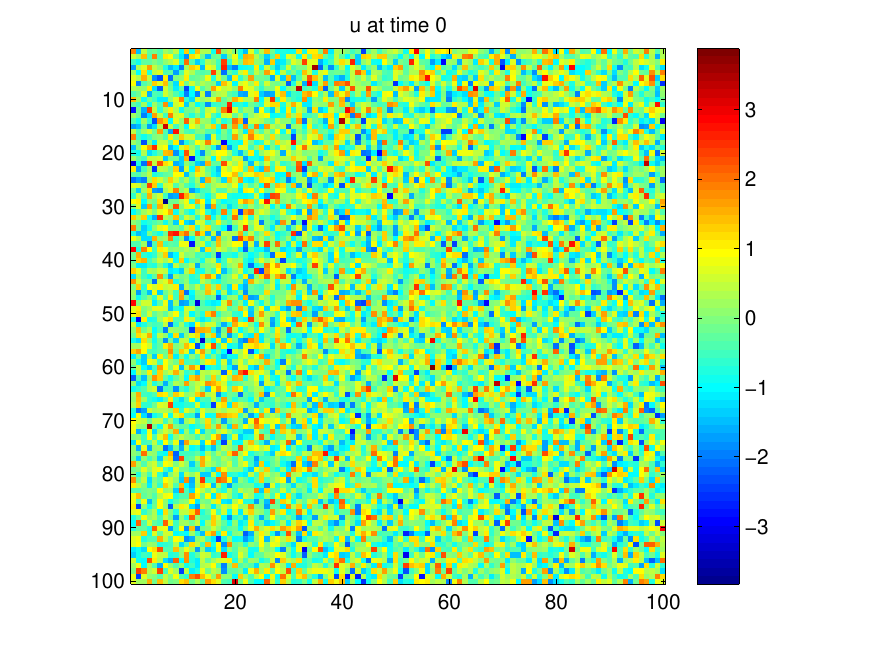}} 
{\includegraphics[width=0.32\textwidth]{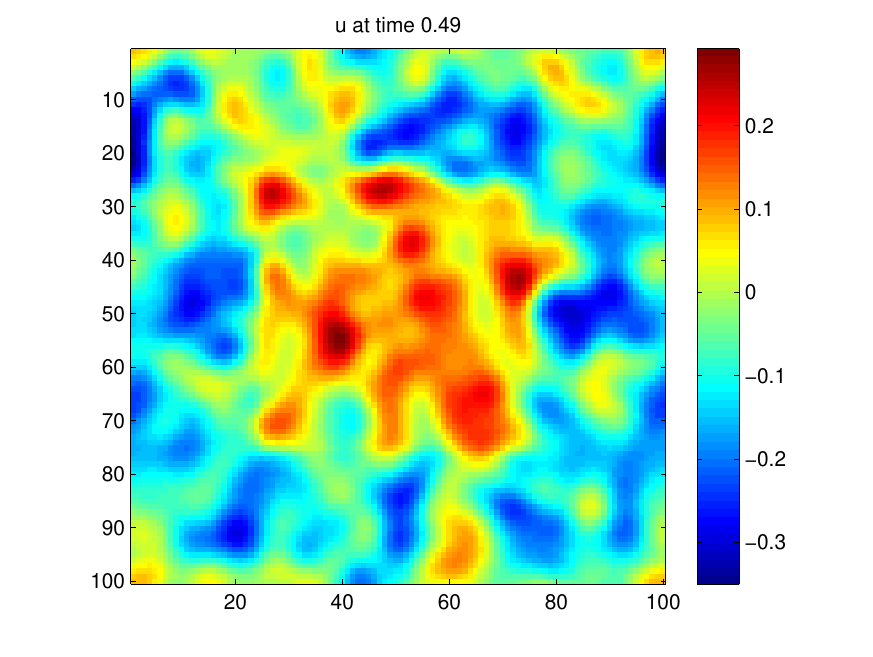}} 
{\includegraphics[width=0.32\textwidth]{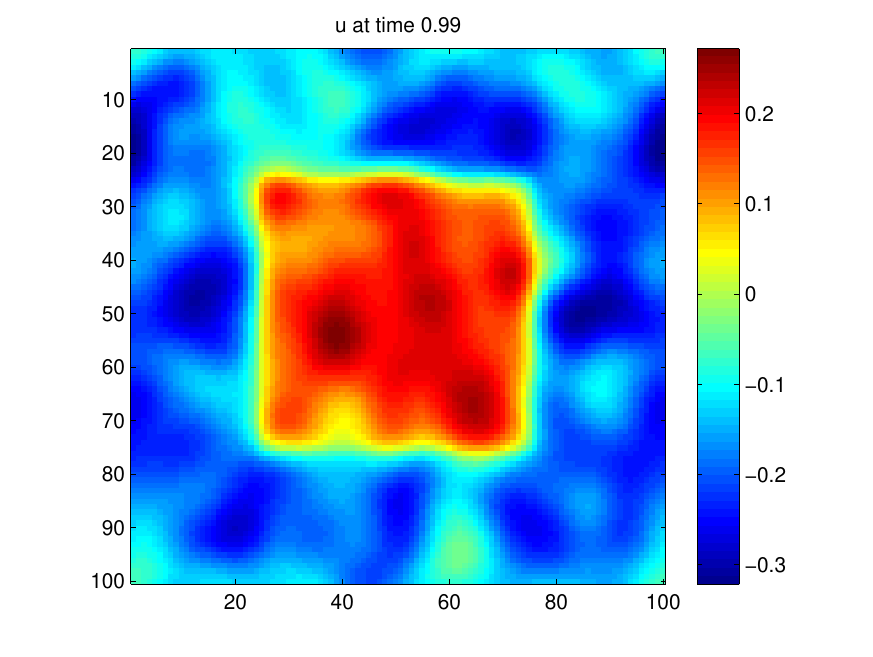}}
\\
(A) $t=0$
\hskip 70pt 
(B) $t=0.49$
\hskip 60pt
(C) $t=0.99$
\\ 
{\includegraphics[width=0.32\textwidth]{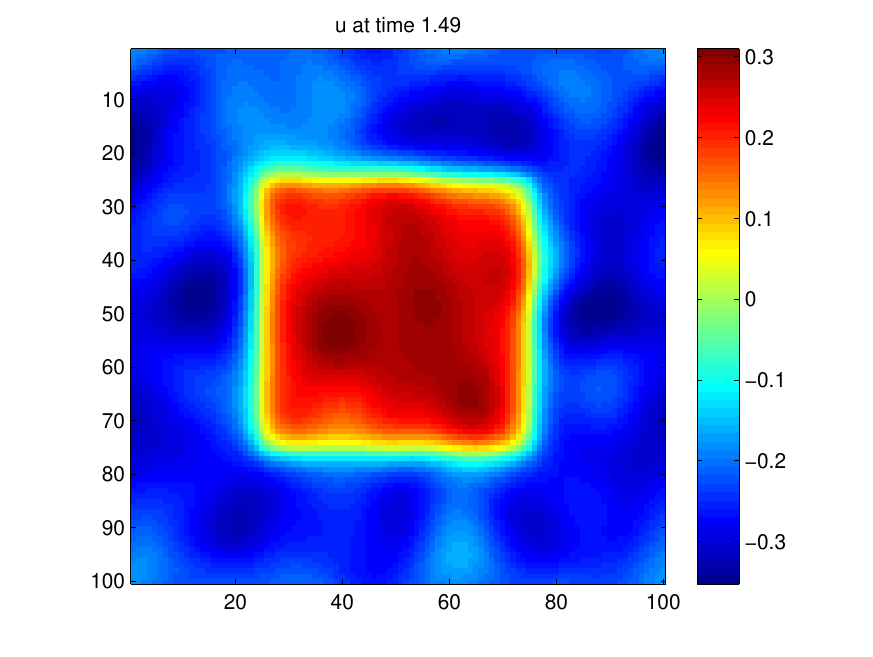}} 
{\includegraphics[width=0.32\textwidth]{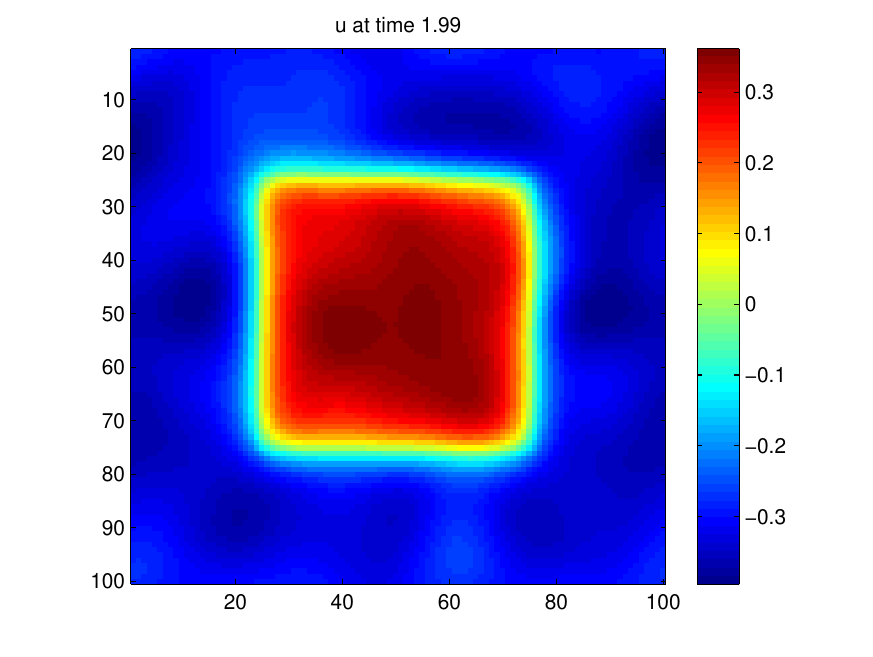}} 
{\includegraphics[width=0.32\textwidth]{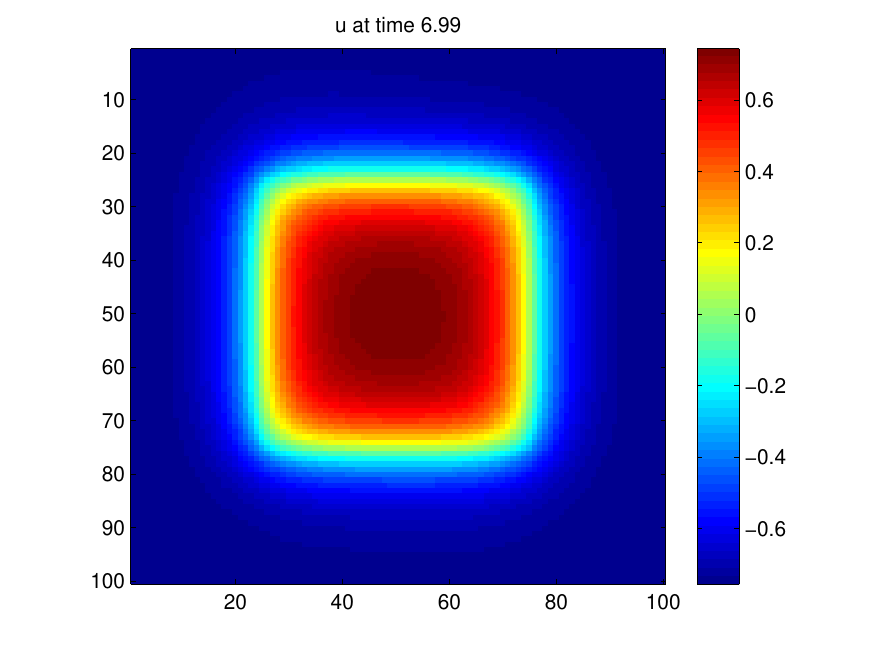}}
\\
(D) $t=1.49$
\hskip 60pt 
(E) $t=1.99$
\hskip 60pt
(F) $t=6.99$
\\ 
{\includegraphics[width=0.32\textwidth]{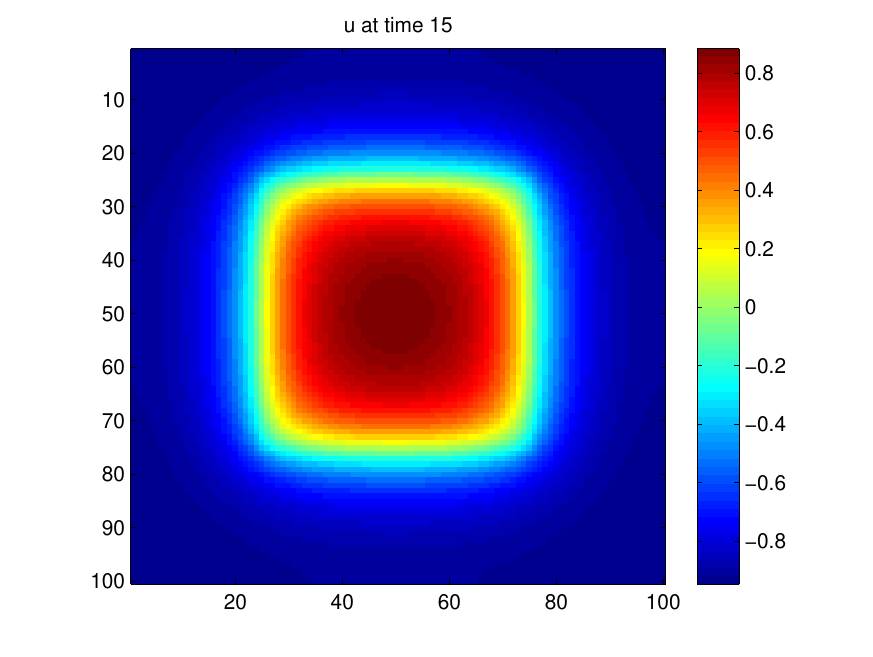}}
{\includegraphics[width=0.66\textwidth]{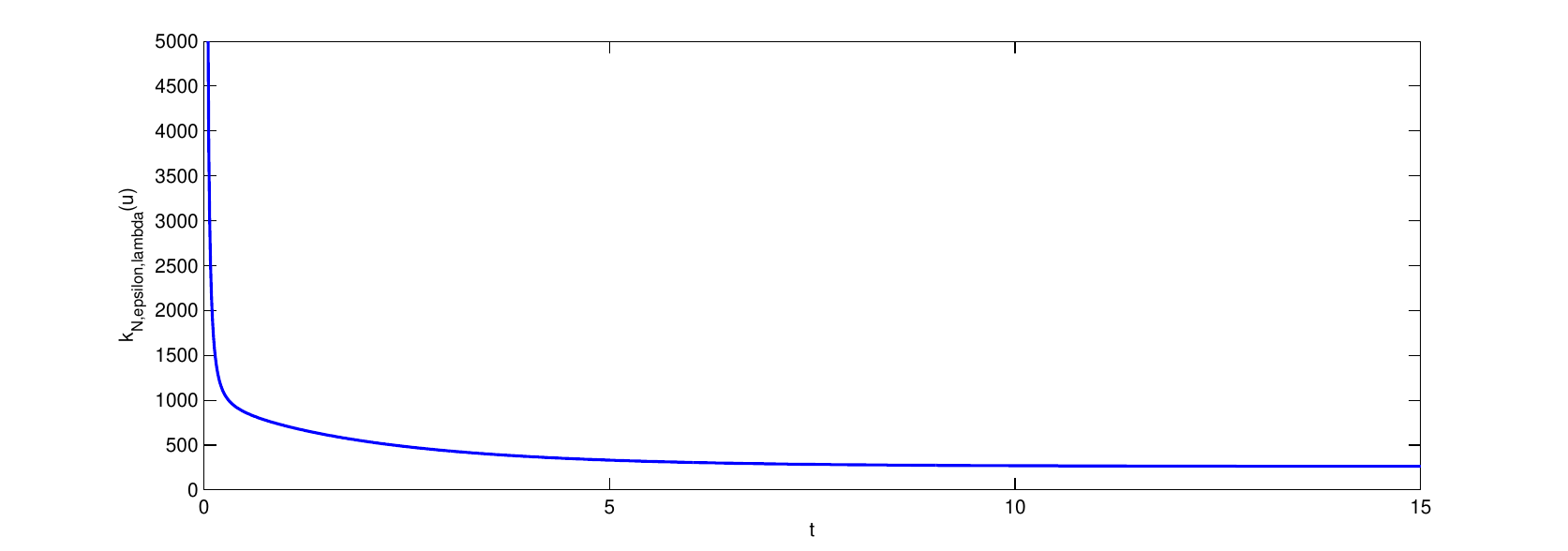}}
\\ 
(G)  $t=15$ 
\hskip 140pt
(H) \qquad\qquad\qquad\quad\quad
\caption{(A)--(G) show snap shots of the gradient flow
(\ref{eq:gradflowforkNel}) using the parameters in the text.  (h)
shows the corresponding time evolution of
$k_{N,\e,\lambda}(u)$.}\label{fig:kNe}
\end{figure}

In Figure~\ref{fig:kNe} we use the following parameter values:
$N=100$, $\e=5$, $\lambda = 0.1$ and $dt=0.01$.  We use $W(s) = s^2
(s-1)^2$ for the potential and $f$ is data prescribed to be 1 in a
square region and 0 outside that region.  

Note that $W'$ satisfies the growth condition $(W_4)$ with $q=3$,
hence according to Theorems~\ref{thm:kNcompactness}
and~\ref{thm:GammaconvergenceforkNalpha} $N$ and $\e$ should satisfy
the relation $N^{-\alpha} = \e$ for an $\alpha\in (0, 1/3)$.  The
combination $N=100$ and $\e=5$ used in Figure~\ref{fig:kNe} falls
outside this range ($\alpha \approx -0.35$), but the simulations still
give a good result.  Our theoretical results give a good guideline for
choosing $\alpha$ (especially the upper bound), but in practice
different values of $\alpha$, and hence $\e$, can produce good
gradient flow simulations.  We have chosen this larger value of $\e$
for our figures to show a stronger diffusion.

\section{The continuum limit of nonlocal means}\label{sec:NLM}

In this section we study the nonlocal means functional $g_N$ from
(\ref{eq:gN}) for a given fixed $\Phi\in C^\infty(\T^2)$.  Remember
that the weights are $\omega_{L,N} := e^{-d_{L,N}^2/\sigma^2}$ with
$\sigma, L >0$ constants (possibly depending on $N$) and $d_{L,N}$
defined in (\ref{eq:dLN}).  We define the limit weights
$\omega_{L,\sigma}, \omega_{\ell,c} \in L^\infty(\T^2\times\T^2)$ as
\[
\omega_{L,\sigma}(x,y) := e^{-\frac{4L^2}{\sigma^2}
\big(\Phi(x)-\Phi(y)\big)^2}, \qquad \omega_{\ell,c}(x,y) := e^{-c^2
\int_{S_\ell} \big(\Phi(x+z)-\Phi(y+z)\big)^2\,dz},
\]
where $\ell, c>0$ and $S_\ell := \{z\in \R^2: |z_1|+|z_2| \leq
\ell\}$.  The limit functionals $g_\infty^L: L^1(\T^2) \to \R$ and
$g_\infty^\ell: L^1(\T^2) \to \R$ are
\begin{align*}
g_\infty^L(u) 
&
:= \int_{\T^2} \int_{\T^2} \omega_{L,\sigma}(x,y)
|u(x)-u(y)|\,dx\,dy, \\
g_\infty^\ell(u) 
&
:= \int_{\T^2} \int_{\T^2}
\omega_{\ell,c}(x,y) |u(x)-u(y)|\,dx\,dy.
\end{align*}
We prove a $\Gamma$-convergence result.

\begin{theorem}[$\Gamma$-convergence]\label{thm:GammaconvergencegN} 
  $(1)$
If $\sigma, L>0$ are fixed, then $\displaystyle g_N
\overset{\Gamma}\to g_\infty^L$ as $N\to\infty$, in the $L^1(\T^2)$
topology.  

 $(2)$
 If $\sigma = \frac{N}c$ for some $c>0$ and $L$ is
such that $L/N \to \ell$ as $N\to \infty$, for some $\ell\in (0,1/2)$,
then $\displaystyle g_N \overset{\Gamma}\to g_\infty^\ell$ as
$N\to\infty$, in the $L^1(\T^2)$ topology. 
\end{theorem}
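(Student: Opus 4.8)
The plan is to exploit that $g_N$ is a purely nonlocal functional, so that --- in contrast to $h_{N,\e}$ and $k_{N,\e}$ --- no derivatives appear and the whole statement reduces to the convergence of the discrete kernel. First I would identify $\mathcal V_N$ with $\mathcal A_N$ as in Section~\ref{sec:proofk} and extend $g_N$ to $L^1(\T^2)$ by $g_N(u):=+\infty$ on $L^1(\T^2)\setminus\mathcal A_N$. For $u\in\mathcal A_N$, using $|S_N^{i,j}|=N^{-2}$ and that $u$ is constant on each grid square, one gets the exact identity
\[
g_N(u)=\int_{\T^2}\!\!\int_{\T^2}W_N(x,y)\,|u(x)-u(y)|\,dx\,dy,\qquad W_N(x,y):=(\omega_{L,N})_{i,j,k,l}\ \text{ for }(x,y)\in S_N^{i,j}\times S_N^{k,l},
\]
with $0\le W_N\le1$. (If instead $g_N$ is kept on $\mathcal V_N^b$ as in Section~\ref{sec:functionals}, replace $\mathcal A_N$ by $\mathcal A_N^b$ throughout, give $g_\infty^L$ and $g_\infty^\ell$ the usual $+\infty$ branch off $L^1(\T^2;\{0,1\})$, and take recovery sequences binary; nothing else changes.)

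The core step will be to show that $W_N\to\omega$ uniformly on $\T^2\times\T^2$, where $\omega=\omega_{L,\sigma}$ in part $(1)$ and $\omega=\omega_{\ell,c}$ in part $(2)$. In both parts, for $(x,y)\in S_N^{i,j}\times S_N^{k,l}$ the Lipschitz continuity of $\Phi$ gives $\Phi(\tfrac{i-r}{N},\tfrac{j-s}{N})=\Phi\big(x-(\tfrac rN,\tfrac sN)\big)+\mathcal O(N^{-1})$, and likewise for the $(k,l)$-term, with the error uniform in $r,s$ and $(x,y)$; since $\Phi$ is bounded, $(d_{L,N}^2)_{i,j,k,l}$ thereby equals $\sum_{r,s=-L}^L\big(\Phi(x-(\tfrac rN,\tfrac sN))-\Phi(y-(\tfrac rN,\tfrac sN))\big)^2$ up to an error of $\mathcal O(N^{-1})$ per summand. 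In part $(1)$, with $L$ and $\sigma$ fixed, all sample points additionally lie within $\mathcal O(N^{-1})$ of $x$ (resp.\ $y$), so $\sigma^{-2}(d_{L,N}^2)_{i,j,k,l}\to\tfrac{4L^2}{\sigma^2}(\Phi(x)-\Phi(y))^2$ uniformly and $W_N\to\omega_{L,\sigma}$ uniformly. In part $(2)$, $\sigma=N/c$, so $\sigma^{-2}(d_{L,N}^2)_{i,j,k,l}=c^2N^{-2}\sum_{r,s=-L}^L G_{x,y}(\tfrac rN,\tfrac sN)+o(1)$ with $G_{x,y}(z):=\big(\Phi(x+z)-\Phi(y+z)\big)^2$, and since $L/N\to\ell\in(0,\tfrac12)$ this is a Riemann sum of mesh $N^{-1}$ for $c^2\int_{S_\ell}G_{x,y}(z)\,dz$; because $\Phi\in C^\infty(\T^2)$ the family $\{G_{x,y}\}$ is uniformly bounded and equi-Lipschitz, so these Riemann sums converge uniformly in $(x,y)$ and $W_N\to\omega_{\ell,c}$ uniformly. (The bound $\ell<\tfrac12$ ensures that for $n$ large the $2L+1$ shifted indices are pairwise distinct modulo $N$, so no wrap-around over-counting occurs.)

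Given the uniform kernel convergence, both $\Gamma$-inequalities follow from the elementary fact that, for any sequence $u_n\to u$ in $L^1(\T^2)$, $\int\!\int W_{N_n}|u_n(x)-u_n(y)|\to\int\!\int\omega|u(x)-u(y)|$: indeed $\big\||u_n(\cdot)-u_n(\cdot\cdot)|-|u(\cdot)-u(\cdot\cdot)|\big\|_{L^1(\T^2\times\T^2)}\le2|\T^2|\,\|u_n-u\|_{L^1(\T^2)}\to0$ by the triangle inequality, and combining this with $\|W_{N_n}\|_\infty\le1$ and the uniform convergence $W_{N_n}\to\omega$ gives the claim. For the lower bound: if $\liminf g_{N_n}(u_n)=\infty$ there is nothing to prove; otherwise, along a subsequence realising the $\liminf$, the $u_n$ lie in $\mathcal A_{N_n}$, so $g_{N_n}(u_n)=\int\!\int W_{N_n}|u_n(x)-u_n(y)|\to g_\infty(u)$ and hence $g_\infty(u)\le\liminf g_{N_n}(u_n)$. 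For the upper bound: given $u\in L^1(\T^2)$, take $u_n\in\mathcal A_{N_n}$ with value $N_n^2\int_{S_{N_n}^{i,j}}u$ on each $S_{N_n}^{i,j}$, so that $u_n\to u$ in $L^1(\T^2)$ and $\limsup g_{N_n}(u_n)=\lim g_{N_n}(u_n)=g_\infty(u)$. This gives $g_N\overset{\Gamma}{\to}g_\infty$ in the $L^1(\T^2)$ topology in both parts.

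The main obstacle I anticipate is the uniform kernel convergence in part $(2)$: establishing that the Riemann sums over the shifted grid windows converge to $c^2\int_{S_\ell}G_{x,y}$ \emph{uniformly in the base point} $(x,y)$ (for which one uses that $\{G_{x,y}\}$ is equi-Lipschitz and equibounded, thanks to the smoothness and boundedness of $\Phi$), together with correctly identifying the limiting window as $N\to\infty$. The remaining estimates are routine.
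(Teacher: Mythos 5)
Your proposal is correct and follows essentially the same route as the paper: there, too, the theorem is reduced to (i) uniform convergence of the piecewise-constant kernels to $\omega_{L,\sigma}$ resp.\ $\omega_{\ell,c}$ (proved via the trapezoidal rule rather than your Riemann-sum/equi-Lipschitz argument, but to the same effect) and (ii) a continuity lemma showing $g_{N_n}(u_n)\to g_\infty(u)$ whenever $u_n\to u$ in $L^1(\T^2)$ and the kernels converge uniformly, from which (LB) and (UB') follow exactly as you describe. Your explicit handling of the domain ($\mathcal{A}_N$ versus $\mathcal{A}_N^b$, with the corresponding $+\infty$ branch on the limit functional, and binary resp.\ local-average recovery sequences) is in fact tidier than the paper's, which assumes $u_n\in\mathcal{A}_{N_n}^b$ in the lower bound but uses non-binary samplings of smooth $u$ as recovery sequences.
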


As explained in Remark~\ref{rem:nocompactnessforgN} we do \emph{not}
have compactness in this case.

Note that $g_N$ is the functional $f_0$ from Theorem~\ref{thm:Gammaf}
where the graph $G$ is the grid $G_N$ and the choices $\chi=N^{-4}$
and $\omega = \omega_{L,N}$ have been made.  Two main differences
between this functional and the previous functionals on the grid $G_N$
we considered are that the graph is now completely connected and the
weights are not uniform over the edges.  For the latter reason it is
useful to introduce a notation for the space of graph weights on
$G_N$.  Given a weight function $\omega$ and nodes $n_{i,j}, n_{k,l}
\in V_N$ we write $\omega_{i,j,k,l} := \omega(n_{i,j}, n_{k,l})$.
Define
\[
\mathcal{W}_N := \{ \omega: V_N \times V_N \to [0,\infty): \text{ for
all } i,j,k,l\in I_{N^2}\,\, \omega_{i,j,k,l} = \omega_{k,l,i,j}\}.
\]
Completely analogous to the identification between $\mathcal{V}_N$ and
$\mathcal{A}_N$ which was introduced in Section~\ref{sec:proofh}, we
can identify $\mathcal{W}_N$ with
\[
\Omega_N := \{ \omega: \T^2 \times \T^2 \to [0,\infty): \text{ for all
} x,y \in \T^2\,\, \omega(x,y) = \omega(y,x)\}.
\]
Identifying $\omega_{L,N}\in \mathcal{W}_N$ with the corresponding
$\omega_{L,N} \in \Omega_N$ and $u\in \mathcal{V}_N^b$ with the
corresponding $u\in \mathcal{A}_N^b$ we can write
\[
g_N(u) = \left\{\begin{array}{ll} \int_{\T^2} \int_{\T^2}
\omega_{L,N}(x,y) |u(x)-u(y)|\,dx\,dy & \text{if }
u\in\mathcal{A}_N^b,\\
+\infty & \text{if } u\in L^1(\T^2)\setminus\mathcal{A}_N^b.
\end{array}\right.
\]

We prove Theorem~\ref{thm:GammaconvergencegN} in two steps.  First we
show that uniform convergence of the weights suffices for
$\Gamma$-convergence of $g_N$ and then we show that uniform
convergence.

\begin{lemma}\label{lem:unifweightsgammafunc}
Let $\{\omega_N\}_{N=1}^\infty$ be such that $\omega_N \in \Omega_N$
and $\omega_N \to \omega$ uniformly as $N\to\infty$ for some
$\omega\in L^\infty(\T^2\times\T^2)$.  For $N\in\N$ define the
functional $g_\infty: L^1(\T^2) \to \R$ by
\[
g_\infty(u) := \int_{\T^2} \int_{\T^2} \omega(x,y)
|u(x)-u(y)|\,dx\,dy,
\]
then $g_N \overset{\Gamma}\to g_\infty$ as $N\to\infty$ in the
$L^1(\T^2)$ topology.
\end{lemma}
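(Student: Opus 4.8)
The plan is to base the whole argument on two elementary continuity estimates for the functional $\mathcal{G}_\eta(v):=\int_{\T^2}\int_{\T^2}\eta(x,y)\,|v(x)-v(y)|\,dx\,dy$, defined for $\eta\in L^\infty(\T^2\times\T^2)$ and $v\in L^1(\T^2)$, noting that $g_N(v)=\mathcal{G}_{\omega_N}(v)$ for $v\in\mathcal{A}_N^b$ and $g_\infty=\mathcal{G}_\omega$. First, for fixed $\eta$ one has $|\mathcal{G}_\eta(v)-\mathcal{G}_\eta(w)|\le 2\|\eta\|_\infty\|v-w\|_{L^1(\T^2)}$, which follows by integrating the pointwise bound $\big||v(x)-v(y)|-|w(x)-w(y)|\big|\le|v(x)-w(x)|+|v(y)-w(y)|$; in particular $g_\infty$ is finite and continuous on $L^1(\T^2)$. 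Second, whenever $v$ is $\{0,1\}$-valued one has $\int_{\T^2}\int_{\T^2}|v(x)-v(y)|\,dx\,dy\le 2\|v\|_{L^1(\T^2)}\le2$ (the torus has unit area), hence $|\mathcal{G}_{\eta_1}(v)-\mathcal{G}_{\eta_2}(v)|\le 2\|\eta_1-\eta_2\|_\infty$ for every such $v$. Together with the hypothesis $\|\omega_N-\omega\|_\infty\to0$ these give $|g_N(v)-g_\infty(v)|\le 2\|\omega_N-\omega\|_\infty\to0$ uniformly over $v\in\mathcal{A}_N^b$, and this single fact powers both halves of the $\Gamma$-convergence.

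Before that I would dispose of non-binary functions. Each $g_N$ equals $+\infty$ off $\mathcal{A}_N^b$, and an $L^1$-limit of $\{0,1\}$-valued functions is again $\{0,1\}$-valued (pass to an a.e.-convergent subsequence); so for $u\in L^1(\T^2)$ not a.e. binary, every sequence $u_n\to u$ in $L^1(\T^2)$ with $N_n\to\infty$ has $\liminf_n g_{N_n}(u_n)=+\infty$, since a bounded-energy subsequence would lie in $\mathcal{A}_{N_n}^b$ and force $u$ to be binary. Hence the $\Gamma$-lower and $\Gamma$-upper limits both equal $+\infty$ there (so $g_\infty$ on the non-binary part of $L^1(\T^2)$ is to be read as $+\infty$, matching the domains of the $g_N$), and it remains to establish the two $\Gamma$-inequalities for $u\in L^1(\T^2;\{0,1\})$.

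For the lower bound, given $u_n\to u$ in $L^1(\T^2)$ with $N_n\to\infty$, I would use the reduction recalled in Section~\ref{sec:explainGamma} to assume $g_{N_n}(u_n)\le C$, so that $u_n\in\mathcal{A}_{N_n}^b$, and then pass to a further subsequence with $u_n\to u$ a.e.; the two estimates above then give $|g_{N_n}(u_n)-g_\infty(u)|\le 2\|\omega_{N_n}-\omega\|_\infty+2\|\omega\|_\infty\|u_n-u\|_{L^1(\T^2)}\to0$, whence $\liminf_n g_{N_n}(u_n)=g_\infty(u)$. For the upper bound, given an arbitrary sequence $N_n\to\infty$, I would take as recovery sequence the grid-rounding of $u$: on each square $S_{N_n}^{i,j}$ set $u_n\equiv1$ if the average of $u$ over that square is $\ge\tfrac12$ and $u_n\equiv0$ otherwise, so $u_n\in\mathcal{A}_{N_n}^b$. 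To check $u_n\to u$ in $L^1(\T^2)$, write $E_N$ for the square-averaging projection; then $\|E_Nu-u\|_{L^1(\T^2)}\to0$ (immediate for continuous $u$ since the squares have diameter $\le\sqrt2/N$, hence for every $u\in L^1(\T^2)$ by density together with $\|E_N\|_{L^1\to L^1}\le1$), and on the set $\{u_n\ne u\}$ one has $|E_{N_n}u-u|\ge\tfrac12$, so $\|u_n-u\|_{L^1(\T^2)}\le2\|E_{N_n}u-u\|_{L^1(\T^2)}\to0$. The same estimate as for (LB) then gives $|g_{N_n}(u_n)-g_\infty(u)|\to0$, so $\limsup_n g_{N_n}(u_n)=g_\infty(u)$; combining the two inequalities proves the lemma. (For $u$ with $\supp u$ of finite perimeter one could instead recycle the recovery sequence of Lemma~\ref{lem:hupperbound}, but the grid-rounding construction works for every binary $u$.)

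I expect the recovery-sequence step to be the only genuinely non-routine point: it rests on the density of binary grid step functions in $L^1(\T^2;\{0,1\})$, and since the grids $\tfrac1N\Z^2$ are not nested, $E_N$ is not a bona fide martingale, so one must argue through a dense subspace together with the uniform bound $\|E_N\|_{L^1\to L^1}\le1$. Everything else --- the two Lipschitz-type inequalities and the bookkeeping with $\|\omega_N-\omega\|_\infty$ --- is straightforward once one records that $\|u_n\|_{L^1(\T^2)}\le1$ for $\{0,1\}$-valued functions on the unit-area torus.
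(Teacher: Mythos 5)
Your proof is correct, and its analytic core coincides with the paper's: your two estimates for $\mathcal{G}_\eta$ are precisely the terms $I_1$ (uniform closeness of the weights, using $\int_{\T^2}\int_{\T^2}|u_n(x)-u_n(y)|\,dx\,dy\le 2$ for binary $u_n$) and $I_2$ (the Lipschitz bound $2\|\omega\|_\infty\|u_n-u\|_{L^1(\T^2)}$) in the paper's proof, and both arguments deduce from them that $g_{N_n}(u_n)\to g_\infty(u)$ along \emph{every} sequence in $\mathcal{A}_{N_n}^b$ converging to $u$, which gives (LB) immediately. You diverge on the upper bound and on non-binary limits. For the recovery sequence the paper samples a smooth $u$ pointwise on the grid and then passes to general $u$ via density of $C^\infty(\T^2)$ in $L^1(\T^2)$ and lower semicontinuity of the $\Gamma$-upper limit; you instead build, for each binary $u$, the majority-vote rounding and check $\|u_n-u\|_{L^1(\T^2)}\le 2\|E_{N_n}u-u\|_{L^1(\T^2)}\to 0$ directly. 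Your construction is more self-contained and, crucially, actually lands in $\mathcal{A}_{N_n}^b$, whereas the sampling of a smooth non-binary $u$ does not, so under the literal definition of $g_N$ (equal to $+\infty$ off $\mathcal{A}_N^b$) the paper's recovery sequence is inadmissible. This is also why your reinterpretation of the limit as $+\infty$ off $L^1(\T^2;\{0,1\})$ is not a cosmetic change but the provable statement: since an $L^1$-limit of $\{0,1\}$-valued functions is $\{0,1\}$-valued, no finite-energy sequence converges to a non-binary $u$, and (UB) with the finite value $g_\infty(u)$ cannot hold there. (The finite-everywhere limit claimed in the paper would be correct if $g_N$ were instead defined by the same double integral on all of $\mathcal{A}_N$; your $I_1$-type estimate still closes in that setting with $\sup_n\|u_n\|_{L^1(\T^2)}$ replacing the constant $2$.)
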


\begin{proof}[\bf Proof.]
Let $\{N_n\}_{n=1}^\infty \subset \N$ such that $N_n\to \infty$ as
$n\to\infty$, $u\in L^1(\T^2)$ and $\{u_n\}_{n=1}^\infty \subset
L^1(\T^2)$ such that $u_n \to u$ in $L^1(\T^2)$ as $n\to\infty$.
Assume that for each $n\in\N\,\,$ $u_n \in \mathcal{A}_{N_n}^b$.  
Then
\begin{align*}
&  \Big  | \int_{\T^2} \int_{\T^2} \omega_{N_n}(x,y) |
u_n(x) - u_n(y)|\,dx\,dy - \int_{\T^2} \int_{\T^2} \omega(x,y) | u(x)
- u(y)|\,dx\,dy \Big | \\
&=  \Big | \int_{\T^2} \int_{\T^2} (\omega_{N_n}(x,y) - \omega(x,y))
|u_n(x)-u_n(y)|\,dx\,dy   \\
 &  
\qquad + \int_{\T^2}\int_{\T^2} \omega(x,y) \left(
 |u_n(x)-u_n(y)|-|u(x)-u(y)|\right)\,dx\,dy \Big |\\
 &\leq I_1 + \int_{\T^2}\int_{\T^2} \omega(x,y)
 |u_n(x)-u_n(y)-u(x)+u(y)|\,dx\,dy \leq I_1 + 2 I_2,
\end{align*}
where for simplicity we have used the notation 
\begin{align*}
&
I_1(u) := \int_{\T^2}
\int_{\T^2} |\omega_{N_n}(x,y) - \omega(x,y)| |u_n(x)-u_n(y)|\,dx\,dy 
\\
&
I_2(u) := \int_{\T^2}\int_{\T^2} \omega(x,y) |u_n(x)-
u(x)|\,dx\,dy .
\end{align*}
Since $\omega_{N_n} \to \omega$ uniformly as $n\to\infty$, there is a
sequence of constants $C_n>0$ such that for $n$ large enough
$|\omega_{N_n}(x,y) - \omega(x,y)| \leq C_n$ and $C_n\to 0$ as
$n\to\infty$.  Hence,
$$
I_1 \leq C_n \int_{\T^2} \int_{T^2}
|u_n(x)-u_n(y)|\,dx\,dy . 
$$
 Because $u_n \in \mathcal{A}_{N_n}^b$ we
have $|u_n(x)-u_n(y)| \leq 2$ for almost all $(x,y)\in \T^2\times
\T^2$ and hence $I_1 \to 0$ as $n\to \infty$.  Furthermore
$
I_2 \leq \|\omega\|_{L^\infty(\T^2)} \|u_n-u\|_{L^1(\T^2)}
$
and thus $I_2 \to 0$ as $n\to\infty$.  We conclude that
$
\underset{n\to\infty}\lim\, g_{N_n}(u_n) = g_\infty(u)
$
for any sequence $\{u_n\}_{n=1}^\infty$ such that $u_n \in
\mathcal{A}_{N_n}^b$ and $u_n\to u$ in $L^1(\T^2)$ as $n\to\infty$.
In particular (LB) is proven.  To prove (UB') all that remains is to
show that there exists such a sequence.

First assume $u\in C^\infty(\T^2)$ and define $u_n(x) := u(i/{N_n},
j/{N_n})$ where $i, j \in I_N$ are such that $x\in S_{N_n}^{i,j}$ from
(\ref{eq:SNij}).  This sequence satisfies the required conditions,
hence (UB') is proved for $u\in C^\infty(\T^2)$.  We conclude the
argument by using the lower semicontinuity of the upper $\Gamma$-limit
and density of $C^\infty(\T^2)$ in $L^1(\T^2)$ as in the proof of
Theorems~\ref{lem:kupperbound} and ~\ref{lem:kupperboundallNs} to
deduce (UB') for $u\in L^1(\T^2)$.
\end{proof}

\begin{remark}
Note that $g_N$ does not converge uniformly to $g_\infty$, because if
$u\in L^1(\T^2)$ is such that for all $N\in\N\,\,$ $u\not\in
\mathcal{A}_N^b$, then $|g_N(u)-g_\infty(u)| = \infty$.  If we would
restrict the domains of $g_N$ and $g_\infty$ to continuous $u$ and
define $g_N$ to be
\[
g_N(u) := \int_{\T^2} \int_{\T^2} \omega_N(x,y)
|u_N(x)-u_N(y)|\,dx\,dy,
\]
where $u_N(x) = u(i/N, j/N)$ where $i$ and $j$ are such that $x\in
S_{N_n}^{i,j}$, then $g_N\to g_\infty$ uniformly by the estimates in
the proof of Lemma~\ref{lem:unifweightsgammafunc}.  By
\cite[Proposition 5.2]{DalMaso93} $g_N$ then $\Gamma$-converges to the
lower semicontinuous envelope of $g_\infty$.
\end{remark}

\begin{lemma}\label{lem:unifconvofweights} 
$(1)$
If $\sigma, L>0$ are fixed, then $\omega_{L,N} \to
\omega_{L,\sigma}$ uniformly as $N\to\infty$. 
  
$(2)$ If $\sigma =
\frac{N}c$ for some $c>0$ and $L$ is such that $L/N \to \ell$ as $N\to
\infty$, for some $\ell\in (0,1/2)$, then $\omega_{L,N} \to
\omega_{\ell,c}$ uniformly as $N\to\infty$. 
\end{lemma}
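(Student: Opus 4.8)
The plan is to prove both parts by the same two-step reduction. Recall that $\omega_{L,N}$, under the identification with a function on $\T^2\times\T^2$ introduced just before the lemma, equals $e^{-(d_{L,N}^2)_{i,j,k,l}/\sigma^2}$ on the square $S_N^{i,j}\times S_N^{k,l}$. Since $s\mapsto e^{-s}$ is $1$-Lipschitz on $[0,\infty)$, writing $\widehat\omega=e^{-E_0}$ for the candidate limit we have
\[
\|\omega_{L,N}-\widehat\omega\|_{L^\infty(\T^2\times\T^2)} \ \le\ \sup\Big|\tfrac{(d_{L,N}^2)_{i,j,k,l}}{\sigma^2}-E_0(x,y)\Big|,
\]
the supremum over $(x,y)\in\T^2\times\T^2$ with $(i,j,k,l)$ the indices of the containing square. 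So it suffices to show that the exponents $\sigma^{-2}(d_{L,N}^2)_{i,j,k,l}$ converge uniformly to $E_0$, and I would estimate them using only that $\Phi\in C^\infty(\T^2)$ is bounded, Lipschitz, and has bounded gradient on the compact torus.

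For part (1), $\sigma$ and $L$ are fixed, so $d_{L,N}^2$ is a sum with an $N$-independent number of terms. For $(x,y)\in S_N^{i,j}\times S_N^{k,l}$ and $|r|,|s|\le L$, the point $(\tfrac{i-r}{N},\tfrac{j-s}{N})$ lies within $\sqrt2\,N^{-1}$ of $(x_1-\tfrac{r}{N},x_2-\tfrac{s}{N})$, and similarly for the $(k,l)$-term, so Lipschitzness and boundedness of $\Phi$ give
\[
\Big(\Phi\big(\tfrac{i-r}{N},\tfrac{j-s}{N}\big)-\Phi\big(\tfrac{k-r}{N},\tfrac{l-s}{N}\big)\Big)^2 = \big(\Phi(x)-\Phi(y)\big)^2 + \mathcal O(N^{-1}),
\]
uniformly in all indices. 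Summing over the fixed index set of the definition and dividing by $\sigma^2$ yields $\sigma^{-2}(d_{L,N}^2)_{i,j,k,l}=\tfrac{4L^2}{\sigma^2}\big(\Phi(x)-\Phi(y)\big)^2+\mathcal O(N^{-1})$, uniformly --- exactly the exponent of $\omega_{L,\sigma}$; the Lipschitz reduction then completes part (1).

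For part (2), $\sigma=N/c$ and $L/N\to\ell\in(0,\tfrac12)$, so $\sigma^{-2}(d_{L,N}^2)_{i,j,k,l}=c^2N^{-2}\sum_{r,s=-L}^L(\cdots)^2$. Setting $z=(r/N,s/N)$ and arguing as above, each summand equals $(\Phi(x+z)-\Phi(y+z))^2+\mathcal O(N^{-1})$ uniformly; since there are $\mathcal O(N^2)$ summands, after the factor $c^2N^{-2}$ these per-term errors total only $\mathcal O(N^{-1})$. (The hypothesis $\ell<\tfrac12$ keeps the offsets $z$ inside a fundamental domain, so the sampling does not wrap around the torus.) The principal part $c^2N^{-2}\sum_{r,s=-L}^L(\Phi(x+z)-\Phi(y+z))^2$ is $c^2$ times a Riemann sum of the integrand $z\mapsto(\Phi(x+z)-\Phi(y+z))^2$, which is $C^1$ with gradient bounded uniformly in $(x,y)$; the standard Riemann-sum estimate then gives convergence to $c^2\int_{S_\ell}(\Phi(x+z)-\Phi(y+z))^2\,dz$ with an $\mathcal O(N^{-1})$ error, uniform in $(x,y)$, after absorbing an $o(1)$ term from the discrepancy between the scaled index region and $S_\ell$ (using $L/N\to\ell$ and boundedness of the integrand) and using the symmetry $z\mapsto-z$ of $S_\ell$ to replace $\Phi(x-z)$ by $\Phi(x+z)$. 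This limit is the exponent of $\omega_{\ell,c}$, so the Lipschitz reduction completes part (2).

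The main obstacle is the bookkeeping in part (2): one must check that every $\mathcal O(N^{-1})$ and $o(1)$ error above is genuinely uniform in $(x,y)$ --- equivalently in the indices $i,j,k,l$, which determine the sampling offsets --- and that the $\sim(\ell N)^2$ growth in the number of summands is exactly offset by the $N^{-2}$ prefactor. Both are controlled purely by the uniform bounds on $\Phi$ and $\nabla\Phi$ coming from $\Phi\in C^\infty(\T^2)$, so no extra input is needed.
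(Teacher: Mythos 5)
Your argument follows the same route as the paper's: reduce, via the $1$-Lipschitz continuity of $s\mapsto e^{-s}$ on $[0,\infty)$, to uniform convergence of the exponents $\sigma^{-2}(d_{L,N}^2)_{i,j,k,l}$, and then treat the sum defining $d_{L,N}^2$ as a quadrature approximation of an integral, controlled by the uniform bounds on $\Phi$ and $\nabla\Phi$ on the compact torus. The paper uses the trapezoidal rule (with an explicit second-derivative remainder); you use a term-by-term Lipschitz estimate in part (1) and a first-order Riemann-sum estimate in part (2). That is the same idea with a cruder but entirely sufficient error bound, and your uniformity bookkeeping in part (2) (per-term error $\mathcal{O}(N^{-1})$, $\mathcal{O}(N^2)$ terms, prefactor $N^{-2}$) is correct, as is your use of the $z\mapsto -z$ symmetry to pass from $\Phi(x-z)$ to $\Phi(x+z)$.

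Two places where your constants do not literally come out as claimed --- both already present in the paper's own computation, so they reflect the lemma's normalization rather than a flaw in your strategy, but you assert the claimed constants while your displayed steps produce different ones. First, in part (1) the index set $\{-L,\dots,L\}^2$ has $(2L+1)^2$ elements, so summing your per-term estimate gives $(2L+1)^2\big(\Phi(x)-\Phi(y)\big)^2+\mathcal{O}(N^{-1})$, not $4L^2(\cdots)$; the paper reaches $4L^2$ only by replacing the equal-weight sum with the integral over a square of area $4L^2N^{-2}$, i.e.\ by silently dropping the boundary weights of the stencil, which for fixed $L$ contribute at order one. Second, in part (2) the scaled index region fills out the square $[-\ell,\ell]^2$, whereas $S_\ell$ as defined is the $\ell^1$-ball $\{|z_1|+|z_2|\le\ell\}$; the symmetric difference of these sets has measure of order $\ell^2$, not $o(1)$, so your ``absorb the discrepancy'' step only works if $S_\ell$ is read as the $\ell^\infty$-ball (evidently what is intended --- the paper makes the same identification when it writes $|S_{L,N}|=4L^2/N^2$). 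With the constants in the limit weights adjusted accordingly, or $S_\ell$ read as a square, your proof is complete.
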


We defer the relatively straightforward proof to
Appendix~\ref{sec:deferredproofs}.

\smallskip

\noindent
{\bf Proof of Theorem~\ref{thm:GammaconvergencegN}.}
Combining Lemmas~\ref{lem:unifweightsgammafunc}
and~\ref{lem:unifconvofweights} the result follows directly.
\qed

\begin{remark}\label{rem:nocompactnessforgN}
It is important to note that for $g_N$ we do \emph{not} have a
compactness result in the $L^1(\T^2)$ topology.  If we have sequences
$\{N_n\}_{n=1}^\infty$ and $\{u_n\}_{n=1}^\infty$ such that
$N_n\to\infty$ as $n\to\infty$ and $u_n\in \mathcal{A}_{N_n}^b$, the
bound on $\|u_n\|_{L^\infty(\T^2)}$ allows us to conclude that $u_n
\overset{*}\rightharpoonup u$ for some subsequence (labelled again by
$n$) and some $u\in L^\infty(\T^2)$.  In order to deduce $L^1(\T^2)$
convergence we would need some information on the derivatives (or
finite differences) which we do not have when the weights $\omega$ are
nonsingular.  A uniform bound on $g_{N_n}(u_n)$ adds no useful
information since $g_{N_n}(u_n) \leq 1$ per definition, if $u_n\in
\mathcal{A}_{N_n}^b$.

A simple counterexample is the case where $\Phi$ is constant, hence
the graph weight function $\omega \equiv 1$.  Let $u_N \in
\mathcal{A}_N^b$ be a checker board pattern on $G_N$, \text{i.e.,} as
function in $\mathcal{V}_N^b\,\,$ $(u_N)_{0,0}=0$ and $(u_N)_{i,j}
\neq (u_N)_{i+1,j} = (u_N)_{i,j+1}$ for all $i, j$, then
\[
g_N(u_N) = \int_{\T^2} \int_{\T^2} |u_N(x)-u_N(y)|\,dx\,dy = 2 |\supp
u_N|\,\, |(\supp u_N)^c| \leq 2.
\]
However no subsequence of $\{u_N\}_N$ converges in $L^1(\T^2)$ as can
be seen as follows.  Let $M\gg N$ then the square $S_N^{i,j}$ contains
$\mathcal{O}\Big(\big(\frac{M}N\big)^2\Big)$ squares of size $M^{-1}$
by $M^{-1}$.  On approximately half (at least $\mathcal{O}(1)$) of
these $u_M \neq u_N$, so $\displaystyle \int_{\T^2} |u_N - u_{M}| =
N^2 M^{-2} \mathcal{O}\Big(\big(\frac{M}N\big)^2\Big) \mathcal{O}(1) =
\mathcal{O}(1)$.
\end{remark}

\section{Discussion and open questions}\label{sec:discopen}

In this paper we have shown various $\Gamma$-convergence results.  The
convergence of $f_\e$ in Section~\ref{sec:Gammagraph} shows that we
can extend the classical Modica-Mortola $\Gamma$-convergence result
for the Ginzburg-Landau functional to graphs, if we are careful about
the precise scaling.  The discrete nature of a graph forces us to not
include an $\e$ in the finite difference term, unlike the $\e$ in the
gradient term for the continuum Ginzburg-Landau functional.  As has
been shown on a specific regular square grid in
Section~\ref{sec:differentscalingshNe}, this has consequences for the
limit functional, which now behaves like an anisotropic instead of
isotropic total variation.  We recovered the isotropic total variation
for the regular grid case in Section~\ref{sec:differentscalingskNe} by
taking an approach reminiscent of classical numerical analytic
results, instead of graph based results.  Specifically, to do this we
need to choose a scaling in line with standard finite difference and
quadrature methods and make the limit $N\to \infty$ dominant over the
limit $\e\to0$ such that, in a sense, we first get back to the
continuum case, before passing to the total variation.  The lesson in
here is twofold.  On the one hand it shows that one has to be careful
when discretizing on a grid not to pick up grid direction, which has
been known to numerical analysts for a long time.  On the other hand
however, it entices us to look at graph based functionals and
nonlinear partial differential equations in their own right, because
they can behave in surprising ways if the topology of the graph is
allowed to interact with the functional or PDE. This conclusion is
reminiscent of the behavior which is found in
\cite{HeinAudibertvonLuxburg07}.  In that paper the authors study the
limit of the graph Laplacian on a graph which is constructed by
sampling points from a manifold.  They find the limit is independent
of the sampling distribution only for a specific scaling of the graph
Laplacian.

In Section~\ref{sec:NLM} we studied the limit of a functional of
nonlinear means type on graphs, showing that while the limit exists,
the nonlocal nature of the functional leads to loss of compactness.
This is not expected to be a specific problem of the graph based
nature of the functional, but of the nonlocality and as such is
expected to be present for a continuum version of $g_N$ as well.

One question raised in Section~\ref{sec:discussionalpha}, is whether
the range of $\alpha$ under which $\Gamma$-convergence and compactness
of $k_N^\alpha$ can be proven, can be extended to $(0,1)$.  This is an
important question in practice when running gradient flow simulations.
A choice of $\e$ which is too small or large can lead to either
pinning or too fast diffusion respectively.

The $\Gamma$-convergence results for $h_{N,\e}$ and $k_{N,\e}$
naturally lead to the question of the limit behavior for other graphs.
In order to have a good interpretation for that question it is in the
first place necessary to have a structured way in which to increase
$m$, the number of nodes for the graph.  A triangulation might be the
natural next step.  A random Erd{\H{o}}s-R{\'e}nyi graph
\cite{ErdosRenyi59} also carries a natural rule how to connect new
nodes to the graph and may be an interesting exploration into the
question whether the Ginzburg-Landau functional on a graph without
explicit spatial embedding can possibly have `continuum' limit.  For
arbitrary graphs it is less clear how to add new nodes in a structured
way.  One option could be to construct a sequence of graphs where in
each next step each existing edge is bisected by a new node.

A question that is very relevant for the applications of the
Ginzburg-Landau functional is that of stability of minimizers with
respect to perturbations of the graph (\textit{e.g.} perturb the
weights or add or delete nodes).  For example, in data analysis, if
the nodes represent data points and the edge weights measure
similarity, it is quite likely that noise is present in the weights.

\section{Acknowledgements}

We would like to thank Arjuna Flenner for many useful discussions. 
We also thank Dejan Slepcev and Thomas Laurent for recently drawing 
our attention to an independent and as yet unpublished manuscript 
\cite{BraidesYip12}, which overlaps with some of the results here. 

The research in this paper was made possible by funding from 
ONR grant N000141210040 , ONR grant N000141210838, 
ONR grant N000141010221 and AFOSR MURI grant FA9550-10-1-0569.

\appendix

\section{Choice of Hilbert space and difference
structure}\label{sec:structureexplained}

In this section we will give some background information and
derivations concerning the choices made in Section~\ref{sec:LaplDirTV}
that defined our graph operators and functionals.

We start by associating $\mathcal{V}$ and $\mathcal{E}$ with the
finite dimensional vector spaces $\R^m$ and $\R^{m(m-1)/2}$ respectively.
We will turn these vector spaces into Hilbert spaces by defining inner
products on them.  We follow the procedure described in \cite[Section
2]{HeinAudibertvonLuxburg07}\footnote{We slightly deviate from
\cite{HeinAudibertvonLuxburg07}.  Instead of sums $\sum_{i=1}^m$ they
use averages $\frac1m \sum_{i=1}^m$, which is a choice not unanimously
adopted in the literature, but leads to cleaner convergence statements
in \cite{HeinAudibertvonLuxburg07}.  We could adopt this convention in
this paper, but it would not significantly alter our results, mutatis
mutandis.}.  For $u, v\in \mathcal{V}$ and $\varphi, \phi \in
\mathcal{E}$ we define
\[
\langle u, v \rangle_{\mathcal{V}} := \sum_{i\in I_m} u_i v_i
\alpha(d_i), \qquad \langle \varphi, \phi \rangle_{\mathcal{E}} :=
\frac12 \sum_{i,j\in I_m} \varphi_{ij} \phi_{ij} \beta(\omega_{ij}),
\]
where $\alpha, \beta: [0, \infty) \to [0, \infty)$ are functions yet
to be determined.  Note that a priori we allow $\alpha$ and $\beta$ to
take the value zero, which means that the above `inner products' might
be positive semi-definite and not positive definite.  We will get back
to this issue after we have decided on our 
choices of $\alpha$~and~$\beta$\footnote{Note that if $\alpha$ and $\beta$ are such that
positive definiteness is satisfied, these inner products do indeed
turn $\mathcal{V}$ and $\mathcal{E}$ into Hilbert spaces, since
convergence with respect to the induced $\mathcal{E}$ norm preservers
skew-symmetry.}.

As in \cite{GilboaOsher08} we also define the dot product for
$\varphi, \phi \in \mathcal{E}$ as
\[
(\varphi\cdot\phi)_i := \frac12 \sum_{j\in I_m} \varphi_{ij} \phi_{ij}
\beta(\omega_{ij}).
\]
Note that $\varphi\cdot\phi \in \mathcal{V}$.
As explained in \cite{HeinAudibertvonLuxburg07} we can now define the
difference operator or gradient $\nabla: \mathcal{V} \to \mathcal{E}$
as
\[
(\nabla u)_{ij} := \gamma(\omega_{ij}) (u_j-u_i),
\]
where $\gamma: [0, \infty] \to [0, \infty]$ is a third yet to be
determined function.  With this choice for the gradient we find that
its adjoint, the divergence $\dvg: \mathcal{E}\to \mathcal{V}$, is
given by \cite[Lemma 3]{HeinAudibertvonLuxburg07}
\[
(\dvg \varphi)_i := \frac1{2\alpha(d_i)} \sum_{j\in I_m}
\beta(\omega_{ij}) \gamma(\omega_{ij}) (\varphi_{ji}-\varphi_{ij}).
\]
This expression follows from the defining property of the adjoint:
$\langle \nabla u, \varphi \rangle_{\mathcal{E}} = \langle u,
\dvg\varphi\rangle_{\mathcal{V}}$ for all $u\in \mathcal{V}$ and all
$\varphi\in \mathcal{E}$.

Now that we have inner products, a gradient operator, and a divergence
operator, we can define the following objects:

$\bullet$
Inner product norms $\|u\|_{\mathcal{V}} := \sqrt{\langle
u,u\rangle_{\mathcal{V}}}$ and $\|\varphi\|_{\mathcal{E}} :=
\sqrt{\langle \varphi, \varphi\rangle_{\mathcal{E}}}$.

$\bullet$
 Maximum
norms\footnote{To justify these definitions and 
convince ourselves that there should be no $\beta$ or $\gamma$ 
included in the maximum norms we define $\|\varphi\|_{\mathcal{E}, p}^p := 
\frac12 \sum_{i,j\in I_m} \varphi_{ij}^2 \beta(\omega_{ij})$. 
Adapting the proofs in the continuum case in \textit{e.g.} 
\cite[Theorems 2.3 and 2.8]{Adams75} to the graph situation 
we can prove a H\"older inequality $\displaystyle 
\|\varphi \phi\|_{\mathcal{E}, 1} \leq \|\varphi\|_{\mathcal{E},p} 
\|\phi\|_{\mathcal{E},q}$ for $1<p, q< \infty$ such that 
$\frac1p+\frac1q=1$, an embedding theorem of the form
$\displaystyle \|\varphi\|_{\mathcal{E},p} \leq \left( \frac12 \sum_{i,j\in I_m} 
\beta(\omega_{ij})\right)^{\frac1p-\frac1q} \|\varphi\|_{\mathcal{E},q}$ 
for $1 \leq p \leq q \leq \infty$ and the limit $\displaystyle 
\underset{p\to\infty}\lim\, \|\varphi\|_{\mathcal{E},p} = 
\|\varphi\|_{\mathcal{E}, \infty}$. A similar result holds for the norms 
on $\mathcal{V}$.}
$\|u\|_{\mathcal{V},\infty} := \max\{|u_i|: i\in I_m\}$ and \\ 
$\|\varphi\|_{\mathcal{E},\infty} := \max\{|\varphi_{ij}|: i,j \in
I_m\}$.

$\bullet$
  The norm corresponding to the dot product
$\|\varphi\|_i := \sqrt{( \varphi \cdot \varphi)_i}$. \\
 Note that
$\|\cdot\|_{\mathcal{E},\text{dot}} \in \mathcal{V}$.

$\bullet$
  The
Dirichlet energy $\frac12 \|\nabla u\|_{\mathcal{E}}^2$.

$\bullet$
  The
graph Laplacian $\Delta := \dvg\circ\nabla: \mathcal{V} \to
\mathcal{V}$.  So
\[
(\Delta u)_i := \frac1{\alpha(d_i)} \sum_{j\in I_m} \beta(\omega_{ij})
\gamma^2(\omega_{ij}) (u_i-u_j).
\]

$\bullet$
  The isotropic and anisotropic total variation $TV:
\mathcal{V}\to \R$ and $TV_a: \mathcal{V}\to \R$ respectively:
\begin{align*}
\TV(u) &:= \max\{ \langle \dvg \varphi, u\rangle_{\mathcal{V}} :
\varphi\in \mathcal{E},\,\, \underset{i\in
I_m}\max\,\,\|\varphi\|_i\leq 1\}.\\
\TVa(u) &:= \max\{ \langle \dvg \varphi, u\rangle_{\mathcal{V}} :
\varphi\in \mathcal{E},\,\, \|\varphi\|_{\mathcal{E},\infty}\leq 1\}.
\end{align*}
We note that by the property of the adjoint we can also use $\langle
\nabla u, \varphi\rangle_{\mathcal{E}}$ in the definitions above
instead of $\langle \dvg \varphi, u\rangle_{\mathcal{V}}$.  In this
finite dimensional setting these maxima over unit balls will be
achieved, hence we are justified in using $\max$ instead of $\sup$.

Before we start making specific choices for $\alpha$, $\beta$, and
$\gamma$ it is interesting to make some general observations which do
not depend on these choices.

$\bullet$
  We can alternatively derive the Laplacian via the variational
principle from the Dirichlet energy as follows.  Consider $u, v \in
\mathcal{V}$ and $t\in \R$, then
\begin{align*}
\left.\frac{d}{dt} \frac12 \|\nabla
u+tv\|_{\mathcal{E}}^2\right|_{t=0} &= \frac12 \sum_{i,j\in I_m}
\beta(\omega_{ij}) \gamma^2(\omega_{ij}) (u_i-u_j) (v_i-v_j)\\
&= \sum_{i,j\in I_m} \beta(\omega_{ij}) \gamma^2(\omega_{ij})
(u_i-u_j) v_i\\
&= \sum_{i,j\in I_m} \frac{\beta(\omega_{ij})
\gamma^2(\omega_{ij})}{\alpha(d_i)} (u_i-u_j) v_i \alpha(d_i) =
\langle \Delta u, v \rangle_{\mathcal{V}}.
\end{align*}
If we choose $v=u$ this also shows that an analogue of `integration by
parts' holds:
\[
\langle \Delta u, u \rangle_{\mathcal{V}} = \frac12 \sum_{i,j\in I_m}
\beta(\omega_{ij}) \gamma^2(\omega_{ij}) (u_i-u_j)^2 = \|\nabla
u\|_{\mathcal{E}}^2.
\]

$\bullet$
  We can also give a variational TV-type formulation of the
Dirichlet energy itself via:
\[
\|\nabla u\|_{\mathcal{E}} = \max\{ \langle \dvg \varphi,
u\rangle_{\mathcal{V}} : \varphi\in \mathcal{E},\,\,
\|\varphi\|_{\mathcal{E}}\leq 1\}.
\]
To see why this holds we first remember that $\langle \nabla u,
\varphi\rangle_{\mathcal{E}}=\langle \dvg \varphi,
u\rangle_{\mathcal{V}}$.  Then we see that by the Cauchy--Schwarz
inequality 
$$
\langle \nabla u, \varphi\rangle_{\mathcal{E}} \leq
\|\nabla u\|_{\mathcal{E}} \|\varphi\|_{\mathcal{E}} \leq \|\nabla
u\|_{\mathcal{E}} . 
$$
 Equality is achieved when 
$$
\varphi =
\varphi^{\mathcal{E}}(u) := \left\{\begin{array}{ll}\frac{\nabla
u}{\|\nabla u\|_{\mathcal{E}}} & \text{if } \|\nabla u\|_{\mathcal{E}}
\neq 0,\\ 0 & \text{if } \|\nabla u\|_{\mathcal{E}} =
0\end{array},\right.
$$
 which is permissible since
$\|\varphi^{\mathcal{E}}(u)\|_{\mathcal{E}}\leq 1$.

We will now make particular choices for $\alpha$, $\beta$, and
$\gamma$.  Our choices will be driven by the desire to satisfy the
following properties.

(1)
We will consider a family of graph Laplacians indexed by a
parameter $r\leq 1$ (not to be confused with the $p$-Laplacian from
the literature).  As it turns out only the choice of $\alpha$ is
influenced by the choice of $r$.  The Laplacians we consider are
    \[
    (\Delta_r u)_i := d_i^{1-r} u_i - \sum_{j\in I_m}
    \frac{\omega_{ij}}{d_i^r} u_j = \sum_{j\in I_m}
    \frac{\omega_{ij}}{d_i^r} (u_i-u_j) \]

    As explained in Section~\ref{sec:LaplDirTV} by choosing either
    $r=0$ or $r=1$ we recover the unnormalized or random walk
    Laplacian respectively.  To construct the symmetric normalized
    Laplacian as it appears in the literature requires a gradient of
    the form 
$$
(\nabla u)_{ij} = \gamma(\omega_{ij})
    \Big (\frac{u_j}{\sqrt{d_j}} - \frac{u_i}{\sqrt{d_i}} \Big )
$$
    (\textit{cf.} \cite{HeinAudibertvonLuxburg07}).  This falls
    outside our current framework and hence we will not consider it
    here.\footnote{As a word of caution we note that the use of the
    symmetric normalized Laplacian in combination with a double well
    potential $W$ with wells that are not symmetrically placed around
    0 (as in the case where the wells are at 0 and 1) causes an
    asymmetry between the two phases that is typically unwanted.} Some
    discussion of the pros and cons of different graph Laplacians can
    be found in \textit{e.g.}
    \cite{vonLuxburg07,WardetzkyMathurKaelbererGrinspun07}.

(2)
The Dirichlet energy is given by $\frac12 \|\nabla
u\|_{\mathcal{E}}^2 = \frac14 \sum_{i,j\in I_m} \omega_{ij}
(u_i-u_j)^2$, independently of the choice of $r$ in the Laplacian.

(3)
 The isotropic total variation is
\[
\TV(u) =\sum_{i\in I_m} \|\nabla u\|_i = \frac12 \sqrt2 \sum_{i\in
I_m} \sqrt{\sum_{j\in I_m} \omega_{ij} (u_i-u_j)^2}
\]
(\textit{cf.} \cite{GilboaOsher08} where $\TV$ is called nonlocal TV
because the graph is assumed to be embedded in an Euclidean space and
so what is local on the graph (neighboring vertices) might not be
local in the embedding space).

(4)
  We will consider a family of anisotropic total variations
parametrized by the parameter $q\in [1/2, 1]$\footnote{We can take
$q\in \R$ if we interpret $\omega_{ij}^q$ as zero whenever
$\omega_{ij}=0$.}:
\[
{\TVa}_q(u) = \langle \nabla u, \sgn(\nabla u)\rangle_{\mathcal{E}} =
\frac12 \sum_{i,j\in I_m} \omega_{ij}^q |u_i-u_j|.
\]
The parameter $q$ comes in via the definitions of $\beta$ and $\gamma$
and the signum function is understood to act element-wise on the
elements of $\nabla u$.

Let us consider all the points above to find out what conditions we
have to impose on $\alpha$, $\beta$, and $\gamma$ to satisfy this list
of requirements.

\begin{enumerate}

\item\label{item:Laplacians} As can be seen in the definition of the
Laplacian (\textit{cf.} also \cite[Definition
7]{HeinAudibertvonLuxburg07}) in order to get the desired Laplacians
we have to choose $\alpha$, $\beta$, and $\gamma$ such that for each
$\omega_{ij}$ and each $d_i$:
\[
\frac{\beta(\omega_{ij}) \gamma^2(\omega_{ij})}{\alpha(d_i)} =
\frac{\omega_{ij}}{d_i^r}.
\]
Specifically $\beta(\omega_{ij}) \gamma^2(\omega_{ij}) = \omega_{ij}$
for any choice of $r$ and $\alpha(d_i) = d_i^r$.  We will see below
that the choice of $\alpha$ is irrelevant for the
points~\ref{item:Dirichletenergy}--\ref{item:TVa} and hence all
choices of $r$ are compatible with what follows.

\item\label{item:Dirichletenergy} For the Dirichlet energy we compute
\[
\frac12 \|\nabla u\|_{\mathcal{E}}^2 = \frac14 \sum_{i,j\in I_m}
(u_i-u_j)^2 \beta(\omega_{ij}) \gamma^2(\omega_{ij}) = \frac14
\sum_{i,j\in I_m} \omega_{ij} (u_i-u_j)^2.
\]
Since the graph Laplacian appears as the natural operator in the
Euler-Lagrange equation associated with the Dirichlet energy it is not
surprising that we do not get any extra conditions on $\alpha$,
$\beta$ or $\gamma$ from the Dirichlet energy which we didn't already
get from the Laplacian.  It is interesting to note though that the
Dirichlet energy does not depend on the choice of $\alpha$ (and hence
$r$ in the Laplacian) at all.

\item For the isotropic total variation $\TV$ we use the
Cauchy-Schwarz inequality on the dot product norm to get $\langle
\nabla u, \varphi\rangle_{\mathcal{E}} = \sum_{i\in I_m} (\nabla u
\cdot \varphi)_i \leq \sum_{i\in I_m} \|\nabla u\|_i \|\varphi\|_i
\leq \sum_{i\in I_m} \|\nabla u\|_i$.  To achieve equality\footnote{Note 
that demanding $\varphi^{TV}$ to achieve equality does not determine 
it uniquely on the set of vertices for which $\|\nabla u\|_i = 0$.\label{foot:nonuniquevarphi}} let
$\varphi_{ij} = \varphi_{ij}^{\TV}(u) := \left\{
\begin{array}{ll}\frac{(\nabla u)_{ij}}{\|\nabla u\|_i} & \text{if }
\|\nabla u\|_i \neq 0,\\ 0 & \text{if } \|\nabla u\|_i=0\end{array}.\right.$ 
Again we do not require extra conditions
on $\beta$ and $\gamma$.  They will be determined by the last
requirement\footnote{If we would have defined the dot product
$(\varphi\cdot\phi)_i:= \frac12 \sum_{j\in I_m} \varphi_{ij} \phi_{ij}
\delta(\omega_{ij})$ for a function $\delta$ possibly different than
$\beta$, the requirement on $\TV$ would have led to the condition
$\frac{\beta^2(\omega_{ij})
\gamma^2(\omega_{ij})}{\delta(\omega_{ij})} = \omega_{ij}$.  Together
with $\beta(\omega_{ij}) \gamma^2(\omega_{ij}) = \omega_{ij}$ from
point~\ref{item:Laplacians} this gives $\delta=\beta$ as we have
assumed all along.}.

\item\label{item:TVa} To compute $\TVa$ we use the bound on the
maximum norm of $\varphi$ to find
\begin{align*}
\langle \nabla u, \varphi\rangle_{\mathcal{E}} &= \frac12 \sum_{i,j\in
I_m} \varphi_{ij} (u_j-u_i) \beta(\omega_{ij}) \gamma(\omega_{ij})
\\
&
\leq \frac12 \sum_{i,j \in I_m} |\varphi_{ij}| |u_i-u_j|
\beta(\omega_{ij})\gamma(\omega_{ij})\\
&\leq \frac12 \sum_{i,j \in I_m} |u_i-u_j|
\beta(\omega_{ij})\gamma(\omega_{ij}).
\end{align*}
To achieve equality we can choose $\varphi = \varphi^a:=\sgn(\nabla u)$,
\textit{i.e.,} $\varphi_{ij}^a = \sgn(u_j-u_i)$ for $i,j$ such that 
$\gamma(\omega_{ij})>0$ and $\varphi_{ij}^a=0$ otherwise\footnote{Note that we can change $\varphi^a$ on the set of vertices for which $\nabla u = 0$ without losing equality. See also footnote \ref{foot:nonuniquevarphi}.}. Hence
\[
\TVa(u) = \frac12 \sum_{i,j \in I_m} |u_i-u_j|
\beta(\omega_{ij})\gamma(\omega_{ij}).
\]
If we now choose $\beta(\omega_{ij})=\omega_{ij}^{2q-1}$ and
$\gamma(\omega_{ij}) = \omega_{ij}^{1-q}$ then $\TVa={\TVa}_q$ while
$\beta$ and $\gamma$ satisfy the necessary condition
$\beta(\omega_{ij}) \gamma^2(\omega_{ij}) = \omega_{ij}$ from the
previous points.
\end{enumerate}
These choices for $\alpha$, $\beta$, and $\gamma$ lead to the inner
products (or semi-definite sesquilinear forms), operators and
functions presented in Section~\ref{sec:LaplDirTV}.

It is interesting to consider the conditions under which $\|\nabla
u\|_{\mathcal{E}}=0$ and $\|\nabla u\|_i=0$.
\begin{align*}
\|\nabla u\|_{\mathcal{E}}=0 \Leftrightarrow \sum_{i,j\in I_m}
\omega_{ij} (u_i-u_j)^2 \Leftrightarrow \forall (i,j)\in I_m^2\,\,
[\omega_{ij}=0 \vee u_i=u_j].
\end{align*}
This means that $\|\nabla u\|_{\mathcal{E}}=0$ iff $u$ is constant on
connected components of the graph.  Similarly
\begin{align*}
\|\nabla u\|_i=0 \Leftrightarrow \sum_{j\in I_m} \omega_{ij}
(u_i-u_j)^2 \Leftrightarrow \forall j\in I_m\,\, [\omega_{ij}=0 \vee
u_i=u_j],
\end{align*}
hence $\|\nabla u\|_i=0$ iff $u$ is constant on the set $\{n_j\in V:
j=i \vee e_{ij}\in E\}$ consisting of neighboring vertices of the
$i^{\text{th}}$ vertex plus the $i^{\text{th}}$ vertex itself.

We see that these conditions do what we would hope and expect them to
do, even if the choice of $q$ has made the $\mathcal{E}$-sesquilinear
form semi-definite, \textit{i.e.,} $\|\nabla u\|_{\mathcal{E}}=0$ gives
global (per connected component) constants and $\|\nabla u\|_i=0$
gives local constancy.

\section{Deferred proofs}\label{sec:deferredproofs}

The next lemma was used in the proof of Lemma~\ref{lem:klowerbound}.

\begin{lemma}\label{lem:lemmaforklowerbound}
Let $\{u_n\}_{n=1}^\infty \subset L^1(\T)$ be a such that $u_n \to u$
in $L^1(\T)$ for a $u\in L^1(\T^2)$ and $u_n' \rightharpoonup v$ in
$L^2(\T)$ for a $v\in L^2(\T)$.  Then $v=u'$ (and thus $u\in
W^{1,2}(\T)$).
\end{lemma}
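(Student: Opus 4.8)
The plan is to identify $v$ as the distributional derivative of $u$ by passing to the limit in the integration-by-parts identity for the $u_n$. Since everything takes place on the compact torus $\T = \R/\Z$, there are no boundary terms: for each $n$ the function $u_n$ has weak derivative $u_n' \in L^2(\T) \subset L^1(\T)$, so for every periodic test function $\phi \in C^\infty(\T)$ one has $\int_\T u_n' \phi = -\int_\T u_n \phi'$.

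First I would send $n\to\infty$ on each side separately. On the left, since $\T$ has finite measure and $\phi$ is bounded we have $\phi \in L^2(\T)$, so the weak convergence $u_n' \rightharpoonup v$ in $L^2(\T)$ yields $\int_\T u_n' \phi \to \int_\T v\,\phi$. On the right, since $\phi' \in L^\infty(\T)$, the strong convergence $u_n \to u$ in $L^1(\T)$ yields $\int_\T u_n\,\phi' \to \int_\T u\,\phi'$. Combining the two limits gives $\int_\T v\,\phi = -\int_\T u\,\phi'$ for every $\phi \in C^\infty(\T)$, which is exactly the statement that $v = u'$ in the sense of distributions on $\T$. As $v \in L^2(\T)$ and $u \in L^1(\T)$, this means $u \in W^{1,1}(\T)$ with $u' = v \in L^2(\T)$, i.e. $u\in W^{1,2}(\T)$, as claimed.

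The argument is entirely routine and I do not expect a genuine obstacle; the only points requiring slight care are to pair the weak $L^2$ convergence with an $L^2$ factor ($\phi$) and the strong $L^1$ convergence with an $L^\infty$ factor ($\phi'$), and to note that on a compact manifold periodic smooth functions already suffice to characterise a distribution, so that no boundary terms intrude in the integration by parts.
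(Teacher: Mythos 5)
Your argument is correct and is essentially the same as the paper's proof: both pass to the limit in the integration-by-parts identity $\int_\T u_n'\varphi = -\int_\T u_n\varphi'$, pairing the weak $L^2$ convergence of $u_n'$ with the test function and the strong $L^1$ convergence of $u_n$ with the bounded $\varphi'$, to conclude $\int_\T v\varphi = -\int_\T u\varphi'$. The paper merely packages the same two limits in a single display; there is no substantive difference.
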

Note in the proof below that this result in fact does not depend on
the dimension and holds on $\T^n$.

\begin{proof}[\bf Proof of Lemma~\ref{lem:lemmaforklowerbound}]
Let $\varphi\in C_c^\infty(\T)$, then
\[
0 = \underset{n\to\infty}\lim\, \int_{\T} \varphi (v-u_n') =
\underset{n\to\infty}\lim\, \int_{\T} \left[ \varphi v - \varphi' u -
\varphi u_n' + \varphi' u\right] .
\]
There is a $C>0$ such that
\[
\underset{n\to\infty}\lim\, \int_{\T} \left[-\varphi' u - \varphi u_n'
\right] = \underset{n\to\infty}\lim\, \int_{\T} \varphi' \left[u_n -u
\right] \leq \underset{n\to\infty}\lim\, C \|u_n-u\|_{L^1(\T)} = 0,
\]
hence we conclude $\int_{\T} \varphi v = -\int_{\T} \varphi' u$.
\end{proof}

The next lemma is a discrete Rellich-Kondrachov type compactness
result used in proof of Theorem~\ref{thm:compactnessforkNe}.

\begin{lemma}[Discrete Rellich-Kondrachov compactness
result]\label{lem:discreteRK}
Let $\{u_n\}_{n\!=\!1}^\infty $ $ \subset L^2(\T^2)$ and $\{N_n\}_{n=1}^\infty
\subset (0,\infty)$ be sequences such that as $n\to \infty$ we have
$N_n \to \infty$, $u_n \rightharpoonup u$ in $L^2(\T^2)$ for some
$u\in L^2(\T^2)$, and the difference quotients $($see
$(\ref{eq:differencequotient})) $ $\|D_{N_n}^k u_n\|_{L^2(\T^2)}$
$( k\in\{1,2\} ) $ are uniformly bounded.  Then $u_n \to u$ in
$L^2(\T^2)$.
\end{lemma}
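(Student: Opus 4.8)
The plan is to prove this discrete Rellich–Kondrachov result by mollification, following the standard continuous argument (as in \cite[5.8.2 Theorem 3]{Evans02}, which is already cited) but keeping track of the fact that we only control \emph{discrete} difference quotients $D_{N_n}^k u_n$ rather than genuine weak derivatives. Let $\{\rho_\delta\}_{\delta>0}$ be a standard family of mollifiers on $\T^2$ (periodic), and set $u_n^\delta := \rho_\delta * u_n$. Since $\{u_n\}$ is bounded in $L^2(\T^2)$ (weak convergence implies boundedness) and $\T^2$ is compact, for each fixed $\delta$ the functions $u_n^\delta$ and their gradients are uniformly bounded and equicontinuous, so by Arzel\`a--Ascoli, along a subsequence, $u_n^\delta \to u^\delta$ uniformly, hence in $L^2(\T^2)$, as $n\to\infty$; moreover $u^\delta = \rho_\delta * u$ because $u_n \rightharpoonup u$. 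By a diagonal argument over a sequence $\delta_j \to 0$ we may assume this holds for all $\delta$ in a countable set accumulating at $0$.

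The heart of the matter is the uniform-in-$n$ estimate
\begin{equation*}
\|u_n^\delta - u_n\|_{L^2(\T^2)} \leq C\,\delta\, \sup_{k\in\{1,2\}} \|D_{N_n}^k u_n\|_{L^2(\T^2)} + \omega(n,\delta),
\end{equation*}
where $\omega(n,\delta)\to 0$ in an appropriate sense; more honestly, the clean statement I would aim for is that there is a constant $C$ (independent of $n,\delta$) and, for each $\delta$, a vanishing error so that $\limsup_{n\to\infty}\|u_n^\delta - u_n\|_{L^2(\T^2)} \leq C\delta$. To get this, write $u_n^\delta(x) - u_n(x) = \int_{\T^2}\rho_\delta(y)\big(u_n(x-y)-u_n(x)\big)\,dy$ and estimate the $L^2$ norm of the inner difference $u_n(\cdot - y) - u_n(\cdot)$ for $|y|\leq\delta$ by telescoping: one moves from $x$ to $x-y$ in roughly $\lceil \delta N_n\rceil$ steps of size $1/N_n$ along the coordinate directions, each step contributing (after Minkowski's inequality, and using that $\rho_\delta$ is supported in $\{|y|\le \delta\}$) a term bounded by $N_n^{-1}\|D_{N_n}^k u_n\|_{L^2(\T^2)}$ --- summing $\lceil\delta N_n\rceil$ of them gives the bound $C(\delta + N_n^{-1})\sup_k\|D_{N_n}^k u_n\|_{L^2}$. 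Here one has to be a little careful that $u_n$ need not be piecewise constant on the $N_n$-grid (the lemma is stated for general $u_n\in L^2(\T^2)$), so the telescoping is along translates by multiples of $e_k/N_n$ and the difference quotient $D_{N_n}^k$ is exactly the translation-difference operator from \eqref{eq:differencequotient}; the integer-step decomposition of an arbitrary shift $y$ with $|y|\le\delta$ may leave a residual shift of length $<1/N_n$, which is harmless once combined with the mollification error and handled by a further application of the difference-quotient bound (or absorbed since $N_n\to\infty$). By the uniform bound on the difference quotients this yields $\limsup_{n\to\infty}\|u_n^\delta - u_n\|_{L^2(\T^2)} \leq C\delta$.

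With these two ingredients the conclusion is routine: given $\eta>0$, pick $\delta$ with $C\delta < \eta/3$ (and $\delta$ in our countable set), then for $n,m$ large, $\|u_n - u_m\|_{L^2} \le \|u_n - u_n^\delta\| + \|u_n^\delta - u_m^\delta\| + \|u_m^\delta - u_m\| < \eta$, using $\limsup_n\|u_n-u_n^\delta\|\le C\delta$ and the (Cauchy, by uniform convergence) behaviour of $\{u_n^\delta\}_n$. Hence $\{u_n\}$ is Cauchy in $L^2(\T^2)$, so it converges strongly; the limit must be $u$ since $u_n\rightharpoonup u$ and strong and weak limits coincide. Finally, since every subsequence of the original $\{u_n\}$ has a further subsequence converging strongly to $u$, the whole sequence converges strongly to $u$.

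I expect the main obstacle to be the telescoping/shift estimate: carefully decomposing an arbitrary small shift $y$ into $O(\delta N_n)$ lattice shifts of size $1/N_n$ (plus a sub-lattice remainder), applying Minkowski's integral inequality correctly so that each lattice step is controlled by $N_n^{-1}\|D_{N_n}^k u_n\|_{L^2(\T^2)}$, and making sure the bookkeeping gives a bound of the form $C(\delta + N_n^{-1})$ uniformly in $n$ --- in particular checking that no factor of $N_n$ is accidentally gained. Everything else (Arzel\`a--Ascoli for fixed $\delta$, identification of the mollified weak limit, the final $\varepsilon/3$ argument, and the subsequence-of-subsequences wrap-up) is standard.
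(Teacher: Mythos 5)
Your route is genuinely different from the paper's: the paper mollifies with the periodic heat semigroup $J_\e u_n(x)=\sum_{k\in\Z^2}\widehat{u_n}(k)e^{-\e^2|k|^2+2\pi ik\cdot x}$ and controls $\|J_\e u_n-u_n\|_{L^2}$ by expressing $\|D_{N_n}u_n\|_{L^2}^2$ as a Fourier multiplier and comparing it with $\sum_k|k|^2|\widehat{u_n}(k)|^2$, then invokes the continuous Rellich--Kondrachov theorem for the mollified sequence; you use a convolution mollifier and a telescoping-translation estimate. Your integer-step telescoping is correct (by translation invariance each lattice step costs exactly $N_n^{-1}\|D_{N_n}^k u_n\|_{L^2(\T^2)}$, and Minkowski's integral inequality then gives $\limsup_n\|u_n^\delta-u_n\|_{L^2}\le C\delta$ \emph{modulo} the sub-lattice residual), and the Arzel\`a--Ascoli step, the identification $u^\delta=\rho_\delta*u$, and the final $\eta/3$ Cauchy argument are all fine.

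The genuine gap is precisely the residual shift you flagged and then dismissed as ``harmless'': for general $u_n\in L^2(\T^2)$ a translation by $r$ with $|r|<1/N_n$ is \emph{not} controlled by the difference quotients, and no bookkeeping can fix this, because the lemma as stated is false. Take $N_n\in\N$ and $u_n(x)=\sin(2\pi N_nx_1)$: this function is $1/N_n$-periodic in $x_1$, so $D_{N_n}^1u_n\equiv D_{N_n}^2u_n\equiv 0$ by (\ref{eq:differencequotient}), and $u_n\rightharpoonup 0$ in $L^2(\T^2)$, yet $\|u_n\|_{L^2(\T^2)}=1/\sqrt2$, so there is no strong convergence. (The paper's own Fourier argument has the same blind spot: the discrete symbol $4N_n^2\sin^2(\pi k_1/N_n)$ is comparable to $4\pi^2k_1^2$ only for $|k_1|\ll N_n$ and vanishes at $k_1=N_n$, which is where this counterexample lives.) The statement, and your proof, are rescued by the hypothesis under which the lemma is actually used in Theorem~\ref{thm:compactnessforkNe}, namely $u_n\in\mathcal{A}_{N_n}$. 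For such piecewise constant $u_n$ and $0\le r<1/N_n$ one computes directly
\[
\|u_n(\cdot-re_1)-u_n\|_{L^2(\T^2)}^2=rN_n^{-1}\sum_{i,j=1}^{N_n}\big((u_n)_{i+1,j}-(u_n)_{i,j}\big)^2=rN_n^{-1}\|D_{N_n}^1u_n\|_{L^2(\T^2)}^2\le N_n^{-2}\|D_{N_n}^1u_n\|_{L^2(\T^2)}^2,
\]
so the residual costs no more than one extra lattice step and your estimate $\limsup_n\|u_n^\delta-u_n\|_{L^2}\le C\delta$ follows. With the hypothesis $u_n\in\mathcal{A}_{N_n}$ added, your argument closes completely and is, in my view, more elementary and more robust than the Fourier computation.
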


\begin{proof}[\bf Proof.]
For $\e > 0$ let $u_n^\e := J_\e u_n \in C^\infty(\T^2)$ be a
mollified function on the torus, defined to be the solution to the
heat equation after time $\e$ with initial condition $u_n$:
\[
J_\e u_n(x) = \sum_{k\in \Z^2} \widehat{u_n}(k) e^{-\e^2 |k|^2 + 2 \pi
i k\cdot x},
\]
where $\widehat{u_n}(k) = \int_{\T^2} u_n(x) e^{-2\pi i k\cdot x}
\,dx$.  We proceed in two steps.  First we need to prove some
properties of the mollifier, then we will prove the statement of the
lemma.

\smallskip

{\bf Step 1:} From \cite[Appendix B]{Greer03},
\cite[Lemma1]{GreerBertozzi04} we get that there is a $C>0$ such that
$\|u_n^\e\|_{L^2(\T^2)} \leq C \|u_n\|_{L^2(\T^2)}$.  Additionally,
$J_\e$ is a linear operator.  \cite{Greer03,GreerBertozzi04} also give
$\|J_\e f - f\|_{L^2(\T^2)} \leq \e \|f\|_{H^1(\T^2)}$ for $f\in
H^1(\T^2)$.  $u_n$ is not regular enough to use this estimate, so we
need a discrete version of this.

As in the references above, using that $1-e^{-\theta^2} \leq
|\theta|^2$ for $\theta \in \C$, we find that
\[
\Big  | \frac{ (1-e^{-\e^2 |k|^2} )^2}{1+|k|^2} \Big | <
\left\{ \begin{array}{ll} \e^2 \delta^2 & \text{ if } |k|<\delta,\\
\delta^{-2} & \text{ if } |k| \geq \delta.  \end{array}\right.
\]
Hence,
\begin{align*}
\| u_n^\e - u_n\|_{L^2(\T^2)}^2 &= \sum_{k\in \Z^2} \big  (1-e^{-\e^2
|k|^2}\big  )^2 |\widehat{u_n}(k)|^2  \\
&
\leq \Big  ( \underset{k\in
\Z^2}\sup\Big  | \frac{  (1-e^{-\e^2
|k|^2}  )^2}{1+|k|^2}\Big  | \Big  ) \sum_{k\in \Z^2}
\left(1+|k|^2\right) |\widehat{u_n}(k)|^2\\
&\leq \e^2 \sum_{k\in \Z^2} \left(1+|k|^2\right) |\widehat{u_n}(k)|^2.
\end{align*}
By Plancherel's/Parseval's identity we get immediately $\sum_{k\in
\Z^2} |\widehat{u_n}(k)|^2 = \|u_n\|_{L^2(\T^2)}$.  Furthermore,
\begin{align*}
\| D_{N_n} u_n\|_{L^2(\T^2)}^2 &= \int_{\T^2} \Big  [ \big  (D_{N_n}^1
u_n(x)\big  )^2 + \left(D_{N_n}^2 u_n(x)\right)^2 \Big  ] \,dx\\
 &= N_n^2 \sum_{k\in \Z^2} \Big  ( \Big  |\widehat{(u_n^{+1}-u_n)}
 (k)\Big  |^2 + \Big  |\widehat{(u_n^{+2}-u_n)} (k)\Big  |^2 \Big  ),
\end{align*}
where $u_n^{+j}(x) := u_n(x+N_n e_j)$ for standard basis vectors
$e_j$, $j\in \{1,2\}$.  It's easily computed that
$\widehat{u_n^{+j}}(k) = \widehat{u_n}(k) e^{2 \pi i k_j/N_n}$, hence
\[
\| D_{N_n} u_n\|_{L^2(\T^2)}^2 = \sum_{k\in \Z^2} |\widehat{u_n}(k)|^2
\Big  [ \Big  | N_n  ( e^{2 \pi i k_1/N_n} - 1 ) \Big  |^2 +
\Big | N_n  ( e^{2 \pi i k_2/N_n} - 1 )\Big |^2 \Big  ].
\]
Recognizing the difference quotient 
$$
N_n \big ( e^{2 \pi i k_1/N_n} -
1\big ) = 2 \pi i k_1 \frac{e^{2 \pi i k_1/N_n} - e^0}{2 \pi i
k_1/N_n} = 2\pi i k_1 + \mathcal{O}(N_n^{-1}) ,
$$
 we deduce
\[
\| D_{N_n} u_n\|_{L^2(\T^2)}^2 = 4 \pi^2 \sum_{k\in \Z^2} |k|^2
|\widehat{u_n}(k)|^2 + C_n \sum_{k\in \Z^2} |\widehat{u_n}(k)|^2,
\]
where $C_n = \mathcal{O}(N_n^{-1})$.  On the last term we can use
again Parseval's formula.  By the uniform bounds on
$\|u_n\|_{L^2(\T^2)}$ and $\|D_{N_n} u_n\|_{L^2(\T^2)}$ we then find
that
\begin{equation}\label{eq:uniforminnunune}
\| u_n^\e - u_n\|_{L^2(\T^2)}^2 \leq C \e^2, \quad \text{uniformly in
} n \text{ for } n \text{ large enough}.
\end{equation}
Next we compute an estimate on the derivatives of $u_n^\e$.
$$
\frac{\partial}{\partial x_1} u_n^\e(x) = 2 \pi i \sum_{k\in \Z^2}
k_1 \hat u_n(k) e^{-\e^2 |k|^2 + 2 \pi i k\cdot x}
$$
 and hence, since
$|\hat u_n(k)| \leq \|u_n\|_{L^1(\T^2)}$,
\[
\Big |\frac{\partial}{\partial x_1} J_\e u_n(x)\Big | \leq 2 \pi
\|v\|_{L^1(\T^2)} \sum_{k\in \Z^2} |k_1| e^{-\e^2 |k|^2}.
\]
We compute
\[
\sum_{k_2 \in \Z} e^{-\e^2 k_2^2} \leq 2 \sum_{k_2=0}^\infty e^{-\e^2
k_2^2} \leq 2 \int_0^\infty e^{-\e^2 k_2^2} \,dk_2 + 1= \sqrt{\pi}
\e^{-1} + 1
\]
and
\[
\sum_{k_1 \in \Z} |k_1| e^{-\e^2 k_1^2} \! = \! 2 \! \sum_{k_1=0}^\infty k_1
e^{-\e^2 k_2^2} \! \leq \! 2 \! \int_0^\infty \! \!  k_1 e^{-\e^2 k_1^2}  dk_1 
\! =\! 
\e^{-2} \int_0^\infty x^2 e^{-x^2} dx \! = \! \e^{-2}.
\]
Because $\|u_n\|_{L^1(\T^2)} \leq \|u_n\|_{L^2(\T^2)}$ is uniformly
bounded, we conclude (for $\e$ small enough) there is a $C>0$ such
that $\|\nabla J_\e u_n(x)\cdot e_k\|_{L^\infty(\T^2)} \leq C
\e^{-3}$, $k\in\{1,2\}$.

\smallskip

{\bf Step 2:} Let $\eta>0$ and let $n$ be large enough for the
bounds proved in Step 1 to hold.  Fix $\e>0$ small enough such that,
by (\ref{eq:uniforminnunune}), for each $n$ we have
$\|u_n-u_n^\e\|_{L^2(\T^2)} \leq \eta/3$.

By the bounds from Step 1 both $\|u_n^\e\|_{L^2(\T^2)}$ and $\|\nabla
J_\e u_n(x)\cdot e_k\|_{L^2(\T^2)}$ are uniformly (in $n$, for fixed
$\e$) bounded, and hence by the Rellich-Kondrachov compactness theorem
\cite[\S5.7 Theorem 1]{Evans02}, \cite[Theorem 6.2]{Adams75} the
sequence $\{u_n^\e\}_{n=1}^\infty$ converges strongly in $L^2(\T^2)$
as $n\to \infty$.  In particular it is a Cauchy sequence in
$L^2(\T^2)$, so choose $M_\e>0$ such that for all $n, m > M_\e$ we
have $\|u_n^\e - u_m^\e\|_{L^2(\T^2)} \leq \eta/3$.  Then for such
$n,m$
\[
\|u_n - u_m\|_{L^2(\T^2)} \leq \|u_n-u_n^\e\|_{L^2(\T^2)} +
\|u_m-u_m^\e\|_{L^2(\T^2)} + \|u_n^\e-u_m^\e\|_{L^2(\T^2)} \leq \eta.
\]
Thus, $\{u_n\}_{n=1}^\infty$ is a Cauchy sequence in $L^2(\T^2)$ and
therefore converges strongly in $L^2(\T^2)$.  By the uniqueness of the
limit it converges to $u$.
\end{proof}

\begin{proof}[\bf Proof of Lemma~\ref{lem:unifconvofweights}]
In both cases we assume without loss of generality that for $N$ is
large enough such that $\frac{L}N<\frac12$.
For fixed $N>0$ and $i,j,k,l\in I_{N^2}$, let $x=(i/N, j/N)$ and
$y=(k/N, j/N)$.  For $z\in \T^2$ write $f^{x,y}(z) =
\big(f(x-z)-f(y-z)\big)^2$, then from (\ref{eq:dLN})
$$
(d_{L,N}^2)_{i,j,k,l}= \sum_{r,s=-L}^L f^{x,y}(r/N, s/N) . 
$$
 Also
define $S_{L,N}:= \{z\in \T^2: |z_1|+|z_2| \leq L/N\}$.  By repeated
use of the trapezoidal rule for approximating integrals we then find
\begin{align}
N^{-2} (d_{L,N}^2)_{i,j,k,l} &= \int_{S_{L,N}} f^{x,y}(z)\,dz -
\frac{L}{6 N^3}\int_{-L/N}^{L/N} \frac{\partial^2 f^{x,y}}{\partial
z_2^2}(z_1, \zeta_2)\,dz_1  \notag
\\
&
\qquad - \frac{L}{6N^4} \sum_{s=-L}^L
\frac{\partial^2 f^{x,y}}{\partial z_1^2}(\zeta_1, s/N)\notag\\
&=: \int_{S_{L,N}} f^{x,y}(z)\,dz + R_{L,N},\label{eq:itsatrap}
\end{align}
where $(\zeta_1, \zeta_2) \in S_{L,N}$.  By smoothness of $f$ and
compactness of $\T^2$ we have $|R_{L,N}| \leq C_f \frac{L^2}{N^4}$ for
some constant $C_f>0$ depending on $f$.

For the first statement in the lemma we now find
\begin{align*}
(d_{L,N}^2)_{i,j,k,l} & =  \! N^2  \! \int_{S_{L,N}}  \!  \!  \!  \!  f^{x,y}(z) dz
 +  \!  N^2
R_{L,N}   \! 
=  \! \frac{4 L^2}{|S_{L,N}|} \!  \int_{S_{L,N}} 
 \!  \! 
f^{x,y}(z)\,dz + N^2
R_{L,N}\\
 &\to 4 L^2 f^{x,y}(0) = 4 L^2 \big(f(x)-f(y)\big)^2 \quad
 \text{uniformly as } N\to\infty.
\end{align*}
The uniformity of the convergence follows by the bound of the smooth
$f$ on the compact domain $\T^2$.  This proves the claim (since the
composition of a continuous function and a uniformly converging
sequence of functions is uniformly converging to the composition of
the continuous function and the limit of the sequence).

For the second statement the bound on $f$ allows us to conclude that
\[
\int_{S_{L,N}} f^{x,y}(z)\,dz \to \int_{S_\ell} f^{x,y}(z)\,dz \quad
\text{uniformly as } N\to\infty.
\]
The claim then follows by taking the limit $N\to\infty$ in
(\ref{eq:itsatrap}).
\end{proof}

\def\cprime{$'$} \def\cprime{$'$}

\end{document}